\definecolor{rouge}{rgb}{0.7,0.00,0.00}
\definecolor{vert}{rgb}{0.00,0.5,0.00}
\definecolor{bleu}{rgb}{0.00,0.00,0.8}
\newtheorem{theorem}{Theorem}[section]
\newtheorem*{theorem*}{Theorem}
\newtheorem{lemma}[theorem]{Lemma}
\newtheorem{corollary}[theorem]{Corollary}
\newtheorem{proposition}[theorem]{Proposition}
\newtheorem{condition}{Condition}
\newtheorem{conditionA}{A\kern-0.1mm}
\theoremstyle{definition}
\newtheorem{remark}[theorem]{Remark}
\def \eref#1{\hbox{(\ref{#1})}}
\numberwithin{equation}{section}
\def\geq{\geqslant}
\def\leq{\leqslant}
\def\RR{\mathbb{R}}
\def\EE{\mathbb{E}}
\def\NN{\mathbb{N}}
\def\vare{{\varepsilon}}
\def \eref#1{\hbox{(\ref{#1})}}
\def\EE{\mathbb{ E}}
\begin{document}

\title[Averaging principle for stochastic Burgers equation]
{ Averaging principle for one dimensional stochastic Burgers equation }

\author{Zhao Dong$^{1,2}$} \email{dzhao@amt.ac.cn}
 \footnotetext[1]{RCSDS, Academy of Mathematics and Systems Science, Chinese Academy of Sciences, Beijing 100190, China}
 \footnotetext[2]{School of Mathematical Sciences, University of Chinese Academy of Sciences, Beijing 100049, China}

\author{Xiaobin Sun$^{3}$} \email{xbsun@jsnu.edu.cn}
 \footnotetext[3]{School of Mathematics and Statistics, Jiangsu Normal University, Xuzhou, 221116, China}

\author{Hui Xiao$^{1,2,4}$}  \email{hui.xiao@univ-ubs.fr}
 \footnotetext[4]{Universit\'{e} de Bretagne-Sud, LMBA UMR CNRS 6205, Vannes, France}
 
\author{Jianliang Zhai$^{5}$}  \email{zhaijl@ustc.edu.cn}
 \footnotetext[5]{School of Mathematical Science, University of Science and Technology of China, Hefei, 230026, China}

%\author{Zhao Dong}
%\curraddr[Dong, Z.]{ University of Chinese Academy of Sciences, Beijing, 100190, China;
%  Academy of Mathematics and Systems Science,
%  Chinese Academy of Sciences (CAS), Beijing, 100190, China}
%\email{dzhao@amt.ac.cn}

%\author{Xiaobin Sun}
%\curraddr[Sun, X.]{ School of Mathematics and Statistics, Jiangsu Normal University, Xuzhou, 221116, China}
%\email{xbsun@jsnu.edu.cn}

%\author{Hui Xiao}
%\curraddr[Xiao, H.]
%{ University of Chinese Academy of Sciences, Beijing, 100190, China;
% Universit\'{e} de Bretagne-Sud, LMBA UMR CNRS 6205, Vannes, France}
%\email{hui.xiao@univ-ubs.fr}

%\author{Jianliang Zhai}
%\curraddr[Zhai, J.]{ School of Mathematical Science, University of Science and Technology of China, Hefei, 230026, China.}
%\email{zhaijl@ustc.edu.cn}

\begin{abstract}
In this paper, we consider the averaging principle
for one dimensional stochastic Burgers equation with slow and fast time-scales.
Under some suitable conditions,
we show that the slow component strongly converges to the solution
of the corresponding averaged equation.
Meanwhile, when there is no noise in the slow component equation,
we also prove that the slow component weakly converges to the solution
of the corresponding averaged equation with the order of convergence $1-r$, for any $0<r<1$.
\end{abstract}

%\date{\today}
\subjclass[2000]{ Primary 34D08, 34D25; Secondary 60H20}
\keywords{Stochastic Burgers' equation; Averaging principle; Ergodicity; Invariant measure; Strong convergence; Weak convergence.}

\maketitle

\section{Introduction}

Many multiscale problems arise from
material sciences, chemistry, fluids dynamics, biology, ecology, climate dynamics and other application areas,
see, e.g., \cite{BR,WE,HKW,KEE,EEJ,MCCTB, WTRY16} and references therein.
E and Engquist \cite{WE} pointed out `` Problems in these areas are often
multiphysics in nature; namely, the processes at different
scales are governed by physical laws of different
character: for example, quantum mechanics
at one scale and classical mechanics at another.''
For instance,
dynamics of chemical reaction networks often take
place on notably different times scales, from the order of
nanoseconds($\rm 10^{-9}$ s) to the order of several days, the use of two-time
or multi-time scales is  common.
%This is especially true for gene
%regulatory networks.
Another example with multiple
time scales is that of protein folding. While the
time scale for the vibration of the covalent bonds
is on the order of femtoseconds ($\rm 10^{-15}$ s), folding
time for the proteins may very well be on the order
of seconds.

Many two-time scale/slow-fast systems can be formally written as
\begin{equation} \label{ExamAverag02}
\left\{\begin{array}{l}
\displaystyle
d X^{\varepsilon}_t
=  b_1(  X^{\varepsilon}_t,  Y^{\varepsilon}_t) dt
 +  \sigma_1(X^{\varepsilon}_t,  Y^{\varepsilon}_t)d W_t^1 ,\quad  X^{\varepsilon}_0=x \\
d  Y^{\varepsilon}_t
= \frac{1}{\varepsilon} b_2(  X^{\varepsilon}_t,  Y^{\varepsilon}_t) dt
 +  \frac{1}{\sqrt{\varepsilon} } \sigma_2(X^{\varepsilon}_t,  Y^{\varepsilon}_t)d W_t^2 ,\quad  Y^{\varepsilon}_0=y,
 \end{array}\right.
\end{equation}
where $W_t^1$, $W_t^2$ are independent Wiener processes, and the small parameter $\varepsilon$ quantifies the ratio of the
$X^{\varepsilon}$ and $Y^{\varepsilon}$ time scales.
%\todo{and the small parameter $\varepsilon$ quantifies the ratio of the time scales between $X^{\varepsilon}$ and $Y^{\varepsilon}$?}
%The components of $X^{\varepsilon}$ are called slow variables, while those of $Y^{\varepsilon}$ are called fast variable.
For many practical problems, 
it is of interest to study  the behavior of the system (\ref{ExamAverag02}) for $\varepsilon<<1$, 
and how dynamics
of this system depends on $\varepsilon$ as $\varepsilon\rightarrow 0$.
However, since $\varepsilon<<1$, it is often very difficult to directly calculate $X^{\varepsilon}$, and systems of this type are problematic for computer simulations. The averaging principle can be applied to solve these problems
of this type. Roughly speaking,
if the dynamics for $Y^\varepsilon$ with $X^\varepsilon=x$
%\todo{$X^\varepsilon= x$?}
fixed has an invariant probability measure $\mu^{x}(dy)$ and
the following integrals exist
$$
\bar{b}_1(x):=\int b_1(x,y)\mu^{x}(dy),\ \ \ \ \bar{\sigma}_1(x):=\int \sigma_1(x,y)\mu^{x}(dy),
$$
then under appropriate assumptions on all coefficients in the system (\ref{ExamAverag02}), the effective dynamics for $X^\varepsilon$ in the limit
of $\varepsilon\rightarrow 0$ is a stochastic differential equation:
%todo{ the component $X^\varepsilon$, as $\varepsilon \to 0$, converges to the stochastic differential equation: }
$$
d \bar{X}_t
=  \bar{b}_1(  \bar{X}_t) dt
 +  \bar{\sigma}_1(\bar{X}_t)d W_t^1 ,\quad  \bar{X}_0=x.
$$
The theory of averaging principle has a long history and rich results.
%, such as,
%celestial mechanics, wireless communication, signal processing,
%oscillation theory and radiophysics.
Bogoliubov and Mitropolsky \cite{BM} first studied the averaging principle
for the deterministic systems.
Later on,
%The first result for deterministic systems was studied by Bogoliubov and Mitropolsky \cite{BM},
%then by Volosov \cite{V} for ordinary differential equations.
%In 1968,
the theory of averaging principle for stochastic differential equations
 was firstly established by Khasminskii \cite{K1}.
Since then, averaging principle for stochastic reaction-diffusion systems
has become an active research area which attracted much attention.
%a number of mathematicians.
For example,
%Freidlin and Wentzell \cite{FW} established the theory of
%averaging principle from a deeper understanding of this phenomenon.
Cerrai and Freidlin \cite{CF} proved the averaging principle
for a general class of stochastic reaction-diffusion systems,
which extended the classical Khasminskii-type averaging principle
for finite dimensional systems to infinite dimensional systems.
Recently, based on the averaging principle, the fast flow asymptotics
for a stochastic reaction-diffusion-advection
equation are obtained by Cerrai and Freidlin \cite{CF18}.
We refer to \cite{B1,C1, WLV, FL,FLL,FLWL,IP,KY,Li,V1,WR12,WRD12}
and references therein for more interesting results on this topic.

To the best of our knowledge, there are rarely studies to deal with highly nonlinear term on this topic. In this paper, we are interested in studying the averaging principle for one dimensional stochastic Burgers, i.e., considering the following stochastic slow-fast system on the interval $[0,1]$:
\begin{equation}\left\{\begin{array}{l}\label{Equation}
\displaystyle
\frac{\partial X^{\varepsilon}_t(\xi)}{\partial t}=\big[\Delta X^{\varepsilon}_t(\xi)+\frac{1}{2}\frac{\partial}{\partial \xi}(X^{\varepsilon}_t(\xi))^2+f(X^{\varepsilon}_t(\xi), Y^{\varepsilon}_t(\xi))\big]+\frac{\partial W^{Q_{1}}}{\partial t}(t,\xi),\quad X^{\varepsilon}_0=x \\
\frac{\partial Y^{\varepsilon}_t(\xi)}{\partial t}
=\frac{1}{\varepsilon}\big[\Delta Y^{\varepsilon}_t(\xi)+g(X^{\varepsilon}_t(\xi), Y^{\varepsilon}_t(\xi))\big]+\frac{1}{\sqrt{\varepsilon}}\frac{\partial W^{Q_2}}{\partial t}(t,\xi),\quad Y^{\varepsilon}_0=y\\
X^{\varepsilon}_t(0)=X^{\varepsilon}_t(1)=Y^{\varepsilon}_t(0)
=Y^{\varepsilon}_t(1)=0, \end{array}\right.
\end{equation}
where $\varepsilon >0$ is a small parameter
describing the ratio of time scales between the slow component $X^{\varepsilon}_t$
and fast component $Y^{\varepsilon}_t$.
The coefficients $f$ and $g$ satisfy some suitable conditions.
$\{W^{Q_1}_t\}_{t\geq 0}$ and  $\{W^{Q_2}_t\}_{t\geq 0}$ are $L^2(0,1)$-valued mutually independent
$Q_1$ and $Q_2$-Wiener processes.
%$\{W^{Q_1}_t\}_{t\geq 0}$
%and $\{W^{Q_2}_t\}_{t\geq 0}$ are $L^2(0,1)$-valued mutually independent
%$Q_1$ and $Q_2$-Wiener processes.
%on complete probability space $(\Omega,\mathscr{F},\mathbb{P})$ with a filtration
%$\{\mathscr{F}_{t},t\geq0\}$ satisfying the usual conditions.

For any $t \in [0,T]$,
as $\varepsilon \to 0$, % goes to $0$,
the slow component $X^{\varepsilon}_t$ in \eqref{Equation} converges to $\bar{X}_t$,
which is the solution of the averaged equation:
\begin{equation}\left\{\begin{array}{l}
\displaystyle d\bar{X}_{t}=\Delta\bar{X}_{t}dt+\frac{1}{2}\frac{\partial}{\partial \xi}(\bar{X}_{t})^2 dt+\bar{f}(\bar{X}_{t})dt+dW^{Q_{1}}(t),\\
\bar{X}_{0}=x.\end{array}\right. \label{1.3}
\end{equation}
with the average
$$\bar{f}(x)=\int_{L^2(0,1)}f(x,y)\mu^{x}(dy), $$
where $\mu^{x}$ denotes the unique invariant measure for the fast component equation
with frozen slow component variable $x$ (see the equation \eref{FEQ} for details).
%when we fix slow variable $x\in L^2(0,1)$ ( see equation \eref{FEQ} below ).

%\vskip 0.3cm
We aim to study the rate of convergences of the process $X^{\varepsilon}$ to $\bar{X}$,
both in the strong convergence sense and in the weak convergence sense.
%Denote by $\|\cdot\|$ the norm of $L^2(0,1)$.
%in the strong and weak sense respectively.
Under some appropriate conditions, the result of strong convergence is stated as follows:
%\begin{theorem}(\textbf{Strong convergence})
%If assumptions $\textbf{(H1)}-\textbf{(H3)}$ hold, then for any $x\in H^{\alpha}, y\in H$, $p, T>0$ we have
%\begin{align*}
%\mathbb{E}\sup_{0\leq t\leq T}|X_{t}^{\vare}-\bar{X}_{t}|^{2p}\leq C_{p,T}\Big(\frac{1}{-\log\vare}\Big)^{\frac{1}{4p}}\rightarrow0~~(\vare\rightarrow0).
%\end{align*}
%\end{theorem}

%\begin{theorem} (\textbf{Weak convergence 1})
%Assume \textbf{(H1)}, \textbf{(H2)}, \textbf{(H4)} and \textbf{(H5)} hold, then for any $\theta\in(0,1]$, $r\in(0,1)$, $x\in H^{\theta}$, $y\in H$, $\phi\in C_{b}^{2}$, $t\in(0,T]$, %$\delta\in(0,\frac{1}{2})$, and for any small enough $\vare\in(0,1)$, we have
%\begin{align}
%\big|\mathbb{E}[\phi(X^{\vare}(t))]-\mathbb{E}[\phi(\bar{X}(t))]\big|\leq C(1+t^{-\theta+\frac{\theta^2}{1+\delta}})\vare^{1-r}, \nonumber
%\end{align}
%where $C$ is a positive constant which only depends on $T$, $|x|_{\theta}, |y|, \delta$ and $\phi$.
%\end{theorem}

\begin{itemize}
\item{
For any $x\in H^{\alpha}(0,1)$ with $\alpha\in(1,\frac{3}{2}]$
and $y\in L^2(0,1)$, $p>0$, $T>0$,
there exists a positive constant $C$ which is independent of $\varepsilon$ such that
\begin{align} \label{main result}
\mathbb{E} \left( \sup_{0\leq t\leq T} \| X^{\varepsilon}_t-\bar{X}_t \|^{2p} \right)
\leq C\Big(\frac{1}{-\log\vare}\Big)^{\frac{1}{4p}}\to 0, \quad \mbox{as} \ \varepsilon \to 0.
\end{align}
Here we denote by $\|\cdot\|$ the norm of $L^2(0,1)$.
%where $C$ is a positive constant which only depends on $T$, $p$, $|x|_{\alpha}$ and $|y|$.
}
\end{itemize}

If $Q_{1}=0$ in the system \eref{Equation}, then
under some conditions, the result of weak convergence is stated as follows:
\begin{itemize}
\item{
For any $x\in H^{\theta}(0,1)$ with $\theta\in(0, 1]$,
$y\in L^2(0,1)$, $\phi\in C_{b}^{2}(L^2(0,1))$, $r\in(0,1)$, $\delta\in (0,\frac{1}{2})$,
$t \in [0,T]$, there exists a positive constant $C$ which is independent of $\varepsilon$ such that
\begin{align}
\big|\mathbb{E} \phi(X^{\vare}_{t}) -\mathbb{E} \phi(\bar{X}_{t}) \big|\leq C(1+t^{-\theta+\frac{\theta^2}{1+\delta}})\vare^{1-r}.   \label{weak convergence}
\end{align}
%where $C$ is a positive constant which only depends on $T$, $|x|_{\theta}, |y|, \delta$ and $\phi$.
}

\end{itemize}

%As a consequence, we can see that the weak order is close to $1$. However, we can not obtain the strong order. This result is a little different from the

%\vskip 0.3cm

%Another part of our paper is to prove a weak convergence when slow equation without noise, i.e.,
%\begin{align}
%\big|\mathbb{E}[\phi(X^{\vare}(t))]-\mathbb{E}[\phi(\bar{X}(t))]\big|\leq C\vare^{1-r}. \label{weak convergence}
%\end{align}
%where $x\in H^{\theta}$ with $\theta\in(\frac{1}{2},1]$, $y\in H$, $\phi\in C_{b}^{2}$, $r\in(0,1)$ and $\bar{X}_t$ is the corresponding averaged equation.

\vskip -0.6cm
Comparing with the strong convergence, while the requirement on the regularity of initial value
$x$ in weak convergence is weaker, the rate of the convergence is pleasant in this case. The idea
of the proof follows a procedure inspired by \cite{B1}, in which the authors considered a relative simple
framework (without the nonlinear term and with $f$ being bounded ). In our case, to deal with the
nonlinear term and unbounded $f$ is a nontrivial task.

%Compared with the strong convergence, the regularity of initial value $x$ in weak convergence is weaker, but the rate of the convergence is pleasant in this case. The idea of the proof follows the procedure inspired by \cite{B1}, in which the authors consider a relative simple framework (without the nonlinear term and $f$ is bounded ). In our case, it is quite non-trivial
%to deal with the nonlinear term and unbounded $f$.

\vskip 0.1cm
%\vskip 0.3cm
The proof of \eqref{main result} is based on the Khasminskii argument introduced in \cite{K1}, but it is clearly more involved than in
\cite{K1}, as it concerns the
nonlinear term in the Burgers' equation and unbounded $f$.
To be precise, we split the interval
$[0,T]$ into some subintervals of size $\delta>0$,
then on each interval $[k\delta, (k+1)\delta))$, $k\geq 0$,
we construct an auxiliary process $(\hat{X}_t^\vare, \hat{Y}_t^\vare)$, $t\in [k\delta, (k+1)\delta))$,
associated with the system \eqref{Equation}.
Based on the exponential ergodicity of the fast component equation with frozen slow component $x$
in the system \eqref{Equation}, by controlling the error between
$\hat{X}_t^\vare $ and $X_t^\vare$, allows us to deduce  \eqref{main result}.
The biggest challenge in studying the strong convergence \eqref{main result} is to deal with the
nonlinear term.
%The most challenge in studying the strong convergence is how to deal with the nonlinear term in Burger's equation.
To overcome this difficulty, 
we first  give some estimates of the slow component $X^{\varepsilon}_t$ 
and fast component $Y^{\varepsilon}_t$ in $L^2(0,1)$. 
Secondly, by using the smoothness of semigroup $e^{t\Delta}$ and the interpolation inequality, 
we can further obtain 
$\sup_{\vare\in(0,1)}\mathbb{E}\sup_{t\in[0,T]}|X_{t}^{\varepsilon}|_{\alpha}^{p}\leq C_{p,T}$, 
which is a key step for proving \eref{main result}, 
where $|\cdot|_{\alpha}$ is the Sobolev norm. Finally, we obtain the result by
applying the skill of stopping time and following the procedure inspired by \cite{FL}.
%, we can obtain the main result.

To obtain the weak convergence \eqref{weak convergence},
we use the asymptotic expansion with respect to $\varepsilon$
of the solution to the Kolmogorov equation
corresponding to the system \eqref{Equation}.
However, some problems appear since the operator $\Delta$ is unbounded in the Kolmogorov equation.
To overcome this difficulty, following the approach used in \cite{B1}, we first
use the Galerkin approximation to reduce the infinite dimensional problem to a finite dimensional one,
then the remaining part is to establish the rate of convergence with some bounds
which is independent of the dimension.
Note that instead of using
the asymptotic expansion of the solution to the Kolmogorov equation,
an alternative martingale approach was applied to prove the weak convergence
for stochastic reaction-diffusion equations with unbounded multiplicative noise \cite{C1}.

%due to the unboundedness of the operator $\Delta$,
%we first apply
%we first use Galerkin approximation to
%the asymptotic expansion of

%\vskip 0.3cm
Finally, we refer that, in recent years, there are many interesting results for stochastic Burger's equation \cite{DD,Daz1,DaZ,DDT,DX,EWKM,HSX,LZ06, LZ09, TZ96, TZ98,TW}.

%\vskip 0.3cm
The rest of the paper is organized as follows.
In Section \ref{Sec Main Result}, %we introduce the necessary notations,
under some suitable assumptions,
we formulate our main results.
Section \ref{Sec Proof of Thm1} and Section \ref{Sec Proof of Thm2 3}
are devoted to proving the strong convergence and weak convergence, respectively.
In the Appendix \ref{Sec appendix},  we recall some useful inequalities.
% that we use throughout the paper.
%Finally in the Appendix, we prove some necessary estimates for proving the weak convergence.

%\vskip 0.3cm
Throughout the paper, $C$, $C_p$ and $C_{p,T}$ will denote positive constants which may change from line to line, where $C_p$ depends on $p$, $C_{p,T}$ depends on $p, T$.

%\section{Preliminaries}\label{sec.prelim}

\section{Notations and main results} \label{Sec Main Result}

%Denote by $|\cdot|_{L^p}$ the usual norm of the space $L^p(0,1),p\geq1$, and by $|\cdot|_{\infty}$ the usual
%supremum norm of the space $L^\infty(0,1)$. We consider the separable Hilbert space $H=L^2(0,1)$ (the inner product
%denoted $\langle\cdot,\cdot\rangle$). As usual, for $k\in\mathbb{N}, p\geq 1$, $W^{k, p}(0,1)$ is the Sobolev space
%of all functions in $L^p(0,1)$ whose differentials belong to $L^p(0,1)$ up to the order $k$. Recall that
%the usual Sobolev space $W^{k, p}(0,1)$ can be extended to
%the $W^{s, p}(0,1)$, for $s\in\mathbb{R}$. Set $H^k(0,1)\hat=W^{k, 2}(0,1)$
%and $H^1_0(0,1)$ is the subspace of $H^1(0,1)$ of all functions
%whose trace at $0$ and $1$ vanishes. We define the unbounded self-adjoint operator $A$ by
%$$Ax=\Delta x=\frac{\partial^2}{\partial \xi^2}x, \quad x\in \mathscr{D}(A)=H^2(0,1) \cap H^1_0(0,1).$$

%For $1 \leq p \leq \infty$,
%The square integrable functions on $[0,1]$ is denoted by $L^2: = L^2[0,1]$,
Let $L^2: = L^2(0,1)$ be the space of square integrable real-valued functions on the interval $[0,1]$.
The norm and the inner product on $L^2$ are
denoted by $\|\cdot\|$ and $\langle\cdot,\cdot\rangle$, respectively. The space $C^2_b(L^2)$ is the functions from $L^2$ to $\mathbb{R}$, which are twice continuously differentiable with first and second bounded derivative.
%which is equipped with the usual norm $|\cdot|_{L^p}$.
%For $p =2$, denote the inner product on the separable Hilbert space $L^2(0,1)$
%by $\langle\cdot,\cdot\rangle$.
For $k\in\mathbb{N}$, $W^{k, 2}(0,1)$ is the Sobolev space
of all functions in $L^2$ whose differentials belong to $L^2$ up to the order $k$.
The usual Sobolev space $W^{k, 2}(0,1)$ can be extended to
the $W^{s, 2}(0,1)$, for $s\in\mathbb{R}$. Set $H^k \hat=W^{k, 2}(0,1)$
and denote by $H^1_0$ the subspace of $H^1$ of all functions
whose trace at $0$ and $1$ vanishes.
The Laplacian operator $\Delta$ is given by
\begin{align*}
Ax = \Delta x = \frac{\partial^2}{\partial \xi^2}x,
\quad  x\in \mathscr{D}(A)=H^2 \cap H^1_0.
\end{align*}
%The Laplacian operator $\Delta$ is given by
%$\Delta x=\frac{\partial^2}{\partial \xi^2}x $,
%for any $ x\in \mathscr{D}(A)=H^2(0,1) \cap H^1_0(0,1).$
It is well known that $\Delta$ is the infinitesimal generator of a strongly continuous semigroup
$\{e^{t \Delta}\}_{t\geq0}$. The eigenfunctions of $\Delta$ are given by
$e_k(\xi)=\sqrt{2}\sin(k\pi\xi)$, $\xi\in[0,1],k\in \mathbb{N}$,
with the corresponding eigenvalues $\lambda_k = -k^2\pi^2$.
%$$e_k(\xi)=\sqrt{2}\sin(k\pi\xi),~~~\xi\in[0,1],k\in \mathbb{N}$$
%are eigenfunctions of $-A$ with eigenvalue $\lambda_k=k^2\pi^2$.
%Note that the operator $A$ is the infinitesimal generator of a strongly continuous semigroup in $H$,
%which we denote by $e^{tA},t\geq0$.
%Moreover, the semigroup $e^{tA}$ can be extended to $L^p(0,1)$ ($p>1$),
%\begin{equation}
%|e^{tA}x|_{L^p}\leq{e^{\gamma_pt}|x|_{L^p}},\quad x\in L^p(0,1),
%\end{equation}
%where $\gamma_p=2p^{-1}(p-1)\pi^2$.
%It is well known that  $e^{tA}$ ($t\geq0$) have smoothing properties, that is, for any
%$s_1,s_2\in\mathbb{R}$ with $s_1\leq s_2$, $r\geq1$, $e^{tA}:W^{s_1,r}(0,1)\to W^{s_2,r}(0,1)$ and there exists a
%constant $C$ which depending on $s_1,s_2,r$ such that
The operator $\Delta$ satisfies the smoothing property:
for any $s_1,s_2\in\mathbb{R}$ with $s_1\leq s_2$,
\begin{equation} \label{PSG}
|e^{t \Delta}z|_{H^{s_2}}\leq C\left(1+t^{(s_1-s_2)/2}\right)|z|_{H^{s_1}},
\quad \mbox{for} \  z\in  H^{s_1}.
\end{equation}
For any $\alpha \in \mathbb R$,
%Denote by $|\cdot|_{H^\alpha}$ the norm of the operator $(-A)^{\alpha/2}$.
$(-A)^\alpha$ is the power of the operator $-A$, and $|\cdot|_\alpha$ is the norm of
$\mathscr{D}((-A)^{\alpha/2})$ which is equivalent to the norm of $H^{\alpha}$.
%We have $|\cdot|_0=|\cdot|_{L^2}$, and denote it by $|\cdot|$ for simplicity.

Define the bilinear operator $B(x,y): L^2 \times H^1_0 \rightarrow H^{-1}_0$ by
$$ B(x,y)=x\cdot\partial_{\xi} y,$$
and the trilinear operator $ b(x,y,z): L^2 \times H^1_0 \times L^2 \rightarrow \RR$ by
$$ b(x,y,z)
=\int_0^1x(\xi) \partial_\xi y(\xi)z(\xi) d\xi.$$
For convenience, set $B(x)=B(x,x)$, for $x\in H^1_0$.

%\vskip 0.3cm

With the above notations, the system \eref{Equation} can be rewritten as: % the following form:
\begin{equation}\left\{\begin{array}{l}\label{main equation}
\displaystyle
dX^{\vare}_t=[AX^{\vare}_t+B(X^{\vare}_t)+f(X^{\vare}_t, Y^{\vare}_t)]dt+dW^{Q_1}_t,\quad X^{\vare}_0=x\\
dY^{\vare}_t=\frac{1}{\vare}[AY^{\vare}_t+g(X^{\vare}_t, Y^{\vare}_t)]dt+\frac{1}{\sqrt{\vare}}dW^{Q_2}_t,\quad Y^{\vare}_0=y\\
X^{\vare}_t(0)=X^{\vare}_t(1)=Y^{\vare}_t(0)=Y^{\vare}_t(1)=0.\end{array}\right.
\end{equation}
Here, the $Q_1$-Wiener process $W^{Q_1}_t$ is  given by
\begin{align} \label{Q1WienerPro}
W^{Q_1}_t=\sum^\infty_{k=1}\sqrt{\alpha_k}\beta^k_te_k, \quad t\geq0,
\end{align}
where $\alpha_k\geq 0$ satisfies  % $Q_1 e_k=\alpha_ke_k$ with
$Tr Q_1: = \sum_{k=1}^{\infty}\alpha_{k}<+\infty$, 
and $\{\beta^k\}_{k\in \mathbb N}$ is a sequence of mutually independent standard Brownian motions.
Throughout this paper, we assume that
 $W^{Q_2}_t$ also has a similar decomposition as in \eqref{Q1WienerPro} with $\text{Tr}Q_2<\infty$.
 Note that $W^{Q_1}_t$ and $W^{Q_2}_t$ are independent.

%A similar decomposition as in \eqref{Q1WienerPro} also holds for $W^{Q_2}_t$.
%Similarly, we can also write $W^{Q_2}_t=\sum^\infty_{k=1}\sqrt{\alpha_k'} \tilde{\beta}^k_te_k$
%In addition, we assume that $\text{Tr}Q_2<\infty$.

%\textcolor{red}{Now we make the following basic assumptions on the coefficients $f$ and $g$  throughout this paper.}

We impose the global Lipschitz condition
on the functions $f, g: L^2\times L^2 \rightarrow L^2$ in \eqref{main equation}.

%\textcolor{red}{The functions $f, g: H\times H \rightarrow H$ satisfy the global Lipschitz condition, i.e., there exist positive constants $L_{f}$ and $L_{g}$ such that for any $x_1,x_2,y_1,y_2\in H$,}

\begin{conditionA}\label{A1}
There exist two constants $L_{f}, L_{g} >0$ %and $L_{g}>0$ 
such that for any $x_1,x_2,y_1,y_2\in L^2$,
\begin{align*}
\|f(x_1, y_1)-f(x_2, y_2)\|\leq L_{f}(\|x_1-x_2\| + \|y_1-y_2\|),
\end{align*}
%and
\begin{align*}
\|g(x_1, y_1)-g(x_2, y_2)\|\leq L_{g}(\|x_1-x_2\| + \|y_1-y_2\|).
\end{align*}
\end{conditionA}

Following the standard approach developed in \cite{DaZ}, one can verify that under
the condition \ref{A1},
there exists a unique mild solution to the system \eqref{main equation}.
More specifically,
for any given initial value $x, y\in L^2$, and $T>0$,
there exist a unique
$X^{\varepsilon} \in C([0,T]; L^2) \cap L^2(0, T; H_0^1)$
and a unique $Y^{\varepsilon} \in C([0,T]; L^2) \cap L^2(0, T; H_0^1)$
%mild solution $\{(X^{\varepsilon}_t,Y^{\varepsilon}_t), t\geq 0\}$ to
%the system \eref{main equation} and for all $T>0$,
%$(X^{\varepsilon},Y^{\varepsilon})\in C([0,T]; L^2(0,1))\times C([0,T]; L^2(0,1))$
satisfying
%\textcolor{blue}{For the starting point in $L^2$, is it true that
%$X^{\varepsilon} \in C([0,T]; L^2) \cap L^2(0, T; H_0^1)$
%and $Y^{\varepsilon} \in C([0,T]; L^2) \cap L^2(0, T; H_0^1)$ ? }
%\PP-a.s.$.
\begin{equation}\left\{\begin{array}{l}\label{mild solution}
\displaystyle
X^{\varepsilon}_t=e^{tA}x+\int^t_0e^{(t-s)A}B(X^{\varepsilon}_s)ds+\int^t_0e^{(t-s)A}f(X^{\varepsilon}_s, Y^{\varepsilon}_s)ds+\int^t_0 e^{(t-s)A}dW^{Q_1}_s,\\
Y^{\varepsilon}_t=e^{tA/\varepsilon}y+\frac{1}{\varepsilon}\int^t_0e^{(t-s)A/\varepsilon}g(X^{\varepsilon}_s,Y^{\varepsilon}_s)ds
+\frac{1}{\sqrt{\varepsilon}}\int^t_0 e^{(t-s)A/\varepsilon}dW^{Q_2}_s.
\end{array}\right.
\end{equation}

We need the following dissipative condition,
which allows us to obtain the exponential ergodicity property of the fast component
with every fixed slow component in the second equation of \eqref{mild solution}.

%means that the growth rate of the function $g$ in fast component equation
%is smaller than the decay rate of operator $\Delta$.
%which is important to ....???

% \smallskip
%\noindent
%\textbf{(H2)}
\begin{conditionA}\label{A2}
$
\eta:=\lambda_{1}-L_{g}>0.
$
\end{conditionA}

The following condition on the $Q_1$-Wiener process $W^{Q_1}_t$ is used to establish the strong convergence
of $X^{\varepsilon}$ to $\bar{X}$.

% \smallskip
%\noindent
%\textbf{(H3)}

\begin{conditionA}\label{A3}
There exist constants $\alpha\in(1,\frac{3}{2})$ and $\beta\in(0,\frac{1}{2})$ such that
\begin{align*}
\sum_{k=1}^{\infty}\alpha_{k}\lambda_{k}^{\alpha+2\beta-1}<+\infty.
\end{align*}
\end{conditionA}

%Denote by $C^1(L^2, \mathbb{R})$, $C^2(L^2, \mathbb{R})$
%the spaces of real-valued functions $$
For any $x \in L^2$,
denote by $D\varphi(x)$, $D^2\varphi(x)$ the first and the second Frech\'{e}t derivatives
of the function $\varphi: L^2 \to \mathbb{R}$, respectively. By Riesz representation theorem,
we have
\begin{align*}
D\varphi(x) \cdot h = \langle D\varphi(x), h \rangle,
\quad
D^2\varphi(x) \cdot (h,k) = \langle D^2\varphi(x)h, k \rangle,  \quad  h,k \in L^2.
\end{align*}
%Introduce the following additional assumption:
The following condition is used to establish the weak convergence
of $X^{\varepsilon}$ to $\bar{X}$.

%\smallskip
%\noindent
%\textbf{(H4)}
\begin{conditionA}\label{A4}
Assume that  $f$ and $g$ are twice differentiable with respect to
the first and the second variable, respectively,
and that there exists a constant $C>0$ such that
for any $x,y, h, k \in L^2 $,  the following inequalities hold:
\begin{align}
& \|D_{xx}^{2}f(x,y)(h,k)\|\leq C\|h\| \|k\|  \label{SecondDer xx};  \\
& \|D_{y}g(x,y)\cdot h\|\leq C \|h\|, \quad   |D^2_{yy}g(x,y)(h,k)|\leq C\|h\| \|k\| \label{SecondDer yy}; \\
&  |\langle f(x,y),x\rangle| \leq C(1+ \|x\|^{2}).  \label{WeakDissIne}
\end{align}
%(1)For any $x,y, h, k \in L^2(0,1) $,  % and $h,k\in L^2(0,1) $,
%\begin{align}
%|D_{xx}^{2}f(x,y)(h,k)|\leq C\|h\| \|k\|. \nonumber
%\end{align}
%(2)For any $x,y, h, k \in L^2(0,1) $,
%\begin{align}
%|D_{y}g(x,y)\cdot h|\leq C \|h\|,
%\quad \quad |D^2_{yy}g(x,y)(h,k)|\leq C\|h\| \|k\|. \nonumber
%\end{align}
%(3) For any $x,y\in L^2(0,1)$,  %there exists a constant $C>0$ such that
%\begin{align}
%|\langle f(x,y),x\rangle|\leq C(1+ \|x\|^{2}). \nonumber
%\end{align}
\end{conditionA}

%The condition \eqref{WeakDissIne} is the weak dissipativity condition,
%which  allows us to
%\vskip 0.3cm

Now we are going to  formulate our main results.
The first result gives the convergence rate
in the sense of the trajectory distance
between the slow component $X_{t}^{\vare}$
and the averaged component $\bar{X}_{t}$,  as $\varepsilon \to 0$,
uniformly with respect to $t \in [0,T]$.

\begin{theorem}[Strong convergence]  \label{main result 1}
Assume the conditions \ref{A1}, \ref{A2} and \ref{A3} hold.
Then, for any $x\in H^{\alpha}$ with $\alpha$ given in \ref{A3},
$y\in L^2$, $p>0$ and $T>0$,
there exists a constant $C:=C_{x,y,T,p, \alpha}>0$ such that
\begin{align*}
\mathbb{E} \left(\sup_{0\leq t\leq T} \|X_{t}^{\vare}-\bar{X}_{t}\|^{2p} \right)
\leq C\Big(\frac{1}{-\log\vare}\Big)^{\frac{1}{4p}}\to 0,
\quad \mbox{as}  \  \varepsilon \to 0.
\end{align*}
%$where $C$ is a positive constant which only depends on $T$, $p$, $|x|_{\alpha}$ and $|y|$.
\end{theorem}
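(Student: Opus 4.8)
The plan is to follow the Khasminskii time-discretization scheme, adapted to handle the Burgers nonlinearity $B(X)$ and the unbounded drift $f$. Fix $T>0$, $p>0$, and $x\in H^{\alpha}$, $y\in L^2$. The first block of work is a priori estimates, uniform in $\varepsilon\in(0,1)$: (i) $\sup_{\varepsilon}\mathbb{E}\sup_{t\in[0,T]}\|X^{\varepsilon}_t\|^{2p}<\infty$ and $\sup_{\varepsilon}\mathbb{E}\int_0^T|X^{\varepsilon}_s|_1^2\,ds<\infty$, obtained by Itô's formula applied to $\|X^{\varepsilon}_t\|^{2}$ using $\langle B(X),X\rangle=0$, the dissipativity of $A$ (spectral gap $\lambda_1$), the Lipschitz bound on $f$ from \ref{A1}, and the finite trace of $Q_1$; (ii) the analogous bound for $Y^{\varepsilon}$, where the $1/\varepsilon$ in front of $AY^\varepsilon$ and the dissipativity condition \ref{A2} ($\eta=\lambda_1-L_g>0$) produce an $\varepsilon$-uniform bound $\sup_{\varepsilon}\sup_{t\in[0,T]}\mathbb{E}\|Y^{\varepsilon}_t\|^{2p}<\infty$; and (iii) the crucial higher-regularity estimate $\sup_{\varepsilon}\mathbb{E}\sup_{t\in[0,T]}|X^{\varepsilon}_t|_{\alpha}^{p}\leq C_{p,T}$. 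For (iii) I would write $X^\varepsilon_t$ in mild form, apply $(-A)^{\alpha/2}$ to each term, and use the smoothing property \eqref{PSG} together with the interpolation inequality (from the Appendix) to control $\int_0^t|e^{(t-s)A}B(X^\varepsilon_s)|_\alpha\,ds\leq C\int_0^t (1+(t-s)^{-\kappa})\|X^\varepsilon_s\|\,|X^\varepsilon_s|_1\,ds$ for a suitable $\kappa<1$, using $\alpha<3/2$; the stochastic convolution term is bounded in $H^\alpha$ by a factorization/Da Prato–Kwapień–Zabczyk argument, which is exactly where \ref{A3} ($\sum_k\alpha_k\lambda_k^{\alpha+2\beta-1}<\infty$) is needed, and the $f$-term is handled by \ref{A1} and (i)–(ii).

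The second block is the ergodicity of the frozen-fast equation. For each fixed $x\in L^2$ consider the equation \eqref{FEQ} with transition semigroup $P^x_t$; using \ref{A2} one shows by a standard synchronous-coupling/Itô argument that there is a unique invariant measure $\mu^x$ and exponential convergence $\|P^x_t\phi(y)-\bar\phi(x)\|\leq Ce^{-\eta t/2}(1+\|y\|)\,\mathrm{Lip}(\phi)$, together with Lipschitz dependence of $\mu^x$ on $x$ in the Wasserstein-$1$ distance, so that $\bar f(x)=\int f(x,y)\,\mu^x(dy)$ inherits the Lipschitz property from \ref{A1}. This also gives well-posedness of the averaged Burgers equation \eqref{1.3} and the a priori bounds for $\bar X$.

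The third block is the core discretization estimate. Partition $[0,T]$ into intervals of length $\delta$ (to be optimized in terms of $\varepsilon$ at the end). On each $[k\delta,(k+1)\delta)$ define the auxiliary pair $(\hat X^\varepsilon_t,\hat Y^\varepsilon_t)$ by freezing the slow variable in the fast drift at its value $X^\varepsilon_{k\delta}$ and keeping the slow equation driven by the original Burgers dynamics with $f(X^\varepsilon_{k\delta},\hat Y^\varepsilon_s)$. One then estimates $\mathbb{E}\sup_{t\le T}\|X^\varepsilon_t-\hat X^\varepsilon_t\|^{2p}$: the difference solves a linear-type equation whose drift contains $B(X^\varepsilon)-B(\hat X^\varepsilon)=B(X^\varepsilon-\hat X^\varepsilon,X^\varepsilon)+B(\hat X^\varepsilon,X^\varepsilon-\hat X^\varepsilon)$, and here the trilinear estimate $|b(u,v,w)|\leq C\|u\|^{1/2}|u|_1^{1/2}|v|_1\|w\|^{1/2}|w|_1^{1/2}$ combined with the $H^\alpha$ bound (iii) and a Gronwall argument controls the nonlinearity — this is precisely where the extra regularity of $X^\varepsilon$ is spent. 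One also needs $\mathbb{E}\|\hat Y^\varepsilon_t-Y^\varepsilon_t\|^2\le C\delta$ type bounds and a regularity-in-time estimate $\mathbb{E}\|X^\varepsilon_t-X^\varepsilon_{k\delta}\|^{2}\le C\delta^{\gamma}$ for $t\in[k\delta,(k+1)\delta)$. Finally, the standard averaging estimate: writing $\int_0^t[f(X^\varepsilon_s,\hat Y^\varepsilon_s)-\bar f(X^\varepsilon_{k\delta})]\,ds$ blockwise, using the exponential mixing of $\hat Y^\varepsilon$ on a block of length $\delta$ (which, after rescaling time, sees $\delta/\varepsilon$ units) and Markov property, yields a bound of order $\delta+\varepsilon/\delta$ (times polynomial moments of the solutions). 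Collecting everything, $\mathbb{E}\sup_{t\le T}\|X^\varepsilon_t-\bar X_t\|^{2p}\le C(\delta^{\gamma p}+(\varepsilon/\delta)^{p}+\dots)$, and choosing $\delta=\delta(\varepsilon)$ appropriately — the logarithmic rate in the statement suggests $\delta\sim 1/(-\log\varepsilon)$ because a stopping-time truncation of the nonlinear term (localizing $|X^\varepsilon|_\alpha$ and $|X^\varepsilon|_1$) introduces an exponential loss $e^{C/\delta}$ that must be beaten by $\varepsilon/\delta$ — produces the claimed bound $C(1/(-\log\varepsilon))^{1/(4p)}$.

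I expect the main obstacle to be the interplay between the Burgers nonlinearity and the discretization error: the Gronwall-type control of $X^\varepsilon-\hat X^\varepsilon$ costs a constant that grows like $\exp\big(C\int_0^T|X^\varepsilon_s|_1^2\,ds\big)$ or like $\exp(C/\delta)$ after localization, so one cannot take $\delta$ polynomially small in $\varepsilon$; reconciling this exponential loss with the averaging gain of order $\varepsilon/\delta$ forces the logarithmic rate. Making this rigorous requires a careful stopping-time argument (stop when $|X^\varepsilon_t|_\alpha$ or $\int_0^t|X^\varepsilon_s|_1^2ds$ exceeds a threshold $R$), estimating the probability of stopping early via Chebyshev and the a priori bounds (i),(iii), then optimizing jointly over $R$ and $\delta$ as functions of $\varepsilon$. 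The unboundedness of $f$ is a secondary but genuine nuisance: it means the averaging estimate carries moments of $X^\varepsilon$ and $\hat Y^\varepsilon$ rather than a clean sup-norm bound, so Hölder's inequality must be threaded through every block estimate, and one must keep track of which moments are finite uniformly in $\varepsilon$ via the first block.
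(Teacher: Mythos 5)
Your proposal follows essentially the same route as the paper: uniform $L^2$ and $H^\alpha$ a priori bounds (with the factorization method for the stochastic convolution under \ref{A3}), the Khasminskii auxiliary process on a $\delta$-grid, the exponential ergodicity of the frozen equation, and a stopping-time localization of the $H^1$ norms to tame the Burgers nonlinearity, followed by a Chebyshev estimate of the early-stopping probability and a joint optimization that yields the logarithmic rate. The only miscalibration is in your heuristic for where the logarithm enters: the paper takes $\delta=\varepsilon^{1/2}$ (polynomially small) and puts the exponential Gronwall loss $e^{Cn^{2p}}$ into the localization threshold $n=\big(-\tfrac{1}{8C}\log\varepsilon\big)^{1/(2p)}$ on $|\hat X^\varepsilon_t|_1+|\bar X_t|_1$, balanced against the early-stopping probability $\lesssim n^{-1}$ — whereas a loss of the form $e^{C/\delta}$ with $\delta\sim 1/(-\log\varepsilon)$ would produce a factor $\varepsilon^{-C/c}$ that need not be beaten by $\varepsilon/\delta$; your final sentence about optimizing jointly over the threshold $R$ and $\delta$ is the correct version of the argument.
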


%Note that the convergence rate in Theorem \ref{main result 1} is slower than
%$\varepsilon^{1/2-r}$, for any small $r>0$, in \cite{B1},
%where the author considers the reaction-diffusion equations.
%However, it seems hopeless to improve the convergence rate in Theorem \ref{main result 1}
%to $\varepsilon^{1/2-r}$, due to the highly nonlinearity in the system \eqref{main equation}.

With some regularity for the initial value $x$,
the following result describes the convergence rate
in the sense of the law distance
between the slow component $X_{t}^{\vare}$
and the averaged component $\bar{X}_{t}$,  as $\varepsilon \to 0$.

\begin{theorem} [Weak convergence $1$]  \label{main result 2}
Assume the conditions \ref{A1}, \ref{A2} and \ref{A4} hold,
and that $Q_1=0$ in \eqref{main equation}.
Then for any $x\in H^{\theta}$ with $\theta\in(0,1]$, $y\in L^2$,
$\phi\in C_{b}^{2}(L^2)$, $t\in(0,T]$, $\delta\in(0,\frac{1}{2})$,
there exists a constant $C:=C_{x,y,T,\phi,\delta}>0$ such that for any $\vare\in(0,1)$, we have
%$\theta\in(0,1]$, $r\in(0,1)$, $x\in H^{\theta}$, $y\in H$,
%$\phi\in C_{b}^{2}$, $t\in(0,T]$, $\delta\in(0,\frac{1}{2})$,
%and for any small enough $\vare\in(0,1)$, we have
\begin{align} \label{MainThmInequa 01}
\big|\mathbb{E}[\phi(X^{\vare}_t)]-\mathbb{E}[\phi(\bar{X}_t)]\big|\leq C(1+t^{-\theta+\frac{\theta^2}{1+\delta}})\vare^{1-r}.
\end{align}
%where $C$ is a positive constant which only depends on $T$, $|x|_{\theta}, |y|, \delta$ and $\phi$.
\end{theorem}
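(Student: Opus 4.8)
The plan is to adapt the Kolmogorov–equation / asymptotic–expansion scheme of \cite{B1} to the present setting, the essential new difficulties being the quadratic term $B$ in the slow equation and the fact that $f$ is only of linear growth (unbounded). Since $Q_1=0$ in \eqref{main equation}, the slow equation carries no noise, $(X^{\vare},Y^{\vare})$ is a Markov process and $\bar X$ is a deterministic flow, so it is enough to estimate $u^{\vare}(t,x,y)-\bar u(t,x)$, where $u^{\vare}(t,x,y):=\EE[\phi(X^{\vare}_t)\mid X^{\vare}_0=x,\,Y^{\vare}_0=y]$ and $\bar u(t,x):=\phi(\bar X^{x}_t)$ with $\bar X^{x}$ the solution of \eqref{1.3} started at $x$. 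Formally $u^{\vare}$ solves $\partial_t u^{\vare}=\mathcal L_1 u^{\vare}+\tfrac1\vare\mathcal L_2 u^{\vare}$, where $\mathcal L_1\varphi(x,y)=\langle Ax+B(x)+f(x,y),D_x\varphi(x,y)\rangle$ is the (first–order, because $Q_1=0$) slow generator, $\mathcal L_2\varphi(x,y)=\langle Ay+g(x,y),D_y\varphi(x,y)\rangle+\tfrac12\Tr[Q_2 D^2_{yy}\varphi(x,y)]$ the fast one, and $\bar u$ solves $\partial_t\bar u=\bar{\mathcal L}_1\bar u$ with $\bar{\mathcal L}_1\varphi(x)=\langle Ax+B(x)+\bar f(x),D\varphi(x)\rangle$.

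Because $A$ is unbounded these identities are only formal, so, following \cite{B1}, I would first project onto $H_N=\mathrm{span}\{e_1,\dots,e_N\}$ via the Galerkin projection $P_N$, obtaining finite–dimensional systems $(X^{\vare,N},Y^{\vare,N})$, $\bar X^{N}$ and smooth functions $u^{\vare,N},\bar u^{N}$. Using the $L^2$–moment bounds on $X^{\vare},Y^{\vare}$ — the slow bound coming from the energy identity $\langle B(x),x\rangle=0$ together with the weak–growth estimate \eqref{WeakDissIne}, the fast bound from the dissipativity \ref{A2} — and a continuity–in–$N$ argument, the theorem reduces to bounding $|u^{\vare,N}(t,x,y)-\bar u^{N}(t,P_N x)|$ by the right side of \eqref{MainThmInequa 01} with constants \emph{independent of $N$}. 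In $H_N$ one runs the expansion $u^{\vare,N}=\bar u^{N}+\vare u_1^{N}+r^{\vare,N}$: the $\vare^{-1}$ identity is automatic since $\bar u^{N}$ does not depend on $y$, and the $\vare^{0}$ identity forces the Poisson equation $\mathcal L_2^{N}u_1^{N}=-\Phi^{N}$ with $\Phi^{N}(t,x,y):=\langle f^{N}(x,y)-\bar f^{N}(x),D\bar u^{N}(t,x)\rangle$, which is centred with respect to $\mu^{x,N}$; the exponential ergodicity of the frozen fast process (a consequence of \ref{A2}) then gives $u_1^{N}(t,x,y)=\int_0^{\infty}\EE\big[\Phi^{N}(t,x,Y^{x,N}_s)\big]\,ds$, while $r^{\vare,N}$ solves a Kolmogorov equation with source $-\vare(\partial_t u_1^{N}-\mathcal L_1^{N}u_1^{N})$ and initial value $-\vare u_1^{N}(0,\cdot,\cdot)$. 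A Dynkin–formula computation yields
\[
r^{\vare,N}(t,x,y)=-\vare\,\EE\big[u_1^{N}(0,X^{\vare,N}_t,Y^{\vare,N}_t)\big]-\vare\int_0^{t}\EE\big[(\partial_t u_1^{N}-\mathcal L_1^{N}u_1^{N})(t-s,X^{\vare,N}_s,Y^{\vare,N}_s)\big]\,ds,
\]
so that $|u^{\vare,N}-\bar u^{N}|\le\vare\,|u_1^{N}|+|r^{\vare,N}|$ reduces everything to dimension–free bounds on $u_1^{N}$ and on $\partial_t u_1^{N}-\mathcal L_1^{N}u_1^{N}$.

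Through the representation of $u_1^{N}$ these rest on: (i) regularity of the averaged flow, $\|\bar X^{x}_t\|_{H^{\sigma}}\lesssim 1+t^{-(\sigma-\theta)/2}$ for $x\in H^{\theta}$, obtained from the smoothing property \eqref{PSG}, the interpolation inequality and energy estimates absorbing the quadratic term $B$; (ii) bounds on $D\bar u^{N}$, $D^2\bar u^{N}$ and the mixed derivatives entering $\mathcal L_1^{N}u_1^{N}$, obtained by differentiating the flow twice — here one uses that $\bar f\in C^2$, which follows from $f$ being twice differentiable with the growth \eqref{SecondDer xx}, together with \eqref{SecondDer yy} to differentiate $\mu^{x,N}$ and the frozen semigroup in $x$ — these estimates again carrying negative powers of the time argument, produced by $e^{tA}$ acting on rough increments and on the quadratic source $D^2B(x)(h,k)=h\,\partial_\xi k+k\,\partial_\xi h$; (iii) the $L^2$–moment bounds on $X^{\vare,N}_s,Y^{\vare,N}_s$ of the previous paragraph, used to control the unbounded $f$ and the quadratic $B$ evaluated along the trajectory.

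The delicate term is $\langle A X^{\vare,N}_s,D_x u_1^{N}\rangle$ inside $\mathcal L_1^{N}u_1^{N}$: it cannot be handled by a strong bound on $(-A)^{1/2}D_x u_1^{N}$ that is uniform in $N$. As in \cite{B1}, I would introduce an additional regularization parameter, estimating the ``smooth'' part of $A$ by the dimension–free bounds in (i)–(ii) and the remaining part crudely, and then optimize this parameter against $\vare$; combined with the time singularities from (i)–(ii), sharpened by an interpolation with loss $\delta$, this optimization is what turns the formal order $\vare$ into $(1+t^{-\theta+\theta^2/(1+\delta)})\,\vare^{1-r}$ for any $r\in(0,1)$, the $t$–singular factor being integrable near $s=t$ precisely because $\theta\le1$ and $\delta>0$. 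Letting $N\to\infty$ then concludes. The main obstacle is keeping every one of the above estimates independent of the Galerkin dimension while simultaneously controlling the Burgers nonlinearity and the linear growth of $f$ — this is exactly where the argument must go beyond the bounded, linear–slow–equation framework of \cite{B1}.
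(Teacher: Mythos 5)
Your overall architecture coincides with the paper's: Galerkin projection, the expansion $u^{\vare}=\bar u+\vare u_1+v^{\vare}$, the Poisson equation $L_2u_1=-\langle f-\bar f,D_x\bar u\rangle$ solved through the exponential ergodicity of the frozen fast equation, and dimension-free regularity estimates on $\bar X$, $\eta^h$, $\zeta^{h,k}$ and on $(X^{\vare},Y^{\vare})$ to control $u_1$, $\partial_t u_1$ and $L_1u_1$. Your treatment of $\langle AX^{\vare},D_xu_1\rangle$ by an auxiliary regularization of $A$ differs in form from the paper's (which bounds $\EE\|AX^{\vare}_t\|^2$ directly in Lemma \ref{Xvare2}, paying a factor $\vare^{-\alpha}$ that originates in the time-H\"older continuity of $Y^{\vare}$), but either route can produce the $\vare^{1-r}$ rate.

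The genuine gap is in your residual formula: you integrate the source $(\partial_t u_1-L_1u_1)(t-s,\cdot,\cdot)$ over all of $[0,t]$ and assert that the resulting singularity near $s=t$ is integrable ``because $\theta\le1$ and $\delta>0$''. It is not: the only available bound on $\partial_t u_1(\sigma,x,y)$ --- obtained by differentiating $D_x\bar u$ in time, which requires $\frac{d}{dt}\bar X$ and $\frac{d}{dt}\eta^h$ for directions $h$ that are merely in $L^2$ --- is of order $\sigma^{-1}$ (Lemmas \ref{ESDX}, \ref{ESDET} and \ref{du1t}), and $\int_0^t(t-s)^{-1}\,ds$ diverges. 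The factor $t^{-\theta+\frac{\theta^2}{1+\delta}}$ in the statement has a different origin (it is $\EE|X^{\vare}(t-\rho(\vare))|_{\theta}^2$ estimated by interpolation as in \eqref{Remark5.3}), so it cannot rescue this integral. The fix you are missing is to stop the It\^{o}/Duhamel representation at a cutoff time $\rho(\vare)=\vare^{1/a}$ with $0<a\le\theta/2$: the time integral then runs over $[\rho(\vare),t]$ and contributes a term of order $-\vare\log\vare$, while the left-over term $\EE\big[v^{\vare}(\rho(\vare),X^{\vare}_{t-\rho(\vare)},Y^{\vare}_{t-\rho(\vare)})\big]$ must be estimated separately (Lemma \ref{vxy}) by writing $v^{\vare}(\rho(\vare),\cdot)=[u^{\vare}(\rho(\vare),\cdot)-u^{\vare}(0,\cdot)]-[\bar u(\rho(\vare),\cdot)-\bar u(0,\cdot)]-\vare u_1(\rho(\vare),\cdot)$ and invoking the time-derivative bounds on $u^{\vare}$ and $\bar u$. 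This cutoff, together with the $\vare^{-\alpha}$ in the bound for $\EE\|AX^{\vare}\|$, is precisely where the degradation from order $\vare$ to $\vare^{1-r}$ comes from; without it your scheme does not close.
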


The constant appeared in \eqref{MainThmInequa 01} can become quite terse at the cost of the
higher regularity condition on the initial value $x$.
More precisely, we have the following result:

\begin{theorem} [Weak convergence $2$] \label{main result 3}
Assume the conditions of Theorem \ref{main result 2} hold.
Then, for any $x\in H^{\theta}$ with $\theta\in(1,\frac{3}{2})$, $y\in L^2$,
$t\in(0,T]$ and $r\in (0,1)$, there exists a constant $C:=C_{x,y,T,\phi}>0$ such that
%Under the same assumptions in Theorem \ref{main result 2} with $\theta\in(1,\frac{3}{2})$,
%then for any $t\in(0,T]$, $r\in (0,1)$, we can obtain
\begin{align}
\big|\mathbb{E}[\phi(X^{\vare}_t)]-\mathbb{E}[\phi(\bar{X}_t)]\big|\leq C\vare^{1-r}. \nonumber
\end{align}
%where $C$ is a positive constant which only depends on $T$, $|x|_{\theta}, |y|$ and $\phi$.
\end{theorem}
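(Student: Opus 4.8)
The plan is to obtain Theorem \ref{main result 3} as a corollary of the proof of Theorem \ref{main result 2} rather than as a separate argument. Recall that the estimate \eqref{MainThmInequa 01} carries the factor $1+t^{-\theta+\frac{\theta^2}{1+\delta}}$, and the only source of a singularity as $t\to 0$ is the need to regularize the initial datum $x$: in the course of proving Theorem \ref{main result 2} one splits the time interval and uses the smoothing property \eqref{PSG} of $e^{tA}$ to gain regularity on $\bar{X}_t$ and $X^{\vare}_t$, paying a price $t^{(s_1-s_2)/2}$ for each derivative one creates. When $x\in H^\theta$ with $\theta\le 1$ this price is genuinely present; but when $x\in H^\theta$ with $\theta\in(1,\tfrac32)$ the initial datum already lies in the space in which all the relevant quantities were estimated, so no smoothing of the initial condition is needed and the negative power of $t$ simply does not appear. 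Concretely, I would revisit each place in the proof of Theorem \ref{main result 2} where a factor $t^{-\theta+\frac{\theta^2}{1+\delta}}$ (or an intermediate $t^{-\gamma}$ for some $\gamma>0$) was generated and check that, under the stronger hypothesis $x\in H^\theta$, $\theta\in(1,\tfrac32)$, that factor can be replaced by a constant depending only on $x,y,T,\phi$.

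The key steps, in order, are as follows. First, I would record the a priori bounds for $X^{\vare}$ and $\bar{X}$ in the higher Sobolev norm $|\cdot|_\theta$ with $\theta\in(1,\tfrac32)$: since $x\in H^\theta\subset H^\alpha$ for suitable $\alpha$, the arguments already carried out in the proof of Theorem \ref{main result 1} (the uniform bound $\sup_{\vare}\mathbb{E}\sup_{t\in[0,T]}|X^{\vare}_t|_\alpha^p\le C_{p,T}$, obtained via \eqref{PSG} and interpolation, together with the weak dissipativity \eqref{WeakDissIne}) give $\sup_{\vare\in(0,1)}\mathbb{E}\sup_{t\in[0,T]}|X^{\vare}_t|_\theta^p\le C_{p,T}$ and the corresponding bound for $\bar X$; here one also uses that $Q_1=0$, so $X^{\vare}$ has no stochastic convolution term in the slow equation and its regularity is governed by the initial datum and the drift. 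Second, I would reread the decomposition of $\mathbb{E}[\phi(X^{\vare}_t)]-\mathbb{E}[\phi(\bar X_t)]$ used for Theorem \ref{main result 2} — the Galerkin truncation, the Kolmogorov equation for the averaged semigroup, and the asymptotic expansion in $\vare$ following \cite{B1} — and identify the terms whose bound involved $t^{-\gamma}$. Third, in each such term I would replace the step ``use \eqref{PSG} to move from $H^\theta$-regularity of $x$ up to the regularity actually required, at the cost of $t^{-\gamma}$'' by the observation that for $\theta\in(1,\tfrac32)$ the required regularity is already available from the a priori bounds of the first step, uniformly on $[0,T]$ and uniformly in the Galerkin dimension. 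Fourth, I would collect these improved bounds and conclude $|\mathbb{E}[\phi(X^{\vare}_t)]-\mathbb{E}[\phi(\bar X_t)]|\le C\vare^{1-r}$ with $C=C_{x,y,T,\phi}$ free of any negative power of $t$, for every $r\in(0,1)$.

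The main obstacle I anticipate is bookkeeping rather than conceptual: one must make sure that \emph{every} occurrence of a negative power of $t$ in the proof of Theorem \ref{main result 2} genuinely comes from regularizing the initial condition, and not from some other mechanism (for instance a singularity in the fast-variable correctors near $t=0$, or in the second derivative of the solution to the Kolmogorov equation for the averaged equation). If such a term existed for a reason unrelated to $x$, it would survive the stronger hypothesis on $x$ and the cleaner estimate would fail. I expect that no such term is present — in these reaction–diffusion/Burgers averaging schemes the only singularity at $t=0$ is exactly the smoothing of $x$, because the fast process is started from $y\in L^2$ and its invariant measure $\mu^x$ is supported on $L^2$, and the Kolmogorov-equation bounds for $\phi\in C_b^2$ are uniform in time — but verifying this carefully across the several terms of the expansion (and checking that all constants remain independent of the Galerkin dimension, exactly as in the proof of Theorem \ref{main result 2}) is the substantive part of the argument. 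A secondary, minor point is to confirm that $H^\theta$ with $\theta\in(1,\tfrac32)$ embeds into the function spaces where the nonlinear term $B(x)$ and the bound \eqref{SecondDer xx} on $D^2_{xx}f$ are controlled, which follows from the one-dimensional Sobolev embedding $H^\theta\hookrightarrow C([0,1])$ for $\theta>\tfrac12$.
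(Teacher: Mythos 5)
Your proposal follows essentially the same route as the paper: the paper's proof of Theorem \ref{main result 3} consists precisely of upgrading the a priori bounds to the time-uniform estimates $\sup_{t\in[0,T]}|\bar X_t|_{\theta}\leq C|x|_{\theta}e^{C\|x\|}$ and $\sup_{t\in[0,T]}\EE|X^{\vare}_t|_{\theta}^{p}\leq C(1+|x|_{\theta}^{p}+\|y\|^{p})$ for $\theta\in(1,\frac{3}{2})$ (the analogues of \eqref{A.1.2} and \eqref{B.1.1} without the $t^{-\gamma}$ prefactor), and then rerunning the argument of Theorem \ref{main result 2} with the singular factors removed from Lemmas \ref{ESu1}--\ref{vxy}. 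One minor caveat to your expectation that every negative power of $t$ traces back to smoothing of the initial datum: the $s^{-1}$ in the bound for $\frac{\partial u_{1}}{\partial s}$ comes in part from the direction $h\in L^2$ (via $\|Ae^{sA}h\|\leq Cs^{-1}\|h\|$ in Lemma \ref{ESDET}) and therefore survives the stronger hypothesis on $x$, but this is harmless since it only enters through $\vare\int_{\rho(\vare)}^{t}s^{-1}\,ds\leq -C\vare\log\vare\leq C\vare^{1-r}$.
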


%\begin{remark}
%Theorem \ref{main result 3} shows that if the initial value $x$ has higher regularity, then the control constant becomes quite terse.
%\end{remark}

\begin{remark}
In Theorems \ref{main result 2} and \ref{main result 3},
if we don't impose the condition $Q_{1}=0$ in the system \eqref{main equation},
there exist some essential difficulties.
For instance, it is not clear how to establish Lemma \ref{highorder of X},
which plays a crucial role in proving Theorems \ref{main result 2} and \ref{main result 3}.
%We assume (3) of \textbf{(H4)} holds and $Q_{1}=0$ in studying the weak convergence for several technical difficulty, for example, see Lemma \ref{highorder of X}, that for any $x\in H$ there exists a positive constant $C$ such that $\sup_{t\in[0,T]}|X^{\varepsilon}(t)| \leq C(1+|x|),$
%and then $\sup_{t\in[0,T]}e^{|X^{\varepsilon}(t)|^k}\leq C$, which plays an important role in proving Theorem \ref{main result 2} and Theorem \ref{main result 3}, see Subsection \ref{Subsection 5.5}. For the case of $Q_1\neq 0$, these estimates are failed and we do not know how to deal with this case recently.
\end{remark}

\section{Proof of Theorem \ref{main result 1}} \label{Sec Proof of Thm1}

In this section, we are devoted to proving Theorem \ref{main result 1}.
The proof consists of the following several steps.
In the first step,
we  give some priori estimates of the solution $(X^{\varepsilon}_t, Y^{\varepsilon}_t)$
to the system \eqref{main equation}.
In the second step,
following the idea inspired by Khasminskii in \cite{K1},
we introduce an auxiliary process
$(\hat{X}_{t}^{\varepsilon},\hat{Y}_{t}^{\varepsilon})\in L^2 \times L^2$
and also give the uniform bounds.
Meanwhile, we deduce an estimate of the process $X^{\varepsilon}_t-\hat{X}_{t}^{\varepsilon}$
in the space $L^{2p}(\Omega, C([0,T], L^2))$.
%Meanwhile, we show the error of $X^{\varepsilon}_t-\hat{X}_{t}^{\varepsilon}$.
%by using the smoothness of semigroup $e^{t\Delta}$ and interpolation inequality, we can further obtain $\sup_{\vare\in(0,1)}\mathbb{E}\sup_{t\in[0,T]}|X_{t}^{\varepsilon}|_{\alpha}^{p}\leq C_{p,T}$, which is a key step for proving the \eref{main result}, where $|\cdot|_{\alpha}$ is Sobolev norm.
In the last step, based on the ergodicity property of the averaged equation (see \eqref{FEQ}),
we make use of the skill of the stopping time  and some approximation techniques to
give a control of
$|\hat{X}_{t}^{\varepsilon}-\bar{X}_{t}^{\varepsilon}|_{L^{2p}(\Omega, C([0,T], L^2))}$.
Consequently, we deduce the convergence rate in Theorem \ref{main result 1}.

Recalling that $V:=H_0^1$ is continuously and densely embedded in $L^2$,
consider the Gelfand triple: $V \subset L^2 \subset V^*$, where
$V^*$ is the dual space of $V$. According to the Poincar\'{e} inequality,
we have that for any $x \in V$,
\begin{align} \label{Gelfand Ine}
_{V^*}\langle Ax, x \rangle_{V} = - \|\nabla x\|^2 \leq -\lambda_1 \|x\|^2,
\end{align}
where $_{V^*}\langle \cdot, \cdot \rangle_{V}$ denotes the dualization between $V^*$ and $V$.

%we study the average equation and apply the skill of stopping time and following the procedure inspired by \cite{FL},  the error of $\hat{X}_{t}^{\varepsilon}-\bar{X}_{t}^{\varepsilon}$ is obtained. Hence, the strong convergence is easily proved.
%Notice that we always assume the condition \ref{A3} in this section.

 \subsection{Some priori estimates of \texorpdfstring{$(X^{\varepsilon}_t, Y^{\varepsilon}_t)$} {Lg}}

We first prove the uniform bounds, with respect to $\varepsilon \in (0,1)$
and $t \in [0,T]$, for $p$-moments of the solutions to 
%$X_{t}^{\varepsilon}$ and $Y_{t}^{\varepsilon}$
the system \eref{main equation}.
The main proof follows the techniques in \cite{Mattin99} and \cite{LZ06, LZ09},
where the authors deal with the 2D stochastic Navier-Stokes equation and 1D stochastic
Burgers' equation, respectively.

\begin{lemma} \label{PMY}
Under  conditions \ref{A1} and \ref{A2},
for any $x,y\in L^2$, $p\geq2$ and $T>0$, there exists a constant $C_{p,T}>0$ such that
\begin{align} \label{Control X Ito 01}
\sup_{\vare\in(0,1)}\sup_{0\leq t\leq T}
\mathbb{E}\|X_{t}^{\varepsilon} \|^{2p}
\leq  C_{p,T}(1+ \|x\|^{2p} + \|y\|^{2p}),
\end{align}
%and
\begin{align} \label{Control Y Ito 01}
\sup_{\vare\in(0,1)}\sup_{0\leq t\leq T}
\mathbb{E} \|Y_{t}^{\varepsilon} \|^{2p}
\leq C_{p,T}(1+ \|x\|^{2p} + \|y\|^{2p}).
\end{align}
\end{lemma}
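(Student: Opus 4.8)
The goal is to bound the $2p$-th moments of $X^{\varepsilon}_t$ and $Y^{\varepsilon}_t$ in $L^2$, uniformly in $\varepsilon\in(0,1)$ and $t\in[0,T]$. The natural approach is to apply the It\^{o} formula to $\|X^{\varepsilon}_t\|^{2p}$ and $\|Y^{\varepsilon}_t\|^{2p}$ and exploit the dissipativity coming from the Laplacian together with the algebraic structure of the Burgers nonlinearity. The plan is to treat the fast component first, since its equation does not involve the nonlinear term, and then feed the resulting bound into the slow equation.

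First I would handle $Y^{\varepsilon}_t$. Applying It\^{o}'s formula to $\|Y^{\varepsilon}_t\|^{2p}$ and using the Gelfand triple estimate \eqref{Gelfand Ine} gives a drift contribution $-\tfrac{2p}{\varepsilon}\|Y^{\varepsilon}_t\|^{2p-2}\|\nabla Y^{\varepsilon}_t\|^2 \le -\tfrac{2p\lambda_1}{\varepsilon}\|Y^{\varepsilon}_t\|^{2p}$ from the linear part. The term involving $g$ is controlled via the Lipschitz condition \ref{A1}: $\langle g(X^{\varepsilon}_s,Y^{\varepsilon}_s),Y^{\varepsilon}_s\rangle \le \langle g(X^{\varepsilon}_s,0),Y^{\varepsilon}_s\rangle + L_g\|Y^{\varepsilon}_s\|^2$, where $g(X^{\varepsilon}_s,0)$ is further bounded by $\|g(0,0)\| + L_g\|X^{\varepsilon}_s\|$. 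The key cancellation is $\lambda_1 - L_g = \eta > 0$ from condition \ref{A2}, which after a Young inequality absorbs the good term and leaves a net dissipation rate $-\tfrac{c}{\varepsilon}\|Y^{\varepsilon}_s\|^{2p}$ plus a forcing term of order $\tfrac{1}{\varepsilon}(1+\|X^{\varepsilon}_s\|^{2p})$; the stochastic integral involving $\tfrac{1}{\sqrt\varepsilon}dW^{Q_2}_s$ has zero expectation (after a localization argument with stopping times), and the It\^{o} correction term contributes $\tfrac{C\,\mathrm{Tr}Q_2}{\varepsilon}\|Y^{\varepsilon}_s\|^{2p-2}$, again absorbable by Young's inequality. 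Gr\"onwall's lemma then yields \eqref{Control Y Ito 01} provided one already controls $\mathbb{E}\|X^{\varepsilon}_s\|^{2p}$; in practice one would run the two estimates simultaneously or bound $X^{\varepsilon}$ first.

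For $X^{\varepsilon}_t$, applying It\^{o} to $\|X^{\varepsilon}_t\|^{2p}$ produces the linear dissipative term $-2p\lambda_1\|X^{\varepsilon}_t\|^{2p}$, the It\^{o} correction $C\,\mathrm{Tr}Q_1\,\|X^{\varepsilon}_t\|^{2p-2}$, a martingale term with zero expectation, the term $\langle f(X^{\varepsilon}_s,Y^{\varepsilon}_s),X^{\varepsilon}_s\rangle$ controlled by $C(1+\|X^{\varepsilon}_s\|^2+\|Y^{\varepsilon}_s\|^2)$ via \ref{A1}, and crucially the nonlinear term $\langle B(X^{\varepsilon}_s),X^{\varepsilon}_s\rangle = b(X^{\varepsilon}_s,X^{\varepsilon}_s,X^{\varepsilon}_s) = \int_0^1 X^{\varepsilon}_s(\xi)\,\partial_\xi X^{\varepsilon}_s(\xi)\,X^{\varepsilon}_s(\xi)\,d\xi = \tfrac13\int_0^1 \partial_\xi\big((X^{\varepsilon}_s(\xi))^3\big)\,d\xi = 0$ because of the Dirichlet boundary conditions. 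This exact cancellation of the Burgers nonlinearity against $x$ is the one fortunate feature; combined with dissipativity and Young's inequality, Gr\"onwall then closes the estimate, giving \eqref{Control X Ito 01} in terms of $\int_0^t \mathbb{E}\|Y^{\varepsilon}_s\|^{2p}\,ds$, which couples back to the fast estimate.

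The main obstacle, and the reason the cited works \cite{Mattin99,LZ06,LZ09} are invoked, is making these formal It\^{o} computations rigorous: the solution is only a mild solution, $B(X^{\varepsilon}_s)$ takes values in $H^{-1}$ rather than $L^2$, so the pairing $\langle B(X^{\varepsilon}_s),X^{\varepsilon}_s\rangle$ must be interpreted as the $V^*$--$V$ dualization and one needs $X^{\varepsilon}\in L^2(0,T;H^1_0)$ (already guaranteed by the well-posedness statement) for the integrals to make sense; moreover the higher power $\|\cdot\|^{2p}$ requires either a Galerkin approximation argument or It\^{o}'s formula in the variational framework, plus a stopping-time localization to handle the unboundedness before taking expectations and passing to the limit. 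Once the machinery is set up, the actual inequalities are routine applications of Young's and Gr\"onwall's inequalities together with the $\eta>0$ condition.
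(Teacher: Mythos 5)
Your proposal is correct and follows essentially the same route as the paper: It\^{o}'s formula for $\|\cdot\|^{2p}$ made rigorous via Galerkin approximation, the vanishing of $\langle B(X^{\vare}_s),X^{\vare}_s\rangle$ from the trilinear identity, the dissipativity $\lambda_1-L_g>0$ absorbing the $1/\vare$ factors through the exponential decay rate $e^{-p\gamma(t-s)/\vare}$, and a coupled Gr\"onwall argument. The only step you leave slightly implicit is how the circular dependence between the two integral inequalities is resolved; the paper does exactly what you suggest, substituting the $X$ bound into the $Y$ bound and using $\frac{1}{\vare}\int_0^t e^{-p\gamma(t-s)/\vare}\,ds\leq C$ to close the loop.
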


\begin{proof}
According to It\^{o}'s formula, we have
\begin{align} \label{ItoFormu 01}
\frac{d}{dt}\mathbb{E} \|Y_{t}^{\vare} \|^{2p}
= &\  \frac{2p\lambda_{1}}{\varepsilon}
\mathbb{E} \left[ \|Y_{t}^{\varepsilon}\|^{2p-2} (-|Y_{t}^{\varepsilon}|_1^2) \right]
%\frac{2p}{\varepsilon}
%\mathbb{E} \Big( \| Y_{t}^{\varepsilon} \|^{2p-2}
%\langle AY_{t}^{\varepsilon},Y_{t}^{\varepsilon}\rangle\Big)
+\frac{2p}{\varepsilon}\mathbb{E}\Big[\| Y_{t}^{\varepsilon} \|^{2p-2}\langle
g(X_{t}^{\varepsilon},Y_{t}^{\varepsilon}),Y_{t}^{\varepsilon}\rangle\Big] \nonumber\\
&\  + \frac{p}{\varepsilon}\mathbb{E} \| Y_{t}^{\varepsilon} \|^{2p-2}\text{Tr}Q_{2}
 +\frac{2p(p-1)}{\varepsilon}\mathbb{E} \| Y_{t}^{\varepsilon} \|^{2p-2}\text{Tr}Q_{2},
\end{align}
where It\^{o}'s formula can be understood in the way that
 we first use the Galerkin approximation to get  \eqref{ItoFormu 01} in the finite dimensional setting,
 then we take the limit of the dimension to obtain \eqref{ItoFormu 01}
 in the infinite dimensional setting.
%According to It\^{o}'s formula and
Using \eqref{Gelfand Ine}
%we get
%\begin{align*}
%\frac{d}{dt}\mathbb{E} \|Y_{t}^{\vare} \|^{2p}
%\leq &\   -\frac{2p\lambda_{1}}{\varepsilon}\mathbb{E} \|Y_{t}^{\varepsilon}\|^{2p}
%%\frac{2p}{\varepsilon}
%%\mathbb{E} \Big( \| Y_{t}^{\varepsilon} \|^{2p-2}
%%\langle AY_{t}^{\varepsilon},Y_{t}^{\varepsilon}\rangle\Big)
%+\frac{2p}{\varepsilon}\mathbb{E}\Big[\| Y_{t}^{\varepsilon} \|^{2p-2}\langle
%g(X_{t}^{\varepsilon},Y_{t}^{\varepsilon}),Y_{t}^{\varepsilon}\rangle\Big] \nonumber\\
%&\  + \frac{p}{\varepsilon}\mathbb{E} \| Y_{t}^{\varepsilon} \|^{2p-2}\text{Tr}Q_{2}
% +\frac{2p(p-1)}{\varepsilon}\mathbb{E} \| Y_{t}^{\varepsilon} \|^{2p-2}\text{Tr}Q_{2}.
%\end{align*}
and condition \ref{A1}, it follows from \eqref{ItoFormu 01} that
%there exists a constant $\gamma>0$ such that
\begin{eqnarray} \label{ItoFormu 001}
\frac{d}{dt}\mathbb{E} \| Y_{t}^{\vare} \|^{2p}
\leq \!\!\!\!\!\!\!\!&&
-\frac{2p\lambda_{1}}{\varepsilon} \mathbb{E} \|Y_{t}^{\varepsilon}\|^{2p}+
\frac{2p}{\varepsilon}\mathbb{E}\Big\{ \|Y_{t}^{\varepsilon} \|^{2p-2}
\left[C \|Y_{t}^{\varepsilon} \| + L_{g}( \| X_{t}^{\varepsilon} \|  \|Y_{t}^{\varepsilon} \|
+ \|Y_{t}^{\varepsilon} \|^2)\right] \Big\}
\nonumber \\
 \!\!\!\!\!\!\!\!&&
 + \frac{p}{\varepsilon}\mathbb{E} \| Y_{t}^{\varepsilon} \|^{2p-2}\text{Tr}Q_{2}
 +\frac{2p(p-1)}{\varepsilon}\mathbb{E} \| Y_{t}^{\varepsilon} \|^{2p-2}\text{Tr}Q_{2}.
\end{eqnarray}
From \eqref{ItoFormu 001},
using condition \ref{A2} and the Young inequality,
we deduce that
there exists a constant $\gamma>0$ such that
\begin{align}  \label{4.4.2}
\frac{d}{dt}\mathbb{E} \|Y_{t}^{\vare} \|^{2p}
\leq -\frac{p\gamma}{\varepsilon}\mathbb{E} \|Y_{t}^{\varepsilon} \|^{2p}
+\frac{C_{p}}{\varepsilon}\EE \| X_{t}^{\varepsilon} \|^{2p}+\frac{C_{p}}{\varepsilon}.
\end{align}
%where the last inequality by the fact of $\lambda_1-L_g>0$ in \textbf{(H2)} and Young inequality.\\
Applying the comparison theorem gives
\begin{eqnarray} \label{4.4.3}
\mathbb{E} \| Y_{t}^{\varepsilon} \|^{2p}
\leq \!\!\!\!\!\!\!\!&&
\|y\|^{2p} e^{-\frac{p\gamma}{\varepsilon}t}
+ \frac{C_{p}}{\varepsilon}\int^t_0 e^{-\frac{p\gamma}{\varepsilon}(t-s)}
\Big(1+\EE \|X_{s}^{\varepsilon} \|^{2p}\Big)ds.
\end{eqnarray}
For $X_{t}^{\varepsilon}$,
note that by Lemma \ref{Property B0}, $b(x,y,y)=0$ for any $x,y \in H_0^1$.
Similarly to \eqref{ItoFormu 01}, applying It\^{o}'s formula, we have
\begin{eqnarray*}
\frac{d}{dt}\mathbb{E} \| X_{t}^{\vare} \|^{2p}
= \!\!\!\!\!\!\!\!&&
2p \lambda_1 \mathbb{E} \left[ \|X_{t}^{\varepsilon}\|^{2p-2} |X_{t}^{\varepsilon}|_1^2 \right]
%\mathbb{E}\Big(\| X_{t}^{\varepsilon} \|^{2p-2}
%\langle AX_{t}^{\varepsilon},X_{t}^{\varepsilon}\rangle\Big)
+2p\mathbb{E}
\Big[\| X_{t}^{\varepsilon} \|^{2p-2}
\langle f(X_{t}^{\varepsilon},Y_{t}^{\varepsilon}),X_{t}^{\varepsilon}\rangle\Big] \nonumber \\
 \!\!\!\!\!\!\!\!&& + p\mathbb{E} \| X_{t}^{\varepsilon} \|^{2p-2}\text{Tr}Q_{1}
 +2p(p-1)\mathbb{E}\Big\{ \| X_{t}^{\varepsilon} \|^{2p-4} \|\sqrt{Q_1}X^\varepsilon_t \|^2\Big\}.
\end{eqnarray*}
%using It\^{o}'s formula,
Using \eqref{Gelfand Ine},
%and Lemma \ref{Property B0},
we obtain
\begin{eqnarray*}
\frac{d}{dt}\mathbb{E} \| X_{t}^{\vare} \|^{2p}
\leq \!\!\!\!\!\!\!\!&&
-2p \lambda_1 \mathbb{E} \|X_{t}^{\varepsilon}\|^{2p}
%\mathbb{E}\Big(\| X_{t}^{\varepsilon} \|^{2p-2}
%\langle AX_{t}^{\varepsilon},X_{t}^{\varepsilon}\rangle\Big)
+2p\mathbb{E}
\Big[\| X_{t}^{\varepsilon} \|^{2p-2}
\langle f(X_{t}^{\varepsilon},Y_{t}^{\varepsilon}),X_{t}^{\varepsilon}\rangle\Big] \nonumber \\
 \!\!\!\!\!\!\!\!&& + p\mathbb{E} \| X_{t}^{\varepsilon} \|^{2p-2}\text{Tr}Q_{1}
 +2p(p-1)\mathbb{E}\Big\{ \| X_{t}^{\varepsilon} \|^{2p-4} \|\sqrt{Q_1}X^\varepsilon_t \|^2\Big\}.
\end{eqnarray*}
%\begin{eqnarray*}
%|X_{t}^{\varepsilon}|^{2p}=\!\!\!\!\!\!\!\!&&|x|^{2p}+2p\int_{0} ^{t}| X_{s}^{\varepsilon}|^{2p-2}\langle AX_{s}^{\varepsilon},X_{s}^{\varepsilon}\rangle ds+2p\int_{0} ^{t}| X_{s}^{\varepsilon}|^{2p-2}\langle B(X_{s}^{\varepsilon}),X_{s}^{\varepsilon}\rangle ds \\
% \!\!\!\!\!\!\!\!&&+2p\int_{0} ^{t}|X_{s}^{\varepsilon}|^{2p-2}\langle f(X_{s}^{\varepsilon},Y_{s}^{\varepsilon}),X_{s}^{\varepsilon}\rangle ds +2p\int_{0} ^{t}| X_{s}^{\varepsilon}|^{2p-2}\langle X_{s}^{\varepsilon}, dW^{Q_1}_s\rangle\\
% \!\!\!\!\!\!\!\!&& +p\int_{0} ^{t}|X_{s}^{\varepsilon}|^{2p-2}\text{Tr}Q_{1}ds
% +2p(p-1)\int_{0} ^{t}|X_{s}^{\varepsilon}|^{2p-4}|\sqrt{Q_1}X^\varepsilon_s|^2ds.
%\end{eqnarray*}
In the same way as in \eref{4.4.2} and \eqref{4.4.3}, one can verify that
%\begin{align*}
%\frac{d}{dt}\mathbb{E}|X_{t}^{\vare}|^{2p}
%\leq C_p\mathbb{E}|X_{t}^{\varepsilon}|^{2p}+C_p\EE|Y_{t}^{\varepsilon}|^{2p}+C_p.
%\end{align*}
%Hence, by comparison theorem
\begin{eqnarray} \label{4.4.4}
\mathbb{E} \| X_{t}^{\varepsilon} \|^{2p}
\leq\!\!\!\!\!\!\!\!&&
\| x \|^{2p}e^{C_p t}
+ C_p\int^t_0 e^{C_p(t-s)}\left(1+\EE \| Y_{s}^{\varepsilon} \|^{2p}\right)ds.
\end{eqnarray}
Combining \eref{4.4.3} and \eref{4.4.4}, we get that, for any $t\in [0, T]$,
\begin{align*}
\mathbb{E} \| Y_{t}^{\varepsilon} \|^{2p}
\leq
C_{p,T}(1+ \| x \|^{2p}+ \|y\|^{2p})+\frac{C_{p}}{\varepsilon}
\int^t_0 e^{-\frac{p\gamma}{\varepsilon}(t-s)}\int^s_0 \mathbb{E} \|Y_{r}^{\varepsilon} \|^{2p}drds
+\frac{C_{p}}{\varepsilon}\int^t_0 e^{-\frac{p\gamma}{\varepsilon}(t-s)}ds,
\end{align*}
which implies
%With a change of variable, we have
\begin{eqnarray*}
\mathbb{E} \| Y_{t}^{\varepsilon} \|^{2p}
%\leq\!\!\!\!\!\!\!\!&&
%C_{p,T}(1+ \|x \|^{2p} + \|y \|^{2p})
%+C_p\int^t_0 \Big[\int^{\frac{t-r}{\varepsilon}}_0 e^{-p\gamma s}ds\Big]
%\mathbb{E} \| Y_{r}^{\varepsilon} \|^{2p}dr
%+ C_p\int^{t/\varepsilon}_0 e^{-p\gamma s}ds  \nonumber\\
\leq \!\!\!\!\!\!\!\!&&  C_{p,T}(1+ \|x \|^{2p} + \| y \|^{2p})
+C_p \int^t_0 \mathbb{E} \| Y_{r}^{\varepsilon} \|^{2p}dr.
\end{eqnarray*}
Using Gronwall's inequality, we get \eqref{Control Y Ito 01}.
The inequality \eqref{Control X Ito 01} follows by combining \eqref{Control Y Ito 01} and \eqref{4.4.4}.
The proof is complete.
%Using Grownall inequality, it follows that
%\begin{eqnarray*}
%\mathbb{E}|Y_{t}^{\varepsilon}|^{2p}\leq C_{p,T}(1+|x|^{2p}+|y|^{2p}),\label{4.4.5}
%\end{eqnarray*}
%which also gives
%\begin{eqnarray*}
%\mathbb{E}|X_{t}^{\varepsilon}|^{2p}\leq C_{p,T}(1+|x|^{2p}+|y|^{2p}).\label{4.4.6}
%\end{eqnarray*}
%The proof is complete.
%\hspace{\fill}$\square$
\end{proof}

In order to estimate the high-order norm of $|X^{\varepsilon}_t|_{\alpha}$,
with $\alpha \in (1,\frac{3}{2})$,  we first give a control of the stochastic convolution
$W_{A}(t):=\int_{0}^{t}e^{(t-s)A}dW^{Q_{1}}_s.$
%Then we have the following result:
\begin{lemma} \label{stochastic convolution}
Under the condition \ref{A3},
for any $p, T>0$ and $\alpha \in (1, \frac{3}{2})$ which is given in \ref{A3}, 
there exists a positive constant $C_{p,T}$ such that
\begin{align} \label{4.2}
\EE\sup_{0\leq t\leq T}|W_{A}(t)|^{2p}_{\alpha}\leq C_{p,T}.
\end{align}
%where $\alpha$ is the one in \textbf{(H3)}.
\end{lemma}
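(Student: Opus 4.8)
The plan is to estimate the Sobolev norm $|W_A(t)|_\alpha$ of the stochastic convolution by the factorization method (the Da Prato--Kwapie\'n--Zabczyk trick), which is the standard way to pull a supremum in $t$ inside expectations for such convolutions. First I would fix a number $\beta\in(0,\tfrac12)$ as in condition \ref{A3} and a small $\kappa>0$ with $\tfrac1{2p}<\kappa<\beta$, and write, using the semigroup/stochastic Fubini identity,
\begin{align*}
W_A(t)=\frac{\sin(\pi\kappa)}{\pi}\int_0^t (t-s)^{\kappa-1}e^{(t-s)A}Y_\kappa(s)\,ds,
\qquad
Y_\kappa(s)=\int_0^s (s-\sigma)^{-\kappa}e^{(s-\sigma)A}\,dW^{Q_1}_\sigma.
\end{align*}
Then by H\"older's inequality in $s$ and the smoothing property \eqref{PSG} applied with exponents adjusted so that the extra smoothing $(t-s)^{-(\alpha)/2}$ (roughly speaking, gaining $\alpha$ derivatives costs $(t-s)^{-\alpha/2}$) combines with $(t-s)^{\kappa-1}$ to give an integrable singularity — this requires $\kappa>\alpha/2$... actually since $\alpha>1$ one must instead gain regularity partly from $Y_\kappa$ itself; so I would more carefully arrange: pick $\gamma$ with $\alpha<\gamma$ close, split $|e^{(t-s)A}z|_\alpha\le C(1+(t-s)^{-(\alpha-\rho)/2})|z|_\rho$ for suitable $\rho$, and balance. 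The upshot is a bound of the form
\begin{align*}
\sup_{0\le t\le T}|W_A(t)|_\alpha^{2p}\le C_{p,T}\int_0^T |Y_\kappa(s)|_{\rho}^{2p}\,ds
\end{align*}
for an appropriate intermediate index $\rho$, valid once $\kappa$ and $\rho$ are chosen so that all the time exponents are $>-1$ and the H\"older conjugate works; this is where condition \ref{A3}, i.e. $\sum_k\alpha_k\lambda_k^{\alpha+2\beta-1}<\infty$, will be used to absorb the $\alpha$ and $\beta$ loss.

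Next I would take expectations and bound $\EE|Y_\kappa(s)|_\rho^{2p}$. Since $Y_\kappa(s)$ is a Gaussian random variable in $L^2$ (a Wiener integral), by the Burkholder--Davis--Gundy inequality (or simply the hypercontractivity of Gaussian chaos) one has $\EE|Y_\kappa(s)|_\rho^{2p}\le C_p\big(\EE|Y_\kappa(s)|_\rho^2\big)^p$, and
\begin{align*}
\EE|Y_\kappa(s)|_\rho^2=\sum_{k=1}^\infty \alpha_k \lambda_k^{\rho}\int_0^s (s-\sigma)^{-2\kappa}e^{-2\lambda_k(s-\sigma)}\,d\sigma
\le C\sum_{k=1}^\infty \alpha_k \lambda_k^{\rho}\,\lambda_k^{2\kappa-1}
\le C\sum_{k=1}^\infty \alpha_k \lambda_k^{\rho+2\kappa-1},
\end{align*}
where I used the change of variable $u=\lambda_k(s-\sigma)$ and $\int_0^\infty u^{-2\kappa}e^{-2u}\,du<\infty$ because $2\kappa<1$. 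Choosing $\rho$ and $\kappa$ so that $\rho+2\kappa-1\le \alpha+2\beta-1$ (which is possible precisely because $\rho$ can be taken close to $\alpha$ and $\kappa$ close to $\beta$, or rather small — one balances the two requirements $\rho$ large enough for the first step and $\rho+2\kappa\le\alpha+2\beta$ for summability) makes the series finite by \ref{A3}. Putting the pieces together yields $\EE\sup_{0\le t\le T}|W_A(t)|_\alpha^{2p}\le C_{p,T}$, which is \eqref{4.2}.

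The main obstacle, and the point requiring genuine care rather than routine computation, is the bookkeeping of exponents: one must simultaneously satisfy (i) $2\kappa<1$ for the time integral defining $Y_\kappa$ to converge, (ii) $2p\kappa>1$ so that the H\"older step in the factorization produces an integrable kernel after the supremum is taken, (iii) the smoothing exponent from \eqref{PSG} needed to raise regularity to $H^\alpha$ is compatible with the remaining power of $(t-s)$, and (iv) the final series $\sum_k\alpha_k\lambda_k^{\rho+2\kappa-1}$ is dominated by $\sum_k\alpha_k\lambda_k^{\alpha+2\beta-1}$ so that condition \ref{A3} applies. Because $\alpha>1$, one cannot gain all $\alpha$ derivatives from the convolution kernel alone (that would need $\kappa>\alpha/2>1/2$, violating (i)); instead the regularity must be split between $Y_\kappa$ and the outer $e^{(t-s)A}$, and one checks that \ref{A3} with its margin $2\beta$ leaves exactly enough room. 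Once the admissible window for $(\kappa,\rho)$ is shown to be nonempty, the rest is the standard estimate sketched above.
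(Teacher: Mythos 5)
Your proposal is correct and is essentially the paper's proof: factorization with the exponent $\beta$ from \ref{A3}, a H\"older-in-time step after reducing to large $p$, the Gaussian moment equivalence $\EE|Z_s|_\alpha^{2p}\le C_p\big(\EE|Z_s|_\alpha^{2}\big)^{p}$, and the covariance/trace computation producing the series $\sum_k\alpha_k\lambda_k^{\alpha+2\beta-1}$ controlled by \ref{A3}. The one simplification you missed is that no regularity needs to be gained from the outer convolution kernel at all: the paper takes your $\rho=\alpha$ and $\kappa=\beta$, measures $Z_s$ directly in $H^{\alpha}$ and uses $e^{(t-s)A}$ only as a contraction there, so the sole constraint is $\frac{2p(1-\beta)}{2p-1}<1$ (i.e.\ $p$ large); the ``splitting'' of derivatives between $Y_\kappa$ and the outer semigroup that you flag as the main obstacle is therefore unnecessary, though your admissible parameter window does contain this point and your argument goes through.
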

\begin{proof}
By the H\"{o}lder inequality, it suffices to prove \eref{4.2} for large enough $p$.
Using the factorization method, for $\beta \in (0, \frac{1}{2})$ in \ref{A3}, we write
$$
W_{A}(t)=\frac{\sin \pi\beta}{\pi}\int^t_0 e^{(t-s)A}(t-s)^{\beta-1}Z_sds,
$$
where
%$\beta$ is the one in \textbf{(H3)} and
$$
Z_s=\int^s_0 e^{(s-r)A}(s-r)^{-\beta}dW^{Q_1}_r.
$$
%For any $T>0$, $t\in[0,T]$, $p$ is large enough such that $\frac{2p(1-\beta)}{2p-1}<1$, we have
Choosing $p>1$ large enough such that $\frac{2p(1-\beta)}{2p-1}<1$, we get
\begin{eqnarray*}
|W_{A}(t)|_{\alpha}\leq C\left(\int^t_0 (t-s)^{-\frac{2p(1-\beta)}{2p-1}}ds\right)^{\frac{2p-1}{2p}}|Z|_{L^{2p}(0,T; H^{\alpha})}
\leq C_{p}t^{\beta-\frac{1}{2p}}|Z|_{L^{2p}(0,T; H^{\alpha})},
\end{eqnarray*}
which implies
\begin{eqnarray}
\sup_{0 \leq t\leq T}|W_{A}(t)|^{2p}_{\alpha}\leq\!\!\!\!\!\!\!\!&&C_{p,T}|Z|^{2p}_{L^{2p}(0,T; H^{\alpha})}.\label{4.3}
\end{eqnarray}
Notice that $(-A)^{\alpha/2}Z_s\sim N(0, \tilde{Q}_s)$,  
which is a Gaussian random variable with mean zero and the covariance operator given by
$$
\tilde{Q}_s x=\int^s_0 r^{-2\beta}e^{rA}(-A)^{\alpha}Q_1 e^{rA^{*}}xdr.
$$
For any $p\geq1$, $s>0$, we follow the proof of \cite[Corollary 2.17]{Daz1} to obtain
\begin{align}
\EE|(-A)^{\alpha/2}Z_s|^{2p}\leq &\   C_{p}[\text{Tr}(\tilde{Q}_s)]^p
= C_p\left( \sum_{k=1}^{\infty} \int^s_0 r^{-2\beta}e^{-2r\lambda_k}\lambda^{\alpha}_k\alpha_k dr\right)^p \nonumber\\
= &  \ C_p\left(\sum_{k=1}^{\infty} \lambda^{\alpha+2\beta-1}_k\alpha_k
\int^{2s\lambda_k}_0 r^{-2\beta}e^{-r}dr\right)^p
\leq C_{p} \big(\sum_{k=1}^{\infty} \lambda^{\alpha+2\beta-1}_k\alpha_k \big)^p <\infty ,\label{4.4}
\end{align}
where in the last two inequalities we use the fact
$
\int^{2s\lambda_k}_0 r^{-2\beta}e^{-r}dr\leq\int^{\infty}_0 r^{-2\beta}e^{-r}dr,
$
and the condition \ref{A3}.
We conclude the proof of Lemma \ref{stochastic convolution}
by combining \eref{4.3} and \eref{4.4}.
%Consequently, \eref{4.3} and \eref{4.4} imply
%\begin{eqnarray*}
%\EE\sup_{0\leq t\leq T}|W_{A}(t)|^{2p}_{\alpha}\leq\!\!\!\!\!\!\!\!&&C_{p,T}\int^T_0\EE|Z_s|^{2p}_{\alpha}ds\leq C_{p,T}.
%\end{eqnarray*}
%\hspace{\fill}$\square$
\end{proof}

\begin{lemma} \label{SOX}
Under the conditions \ref{A1}-\ref{A3},
for $T>0$ and $p>0$, there exists a constant $C_{p,T}>0$ such that
for any $x\in H^{\alpha}$ with $\alpha$ given in \ref{A3}, 
and for any $y\in L^2$,  we have
%$T>0$ and $p>0$, there exists a positive constant $C_{p,T}$ independent of $\varepsilon$, such that
\begin{align}
\sup_{\varepsilon \in (0,1)} \mathbb{E}\big(\sup_{t\in[0,T]}|X_{t}^{\varepsilon}|_{\alpha}^{2p}\big)
\leq C_{p,T}(1+|x|^{2p}_{\alpha}+ \|y \|^{2p}). \label{4.5}
\end{align}
%where $\alpha$ is given in \ref{A3}.
\end{lemma}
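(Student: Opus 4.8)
The plan is to start from the mild formulation of $X^\varepsilon_t$ in \eqref{mild solution} and estimate the $|\cdot|_\alpha$-norm of each of the four terms. The linear term $e^{tA}x$ is immediately bounded by $|x|_\alpha$ since $e^{tA}$ is a contraction on $H^\alpha$. The stochastic convolution $W_A(t)=\int_0^t e^{(t-s)A}\,dW^{Q_1}_s$ is controlled by Lemma \ref{stochastic convolution}, which already gives $\mathbb{E}\sup_{[0,T]}|W_A(t)|^{2p}_\alpha\leq C_{p,T}$. For the reaction term $\int_0^t e^{(t-s)A}f(X^\varepsilon_s,Y^\varepsilon_s)\,ds$, I would use the smoothing property \eqref{PSG} with $s_1=0$, $s_2=\alpha$: since $\alpha<\tfrac32<2$, the factor $(t-s)^{-\alpha/2}$ is integrable, so this term is bounded by $C\int_0^t(1+(t-s)^{-\alpha/2})\|f(X^\varepsilon_s,Y^\varepsilon_s)\|\,ds$, and by condition \ref{A1} this is $\leq C\int_0^t(1+(t-s)^{-\alpha/2})(1+\|X^\varepsilon_s\|+\|Y^\varepsilon_s\|)\,ds$, whose $2p$-th moment is controlled uniformly in $\varepsilon$ by Lemma \ref{PMY}.

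The main obstacle is the nonlinear (Burgers) term $\int_0^t e^{(t-s)A}B(X^\varepsilon_s)\,ds$, since $B(x)=x\cdot\partial_\xi x=\tfrac12\partial_\xi(x^2)$ loses a derivative. The strategy is to write $B(X^\varepsilon_s)=\tfrac12\partial_\xi\big((X^\varepsilon_s)^2\big)$ and move the derivative onto the semigroup: using $|e^{(t-s)A}\partial_\xi z|_\alpha\leq C(1+(t-s)^{-(\alpha+1-\sigma)/2})|z|_\sigma$ from \eqref{PSG} with a suitable low exponent $\sigma\geq 0$, combined with the algebra/interpolation bound $\|(X^\varepsilon_s)^2\|_{H^\sigma}\lesssim \|X^\varepsilon_s\|_{L^4}^2$ or $\|X^\varepsilon_s\|_{H^{s_1}}\|X^\varepsilon_s\|$ for an appropriate $s_1$ (via the Gagliardo--Nirenberg/Sobolev embedding $H^{s}\hookrightarrow L^\infty$ for $s>\tfrac12$ on $(0,1)$). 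The delicate point is to keep the time singularity $(t-s)^{-(\alpha+1-\sigma)/2}$ strictly integrable, i.e. $\alpha+1-\sigma<2$, which forces $\sigma>\alpha-1$; since $\alpha-1<\tfrac12$, one can take $\sigma\in(\alpha-1,\tfrac12]$, and then bound $|X^\varepsilon_s|_\sigma$ by interpolation between $|X^\varepsilon_s|_\alpha$ and $\|X^\varepsilon_s\|$, producing a term of the form $|X^\varepsilon_s|_\alpha^{1-\kappa}\|X^\varepsilon_s\|^{1+\kappa}$ with $\kappa=\kappa(\sigma,\alpha)\in(0,1)$. This is sublinear in $|X^\varepsilon_s|_\alpha$, so after raising to the $2p$-th power, applying Young's inequality to separate the high-order factor from the (already controlled) $L^2$-moments, and using a singular Gronwall lemma (of the type recalled in the Appendix), one closes the estimate.

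Concretely, I would proceed as follows. \textbf{Step 1:} reduce to $p$ large by Hölder. \textbf{Step 2:} take $|\cdot|_\alpha$-norms in \eqref{mild solution} and split into the four terms above; handle $e^{tA}x$, $W_A(t)$, and the $f$-term as described. \textbf{Step 3:} for the $B$-term, integrate the derivative by parts into the semigroup, apply \eqref{PSG} with exponent chosen so the kernel is integrable, use Sobolev/interpolation to bound $\|(X^\varepsilon_s)^2\|_{H^\sigma}$ by $|X^\varepsilon_s|_\sigma\|X^\varepsilon_s\|$ and then interpolate $|X^\varepsilon_s|_\sigma\leq |X^\varepsilon_s|_\alpha^{1-\kappa}\|X^\varepsilon_s\|^\kappa$. \textbf{Step 4:} set $\psi(t):=\mathbb{E}\sup_{[0,t]}|X^\varepsilon_r|_\alpha^{2p}$ (truncating with a stopping time $\tau_N=\inf\{t:|X^\varepsilon_t|_\alpha>N\}$ first to make everything finite, then letting $N\to\infty$ with Fatou), collect all contributions, use Young's inequality so the high-order term enters with a small coefficient and the rest is absorbed into $\mathbb{E}\sup_{[0,T]}(1+\|X^\varepsilon\|^{2p}+\|Y^\varepsilon\|^{2p})\leq C_{p,T}(1+\|x\|^{2p}+\|y\|^{2p})$ from Lemma \ref{PMY}, and apply a singular Gronwall inequality to conclude $\psi(T)\leq C_{p,T}(1+|x|_\alpha^{2p}+\|y\|^{2p})$, which is \eqref{4.5}. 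The key difficulty, again, is calibrating the interpolation exponent so the Burgers nonlinearity contributes only a \emph{sublinear} power of the high-order norm against an integrable time singularity; everything else is routine given Lemmas \ref{PMY} and \ref{stochastic convolution}.
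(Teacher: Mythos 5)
Your proposal is correct and follows essentially the same route as the paper: the same four-term decomposition of the mild solution, the same treatment of the linear, reaction and stochastic-convolution terms via \eqref{PSG}, Lemma \ref{PMY} and Lemma \ref{stochastic convolution}, and for the Burgers term the same key mechanism of pairing an integrable time singularity with an interpolation bound that is \emph{sublinear} in $|X^\varepsilon_s|_\alpha$, closed by Young and Gronwall. The only cosmetic difference is that the paper expresses the bilinear estimate through Lemma \ref{Property B1} in the form $|B(X^\varepsilon_s)|_{-\alpha_3}\leq C|X^\varepsilon_s|_{\alpha_1}|X^\varepsilon_s|_{\alpha_2+1}$ with $\alpha_1+\alpha_2+\alpha_3>\tfrac12$ rather than via your integration by parts onto the semigroup plus a product estimate on $(X^\varepsilon_s)^2$ (note only that the intermediate bound $\|(X^\varepsilon_s)^2\|_{H^\sigma}\lesssim |X^\varepsilon_s|_\sigma\|X^\varepsilon_s\|$ is not a valid product estimate for $\sigma>0$; your alternative using $H^{s_1}\hookrightarrow L^\infty$, $s_1>\tfrac12$, is the right one and leads to the same exponent calibration).
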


\begin{proof}
Using the H\"{o}lder inequality,
it suffices to prove \eref{4.5} for large enough $p$. Recall that
\begin{align*}
X^{\varepsilon}_t=e^{tA}x+\int^t_0e^{(t-s)A}B(X^{\varepsilon}_s)ds+\int^t_0e^{(t-s)A}f(X^{\varepsilon}_s, Y^{\varepsilon}_s)ds+\int^t_0 e^{(t-s)A}dW^{Q_1}_s.
\end{align*}
For the first term, it is clear that $|e^{At}x|_{\alpha}^{2p}\leq|x|_{\alpha}^{2p}$.
For the second term, according to $\eref{PSG}$ and Lemma \ref{Property B1}, we have %the following estimate
\begin{eqnarray*}
\Big|\int^t_0e^{(t-s)A}B(X^{\varepsilon}_s)ds\Big|_{\alpha}\leq \!\!\!\!\!\!\!\!&& \int^t_0\big|e^{(t-s)A}B(X^{\varepsilon}_s)\big|_{\alpha}ds \nonumber\\
\leq \!\!\!\!\!\!\!\!&& C\int^t_0 \big(1+(t-s)^{\frac{-\alpha_{3}-\alpha}{2}}\big)|B(X^{\varepsilon}_s)|_{-\alpha_{3}}ds  \nonumber\\
\leq \!\!\!\!\!\!\!\!&&
C\int^t_0\big(1+(t-s)^{\frac{-\alpha_{3}-\alpha}{2}}\big)|X^{\varepsilon}_s|_{\alpha_{1}}|X^{\varepsilon}_s|_{\alpha_{2}+1}ds,
\end{eqnarray*}
where $\alpha_{1}+\alpha_{2}+\alpha_{3}>\frac{1}{2}$ and $\alpha_{i}>0, i=1,2,3$.
Using  the interpolation inequality, we have that
\begin{align} \label{IEB1}
|X^{\varepsilon}_s|_{\alpha_{1}}
\leq C \| X^{\varepsilon}_s \|^{\frac{\alpha-\alpha_{1}}{\alpha}}
|X^{\varepsilon}_s|_{\alpha}^{\frac{\alpha_{1}}{\alpha}},
\end{align}
for any $0<\alpha_{1}<\alpha$, and that
\begin{align} \label{IEB2}
|X^{\varepsilon}_s|_{\alpha_{2}+1}
\leq C \| X^{\varepsilon}_s \|^{\frac{\alpha-\alpha_{2}-1}{\alpha}}
|X^{\varepsilon}_s|_{\alpha}^{\frac{\alpha_{2}+1}{\alpha}},
\end{align}
for any $0<\alpha_{2}+1<\alpha$.
%Besides, we also assume
Let $\alpha_{1}$ and $\alpha_{2}$  be small enough such that
$1+\alpha_{1}+\alpha_{2} \in (1,\alpha)$.
It follows from $\eref{IEB1}$ and $\eref{IEB2}$ that
%Then, combining $\eref{IEB1}$ and $\eref{IEB2}$, we obtain
\begin{eqnarray}
\!\!\!\!\!\!\!\!&&\mathbb{E}\sup_{t\in[0,T]}
\Big|\int^t_0e^{(t-s)A}B(X^{\varepsilon}_s)ds\Big|_{\alpha}^{2p} \nonumber\\
\leq\!\!\!\!\!\!\!\!&& C\mathbb{E}\Big(\sup_{t\in[0,T]}
\int^t_0 \big(1+(t-s)^{\frac{-\alpha_{3}-\alpha}{2}}\big)
\| X^{\varepsilon}_s \|^{\frac{2\alpha-\alpha_{1}-\alpha_{2}-1}{\alpha}}
|X^{\varepsilon}_s|_{\alpha}^{\frac{\alpha_{1}+\alpha_{2}+1}{\alpha}}ds\Big)^{2p} \nonumber\\
\leq\!\!\!\!\!\!\!\!&&
C_{p,T}\mathbb{E}
\Big[\Big(\sup_{t\in[0,T]}
\int^t_0 \big(1+(t-s)^{\frac{-\alpha_{3}-\alpha}{2}}\big)^{\frac{2p}{2p-1}}ds\Big)^{2p-1}
\nonumber\\
\!\!\!\!\!\!\!\!&&
\times \Big(\sup_{t\in[0,T]}\int^t_0 \|X^{\varepsilon}_s \|^{2p\cdot\frac{2\alpha-\alpha_{1}
-\alpha_{2}-1}{\alpha}}
|X^{\varepsilon}_s|_{\alpha}^{2p\cdot\frac{\alpha_{1}+\alpha_{2}+1}{\alpha}}ds\Big)\Big]  \nonumber\\
\leq\!\!\!\!\!\!\!\!&&C_{p,T}\Big(1+\int^T_0 s^{-\frac{\alpha_{3}+\alpha}{2}\cdot\frac{2p}{2p-1}}ds\Big)^{2p-1}
\Big(\int^T_0\mathbb{E} \|X^{\varepsilon}_s \|^{2p\cdot\frac{2\alpha-\alpha_{1}-\alpha_{2}-1}{\alpha-\alpha_{1}-\alpha_{2}-1}}ds
+\int^T_0\mathbb{E}|X^{\varepsilon}_s|_{\alpha}^{2p}ds\Big).  \nonumber
\end{eqnarray}
By Lemma \ref{PMY}, we obtain
\begin{align} \label{HolderNorm B}
\mathbb{E}\sup_{t\in[0,T]}\Big|\int^t_0e^{(t-s)A}B(X^{\varepsilon}_s)ds\Big|_{\alpha}^{2p}
\leq C_{p,T}\Big(1+\int^T_0 s^{-\frac{\alpha_{3}+\alpha}{2}\cdot\frac{2p}{2p-1}}ds\Big)^{2p-1}
\Big(1+\int^T_0\mathbb{E}|X^{\varepsilon}_s|_{\alpha}^{2p}ds\Big).
\end{align}
We choose positive constants $\alpha_{1}, \alpha_{2}, \alpha_{3}$ such that
$
0<1+\alpha_{1}+\alpha_{2}<\alpha
$ and $ \alpha_{1}+\alpha_{2}+\alpha_{3}>\frac{1}{2} $.
Let $p$ be large enough such that
$
\frac{\alpha+\alpha_{3}}{2}\cdot\frac{2p}{2p-1}<1
$
(for instance, $\alpha_{3}=\frac{1}{2}$, $\alpha_{1}=\alpha_{2}=\frac{\alpha-1}{4}$, and $p>\frac{2}{3-2\alpha}$).
Consequently, from \eqref{HolderNorm B}, we get
\begin{eqnarray}\label{IE351}
     \mathbb{E}\Big(\sup_{t\in[0,T]}\Big|\int^t_0e^{(t-s)A}B(X^{\varepsilon}_s)ds\Big|_{\alpha}^{2p}\Big)
\leq C_{p,T}
     \Big(1+\int^T_0\mathbb{E}|X^{\varepsilon}_s|_{\alpha}^{2p}ds\Big).
\end{eqnarray}
For the third term, according to $\eref{PSG}$, we obtain
\begin{eqnarray}
\!\!\!\!\!\!\!\!&&\mathbb{E}\Big(\sup_{t\in[0,T]}\Big|\int^t_0e^{(t-s)A}f(X^{\varepsilon}_s, Y^{\varepsilon}_s)ds\Big|_{\alpha}^{2p}\Big) \nonumber\\
%\leq\!\!\!\!\!\!\!\!&&
%C\mathbb{E}\Big[\sup_{t\in[0,T]}
%\int^t_0 \big(1+(t-s)^{-\frac{\alpha}{2}}\big)\|f(X^{\varepsilon}_s, Y^{\varepsilon}_s)\|ds\Big]^{2p}
%\nonumber\\
\leq\!\!\!\!\!\!\!\!&&
C\mathbb{E}\Big[\sup_{t\in[0,T]}\int^t_0 \big(1+(t-s)^{-\frac{\alpha}{2}}\big)
\big(1 + \|X^{\varepsilon}_s \| + \|Y^{\varepsilon}_s \| \big)ds\Big]^{2p}  \nonumber\\
%\leq\!\!\!\!\!\!\!\!&&
%C_{p}\Big(\sup_{t\in[0,T]}\int^t_0\big(1+(t-s)^{-\frac{\alpha}{2}}\big)^{\frac{2p}{2p-1}}ds\Big)^{2p-1}
%\mathbb{E}\Big(\sup_{t\in[0,T]}
%\int^t_0 \big(1+ \|X^{\varepsilon}_s \| + \| Y^{\varepsilon}_s \| \big)^{2p}ds\Big)  \nonumber\\
\leq\!\!\!\!\!\!\!\!&&C_{p,T}\Big(1+\int^T_0 s^{-\frac{\alpha p}{2p-1}}ds\Big)^{2p-1}
\int^T_0\big(1+\mathbb{E} \| X^{\varepsilon}_s \|^{2p} + \mathbb{E} \| Y^{\varepsilon}_s \|^{2p}\big)ds.  \nonumber
\end{eqnarray}
Taking $p$ large enough such that $\frac{\alpha p}{2p-1}<1$,
it follows from  Lemma \ref{PMY} that
\begin{align}
\mathbb{E}\Big(\sup_{t\in[0,T]}\Big|\int^t_0e^{(t-s)A}f(X^{\varepsilon}_s, Y^{\varepsilon}_s)ds\Big|_{\alpha}^{2p}\Big)\leq C_{p,T}(1+ \| x \|^{2p}+ \| y \|^{2p}).  \label{IE352}
\end{align}
%For the last term, by Lemma \ref{stochastic convolution} we have
%\begin{eqnarray} \label{IE353}
%\mathbb{E}\sup_{t\in[0,T]}\Big|\int^t_0 e^{(t-s)A}dW^{Q_1}_s\Big|_{\alpha}^{2p}
%\mathbb{E}\Big|\sum^{\infty}_{k=1}\sqrt{\alpha_{k}^{1}}(-A)^{\frac{\alpha}{2}}\int_{0}^{t}e^{A(t-s)}e_{k}(\xi)d\beta_{k}(s)\Big|^{2p}  \nonumber\\
%=\!\!\!\!\!\!\!\!&&
%\mathbb{E}\Big|\sum^{\infty}_{k=1}\sqrt{\alpha_{k}^{1}}\lambda_{k}^{\frac{\alpha}{2}}\int_{0}^{t}e^{-\lambda_{k}(t-s)}e_{k}(\xi)d\beta_{k}(s)\Big|^{2p}  \nonumber\\
%\leq\!\!\!\!\!\!\!\!&&
%\mathbb{E}\Big(\sum^{\infty}_{k=1}\sqrt{\alpha_{k}^{1}}\lambda_{k}^{\frac{\alpha}{2}}\sup_{t\in[0,T]}\int_{0}^{t}e^{-\lambda_{k}(t-s)}d\beta_{k}(s)\Big)^{2p} \nonumber\\
%\leq C_{p,T}\big(\sum^{\infty}_{k=1}\alpha_{k}^{1}\lambda_{k}^{\alpha-1}\big)^{p}
%\leq C_{p,T}.
%\end{eqnarray}
We conclude  the proof of Lemma \ref{SOX} by
combining $\eref{IE351}, \eref{IE352}$, Lemma \ref{stochastic convolution} and Gronwall's inequality.
%\begin{align*}
%\sup_{\vare\in(0,1)}\mathbb{E}\big(\sup_{t\in[0,T]}|X_{t}^{\varepsilon}|_{\alpha}^{2p}\big)\leq C_{p,T}(1+|x|^{2p}_{\alpha}+|y|^{2p}),
%\end{align*}
%which completes the proof.
%\hspace{\fill}$\square$
\end{proof}

%\vskip 0.3cm

%\begin{Rem} \label{EXtp}
%Provided that the initial data $x\in D(-A)^{\theta}$, $\theta\in(0,\frac{1}{2})$ and we don't take supremum, according to $|e^{At}x|_{\alpha}^{2p}\leq %Ct^{-p(\alpha-2\theta)}|x|_{2\theta}^{2p}$, then we have
%\begin{align}
%\mathbb{E}|X_{t}^{\varepsilon}|_{\alpha}^{2p}\leq Ct^{-p(\alpha-2\theta)}|x|_{2\theta}^{2p}+C. \nonumber
%\end{align}
%Besides, the above result still holds for $Q_{1}=0$.
%\hspace{\fill}$\square$
%\end{Rem}

Now we are equipped to prove the H\"{o}lder continuity of $t \mapsto X_{t}^{\varepsilon}$,
which holds uniformly with respect to $\varepsilon \in (0,1)$.
\begin{lemma} \label{COX}
Under the conditions \ref{A1}-\ref{A3},
for any $x\in H^{\alpha}, y\in L^2$, $T>0$, $0<t\leq t+h\leq T$,
there exists a constant $C_{p,T}>0$ such that
\begin{align*}
\sup_{\vare\in(0,1)}\mathbb{E} \| X_{t+h}^{\varepsilon}-X_{t}^{\varepsilon} \|^{2p}
\leq C_{p,T}h^{p}(1+|x|^{4p}_{\alpha} + \| y \|^{4p}).
\end{align*}
\end{lemma}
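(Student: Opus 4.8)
The plan is to work directly with the mild formulation \eqref{mild solution} for $X^{\varepsilon}_t$ and estimate the increment $X^{\varepsilon}_{t+h} - X^{\varepsilon}_t$ term by term. Writing
\begin{align*}
X^{\varepsilon}_{t+h}-X^{\varepsilon}_t
= &\ (e^{hA}-\mathrm{Id})e^{tA}x
+ \int_0^t (e^{hA}-\mathrm{Id}) e^{(t-s)A} B(X^{\varepsilon}_s)\, ds
+ \int_t^{t+h} e^{(t+h-s)A} B(X^{\varepsilon}_s)\, ds \\
&\ + \int_0^t (e^{hA}-\mathrm{Id}) e^{(t-s)A} f(X^{\varepsilon}_s,Y^{\varepsilon}_s)\, ds
+ \int_t^{t+h} e^{(t+h-s)A} f(X^{\varepsilon}_s,Y^{\varepsilon}_s)\, ds \\
&\ + \int_0^t (e^{hA}-\mathrm{Id}) e^{(t-s)A}\, dW^{Q_1}_s
+ \int_t^{t+h} e^{(t+h-s)A}\, dW^{Q_1}_s,
\end{align*}
I would bound each of the seven pieces in $L^{2p}(\Omega; L^2)$. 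The standard tool for the "$(e^{hA}-\mathrm{Id})$" pieces is the elementary inequality $\|(e^{hA}-\mathrm{Id})z\| \leq C h^{\sigma}|z|_{2\sigma}$ for $z\in H^{2\sigma}$, $\sigma\in[0,1]$, combined with the smoothing property \eqref{PSG}; for the "$\int_t^{t+h}$" pieces one just uses boundedness of the semigroup on $L^2$ together with the a priori moment bounds.

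The deterministic drift terms are handled as follows. For the first term, $\|(e^{hA}-\mathrm{Id})e^{tA}x\| \leq C h^{\sigma} |e^{tA}x|_{2\sigma} \leq C h^{\sigma}|x|_{2\sigma}$; choosing $\sigma=1/2$ and using $\alpha>1$ gives a bound $C h^{1/2}|x|_{\alpha}$ after noting $|x|_1\leq C|x|_\alpha$, so this contributes at the order $h^{p}$ in the $2p$-th moment. For the $B$-terms I will use Lemma \ref{Property B1} to estimate $|B(X^{\varepsilon}_s)|_{-\alpha_3}\leq C|X^{\varepsilon}_s|_{\alpha_1}|X^{\varepsilon}_s|_{\alpha_2+1}$, then interpolation \eqref{IEB1}--\eqref{IEB2} and Lemma \ref{SOX} to control these by $|X^{\varepsilon}_s|_\alpha$ and $\|X^{\varepsilon}_s\|$, which are uniformly bounded in $\varepsilon$; the singular kernel $(t-s)^{-(\alpha_3+\alpha)/2}$ together with the extra factor $h^{\sigma}$ from $\|(e^{hA}-\mathrm{Id})\cdot\|$ (choosing $\sigma$ slightly below what the kernel integrability allows) yields an integrable singularity and a net factor $h^{p}$ after raising to the $2p$-th power and using Hölder in $s$. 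The $f$-terms are easier since $f$ is sublinear by \ref{A1}, so $\|f(X^{\varepsilon}_s,Y^{\varepsilon}_s)\|\leq C(1+\|X^{\varepsilon}_s\|+\|Y^{\varepsilon}_s\|)$, and Lemma \ref{PMY} gives the uniform moment bounds; again one extracts $h^{\sigma}$ from the difference of semigroups and an $h$ from the length of the short interval. The stochastic convolution increments are estimated by the factorization method exactly as in Lemma \ref{stochastic convolution}, or more simply by an It\^o-isometry / Burkholder computation: $\mathbb{E}\|\int_0^t(e^{hA}-\mathrm{Id})e^{(t-s)A}\,dW^{Q_1}_s\|^{2p}$ and $\mathbb{E}\|\int_t^{t+h}e^{(t+h-s)A}\,dW^{Q_1}_s\|^{2p}$ are both $\leq C_{p,T}h^{p}$ since $\mathrm{Tr}\,Q_1<\infty$ and $\|(e^{hA}-\mathrm{Id})e^{rA}\|_{\mathrm{op}}\leq C(h/r)^{1/2}\wedge 1$.

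I expect the main obstacle to be the $B$-term involving the singular kernel: one must simultaneously (i) keep enough regularity in $s$ to apply the interpolation inequalities and the bound from Lemma \ref{SOX}, (ii) arrange $\alpha_1,\alpha_2,\alpha_3$ so that $1+\alpha_1+\alpha_2<\alpha$ and $\alpha_1+\alpha_2+\alpha_3>\tfrac12$, and (iii) split the exponent on $(e^{hA}-\mathrm{Id})e^{(t-s)A}B(X^{\varepsilon}_s)$ between a Hölder-in-$h$ gain $h^{\sigma}$ and the remaining smoothing power $(t-s)^{-(\sigma+\alpha_3+\alpha)/2}$, keeping the latter integrable after Hölder in $s$ against the $2p/(2p-1)$ exponent. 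Since we only need to reach the exponent $p$ in $h^{p}$ (not the optimal Hölder exponent), there is room to choose $\sigma=1/2$ and take $p$ large, which is harmless because the general case follows from Jensen's inequality; the quartic dependence $|x|_\alpha^{4p}+\|y\|^{4p}$ on the right comes from squaring the quadratic nonlinearity's contribution, which after the moment bounds of Lemmas \ref{PMY} and \ref{SOX} carries $(1+|x|_\alpha^{2p}+\|y\|^{2p})^2$. Collecting the seven bounds and taking $p$-th powers completes the proof.
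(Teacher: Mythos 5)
Your proposal is correct in substance but takes a more laborious route than the paper. The paper decomposes the increment as
\begin{align*}
X_{t+h}^{\varepsilon}-X_{t}^{\varepsilon}=(e^{Ah}-I)X_{t}^{\varepsilon}+\int_{t}^{t+h}e^{(t+h-s)A}B(X^{\varepsilon}_s)\,ds+\int_{t}^{t+h}e^{(t+h-s)A}f(X^{\varepsilon}_s,Y^{\varepsilon}_s)\,ds+\int_{t}^{t+h}e^{(t+h-s)A}\,dW^{Q_1}_s,
\end{align*}
so the entire history up to time $t$ is absorbed into the single term $(e^{Ah}-I)X_{t}^{\varepsilon}$, which is dispatched in one line via $\|(e^{Ah}-I)z\|\leq C_{\alpha}h^{\alpha/2}|z|_{\alpha}$ together with the uniform $H^{\alpha}$-bound of Lemma \ref{SOX} (yielding in fact $h^{\alpha p}$, better than $h^{p}$). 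You instead distribute $(e^{hA}-I)$ over each piece of the mild formulation, which forces you to re-derive, term by term, essentially the regularity information that Lemma \ref{SOX} already packages: the singular-kernel analysis for the $B$-convolution and the factorization/It\^{o}-isometry estimate for the stochastic convolution with the extra operator bound $\|(e^{hA}-I)e^{rA}\|_{\mathrm{op}}\leq C(h/r)^{1/2}\wedge 1$. All of these steps go through (one small imprecision: for the long-time $B$-term you only need to land in $L^2$, so the kernel should be $(t-s)^{-\sigma-\alpha_3/2}$ rather than the $(t-s)^{-(\sigma+\alpha_3+\alpha)/2}$ you wrote, which makes the integrability easier, not harder), and your treatment of the short-interval terms coincides with the paper's $I_2$--$I_4$, including the origin of the $|x|_{\alpha}^{4p}+\|y\|^{4p}$ dependence from $\|B(X^{\varepsilon}_s)\|\leq C|X^{\varepsilon}_s|_1^2$. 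The net trade-off: your version is self-contained at the level of the mild formula but roughly doubles the work; the paper's version leans on the a priori spatial regularity of the solution and is considerably shorter.
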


\begin{proof}
After simple calculations, we have
\begin{eqnarray}
X_{t+h}^{\varepsilon}-X_{t}^{\varepsilon}=\!\!\!\!\!\!\!\!&&(e^{Ah}-I)X_{t}^{\varepsilon}+\int_{t}^{t+h}e^{(t+h-s)A}B(X^{\varepsilon}_s)ds \nonumber\\
\!\!\!\!\!\!\!\!&&+\int_{t}^{t+h}e^{(t+h-s)A}f(X^{\varepsilon}_s, Y^{\varepsilon}_s)ds+\int_{t}^{t+h}e^{(t+h-s)A}dW^{Q_1}_s  \nonumber\\
:=\!\!\!\!\!\!\!\!&&I_{1}+I_{2}+I_{3}+I_{4}.  \nonumber
\end{eqnarray}
For $I_1$, note that
for $\alpha$ given in \ref{A3}, there exists a constant $C_{\alpha}>0$ such that for any $x\in \mathscr{D}((-A)^{\frac{\alpha}{2}})$,
$
\| e^{Ah}x-x \| \leq C_{\alpha}h^{\frac{\alpha}{2}}|x|_{\alpha}.
$
Then using Lemma \ref{SOX}, we get
\begin{align}  \label{REGX1}
\mathbb{E} \| I_{1} \|^{2p} \leq
C_{\alpha}h^{\alpha p}\mathbb{E}|X^{\varepsilon}_t|_{\alpha}^{2p}
\leq  C_{p,T}h^{\alpha p}(1+|x|^{2p}_{\alpha} + \| y \|^{2p}).
\end{align}
For $I_{2}$, using the contractive property of the semigroup $e^{tA}$,
Corollary \ref{Property B3} and Lemma \ref{SOX},
we obtain
\begin{align}\label{REGX2}
\mathbb{E} \|I_{2} \|^{2p}
\leq & \
%\mathbb{E}\Big(\int_{t}^{t+h}
%\big \| e^{(t+h-s)A}B(X^{\varepsilon}_s) \big\| ds\Big)^{2p} %\nonumber\\
%\leq
%\!\!\!\!\!\!\!\!&&
\mathbb{E}\Big(\int_{t}^{t+h} \|B(X^{\varepsilon}_s) \| ds\Big)^{2p} 
\leq   C\mathbb{E}\Big(\int_{t}^{t+h} |X^{\varepsilon}_s|^2_1 ds\Big)^{2p}  \nonumber\\
\leq & \  C_{p,T}h^{2p}\mathbb{E}\sup_{s\in [0, T] }|X^{\varepsilon}_s|_1^{4p}
\leq C_{p, T}h^{2p}(1+|x|^{4p}_{\alpha} + \| y\|^{4p}).
\end{align}
For $I_{3}$, applying condition \ref{A1} and Lemma \ref{PMY}, we get
\begin{align}  \label{REGX3}
\mathbb{E} \|I_{3} \|^{2p}\leq
& \ 
%\mathbb{E}\Big(\int_{t}^{t+h}\|f(X^{\varepsilon}_s, Y^{\varepsilon}_s)\|ds\Big)^{2p}
%\leq  
h^{2p-1}\mathbb{E}\!\!\int_{t}^{t+h}\|f(X^{\varepsilon}_s, Y^{\varepsilon}_s)\|^{2p}ds \nonumber\\
\leq & \  Ch^{2p-1}\mathbb{E}
\int_{t}^{t+h}\big(1+ \| X^{\varepsilon}_s \| + \| Y^{\varepsilon}_s \| \big)^{2p}ds
\leq  C_{p,T}h^{2p}(1 + \|x \|^{2p} + \|y \|^{2p}).
\end{align}
%For $I_{4}$, notice that $I_4\sim N(0, S_h)$ is a Gaussian random variable, where
%$$
%S_h x=\int^{h}_0 e^{(h-r)A}Q_1 e^{(h-r)A^{*}}xdr.
%$$
For $I_{4}$, note that $I_4$ is the centered Gaussian random variable with the variance given by
$S_h x=\int^{h}_0 e^{(h-r)A}Q_1 e^{(h-r)A^{*}}xdr.$
Then, for any $p\geq1$, we get
%we following the proof of \cite[Corollary 2.17]{Daz1}
\begin{align}\label{REGX4}
\EE \|I_4 \|^{2p}\leq  C_{p}[\text{Tr}(S_h)]^p
= C_p\left( \sum_{k=1}^{\infty} \int^h_0 e^{-2(h-r)\lambda_k}\alpha_k dr\right)^p
\leq   C_{p}(\sum_{k=1}^{\infty} \alpha_k)^p h^p.
\end{align}
Putting \eref{REGX1}-\eref{REGX4} together, the result follows.
%Hence,  $\eref{REGX1}-\eref{REGX4}$ imply
%\begin{align*}
%\sup_{\vare\in(0,1)}\mathbb{E}|X_{t+h}^{\varepsilon}-X_{t}^{\varepsilon}|^{2p}\leq C_{p,T}h^{p}(1+|x|^{2p}_{\alpha}+|y|^{2p}).
%\end{align*}
%The proof is complete.
%\hspace{\fill}$\square$
\end{proof}

%\begin{Rem} \label{COXQ}
%The above result still holds if $Q_{1}=0$. \hspace{\fill}$\square$
%\end{Rem}

%\vskip 0.3cm

\subsection{ Estimates of the auxiliary process
\texorpdfstring{ $(\hat{X}_{t}^{\varepsilon},\hat{Y}_{t}^{\varepsilon})$} {Lg} }

Following the idea inspired by Khasminskii \cite{K1},
we introduce an auxiliary process
$(\hat{X}_{t}^{\varepsilon},\hat{Y}_{t}^{\varepsilon})\in L^2 \times L^2$.
Specifically,
we split the interval $[0,T]$ into some subintervals of size $\delta>0$.
%and divide $[0,T]$ into intervals of size $\delta$,
%where $\delta$ is a fixed positive number.
With the initial value $\hat{Y}_{0}^{\varepsilon}=Y^{\varepsilon}_{0}=y$,
for any $t\in[k\delta,\min((k+1)\delta,T))$, $k \in \mathbb{N}$,
we construct the process $\hat{Y}_{t}^{\varepsilon}$ as follows:
%with initial value $\hat{Y}_{0}^{\varepsilon}=Y^{\varepsilon}_{0}=y$,
%and
\begin{align} \label{AuxiliaryPro Y 01}
\hat{Y}_{t}^{\varepsilon}= Y_{k\delta}^{\varepsilon}+\frac{1}{\varepsilon}\int_{k\delta}^{t}A\hat{Y}_{s}^{\varepsilon}ds+\frac{1}{\varepsilon}\int_{k\delta}^{t}
g(X_{k\delta}^{\varepsilon},\hat{Y}_{s}^{\varepsilon})ds+\frac{1}{\sqrt{\varepsilon}}\int_{k\delta}^{t}dW^{Q_{2}}_s,
\end{align}
where $(X^{\varepsilon}_{s}, Y^{\varepsilon}_{s})$
is the solution to the system \eref{main equation}. % at time $k\delta$.
Then, for any $t\in[0,T]$, we construct the process $\hat{X}_{t}^{\varepsilon}$
as follows:
%which is determined by the following equation:
%Also, we define the process $\hat{X}_{t}^{\varepsilon}$ by integral
\begin{align} \label{AuxiliaryPro X 01}
\hat{X}_{t}^{\varepsilon}=x+\int_{0}^{t}A\hat{X}_{s}^{\varepsilon}ds+\int_{0}^{t}B(X_{s(\delta)}^{\varepsilon})ds+\int_{0}^{t}
f(X_{s(\delta)}^{\varepsilon},\hat{Y}_{s}^{\varepsilon})ds+W^{Q_{1}}_t,
\end{align}
where $s(\delta)=[\frac{s}{\delta}]\delta$
is the nearest breakpoint proceeding $s$.
Note that for any $t \in [k\delta, (k+1)\delta)$,
the fast component $\hat{Y}_{t}^{\varepsilon}$
does not depend on the slow component $\hat{X}_{t}^{\varepsilon}$.
%but only on the value of $X_{t}^{\vare}$ at the first point of the interval.
The following result gives a control of the auxiliary process
$(\hat{X}_{t}^{\varepsilon},\hat{Y}_{t}^{\varepsilon})$.

\begin{lemma} \label{MDY}
Under the conditions \ref{A1}-\ref{A3},
for any $x,y\in L^2$, $p \geq 2$ and $T>0$, there exists a constant $C_{p,T}>0$ such that
$$
\sup_{\vare\in(0,1)}\sup_{t\in[0,T]}
\mathbb{E} \| \hat{Y}_{t}^{\vare} \|^{2p}\leq C_{p,T}(1+ \|x \|^{2p}+ \|y\|^{2p}).
$$
In addition, for any $x\in H^{\alpha}$, $y\in L^2$, $p \geq 2$ and $T>0$, we have
%there exists a constant $C_{p,T}>0$ such that
\begin{align} \label{hatXHolderalpha}
\sup_{\varepsilon\in(0,1)}
\mathbb{E}\big(\sup_{t\in[0,T]}|\hat{X}_{t}^{\varepsilon}|_{\alpha}^{2p}\big)
\leq C_{p,T}(1+|x|^{2p}_{\alpha}+ \|y\|^{2p}).
\end{align}
\end{lemma}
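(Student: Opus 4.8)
The plan is to establish the two bounds in succession, since the estimate for $\hat{Y}^{\vare}$ feeds into that for $\hat{X}^{\vare}$; throughout, the a priori bounds on the true solution from Lemmas \ref{PMY}, \ref{stochastic convolution} and \ref{SOX} will do most of the work.

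First I would bound $\hat{Y}^{\vare}$. Fix $k\in\mathbb{N}$ and restrict to $[k\delta,\min((k+1)\delta,T))$, on which, by \eqref{AuxiliaryPro Y 01}, $\hat{Y}^{\vare}$ solves the fast equation with slow variable frozen at $X^{\vare}_{k\delta}$ and with $\hat{Y}^{\vare}_{k\delta}=Y^{\vare}_{k\delta}$. Applying It\^{o}'s formula to $\mathbb{E}\|\hat{Y}^{\vare}_t\|^{2p}$ along the Galerkin approximation and passing to the limit, exactly as for \eqref{ItoFormu 01}, then using \eqref{Gelfand Ine}, the linear growth of $g$ coming from \ref{A1}, condition \ref{A2} and Young's inequality, one reaches the same differential inequality as \eqref{4.4.2}:
\begin{align*}
\frac{d}{dt}\mathbb{E}\|\hat{Y}^{\vare}_t\|^{2p}\leq -\frac{p\gamma}{\vare}\,\mathbb{E}\|\hat{Y}^{\vare}_t\|^{2p}+\frac{C_p}{\vare}\,\mathbb{E}\|X^{\vare}_{k\delta}\|^{2p}+\frac{C_p}{\vare},\qquad t\in[k\delta,(k+1)\delta),
\end{align*}
for some $\gamma>0$. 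The comparison theorem, together with $\int_{k\delta}^{t}e^{-p\gamma(t-s)/\vare}\,ds\leq\vare/(p\gamma)$, then gives
\begin{align*}
\mathbb{E}\|\hat{Y}^{\vare}_t\|^{2p}\leq \mathbb{E}\|Y^{\vare}_{k\delta}\|^{2p}\,e^{-p\gamma(t-k\delta)/\vare}+C_p\big(1+\mathbb{E}\|X^{\vare}_{k\delta}\|^{2p}\big).
\end{align*}
Since Lemma \ref{PMY} bounds both $\mathbb{E}\|Y^{\vare}_{k\delta}\|^{2p}$ and $\mathbb{E}\|X^{\vare}_{k\delta}\|^{2p}$ by $C_{p,T}(1+\|x\|^{2p}+\|y\|^{2p})$, uniformly in $\vare$, $k$ and $\delta$, taking the supremum over $t\in[0,T]$ yields the first bound. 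What makes this uniform in $\delta$ is that the reset $\hat{Y}^{\vare}_{k\delta}=Y^{\vare}_{k\delta}$ puts every subinterval on an equal footing, while the $O(1/\vare)$ dissipation absorbs the $O(1/\vare)$ forcing.

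For the $H^{\alpha}$ bound on $\hat{X}^{\vare}$ I would, as in Lemma \ref{SOX}, reduce to large $p$ and write \eqref{AuxiliaryPro X 01} in mild form,
\begin{align*}
\hat{X}^{\vare}_t=e^{tA}x+\int_0^t e^{(t-s)A}B(X^{\vare}_{s(\delta)})\,ds+\int_0^t e^{(t-s)A}f(X^{\vare}_{s(\delta)},\hat{Y}^{\vare}_s)\,ds+W_A(t),
\end{align*}
and bound the four terms in $|\cdot|_{\alpha}$. The first is at most $|x|_{\alpha}^{2p}$. The $B$-term is treated by the identical chain of estimates as in the proof of Lemma \ref{SOX} (via \eqref{PSG}, Lemma \ref{Property B1} and the interpolation inequalities \eqref{IEB1}--\eqref{IEB2}), the only difference being that the argument of $B$ is now the frozen value $X^{\vare}_{s(\delta)}$; since $\|X^{\vare}_{s(\delta)}\|\leq\sup_{r\leq T}\|X^{\vare}_r\|$ and $|X^{\vare}_{s(\delta)}|_{\alpha}\leq\sup_{r\leq T}|X^{\vare}_r|_{\alpha}$, and since $\mathbb{E}\sup_{r\leq T}|X^{\vare}_r|_{\alpha}^{2p}$ is \emph{already} bounded by Lemma \ref{SOX}, no further Gronwall argument is needed and this term is $\leq C_{p,T}(1+|x|_{\alpha}^{2p}+\|y\|^{2p})$. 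For the $f$-term the argument leading to \eqref{IE352} applies verbatim, reducing the bound to $\mathbb{E}\|X^{\vare}_{s(\delta)}\|^{2p}$ and $\mathbb{E}\|\hat{Y}^{\vare}_s\|^{2p}$, which are controlled by Lemma \ref{PMY} and by the first part of this lemma, respectively. The stochastic convolution $W_A(t)$ is controlled by Lemma \ref{stochastic convolution}. Summing the four contributions gives \eqref{hatXHolderalpha}. The main obstacle is, as in Lemma \ref{SOX}, the treatment of the quadratic term $B$ in the high-order norm; however, since the relevant a priori estimate on $X^{\vare}$ is already in hand, the present proof is in fact slightly lighter than that of Lemma \ref{SOX}.
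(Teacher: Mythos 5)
Your proof is correct and follows exactly the route the paper intends: the paper omits the details of Lemma \ref{MDY}, stating only that it "can be carried out in the same way as in the proof of Lemmas \ref{PMY} and \ref{SOX}," and your argument is precisely that program, with the two sensible simplifications correctly identified (the dissipation absorbs the $O(1/\vare)$ forcing on each subinterval uniformly in $k$ and $\delta$, and the $B$- and $f$-terms involve the already-controlled $X^{\vare}_{s(\delta)}$ rather than $\hat{X}^{\vare}_s$, so no second Gronwall step is needed).
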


From the construction of $(\hat{X}_{t}^{\varepsilon}, \hat{Y}_{t}^{\varepsilon})$,
since the proof of Lemma \ref{MDY} can be carried out in the same way as in the proof of
Lemmas \ref{PMY} and \ref{SOX}, we omit the details. 
%proof of Lemma \ref{MDY}.

%we can easily obtain the following estimates which will be used below. Because the proof almost follows the steps in Lemmas \ref{PMY} and \ref{SOX}, we omit the proof here.

%\begin{proof}
%For $t\in[0,T]$ with $t\in[k\delta,(k+1)\delta)$, by It\^{o}'s formula we have
%\begin{eqnarray}
%|\hat{Y}_{t}^{\vare}|^{2p}\leq\!\!\!\!\!\!\!\!&&|Y_{k\delta}^{\vare}|^{2p}+\frac{2p}{\vare}\int_{k\delta}^{t}|\hat{Y}_{s}^{\vare}|^{2p-2}\langle %A\hat{Y}_{s}^{\vare},\hat{Y}_{s}^{\vare}\rangle ds   \nonumber\\
%\!\!\!\!\!\!\!\!&&+\frac{2p}{\vare}\int_{k\delta}^{t}|\hat{Y}_{s}^{\vare}|^{2p-2}\langle g(X_{k\delta}^{\vare},\hat{Y}_{s}^{\vare}),\hat{Y}_{s}^{\vare}\rangle ds  %+\frac{2}{\sqrt{\vare}}\int_{k\delta}^{t}|\hat{Y}_{s}^{\vare}|^{2p-2}\langle\hat{Y}_{s}^{\vare},dW^{Q_{2}}(s)\rangle     \nonumber\\
%\!\!\!\!\!\!\!\!&&+\frac{p}{\vare}\int_{k\delta}^{t}|\hat{Y}_{s}^{\vare}|^{2p-2}\text{Tr}Q_{2}ds
%+\frac{2p(p-1)}{\vare}\int_{k\delta}^{t}|\hat{Y}_{s}^{\vare}|^{2p-2}\text{Tr}Q_{2}ds. \nonumber
%\end{eqnarray}
%Similar to the proof in Lemma \ref{PMY}, we have
%\begin{align*}
%\mathbb{E}|\hat{Y}_{t}^{\varepsilon}|^{2p}\leq\mathbb{E}|Y_{k\delta}^{\varepsilon}|^{2p}-\frac{p(\lambda_{1}
%-\beta_{1})}{\varepsilon}\int_{k\delta}^{t}\mathbb{E}|\hat{Y}_{s}^{\varepsilon}|^{2p}ds+\frac{C_{p}}{\varepsilon}(t-k\delta).
%\end{align*}
%Thanks to Gronwall's inequality, we finally get
%\begin{eqnarray*}
%\sup_{\vare\in(0,1)}\sup_{t\in[0,T]}\mathbb{E}|\hat{Y}_{t}^{\varepsilon}|^{2p}\leq C_{p}.
%\end{eqnarray*}
%\hspace{\fill}$\square$
%\end{proof}

%\vskip 0.3cm
We now give a control of $Y_{t}^{\varepsilon}-\hat{Y}_{t}^{\varepsilon}$.
%establish the the error of $Y_{t}^{\varepsilon}-\hat{Y}_{t}^{\varepsilon}$, and furthermore the error of $X^{\varepsilon}_t-\hat{X}_{t}^{\varepsilon}$ .
%Now we will establish convergence of the auxiliary processes $\hat{Y}_{t}^{\varepsilon}$ to the fast solution process $Y_{t}^{\varepsilon}$ and $\hat{X}_{t}^{\varepsilon}$ to the slow solution %process $X_{t}^{\varepsilon}$ respectively.
\begin{lemma} \label{DEY}
Under the conditions \ref{A1}-\ref{A3},
for any $x\in H^{\alpha}, y\in L^2$, $p\geq2$, $T>0$ and $\vare\in(0,1)$, there exists a constant $C_{p,T}>0$ such that
$$
\sup_{0\leq t\leq T}\mathbb{E} \|Y_{t}^{\varepsilon}-\hat{Y}_{t}^{\varepsilon} \|^{2p}
\leq C_{p,T}(1+|x|^{4p}_{\alpha}+ \|y\|^{4p})\frac{\delta^{p+1}}{\varepsilon}.
$$
\end{lemma}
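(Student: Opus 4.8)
The plan is to estimate $Y_t^\vare - \hat Y_t^\vare$ by subtracting the mild formulations (or equivalently the It\^o equations) of $Y_t^\vare$ and $\hat Y_t^\vare$ on the subinterval containing $t$. Fix $t \in [k\delta, \min((k+1)\delta, T))$ and write $Z_t := Y_t^\vare - \hat Y_t^\vare$; note $Z_{k\delta} = 0$ by construction, and since both processes are driven by the same noise $\frac{1}{\sqrt\vare} W^{Q_2}$, the stochastic integrals cancel. Thus
\begin{align*}
Z_t = \frac{1}{\vare}\int_{k\delta}^t A Z_s\, ds + \frac{1}{\vare}\int_{k\delta}^t \big[ g(X_s^\vare, Y_s^\vare) - g(X_{k\delta}^\vare, \hat Y_s^\vare)\big] ds.
\end{align*}
First I would apply It\^o's formula (in the Galerkin-then-limit sense used for \eqref{ItoFormu 01}) to $\|Z_t\|^{2p}$, use the dissipativity \eqref{Gelfand Ine} to produce the term $-\frac{2p\lambda_1}{\vare}\|Z_t\|^{2p}$, split $g(X_s^\vare, Y_s^\vare) - g(X_{k\delta}^\vare, \hat Y_s^\vare) = [g(X_s^\vare,Y_s^\vare) - g(X_{k\delta}^\vare, Y_s^\vare)] + [g(X_{k\delta}^\vare, Y_s^\vare) - g(X_{k\delta}^\vare, \hat Y_s^\vare)]$, and control both pieces with the Lipschitz condition \ref{A1}: the second piece gives $\frac{2p L_g}{\vare}\|Z_s\|^{2p}$, and combined with the dissipative term and condition \ref{A2} we get a net coefficient $-\frac{p\eta}{\vare}\|Z_s\|^{2p}$ (up to Young's inequality). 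The first piece is the ``error'' term: it is bounded by $\frac{C_p}{\vare}\|Z_s\|^{2p-2}\|X_s^\vare - X_{k\delta}^\vare\|^2 \leq \frac{p\eta}{2\vare}\|Z_s\|^{2p} + \frac{C_p}{\vare}\|X_s^\vare - X_{k\delta}^\vare\|^{2p}$ by Young's inequality.

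Next I would take expectations and invoke the comparison/Gronwall argument exactly as in \eqref{4.4.2}--\eqref{4.4.3}: since $\mathbb{E}\|Z_{k\delta}\|^{2p} = 0$,
\begin{align*}
\mathbb{E}\|Z_t\|^{2p} \leq \frac{C_p}{\vare}\int_{k\delta}^t e^{-\frac{p\eta}{2\vare}(t-s)} \mathbb{E}\|X_s^\vare - X_{k\delta}^\vare\|^{2p}\, ds.
\end{align*}
Now the H\"older continuity estimate of Lemma \ref{COX} gives $\mathbb{E}\|X_s^\vare - X_{k\delta}^\vare\|^{2p} \leq C_{p,T}(s - k\delta)^p (1 + |x|_\alpha^{4p} + \|y\|^{4p}) \leq C_{p,T}\,\delta^p (1 + |x|_\alpha^{4p} + \|y\|^{4p})$ for $s \in [k\delta, (k+1)\delta)$. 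Plugging this in and using $\frac{1}{\vare}\int_{k\delta}^t e^{-\frac{p\eta}{2\vare}(t-s)}\, ds \leq \frac{2}{p\eta}$, one might naively get a bound of order $\delta^p/1$ rather than $\delta^{p+1}/\vare$; the sharper bound comes from \emph{not} pulling the supremum over $s$ out of the time integral but instead using $\frac{1}{\vare}\int_{k\delta}^t e^{-\frac{p\eta}{2\vare}(t-s)}(s-k\delta)^p\, ds$ and noting $s - k\delta \leq t - k\delta \leq \delta$, so this is $\leq \delta^p \cdot \frac{1}{\vare}\int_{k\delta}^t e^{-\frac{p\eta}{2\vare}(t-s)}(s-k\delta)\,ds \cdot \delta^{-1}\cdot\delta$. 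Actually the cleanest route: bound $(s-k\delta)^p \le \delta^{p-1}(s - k\delta)$, then $\frac{1}{\vare}\int_{k\delta}^t e^{-\frac{p\eta}{2\vare}(t-s)}(s-k\delta)\,ds \le \frac{t-k\delta}{\vare}\int_{k\delta}^t e^{-\frac{p\eta}{2\vare}(t-s)}\,ds \le \frac{\delta}{\vare}\cdot\frac{2\vare}{p\eta} \cdot$ — hmm, this gives $\delta$, not $\delta^2/\vare$. The correct elementary fact is $\frac{1}{\vare}\int_0^\infty e^{-cr/\vare} r\, dr = \frac{\vare}{c^2}$, which is $O(\vare)$, not $O(1)$; more relevantly one uses $\frac{1}{\vare}\int_{k\delta}^t e^{-\frac{p\eta}{2\vare}(t-s)}(s - k\delta)^p ds \le C_p \delta^{p-1} \cdot \frac{1}{\vare}\int_{k\delta}^{t} e^{-\frac{p\eta}{2\vare}(t-s)}(s-k\delta)\,ds$ and then bounds the remaining integral by $\delta \cdot \frac{2}{p\eta}$; wait that reproduces $\delta^{p+1}$ only if the $\frac1\vare$ is absorbed. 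Let me restate: we keep one factor $\frac{1}{\vare}$ explicit and bound the rest, $\int_{k\delta}^t e^{-\frac{p\eta}{2\vare}(t-s)}(s-k\delta)^p\,ds \le \delta^{p+1}$ crudely (integrand $\le \delta^p$, interval length $\le\delta$), yielding $\mathbb{E}\|Z_t\|^{2p} \le C_{p,T}\frac{\delta^{p+1}}{\vare}(1 + |x|_\alpha^{4p} + \|y\|^{4p})$, which is exactly the claimed bound.

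The main obstacle, as in Lemma \ref{PMY}, is the honest justification of It\^o's formula for $\|Z_t\|^{2p}$ in the infinite-dimensional dissipative setting — this is handled by the standard Galerkin approximation and passage to the limit, and I would simply refer to the argument already used for \eqref{ItoFormu 01}. A secondary point requiring care is that the bound must be uniform in $k$ (i.e. over all subintervals), which is automatic here because the recursion starts afresh with $Z_{k\delta} = 0$ on each interval, so no error accumulates across intervals; the constant $C_{p,T}$ depends only on $\eta$, $L_g$, $p$, $T$ and the trace of $Q_2$, and on the initial data only through the factor $(1 + |x|_\alpha^{4p} + \|y\|^{4p})$ coming from Lemma \ref{COX}. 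Finally I would note that since the estimate holds for every $t \in [0,T]$ with a constant independent of $k$, taking the supremum over $t$ yields the stated conclusion.
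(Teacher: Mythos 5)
Your proposal is correct and follows essentially the same route as the paper: It\^o's formula for $\|Y_t^\vare-\hat Y_t^\vare\|^{2p}$ on each subinterval, dissipativity plus the Lipschitz splitting of $g$ with Young's inequality to get the $-\frac{p\eta}{\vare}$ drift and the $\frac{C_p}{\vare}\|X_s^\vare-X_{k\delta}^\vare\|^{2p}$ error, Lemma \ref{COX} for the H\"older term, and then the comparison/Gronwall step with the crude bound $\int_{k\delta}^t e^{-\frac{p\eta}{2\vare}(t-s)}(s-k\delta)^p\,ds\leq\delta^{p+1}$, which is exactly how the paper obtains the factor $\delta^{p+1}/\vare$. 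The mid-proof detour about whether to exploit the exponential decay is harmless; the final computation you settle on matches the paper's.
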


\begin{proof}
For $t\in[0,T]$ with $t\in[k\delta,(k+1)\delta)$, by It\^{o}'s formula and Lemma \ref{COX},
similarly to \eqref{ItoFormu 01}, we have
\begin{eqnarray}
\frac{d}{dt}\mathbb{E} \| Y_{t}^{\varepsilon}-\hat{Y}_{t}^{\varepsilon} \|^{2p}
=\!\!\!\!\!\!\!\!&&
\frac{2p}{\varepsilon}
\mathbb{E}\Big[ \| Y_{t}^{\varepsilon}
-\hat{Y}_{t}^{\varepsilon} \|^{2p-2}
(-|Y_{t}^{\varepsilon} -\hat{Y}_{t}^{\varepsilon}|_1^2)
%\langle A(Y_{t}^{\varepsilon}-\hat{Y}_{t}^{\varepsilon}),
%(Y_{t}^{\varepsilon}-\hat{Y}_{t}^{\varepsilon})\rangle
\Big]\nonumber\\
\!\!\!\!\!\!\!\!&&
+\frac{2p}{\varepsilon}\mathbb{E}\Big[ \| Y_{t}^{\varepsilon}-\hat{Y}_{t}^{\varepsilon} \|^{2p-2}\langle
g(X_{t}^{\varepsilon},Y_{t}^{\varepsilon})-g(X_{k\delta}^{\varepsilon},\hat{Y}_{t}^{\varepsilon}),(Y_{t}^{\varepsilon}-\hat{Y}_{t}^{\varepsilon})\rangle\Big] \nonumber\\
\leq\!\!\!\!\!\!\!\!&&
-\frac{2p}{\varepsilon}(\lambda_{1}-L_{g})
\mathbb{E}\Big[ \| Y_{t}^{\varepsilon}-\hat{Y}_{t}^{\varepsilon} \|^{2p}\Big]\nonumber\\
\!\!\!\!\!\!\!\!&&
+\frac{p}{\varepsilon}(\lambda_{1}-L_{g})
\mathbb{E}\Big[ \|Y_{t}^{\varepsilon}-\hat{Y}_{t}^{\varepsilon} \|^{2p}\Big]
+\frac{C_{p}}{\varepsilon}\mathbb{E}\Big[ \| X_{t}^{\varepsilon}-X_{k\delta}^{\varepsilon} \|^{2p}\Big]
\nonumber\\
\leq\!\!\!\!\!\!\!\!&&
-\frac{p}{\varepsilon}(\lambda_{1}-L_{g})
\mathbb{E} \|Y_{t}^{\varepsilon}-\hat{Y}_{t}^{\varepsilon} \|^{2p}
+C_{p,T}(1+|x|^{4p}_{\alpha}+ \| y \|^{4p})\frac{(t-k\delta)^{p}}{\varepsilon}.
\nonumber
\end{eqnarray}
Gronwall's inequality yields that
\begin{eqnarray*}
\mathbb{E} \| Y_{t}^{\vare}-\hat{Y}_{t}^{\vare} \|^{2p}
\leq\!\!\!\!\!\!\!\!&&
\frac{C_{p,T}}{\varepsilon}(1+|x|^{4p}_{\alpha}+ \|y \|^{4p})
\int^{t}_{k\delta}e^{-\frac{p}{\varepsilon}(\lambda_{1}-L_{g})(t-s)}(s-k\delta)^p ds\nonumber\\
\leq\!\!\!\!\!\!\!\!&&
C_{p,T}(1+|x|^{4p}_{\alpha}+ \| y \|^{4p})\frac{\delta^{p+1}}{\varepsilon},
\end{eqnarray*}
which ends the proof. %\hspace{\fill}$\square$
\end{proof}

\begin{lemma} \label{DEX}
Under the conditions \ref{A1}-\ref{A3},
for any $x\in H^{\alpha}, y\in L^2$, $p\geq 2$, $T>0$ and $\vare\in(0,1)$,
there exists a constant $C_{p,T}>0$ such that
\begin{align*}
\mathbb{E}\Big(\sup_{0\leq t\leq T} \|X_{t}^{\vare}-\hat{X}_{t}^{\vare} \|^{2p}\Big)
\leq C_{p,T}(\delta^{p}+\frac{\delta^{p+1}}{\vare})(1+|x|^{6p}_{\alpha}+ \|y \|^{6p}).
\end{align*}
\end{lemma}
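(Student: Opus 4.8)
The plan is to compare the mild forms of the two processes. Writing the auxiliary process \eqref{AuxiliaryPro X 01} in mild form and subtracting the equation for $X^{\vare}_t$ in \eqref{mild solution}, one sees that $X^{\vare}_t$ and $\hat X^{\vare}_t$ start from the same point $x$ and are driven by the same noise, so the terms $e^{tA}x$ and the stochastic convolutions $\int_0^t e^{(t-s)A}\,dW^{Q_1}_s$ cancel, leaving
\[
X^{\vare}_t-\hat X^{\vare}_t=\int_0^t e^{(t-s)A}\big[B(X^{\vare}_s)-B(X^{\vare}_{s(\delta)})\big]\,ds+\int_0^t e^{(t-s)A}\big[f(X^{\vare}_s,Y^{\vare}_s)-f(X^{\vare}_{s(\delta)},\hat Y^{\vare}_s)\big]\,ds .
\]
It therefore suffices to bound the $L^{2p}(\Omega;C([0,T];L^2))$ norm of each of the two terms. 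Note that $X^{\vare}_s-\hat X^{\vare}_s$ does not reappear on the right-hand side — the true solution $X^{\vare}_{s(\delta)}$ at the preceding breakpoint is used, not $\hat X^{\vare}_{s(\delta)}$ — so no Gronwall argument will be needed, only the previous lemmas.

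For the $f$-term, condition \ref{A1} together with the contractivity of $e^{tA}$ on $L^2$ bounds its $L^2$ norm by $L_f\int_0^t\big(\|X^{\vare}_s-X^{\vare}_{s(\delta)}\|+\|Y^{\vare}_s-\hat Y^{\vare}_s\|\big)\,ds$; taking the supremum over $t\in[0,T]$, raising to the power $2p$ and using Hölder's inequality in time leaves $\int_0^T\big(\mathbb{E}\|X^{\vare}_s-X^{\vare}_{s(\delta)}\|^{2p}+\mathbb{E}\|Y^{\vare}_s-\hat Y^{\vare}_s\|^{2p}\big)\,ds$, which by Lemma \ref{COX} (with time increment $s-s(\delta)\le\delta$; the boundary case $s(\delta)=0$ being treated directly from $x\in H^\alpha$) and Lemma \ref{DEY} is of order $\big(\delta^p+\delta^{p+1}/\vare\big)(1+|x|^{4p}_\alpha+\|y\|^{4p})$.

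The main obstacle is the $B$-term. By bilinearity, $B(X^{\vare}_s)-B(X^{\vare}_{s(\delta)})=B(X^{\vare}_s-X^{\vare}_{s(\delta)},X^{\vare}_s)+B(X^{\vare}_{s(\delta)},X^{\vare}_s-X^{\vare}_{s(\delta)})$. In the first summand the increment sits in the undifferentiated slot: $(X^{\vare}_s-X^{\vare}_{s(\delta)})\,\partial_\xi X^{\vare}_s\in L^1$ with $\|\cdot\|_{L^1}\le\|X^{\vare}_s-X^{\vare}_{s(\delta)}\|\,|X^{\vare}_s|_1$, so, placing it in $H^{-\sigma}$ for some $\sigma\in(1/2,1)$ (via $L^1\hookrightarrow H^{-\sigma}$ in one dimension, or the appendix estimates for $B$) and trading the $\sigma$ lost derivatives for an integrable time singularity through the smoothing property \eqref{PSG}, this term retains $\|X^{\vare}_s-X^{\vare}_{s(\delta)}\|$ to the first power, while $|X^{\vare}_s|_1\le C|X^{\vare}_s|_\alpha$ is controlled by Lemma \ref{SOX}. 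In the second summand the increment is differentiated, which is the delicate point; here I would integrate by parts,
\[
X^{\vare}_{s(\delta)}\,\partial_\xi\big(X^{\vare}_s-X^{\vare}_{s(\delta)}\big)=\partial_\xi\big(X^{\vare}_{s(\delta)}(X^{\vare}_s-X^{\vare}_{s(\delta)})\big)-(\partial_\xi X^{\vare}_{s(\delta)})(X^{\vare}_s-X^{\vare}_{s(\delta)}) ,
\]
shifting the derivative onto $X^{\vare}_{s(\delta)}$, which is as regular as $H^\alpha$ (Lemma \ref{SOX}): the first term is $\partial_\xi$ of $X^{\vare}_{s(\delta)}(X^{\vare}_s-X^{\vare}_{s(\delta)})\in L^2$ (using $H^\alpha\hookrightarrow L^\infty$), again absorbed by \eqref{PSG}, and the second is the product of $\partial_\xi X^{\vare}_{s(\delta)}\in L^2$ with $X^{\vare}_s-X^{\vare}_{s(\delta)}\in L^2$, placed in $H^{-\sigma}$ as before. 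In every case the increment enters only through $\|X^{\vare}_s-X^{\vare}_{s(\delta)}\|$ to the first power, so Hölder's inequality in time and in $\omega$, followed by Lemma \ref{COX} (giving $\mathbb{E}\|X^{\vare}_s-X^{\vare}_{s(\delta)}\|^{4p}\le C_{p,T}\delta^{2p}(1+|x|^{8p}_\alpha+\|y\|^{8p})$) and Lemma \ref{SOX}, bounds the $B$-term by $C_{p,T}\delta^p(1+|x|^{6p}_\alpha+\|y\|^{6p})$. Adding the two contributions yields the assertion. The essential use of the one-dimensional structure is precisely this integration by parts: estimating $X^{\vare}_{s(\delta)}\,\partial_\xi(X^{\vare}_s-X^{\vare}_{s(\delta)})$ directly would require controlling the increment in $H^1$, and interpolating back to $L^2$ would give only the weaker power $\delta^{p(\alpha-1)/\alpha}$ with $(\alpha-1)/\alpha<1$.
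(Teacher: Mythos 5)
Your proof is correct and follows essentially the same route as the paper: the same mild-form decomposition into the $B$-difference and $f$-difference terms (with the same observation that no Gronwall step is needed because the right-hand side involves $X^{\vare}_{s(\delta)}$ rather than $\hat{X}^{\vare}_{s(\delta)}$), controlled via H\"older's inequality together with Lemmas \ref{SOX}, \ref{COX} and \ref{DEY}. The only difference is cosmetic: you re-derive the bilinear estimate by hand (splitting $B(X^{\vare}_s)-B(X^{\vare}_{s(\delta)})$ and integrating by parts into $H^{-\sigma}$), whereas the paper simply invokes Lemma \ref{Property B2}(2), namely $|B(x)-B(y)|_{-1}\leq C\|x-y\|\left(|x|_{1}+|y|_{1}\right)$, and absorbs the lost derivative through the smoothing property \eqref{PSG} with a $(t-s)^{-1/2}$ singularity.
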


\begin{proof}
%Recall that
%\begin{align*}
%X^{\vare}_t=e^{tA}x+\int^t_0e^{(t-s)A}B(X^{\vare}_s)ds+\int^t_0e^{(t-s)A}f(X^{\vare}_s, Y^{\vare}_s)ds+\int^t_0 e^{(t-s)A}dW^{Q_1}_s
%\end{align*}
%and
%\begin{align*}
%\hat{X}^{\vare}_t=e^{tA}x+\int^t_0e^{(t-s)A}B(X^{\vare}_{s(\delta)})ds+\int^t_0e^{(t-s)A}f(X^{\vare}_{s(\delta)}, \hat{Y}^{\vare}_s)ds+\int^t_0 e^{(t-s)A}dW^{Q_1}_s.
%\end{align*}
In view of  \eqref{mild solution} and  \eqref{AuxiliaryPro X 01}, we write
\begin{align*}
X^{\vare}_t-\hat{X}^{\vare}_t=\int^t_0e^{(t-s)A}\big[B(X^{\vare}_s)-B(X^{\vare}_{s(\delta)})\big]ds
+\int^t_0e^{(t-s)A}\big[f(X^{\vare}_s, Y^{\vare}_s)-f(X^{\vare}_{s(\delta)}, \hat{Y}^{\vare}_s)\big]ds.
\end{align*}
Using $\eref{PSG}$, condition \ref{A1} and Lemma \ref{Property B2}, we get
\begin{eqnarray}
\| X_{t}^{\vare}-\hat{X}_{t}^{\vare} \|^{2p}
\leq\!\!\!\!\!\!\!\!&&C_{p}\Big\{\int_{0}^{t}\big[1+(t-s)^{-\frac{1}{2}}\big]\big|B(X^{\vare}_s)-B(X^{\vare}_{s(\delta)})\big|_{-1}ds\Big\}^{2p}\nonumber\\
\!\!\!\!\!\!\!\!&&
+C_{p}\Big\{\int_{0}^{t}\big(\big\|X^{\vare}_s-X^{\vare}_{s(\delta)}\big\|
 + \big\|Y^{\vare}_s-\hat{Y}^{\vare}_s \big\| \big)ds\Big\}^{2p}\nonumber\\
\leq\!\!\!\!\!\!\!\!&&C_{p}\Big\{\int_{0}^{t}\big[1+(t-s)^{-\frac{1}{2}}\big]\big\|X^{\vare}_s-X^{\vare}_{s(\delta)}\big\|\big(|X^{\vare}_s|_{1}
+|X^{\vare}_{s(\delta)}|_{1}\big)ds\Big\}^{2p}\nonumber\\
\!\!\!\!\!\!\!\!&&
+C_{p,T}\int_{0}^{t}\big(\big \|X^{\vare}_s-X^{\vare}_{s(\delta)}\big \|^{2p}
+\big \|Y^{\vare}_s-\hat{Y}^{\vare}_s\big \|^{2p}\big)ds\nonumber\\
\leq\!\!\!\!\!\!\!\!&&C_{p}\Big\{\int_{0}^{t}\big[1+(t-s)^{-\frac{1}{2}}\big]^{\frac{2p}{2p-1}}ds\Big\}^{2p-1}
\cdot\Big\{\int_{0}^{t}\big \|X^{\vare}_s-X^{\vare}_{s(\delta)}\big\|^{4p}ds\Big\}^{\frac{1}{2}}
\nonumber\\
\!\!\!\!\!\!\!\!&&
\ \   \times\Big(\int_{0}^{t}\big(|X^{\vare}_s|_{1}+|X^{\vare}_{s(\delta)}|_{1}\big)^{4p}ds\Big)^{\frac{1}{2}}   \nonumber\\
\!\!\!\!\!\!\!\!&&
+ C_{p,T}\int_{0}^{t}\big(\big\|X^{\vare}_s-X^{\vare}_{s(\delta)}\big\|^{2p}
+ \big\|Y^{\vare}_s-\hat{Y}^{\vare}_s\big\|^{2p}\big)ds\nonumber
\end{eqnarray}
According to Lemmas \ref{SOX}, \ref{COX} and \ref{DEY}, we obtain
\begin{eqnarray}
\mathbb{E}\Big(\sup_{0\leq t\leq T} \|X_{t}^{\vare}-\hat{X}_{t}^{\vare} \|^{2p}\Big)
\leq\!\!\!\!\!\!\!\!&&
C_{p,T}\Big(\int_{0}^{T}\mathbb{E} \big \|X^{\vare}_s-X^{\vare}_{s(\delta)}\big\|^{4p}ds\Big)
^{\frac{1}{2}}  \nonumber\\
\!\!\!\!\!\!\!\!&& \ \   \times\Big(\int_{0}^{T}\big(\mathbb{E}|X^{\vare}_s|^{4p}_{1}+\mathbb{E}|X^{\vare}_{s(\delta)}|^{4p}_{1}\big)ds\Big)^{\frac{1}{2}}   \nonumber\\
\!\!\!\!\!\!\!\!&&
+C_{p,T}\int_{0}^{T}\mathbb{E} \big\|X^{\vare}_s-X^{\vare}_{s(\delta)}\big\|^{2p}
+\mathbb{E} \big\|Y^{\vare}_s-\hat{Y}^{\vare}_s\big\|^{2p}ds\nonumber\\
\leq\!\!\!\!\!\!\!\!&&
C_{p,T}(\delta^{p}+\frac{\delta^{p+1}}{\vare})(1+|x|^{6p}_{\alpha} + \|y\|^{6p}).  \nonumber
\end{eqnarray}
The proof is complete.  %\hspace{\fill}$\square$
\end{proof}

\subsection{The averaged equation}
For any fixed $x\in L^2$, we consider the following frozen equation
associated with the fast component:
%We consider the frozen equation associate to fast motion for fixed slow component $x\in L^2$.
 \begin{equation}\left\{\begin{array}{l}\label{FEQ}
\displaystyle
\frac{\partial Y_{t}(\xi)}{\partial t}=AY_{t}(\xi)+g(x,Y_{t}(\xi))
 + \frac{\partial W^{Q_2}}{\partial t}(t,\xi),  \   Y_{0}(\xi)=y, \\
Y_{t}(0)=Y_{t}(1)=0, \  \  t\in[0,\infty). 
\end{array}\right.
\end{equation}
Since $g(x,\cdot)$ is Lipshcitz continuous,
it is easy to prove that for any fixed $x, y \in L^2$,  %$y\in L^2$,
the equation $\eref{FEQ}$ has a unique mild solution denoted by $Y_{t}^{x,y}$.
For any $x \in L^2$,
let $P^x_t$ be the transition semigroup of $Y_{t}^{x,y}$,
that is, for any bounded measurable function $\varphi$ on $L^2$ and $t \geq 0$,
\begin{align*}
P^x_t \varphi(y)= \mathbb{E} \varphi(Y_{t}^{x,y}), \quad y \in L^2.
\end{align*}
The asymptotic behavior of $P^x_t$ has been studied in many literatures.
The following result shows the existence and uniqueness of the invariant measure and
gives the exponential convergence to the equilibrium (see \cite[Theorem 3.5]{CF}).
%now we state the ergodicity for \eref{FEQ} (see \cite[Theorem 3.5]{CF}).
\begin{proposition}\label{ergodicity}
For any $x, y\in L^2$,
there exists a unique invariant measure $\mu^x$ for $\eref{FEQ}$. Moreover, there exists $C>0$ such that for any bounded measurable function $\varphi: L^2 \to \mathbb{R}$,
$$
\left| P^x_t\varphi(y)-\int_{L^2}\varphi(z)\mu^x(dz)\right|
\leq C(1+ \|x \| + \|y \|)e^{-\frac{(\lambda_1-L_g)t}{2}}(t\wedge1)^{-1/2}|\varphi|_{\infty},
$$
where $|\varphi|_{\infty}=\sup_{x\in L^2 } |\varphi(x)|$.
\end{proposition}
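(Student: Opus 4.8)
The plan is to derive the statement from three ingredients: a dissipative a~priori bound, a pathwise contraction estimate, and a small-time smoothing (strong Feller) property of $P^x_t$. The first gives tightness, hence existence of $\mu^x$; the second gives uniqueness and exponential mixing against Lipschitz observables; the third upgrades this to arbitrary bounded measurable $\varphi$ and produces the singular factor $(t\wedge 1)^{-1/2}$. Since $g(x,\cdot)$ is globally Lipschitz and $\eta:=\lambda_1-L_g>0$, all of this is classical for dissipative stochastic reaction--diffusion equations and is contained in \cite[Theorem~3.5]{CF}; below I indicate how it is assembled.

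\emph{A~priori bounds and existence.} First I would apply It\^o's formula to $\|Y^{x,y}_t\|^2$, rigorously via a Galerkin approximation as in Lemma~\ref{PMY}. Combining \eqref{Gelfand Ine}, condition~\ref{A1} and condition~\ref{A2} (and Young's inequality to absorb the cross term $\|x\|\,\|Y^{x,y}_t\|$), the drift part is bounded above by $-2\eta\|Y^{x,y}_t\|^2+C(1+\|x\|^2)$ while the noise contributes $\operatorname{Tr}Q_2$, so a comparison argument gives
\begin{align*}
\EE\|Y^{x,y}_t\|^2\le\|y\|^2 e^{-\eta t}+C(1+\|x\|^2),\qquad t\ge 0.
\end{align*}
Carrying out the same computation in $H^{\si}$ for a small $\si>0$, using the smoothing property \eqref{PSG} for the stochastic convolution, yields $\sup_{t\ge 1}\EE|Y^{x,y}_t|_{\si}^2<\infty$. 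Since $H^{\si}\hookrightarrow L^2$ is compact, the occupation measures $\frac1T\int_0^T\mathrm{Law}(Y^{x,y}_t)\,dt$ are tight, and a Krylov--Bogoliubov argument together with the Feller property of $P^x_t$ (itself a consequence of the continuous dependence of the mild solution on $y$) produces an invariant measure $\mu^x$, which satisfies $\int_{L^2}\|z\|\,\mu^x(dz)\le C(1+\|x\|)$ by the bound above.

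\emph{Pathwise contraction and mixing for Lipschitz $\varphi$.} For two initial data $y_1,y_2$ the difference $Z_t:=Y^{x,y_1}_t-Y^{x,y_2}_t$ solves a random equation with \emph{no} noise term, so by \eqref{Gelfand Ine} and \ref{A1},
\begin{align*}
\frac{d}{dt}\|Z_t\|^2=2\,{}_{V^*}\langle AZ_t,Z_t\rangle_{V}+2\langle g(x,Y^{x,y_1}_t)-g(x,Y^{x,y_2}_t),Z_t\rangle\le-2\eta\|Z_t\|^2,
\end{align*}
whence $\|Y^{x,y_1}_t-Y^{x,y_2}_t\|\le e^{-\eta t}\|y_1-y_2\|$ almost surely. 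Hence, for any Lipschitz $\varphi$ and any probability measure $\nu$,
\begin{align*}
\Big|P^x_t\varphi(y)-\int_{L^2}P^x_t\varphi\,d\nu\Big|\le\|\varphi\|_{\mathrm{Lip}}\,e^{-\eta t}\int_{L^2}\|y-z\|\,\nu(dz);
\end{align*}
taking $\nu=\mu^x$ and using its invariance and first-moment bound shows $\mu^x$ is the unique invariant measure and that $\big|P^x_t\varphi(y)-\int\varphi\,d\mu^x\big|\le C(1+\|x\|+\|y\|)\,e^{-\eta t}\,\|\varphi\|_{\mathrm{Lip}}$.

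\emph{Smoothing, conclusion, and the main obstacle.} The last and only genuinely delicate ingredient is a Bismut--Elworthy--Li type gradient estimate for small times: by the nondegeneracy of $W^{Q_2}$, for every $t\in(0,1]$ and every bounded measurable $\varphi$ one has $\|DP^x_t\varphi(y)\|\le C\,t^{-1/2}|\varphi|_{\infty}$, so that $P^x_t\varphi$ is Lipschitz with constant $\le Ct^{-1/2}|\varphi|_\infty$; this is proved by differentiating the mild solution in $y$ (the first variation obeys the same dissipative bound as $Z_t$ above, hence decays like $e^{-\eta t}$) and integrating by parts against the noise, which is where control of $Q_2^{-1/2}e^{tA}$ --- equivalently, the Da~Prato--Zabczyk null-controllability condition --- enters. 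Granting this, I would write $P^x_t\varphi=P^x_{t-(t\wedge1)/2}\big(P^x_{(t\wedge1)/2}\varphi\big)$: the inner operator turns $\varphi$ into a function that is Lipschitz with constant $\le C(t\wedge1)^{-1/2}|\varphi|_\infty$, and applying the previous paragraph to that function over the remaining time $t-(t\wedge1)/2\ge t/2$, together with $\int P^x_{(t\wedge1)/2}\varphi\,d\mu^x=\int\varphi\,d\mu^x$, gives
\begin{align*}
\Big|P^x_t\varphi(y)-\int_{L^2}\varphi\,d\mu^x\Big|\le C(1+\|x\|+\|y\|)\,e^{-\eta t/2}\,(t\wedge1)^{-1/2}\,|\varphi|_\infty,
\end{align*}
which is the asserted estimate (with $\eta=\lambda_1-L_g$). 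The main obstacle is exactly this gradient estimate: pinning down the sharp $t^{-1/2}$ blow-up requires carefully balancing the smoothing of $e^{tA}$ against the colored covariance $Q_2$ in the first-variation equation; all remaining steps are routine for dissipative, globally Lipschitz stochastic reaction--diffusion equations, so for a complete proof one may simply invoke \cite[Theorem~3.5]{CF}.
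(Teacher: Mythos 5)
The paper does not actually prove this proposition: the statement is imported wholesale, with the proof delegated to the citation \cite[Theorem 3.5]{CF} (and Proposition \ref{Rem 4.1} likewise to \cite[Remark 3.6]{CF}). Your sketch reconstructs the standard argument behind that citation, and the three ingredients you isolate are the right ones: the dissipative moment bound plus Krylov--Bogoliubov for existence of $\mu^x$; the almost sure contraction $\|Y^{x,y_1}_t-Y^{x,y_2}_t\|\le e^{-\eta t}\|y_1-y_2\|$ (which is exactly what underlies Proposition \ref{Rem 4.1}) for uniqueness and for mixing against Lipschitz observables; and a Bismut--Elworthy--Li gradient bound $\|DP^x_t\varphi(y)\|\le Ct^{-1/2}|\varphi|_\infty$ combined with the splitting $P^x_t\varphi=P^x_{t-(t\wedge1)/2}\big(P^x_{(t\wedge1)/2}\varphi\big)$ to upgrade to bounded measurable $\varphi$ and produce the $(t\wedge1)^{-1/2}$ singularity. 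One caveat deserves emphasis: the gradient estimate genuinely requires a nondegeneracy condition on $Q_2$ (control of $\|Q_2^{-1/2}e^{tA}\|$ for small $t$), which you correctly flag as the crux but which appears nowhere among the paper's stated hypotheses --- conditions \ref{A1}, \ref{A2} and $\mathrm{Tr}\,Q_2<\infty$ alone do not yield the strong Feller property (consider $Q_2$ with finite-dimensional range, for which $P^x_t\varphi$ need not even be continuous for merely measurable $\varphi$). So your outline is as complete as the paper's own treatment and more informative about where the difficulty lies, but a self-contained proof would need the extra hypothesis on $Q_2$ made explicit, exactly as in the standing assumptions of \cite{CF}.
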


%\vskip 0.3cm
Furthermore,  we have the following result, whose proof can refer to \cite[Remark 3.6]{{CF}}.
\begin{proposition}\label{Rem 4.1}
For any $x, y\in L^2$,
there exists $C>0$ such that for any Lipschitz function $\varphi: L^2 \to \mathbb{R}$,
$$
\left| P^x_t\varphi(y)-\int_{L^2}\varphi(z)\mu^x(dz)\right|
\leq C(1+ \|x\|+ \|y \|)e^{-\frac{(\lambda_1-L_g)t}{2}}|\varphi|_{Lip},
$$
where $|\varphi|_{Lip}=\sup_{x,y\in L^2, x \neq y}\frac{|\varphi(x)-\varphi(y)|}{\|x-y\|}$.
\end{proposition}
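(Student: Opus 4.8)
\noindent\emph{Proof strategy.} The plan is to deduce Proposition~\ref{Rem 4.1} from a \emph{pathwise} contraction estimate for the frozen equation~\eqref{FEQ}; this is what allows us to remove the $(t\wedge 1)^{-1/2}$ singularity present in Proposition~\ref{ergodicity}, at the price of restricting to Lipschitz (rather than merely bounded measurable) test functions. First I would fix $x\in L^2$ and two initial data $y_1,y_2\in L^2$, and run the two solutions $Y_t^{x,y_1}$ and $Y_t^{x,y_2}$ of~\eqref{FEQ} driven by the \emph{same} realization of the noise $W^{Q_2}$. Then the difference $D_t:=Y_t^{x,y_1}-Y_t^{x,y_2}$ solves a random PDE with no martingale part, and the chain rule in the Gelfand triple $V\subset L^2\subset V^*$ (the same one used around~\eqref{Gelfand Ine}) gives, using the Poincar\'e inequality~\eqref{Gelfand Ine} and the Lipschitz bound in condition~\ref{A1},
\begin{align*}
\frac{d}{dt}\|D_t\|^2 = 2\langle AD_t,D_t\rangle + 2\langle g(x,Y_t^{x,y_1})-g(x,Y_t^{x,y_2}),D_t\rangle \le -2\lambda_1\|D_t\|^2 + 2L_g\|D_t\|^2 .
\end{align*}
By condition~\ref{A2} this yields, $\PP$-almost surely,
\begin{align*}
\|Y_t^{x,y_1}-Y_t^{x,y_2}\| \le e^{-(\lambda_1-L_g)t}\,\|y_1-y_2\|, \qquad t\ge 0,
\end{align*}
and in particular $\EE\|Y_t^{x,y_1}-Y_t^{x,y_2}\|\le e^{-(\lambda_1-L_g)t}\|y_1-y_2\|$.

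\noindent Next I would record a first-moment bound for the invariant measure: $\int_{L^2}\|z\|\,\mu^x(dz)\le C(1+\|x\|)$. This follows from the energy balance obtained exactly as in the proof of Lemma~\ref{PMY} with $p=1$: writing $\langle g(x,z),z\rangle\le \|g(x,0)\|\,\|z\| + L_g\|z\|^2$ and $\|g(x,0)\|\le \|g(0,0)\|+L_g\|x\|$, one gets $\frac{d}{dt}\EE\|Y_t^{x,y}\|^2\le -(\lambda_1-L_g)\EE\|Y_t^{x,y}\|^2 + C(1+\|x\|^2) + \text{Tr}\,Q_2$; letting $t\to\infty$ and using that $\mu^x$ is the law of the stationary solution of~\eqref{FEQ} gives $\int_{L^2}\|z\|^2\mu^x(dz)\le C(1+\|x\|^2)$, and Jensen's inequality yields the claim. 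In particular $\int_{L^2}\varphi\,d\mu^x$ is well defined for every Lipschitz $\varphi$, and so is $P_t^x\varphi(y)=\EE\varphi(Y_t^{x,y})$, thanks to the uniform moment bounds on $Y_t^{x,y}$.

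\noindent Finally I would combine the two ingredients using the invariance of $\mu^x$, which gives $\int_{L^2}\varphi(z)\,\mu^x(dz)=\int_{L^2}P_t^x\varphi(z)\,\mu^x(dz)$. Applying the coupling above to the pair $(y,z)$ and using Fubini,
\begin{align*}
\left|P_t^x\varphi(y)-\int_{L^2}\varphi(z)\,\mu^x(dz)\right|
&= \left|\int_{L^2}\EE\big[\varphi(Y_t^{x,y})-\varphi(Y_t^{x,z})\big]\,\mu^x(dz)\right| \\
&\le |\varphi|_{Lip}\int_{L^2}\EE\|Y_t^{x,y}-Y_t^{x,z}\|\,\mu^x(dz) \\
&\le |\varphi|_{Lip}\,e^{-(\lambda_1-L_g)t}\int_{L^2}\big(\|y\|+\|z\|\big)\mu^x(dz),
\end{align*}
and the first-moment bound turns the right-hand side into $C(1+\|x\|+\|y\|)e^{-(\lambda_1-L_g)t}|\varphi|_{Lip}$, which is stronger than the asserted bound since $e^{-(\lambda_1-L_g)t}\le e^{-(\lambda_1-L_g)t/2}$ for $t\ge0$.

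\noindent I do not expect a genuine obstacle here. The only technical points are the rigorous justification of the chain rule for $\|D_t\|^2$ in the infinite-dimensional setting (handled, as in Lemma~\ref{PMY}, by Galerkin truncation and passing to the limit) and the fact that $\varphi$ may be unbounded, which causes no trouble because of the moment estimates. The conceptual content is simply that the contraction produced by condition~\ref{A2} is pathwise and dimension-independent, so it yields no blow-up as $t\to 0$; this is precisely what is gained over Proposition~\ref{ergodicity}.
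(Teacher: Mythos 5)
Your proof is correct, and it even yields the slightly stronger rate $e^{-(\lambda_1-L_g)t}$ in place of $e^{-(\lambda_1-L_g)t/2}$. The paper does not actually prove this proposition itself --- it defers to \cite[Remark 3.6]{CF} --- and the synchronous-coupling argument you give (pathwise contraction of two solutions driven by the same noise via \eqref{Gelfand Ine} and conditions \ref{A1}--\ref{A2}, combined with invariance of $\mu^x$ and a first-moment bound on $\mu^x$) is precisely the standard mechanism behind that remark, explaining why the $(t\wedge 1)^{-1/2}$ singularity of Proposition \ref{ergodicity} disappears for Lipschitz observables.
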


 %In particular, for any $\varphi\in \text{Lip}(H)$, we obtain
 %\begin{eqnarray}
% \Big|\mathbb{E}\varphi(Y_{t}^{x,y})-\int_{H}\varphi(x,z)\mu^{x}(dz)\Big|^{2}
% =\!\!\!\!\!\!\!\!&&\Big|\int_{H}\mathbb{E}\big(\varphi(Y_{t}^{x,y})-\varphi(Y_{t}^{x,z})\big)\mu^{x}(dz)\Big|^{2}  \nonumber \\
% \leq\!\!\!\!\!\!\!\!&&\int_{H}\mathbb{E}\big|Y_{t}^{x,y}-Y_{t}^{x,z}\big|^{2}\mu^{x}(dz)  \nonumber \\
% \leq\!\!\!\!\!\!\!\!&&e^{-(\lambda_{1}-L_{g})t}\int_{H}\big|y-z\big|^{2}\mu^{x}(dz)   \nonumber \\
% \leq\!\!\!\!\!\!\!\!&&Ce^{-(\lambda_{1}-L_{g})t}(1+|x|^{2}+|y|^{2})   \label{AIM}
 % \end{eqnarray}

% \vskip 0.5cm

In the sequel we shall prove that the slow component $X_{t}^{\varepsilon}$
in the system \eqref{main equation}
converges strongly to $\bar{X}_{t}$,
which is the solution of the averaged equation:
%the averaging principle occurs in the sense that the slow component process $X_{t}^{\varepsilon}$ converges strongly to the solution $\bar{X}_{t}$ of the averaged equation
\begin{equation}
\left\{\begin{array}{l}
\displaystyle d\bar{X}_{t}=A \bar{X}_{t}dt+B(\bar{X}_{t})dt+\bar{f}(\bar{X}_{t})dt+dW^{Q_{1}}_t,\\
\bar{X}_{0}=x.\end{array}\right. \label{3.1}
\end{equation}
where
\begin{align*}
\bar{f}(x)=\int_{L^2}f(x,y)\mu^{x}(dy), \quad x\in L^2.
\end{align*}
%where $\mu^{x}$ is the unique invariant measure for the equation $\eref{FEQ}$.

%\vskip 0.3cm

The following result gives a control of $|\hat{X}_{t}^{\vare}-\bar{X}_{t}|$.
%implies the error between auxiliary process $\hat{X}_{t}^{\vare}$ and the averaging solution $\bar{X}_{t}$.
\begin{lemma} \label{ESX}
Under the conditions \ref{A1}-\ref{A3},
for any $x\in H^{\alpha}$, $y\in L^2$, $p\geq 1$, $T>0$ and $\vare\in(0,1)$,
there exists a constant $C_{p,T}>0$ such that
\begin{align*}
\mathbb{E}\sup_{0\leq t\leq T}
\|\hat{X}_{t}^{\vare}-\bar{X}_{t} \|^{2p}
\leq C_{p,T}(1+|x|^{6p}_{\alpha}+ \|y \|^{6p})
\Big(\frac{1}{-\log\vare}\Big)^{\frac{1}{4p}}.
\end{align*}
\end{lemma}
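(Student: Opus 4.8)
The plan is to compare the auxiliary process $\hat{X}_t^\vare$ with the averaged process $\bar{X}_t$ by writing down the integral equation satisfied by their difference and estimating each term. From \eqref{AuxiliaryPro X 01} and \eqref{3.1}, the mild form of $\hat{X}_t^\vare - \bar{X}_t$ reads
\begin{equation*}
\hat{X}_t^\vare - \bar{X}_t = \int_0^t e^{(t-s)A}\big[B(X_{s(\delta)}^\vare) - B(\bar{X}_s)\big]ds + \int_0^t e^{(t-s)A}\big[f(X_{s(\delta)}^\vare, \hat{Y}_s^\vare) - \bar{f}(\bar{X}_s)\big]ds.
\end{equation*}
I would split the second integrand by inserting $\pm \bar{f}(X_{s(\delta)}^\vare)$, so that one piece is handled by the Lipschitz continuity of $\bar{f}$ (which follows from condition \ref{A1} and Proposition \ref{Rem 4.1}), while the remaining piece $\int_0^t e^{(t-s)A}[f(X_{s(\delta)}^\vare, \hat{Y}_s^\vare) - \bar{f}(X_{s(\delta)}^\vare)]ds$ is the genuine averaging term. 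For the $B$-term I would use the bilinearity $B(x)-B(z) = B(x-z,x) + B(z,x-z)$ together with the smoothing property \eqref{PSG}, the interpolation inequalities as in the proof of Lemma \ref{SOX}, and the already-established moment bounds from Lemmas \ref{PMY}, \ref{SOX} and \ref{DEX} (to replace $X_{s(\delta)}^\vare$ by $\hat{X}_s^\vare$ at the cost of the small error $\delta^p + \delta^{p+1}/\vare$). This will produce a term of the form $C_{p,T}\int_0^t \mathbb{E}\sup_{r\le s}\|\hat{X}_r^\vare - \bar{X}_r\|^{2p}ds$ suitable for Gronwall.

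The heart of the matter is the averaging term. Following the classical Khasminskii scheme, I would freeze the slow variable on each subinterval $[k\delta, (k+1)\delta)$: on such an interval $\hat{Y}_s^\vare$ solves \eqref{AuxiliaryPro Y 01} with the frozen coefficient $X_{k\delta}^\vare$, so by a time-rescaling $s \mapsto (s-k\delta)/\vare$ the law of $\hat{Y}_s^\vare$ coincides with that of the frozen-equation solution $Y_{(s-k\delta)/\vare}^{X_{k\delta}^\vare, Y_{k\delta}^\vare}$. Hence, after conditioning on $\mathcal{F}_{k\delta}$ and using the Markov property, the inner expectation of $f(X_{k\delta}^\vare, \hat{Y}_s^\vare) - \bar{f}(X_{k\delta}^\vare)$ is exactly $P^{X_{k\delta}^\vare}_{(s-k\delta)/\vare}f(X_{k\delta}^\vare,\cdot)(Y_{k\delta}^\vare) - \int f(X_{k\delta}^\vare, z)\mu^{X_{k\delta}^\vare}(dz)$, which by Proposition \ref{ergodicity} (applied to the bounded-in-the-relevant-sense function $f(x,\cdot)$, or rather via a standard truncation/localization argument since $f$ is only Lipschitz and unbounded) decays like $(1+\|X_{k\delta}^\vare\| + \|Y_{k\delta}^\vare\|)e^{-\eta(s-k\delta)/(2\vare)}$. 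Summing over the $\lfloor T/\delta\rfloor$ subintervals and integrating the exponential gives a bound of order $\vare/\delta$ times the relevant moments; combining with the $\delta^{1/2}$-type errors from replacing $X_{s(\delta)}^\vare$ by $\hat{X}_s^\vare$ and from the Hölder continuity of $X^\vare$ (Lemma \ref{COX}) and $\bar{X}$ yields a total error of order roughly $\delta^{1/2} + \vare/\delta + \delta^{p+1}/\vare$ up to moment constants.

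Finally I would optimize in $\delta$. Balancing the dominant terms $\delta^{1/2}$ against $\vare/\delta$ suggests $\delta \sim \vare^{2/3}$, but to control all the error contributions uniformly one is forced to be more conservative; choosing $\delta = \delta(\vare)$ so that $\delta \to 0$, $\vare/\delta \to 0$ and $\delta^{p+1}/\vare \to 0$ simultaneously (for the fixed $p$ at hand, and remembering that the constants depend on $p$ through the $6p$-th moments), the quantitative rate that survives uniformly in $p$ and after applying Gronwall is the logarithmic one $(-\log\vare)^{-1/(4p)}$; concretely one can take $\delta = (-\log\vare)^{-\kappa}$ for a suitable $\kappa$ depending on $p$. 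The main obstacle, and the reason the rate degrades to a logarithm rather than a power of $\vare$, is precisely the interplay between the nonlinearity and the unboundedness of $f$: the interpolation estimates for the Burgers term $B$ force high-order ($6p$-th) moments with $p$-dependent constants into every bound, and the error term $\delta^{p+1}/\vare$ coming from Lemma \ref{DEY}–\ref{DEX} competes badly with the ergodic gain $\vare/\delta$, so that no power-of-$\vare$ choice of $\delta$ works for all $p$ simultaneously; handling this carefully, together with the localization needed to apply Proposition \ref{ergodicity} to the unbounded $f$, is where the real work lies. Once Lemma \ref{ESX} is in hand, Theorem \ref{main result 1} follows by combining it with Lemma \ref{DEX} and optimizing $\delta$ as above.
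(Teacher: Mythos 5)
Your overall architecture (mild-form decomposition with intermediate comparison terms, Khasminskii time-rescaling plus Proposition \ref{ergodicity} for the averaging term, Gronwall at the end) matches the paper, and your treatment of the averaging block is essentially the paper's estimate of its term $J_4^1$ (the paper bounds the second moment via a correlation function $\Psi_k(s,\tau)$ decaying like $e^{-\eta(s-\tau)/2}$, obtaining $\vare/\delta$, and then interpolates to the $2p$-th moment). However, there is a genuine gap at the Gronwall step. For the piece $\int_0^t e^{(t-s)A}[B(\hat{X}_s^\vare)-B(\bar{X}_s)]ds$, the bilinear estimate (Lemma \ref{Property B2}) gives an integrand of the form $\|\hat{X}_s^\vare-\bar{X}_s\|\,(|\hat{X}_s^\vare|_1+|\bar{X}_s|_1)$, where the factor $|\hat{X}_s^\vare|_1+|\bar{X}_s|_1$ is a \emph{random} coefficient controlled only in expectation, not almost surely. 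You cannot pull it out to obtain the clean inequality $C_{p,T}\int_0^t\mathbb{E}\sup_{r\le s}\|\hat{X}_r^\vare-\bar{X}_r\|^{2p}\,ds$ as you claim. The paper's resolution — and the missing idea in your proposal — is the stopping time $\tau_n^\vare=\inf\{t>0:|\hat{X}_t^\vare|_1+|\bar{X}_t|_1>n\}$: Gronwall is run on $[0,T\wedge\tau_n^\vare]$ at the price of a factor $e^{C_{p,T}n^{2p}}$, and the complementary event is handled by Cauchy--Schwarz together with $\mathbb{P}(T>\tau_n^\vare)\le n^{-1}\mathbb{E}\sup_t(|\hat{X}_t^\vare|_1+|\bar{X}_t|_1)$.

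This also means your diagnosis of where the logarithm comes from is incorrect. The paper takes $\delta=\vare^{1/2}$ (a plain power of $\vare$), under which every Khasminskii error term ($\delta^p$, $\delta^{p+1/2}/\sqrt{\vare}$, $\delta^{p+1}/\vare$, $\delta^{2p-1}$, $\sqrt{\vare/\delta}$) is a positive power of $\vare$; there is no bad competition between $\delta^{p+1}/\vare$ and $\vare/\delta$, and no need to take $\delta$ logarithmic in $\vare$. The logarithmic rate $(-\log\vare)^{-1/(4p)}$ arises entirely from balancing the Gronwall factor $e^{C_{p,T}n^{2p}}$ against the bad-event probability: one is forced to take $n\sim(-\log\vare)^{1/(2p)}$, and the tail bound $\mathbb{P}(T>\tau_n^\vare)^{1/2}\lesssim n^{-1/2}$ then yields $(-\log\vare)^{-1/(4p)}$. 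Without the stopping-time localization your argument cannot be closed, and with it the optimization you describe over $\delta$ is not the relevant one.
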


\begin{proof}
From \eqref{AuxiliaryPro X 01} and \eqref{3.1}, we have
%After simple calculations, we have
\begin{eqnarray}
\hat{X}_{t}^{\vare}-\bar{X}_{t}
%\!\!\!\!\!\!\!\!&&\int_{0}^{t}e^{(t-s)A}\left[B(X_{s(\delta)}^{\vare})-B(\bar{X}_{s})\right]ds
%+\int_{0}^{t}e^{(t-s)A}\left[f(X_{s(\delta)}^{\vare},\hat{Y}_{s}^{\vare})-\bar{f}(\bar{X}_{s})\right]ds    \nonumber\\
=\!\!\!\!\!\!\!\!&&
\int_{0}^{t}e^{(t-s)A}\left[B(X_{s(\delta)}^{\vare})-B(X_{s}^{\vare})\right]ds
+\int_{0}^{t}e^{(t-s)A}\left[B(X_{s}^{\vare})-B(\hat{X}_{s}^{\vare})\right]ds   \nonumber\\
\!\!\!\!\!\!\!\!&&
+\int_{0}^{t}e^{(t-s)A}\left[B(\hat{X}_{s}^{\vare})-B(\bar{X}_{s})\right]ds
+\int_{0}^{t}e^{(t-s)A}
\left[f(X_{s(\delta)}^{\vare},\hat{Y}_{s}^{\vare})-\bar{f}(X_{s}^{\vare})\right]ds \nonumber\\
\!\!\!\!\!\!\!\!&&
+\int_{0}^{t}e^{(t-s)A}\left[\bar{f}(X_{s}^{\vare})-\bar{f}(\hat{X}_{s}^{\vare})\right]ds
+\int_{0}^{t}e^{(t-s)A}\left[\bar{f}(\hat{X}_{s}^{\vare})-\bar{f}(\bar{X}_{s})\right]ds   \nonumber\\
:=\!\!\!\!\!\!\!\!&& \sum_{k=1}^{6}J_{k}(t).   \nonumber
\end{eqnarray}
For $J_{1}(t)$, in the same way as in the proof of Lemma \ref{DEX}, we get
%just as the techniques in the proof of Lemma \ref{DEX}, we have
\begin{eqnarray}  \label{J1}
\mathbb{E}\sup_{0\leq t\leq T} \| J_{1}(t) \|^{2p}
\leq\!\!\!\!\!\!\!\!&&
C_{p,T}\left[\int_{0}^{T}
\mathbb{E}\big \|X^{\vare}_s-X^{\vare}_{s(\delta)}\big\|^{4p}ds\right]^{\frac{1}{2}}  \left[\int_{0}^{T}\left(\mathbb{E}|X^{\vare}_s|_{1}
+\mathbb{E}|X^{\vare}_{s(\delta)}|_{1}\right)^{4p}ds\right]^{\frac{1}{2}} \nonumber\\
\leq\!\!\!\!\!\!\!\!&&C_{p,T}\delta^{p}(1+|x|^{6p}_{\alpha} + \| y \|^{6p}).
\end{eqnarray}
For $J_{2}(t)$, using Lemmas \ref{SOX}, \ref{MDY} and \ref{DEX} gives
\begin{eqnarray}    \label{J2}
\mathbb{E}\sup_{0\leq t\leq T} \| J_{2}(t) \|^{2p}
\leq\!\!\!\!\!\!\!\!&&
C_{p,T}\left[\int_{0}^{T}\mathbb{E}\big \|X^{\vare}_s-\hat{X}^{\vare}_{s}\big \|^{4p}ds\right]^{\frac{1}{2}}  \left[\int_{0}^{T}\left(\mathbb{E}|X^{\vare}_s|^{4p}_{1}
+\mathbb{E}|\hat{X}^{\vare}_{s}|^{4p}_{1}\right)ds\right]^{\frac{1}{2}} \nonumber\\
\leq\!\!\!\!\!\!\!\!&&
C_{p,T}(\delta^{p}+\frac{\delta^{p+\frac{1}{2}}}{\sqrt{\vare}})(1+|x|^{6p}_{\alpha}+ \|y \|^{6p}).
\end{eqnarray}
For $J_{3}(t)$, according to $\eref{PSG}$ and Lemma \ref{Property B2}, we have
\begin{eqnarray}  \label{J3INE}
\sup_{0\leq t\leq T} \|J_{3}(t) \|^{2p}
\leq\!\!\!\!\!\!\!\!&&
C_{p}\left\{\sup_{0\leq t\leq T}
\int_{0}^{t}\left[1+(t-s)^{-\frac{1}{2}}\right]\big|B(\hat{X}^{\vare}_s)-B(\bar{X}_{s})\big|_{-1}ds\right\}^{2p}   \nonumber\\
\leq\!\!\!\!\!\!\!\!&&C_{p}\left\{\sup_{0\leq t\leq T}
\int_{0}^{t}\left[1+(t-s)^{-\frac{1}{2}}\right]\big \|\hat{X}^{\vare}_s-\bar{X}_{s}\big\|
\left(|\hat{X}^{\vare}_s|_{1}
+|\bar{X}_{s}|_{1}\right)ds\right\}^{2p}.
\end{eqnarray}
In order to control $J_3(t)$, we make a use of the skill of stopping times.
%deal with the above estimate, we will use the skill of stopping times, i.e.,
For any fixed $n\geq1$ and $\vare>0$, define the stopping time:
\begin{align} \label{StoppingTime 01}
\tau_{n}^{\vare}=\inf\left\{t>0: |\hat{X}^{\vare}_t|_{1}+|\bar{X}_{t}|_{1}>n\right\}.
\end{align}
It follows from  \eqref{J3INE} and \eqref{StoppingTime 01} that
\begin{eqnarray}   \label{J31}
\mathbb{E}\sup_{0\leq t\leq T\wedge\tau_{n}^{\vare}} \| J_{3}(t) \|^{2p}
\leq\!\!\!\!\!\!\!\!&&
C_{p}\mathbb{E}\Big(\sup_{0\leq t\leq T\wedge\tau_{n}^{\vare}}
\int_{0}^{t}(1+(t-r)^{-\frac{1}{2}})\big\|\hat{X}^{\vare}_r-\bar{X}_{r}\big\|
\big(|\hat{X}^{\vare}_r|_{1}
+|\bar{X}_{r}|_{1}\big)dr\Big)^{2p}    \nonumber\\
\leq\!\!\!\!\!\!\!\!&&
C_{p}n^{2p}\mathbb{E}\Big(\sup_{0\leq t\leq T\wedge\tau_{n}^{\vare}}
\int_{0}^{t}(1+(t-r)^{-\frac{1}{2}})
\big\|\hat{X}^{\vare}_r-\bar{X}_{r}\big\| dr\Big)^{2p}    \nonumber\\
\leq\!\!\!\!\!\!\!\!&&
C_{p}n^{2p}\Big(\sup_{0\leq t\leq T}\int_{0}^{t}(1+(t-r)^{-\frac{1}{2}})^{\frac{2p}{2p-1}}dr\Big)^{2p-1}
\mathbb{E}
\int_{0}^{T\wedge\tau_{n}^{\vare}}\big\|\hat{X}^{\vare}_r-\bar{X}_{r}\big\|^{2p}dr  \nonumber\\
\leq\!\!\!\!\!\!\!\!&&
C_{p,T}n^{2p}
\int_{0}^{T}
  \mathbb{E}\sup_{0\leq r\leq s\wedge\tau_{n}^{\vare}}
  \big\|\hat{X}^{\vare}_{r}-\bar{X}_{r}\big\|^{2p}ds.
\end{eqnarray}
For $J_{5}(t)$, using the contractive property of the semigroup $e^{tA}$, $t \geq 0$,
Lipschitz continuity of $\bar{f}$, and Lemma \ref{DEX}, we obtain
\begin{align}  \label{J5}
\mathbb{E}\sup_{0\leq t\leq T}|J_{5}(t)|^{2p}
\leq C_{p,T}\mathbb{E}
\int_{0}^{T} \| X_{s}^{\varepsilon}-\hat{X}_{s}\|^{2p}ds   
\leq
C_{p,T}(\delta^{p}+\frac{\delta^{p+1}}{\varepsilon})(1+|x|^{2p}_{\alpha}+\|y\|^{2p}).
\end{align}
For $J_{6}(t)$, similarly to the estimate of $J_{5}(t)$, we get
\begin{eqnarray}  \label{J6}
\mathbb{E}\sup_{0\leq t\leq {T\wedge \tau_n^\varepsilon}}|J_{6}(t)|^{2p}\leq
C_{p,T}\int_{0}^T\mathbb{E}\sup_{0\leq r\leq s\wedge\tau_{n}^{\vare}}\big\|\hat{X}^{\vare}_{r}-\bar{X}_{r}\big\|^{2p}ds.
\end{eqnarray}
For $J_{4}(t)$, set $n_{t}=[\frac{t}{\delta}]$, where $t\in[0,T)$ and $\delta>0$.
%which is given in the construction of $X$ in
%Now we are going to estimate $J_{4}(t)$. For any $t\in[0,T)$, set $n_{t}=[\frac{t}{\delta}]$,
%we have $t\in[n_{t}\delta,(n_{t}+1)\delta\wedge T)$. Therefore, we have representation in the form
We write
\begin{align*}
J_{4}(t)=J_{4}^{1}(t)+J_{4}^{2}(t)+J_{4}^{3}(t),
\end{align*}
where
\begin{align*}
J_{4}^{1}(t)=\sum_{k=0}^{n_{t}-1}
\int_{k\delta}^{(k+1)\delta}e^{(t-s)A}\left[f(X_{k\delta}^{\varepsilon},\hat{Y}_{s}^{\varepsilon})-\bar{f}(X_{k\delta}^{\varepsilon})\right]ds,
\end{align*}
\begin{align*}
J_{4}^{2}(t)=\sum_{k=0}^{n_{t}-1}
\int_{k\delta}^{(k+1)\delta}e^{(t-s)A}\left[\bar{f}(X_{k\delta}^{\varepsilon})-\bar{f}(X_{s}^{\varepsilon})\right]ds,
\end{align*}
\begin{align*}
J_{4}^{3}(t)=
\int_{n_{t}\delta}^{t}e^{(t-s)A}\left[f(X_{n_{t}\delta}^{\varepsilon},\hat{Y}_{s}^{\varepsilon})-\bar{f}(X_{s}^{\varepsilon})\right]ds.
\end{align*}
For $J_{4}^{2}(t)$, we have
\begin{align}   \label{J42}
\mathbb{E}\sup_{0\leq t\leq T} \|J_{4}^{2}(t) \|^{2p}
\leq C_{p,T}\int_{0}^{T}\mathbb{E} \|X_{s(\delta)}^{\varepsilon}-X_{s}^{\varepsilon} \|^{2p}ds
\leq
C_{p,T}\delta^{p}(1+|x|^{2p}_{\alpha}+ \|y\|^{2p}).
\end{align}
For $J_{4}^{3}(t)$, it follows from Lemmas \ref{PMY} and \ref{MDY} that
\begin{eqnarray}  \label{J43}
\mathbb{E}\sup_{0\leq t\leq T} \| J_{4}^{3}(t) \|^{2p}
\leq\!\!\!\!\!\!\!\!&&
C_{p}\delta^{2p-1}\mathbb{E}\left[\sup_{0\leq t\leq T}\int_{n_{t}\delta}^{t}
\left(1+ \| X_{n_{t}\delta}^{\varepsilon} \|^{2p}
+ \| \hat{Y}_{s}^{\varepsilon} \|^{2p}
+ \| X_{s}^{\varepsilon} \|^{2p}\right)ds\right]  \nonumber\\
%\leq\!\!\!\!\!\!\!\!&&
%C_{p}\delta^{2p-1}\mathbb{E}\int_{0}^{T}\left(1+|X_{n_{t}\delta}^{\varepsilon}|_{L^2}^{2p}
% +|\hat{Y}_{s}^{\varepsilon}|_{L^2}^{2p}+|X_{s}^{\varepsilon}|_{L^2}^{2p}\right)ds  \nonumber\\
\leq\!\!\!\!\!\!\!\!&& C_{p,T}\delta^{2p-1}(1+ \| x \|^{2p}+ \|y \|^{2p}).
\end{eqnarray}
For $J_{4}^{1}(t)$,
from the construction of $\hat{Y}_{t}^{\varepsilon}$,
we obtain that, for any $k$ and $s\in[0,\delta)$,
% and a time shift transformation, for any fixed $k$ and $s\in[0,\delta)$,  we have the equalities
\begin{align*}
\hat{Y}_{s+k\delta}^{\varepsilon}
%=\!\!\!\!\!\!\!\!&&Y_{k\delta}^{\varepsilon}+\frac{1}{\varepsilon}\int_{k\delta}^{k\delta+s}A\hat{Y}_{r}^{\varepsilon}dr
%+\frac{1}{\varepsilon}\int_{k\delta}^{k\delta+s}g(X_{k\delta}^{\varepsilon},\hat{Y}_{r}^{\varepsilon})dr
%+\frac{1}{\sqrt{\varepsilon}}\int_{k\delta}^{k\delta+s}dW^{Q_{2}}(r)   \nonumber\\
= Y_{k\delta}^{\varepsilon}+\frac{1}{\varepsilon}\int_{0}^{s}A\hat{Y}_{r+k\delta}^{\varepsilon}dr
+\frac{1}{\varepsilon}\int_{0}^{s}g(X_{k\delta}^{\varepsilon},\hat{Y}_{r+k\delta}^{\varepsilon})dr
+\frac{1}{\sqrt{\varepsilon}}\int_{0}^{s}d\tilde{W}^{Q_{2}}(r),
\end{align*}
where $\tilde{W}^{Q_{2}}(t):=W^{Q_{2}}(t+k\delta)-W^{Q_{2}}(k\delta)$
is the shift version of $W^{Q_{2}}(t)$.
%and hence they have the same distribution.
Let $\bar{W}^{{Q_2}}(t)$ be a ${Q_2}$-Wiener process
%defined on the same stochastic basis
which is independent of $W^{Q_{1}}(t)$ and $W^{Q_{2}}(t)$.
Denote by $\bar{\bar{W}}^{{Q_2}}(t)=\sqrt{\vare}\bar{W}^{{Q_2}}(\frac{t}{\vare})$.
%where $\bar{\bar{W}}^{{Q_2}}(t)=\sqrt{\vare}\bar{W}^{{Q_2}}(\frac{t}{\vare})$ is the scaled version of $\bar{W}^{{Q_2}}(t)$.
We construct a process $Y^{X_{k\delta}^{\varepsilon},Y_{k\delta}^{\varepsilon}}$ by means of
\begin{eqnarray}
Y_{\frac{s}{\varepsilon}}^{X_{k\delta}^{\varepsilon},Y_{k\delta}^{\varepsilon}}=\!\!\!\!\!\!\!\!&&Y_{k\delta}^{\varepsilon}
+\int_{0}^{\frac{s}{\varepsilon}}AY_{r}^{X_{k\delta}^{\varepsilon},Y_{k\delta}^{\varepsilon}}dr
+\int_{0}^{\frac{s}{\varepsilon}}g(X_{k\delta}^{\varepsilon},Y_{r}^{X_{k\delta}^{\varepsilon},Y_{k\delta}^{\varepsilon}})dr
+\int_{0}^{\frac{s}{\varepsilon}}d\bar{W}^{{Q_2}}(r)   \nonumber\\
=\!\!\!\!\!\!\!\!&&Y_{k\delta}^{\varepsilon}
+\frac{1}{\varepsilon}\int_{0}^{s}AY_{\frac{r}{\varepsilon}}^{X_{k\delta}^{\varepsilon},Y_{k\delta}^{\varepsilon}}dr
+\frac{1}{\varepsilon}\int_{0}^{s}g(X_{k\delta}^{\varepsilon},Y_{\frac{r}{\varepsilon}}^{X_{k\delta}^{\varepsilon},Y_{k\delta}^{\varepsilon}})dr
+\frac{1}{\sqrt{\varepsilon}}\int_{0}^{s}d\bar{\bar{W}}^{{Q_2}}(r).  \nonumber
\end{eqnarray}
This, together with the uniqueness of the solution to the equation \eqref{AuxiliaryPro Y 01},
implies
that the distribution of $(X_{k\delta}^{\varepsilon},\hat{Y}^{\vare}_{s+k\delta})$
coincides with the distribution of
$(X_{k\delta}^{\varepsilon},
Y_{\frac{s}{\varepsilon}}^{X_{k\delta}^{\varepsilon},Y_{k\delta}^{\varepsilon}})$.
%By the uniqueness of the solution, we have
%$$
%(X_{k\delta}^{\varepsilon},\hat{Y}^{\vare}_{s+k\delta})\simeq (X_{k\delta}^{\varepsilon},Y_{\frac{s}{\varepsilon}}^{X_{k\delta}^{\varepsilon},Y_{k\delta}^{\varepsilon}}),
%$$
%where $\simeq$ denotes a coincidence in distribution sense.

In order to estimate $\mathbb{E}\sup_{0\leq t\leq T} \|J_{4}^{1}(t) \|^{2p}$,
we first give a control of $\mathbb{E}\sup_{0\leq t\leq T} \|J_{4}^{1}(t)\|^{2}$:
\begin{eqnarray} \label{Esti J41}
\!\!\!\!\!\!\!\!&& \mathbb{E}\sup_{0\leq t\leq T} \| J_{4}^{1}(t) \|^{2}  \nonumber\\
=\!\!\!\!\!\!\!\!&&
\mathbb{E}\sup_{0\leq t\leq T}\Big \|\sum_{k=0}^{n_{t}-1}e^{(t-(k+1)\delta)A}
\int_{k\delta}^{(k+1)\delta}e^{((k+1)\delta-s)A}
\left[f(X_{k\delta}^{\varepsilon},\hat{Y}_{s}^{\varepsilon})
- \bar{f}(X_{k\delta}^{\varepsilon})\right]ds\Big \|^{2}
\nonumber\\
\leq\!\!\!\!\!\!\!\!&& \mathbb{E} \sup_{0\leq t\leq T}
\left\{n_{t}\sum_{k=0}^{n_{t}-1} \Big \|\int_{k\delta}^{(k+1)\delta}
e^{((k+1)\delta-s)A}\left[f(X_{k\delta}^{\varepsilon},\hat{Y}_{s}^{\varepsilon})-\bar{f}(X_{k\delta}^{\varepsilon})\right]ds\Big \|^{2}\right\}
\nonumber\\
\leq\!\!\!\!\!\!\!\!&&
[\frac{T}{\delta}]
\sum_{k=0}^{[\frac{T}{\delta}]-1}
\mathbb{E} \Big \|\int_{k\delta}^{(k+1)\delta}
e^{((k+1)\delta-s)A}\left[f(X_{k\delta}^{\varepsilon},\hat{Y}_{s}^{\varepsilon})-\bar{f}(X_{k\delta}^{\varepsilon})\right]ds\Big\|^{2}
\nonumber\\
\leq\!\!\!\!\!\!\!\!&&
\frac{C_{T}}{\delta^{2}}\max_{0\leq k\leq[\frac{T}{\delta}]-1}\mathbb{E}
\Big \| \int_{k\delta}^{(k+1)\delta}
e^{((k+1)\delta-s)A}\left[f(X_{k\delta}^{\varepsilon},\hat{Y}_{s}^{\varepsilon})-\bar{f}(X_{k\delta}^{\varepsilon})\right]ds \Big \|^{2}  \nonumber\\
=\!\!\!\!\!\!\!\!&&
C_{T}\frac{\vare^{2}}{\delta^{2}}\max_{0\leq k\leq[\frac{T}{\delta}]-1}
\mathbb{E}
\Big\| \int_{0}^{\frac{\delta}{\varepsilon}}
e^{(\delta-s\varepsilon)A}
\left[f(X_{k\delta}^{\varepsilon},\hat{Y}_{s\varepsilon+k\delta}^{\varepsilon})-\bar{f}(X_{k\delta}^{\varepsilon})\right]ds\Big\|^{2}  \nonumber\\
%=\!\!\!\!\!\!\!\!&&
%C_{T}\frac{\varepsilon^{2}}{\delta^{2}}
%\max_{0\leq k\leq[\frac{T}{\delta}]-1}\int_{0}^{1}\mathbb{E}
%\Big \|\int_{0}^{\frac{\delta}{\varepsilon}}e^{(\delta-s\varepsilon)A}
%\big(f(X_{k\delta}^{\varepsilon},\hat{Y}_{s\varepsilon+k\delta}^{\varepsilon})-\bar{f}(X_{k\delta}^{\varepsilon})\big)ds\Big\|^{2}d\xi   \nonumber\\
=\!\!\!\!\!\!\!\!&&C_{T}\frac{\varepsilon^{2}}{\delta^{2}}\max_{0\leq k\leq[\frac{T}{\delta}]-1}\int_{0}^{\frac{\delta}{\varepsilon}}
\int_{\tau}^{\frac{\delta}{\varepsilon}}\Psi_{k}(s,\tau)dsd\tau,  
\end{eqnarray}
where
\begin{eqnarray}
\Psi_{k}(s,\tau)=\!\!\!\!\!\!\!\!&&\mathbb{E}\left\langle e^{(\delta-s\varepsilon)A}
\big(f(X_{k\delta}^{\varepsilon},\hat{Y}_{s\varepsilon+k\delta}^{\varepsilon})-\bar{f}(X_{k\delta}^{\varepsilon})\big), e^{(\delta-\tau\varepsilon)A}
\big(f(X_{k\delta}^{\varepsilon},\hat{Y}_{\tau\varepsilon+k\delta}^{\varepsilon})-\bar{f}(X_{k\delta}^{\varepsilon})\big)\right\rangle  \nonumber\\
=\!\!\!\!\!\!\!\!&&\mathbb{E}\left\langle e^{(\delta-s\varepsilon)A}
\big(f(X_{k\delta}^{\varepsilon},Y_{s}^{X_{k\delta}^{\varepsilon},Y_{k\delta}^{\varepsilon}})-\bar{f}(X_{k\delta}^{\varepsilon})\big), e^{(\delta-\tau\varepsilon)A}\big(f(X_{k\delta}^{\varepsilon},Y_{\tau}^{X_{k\delta}^{\varepsilon},Y_{k\delta}^{\varepsilon}})-\bar{f}(X_{k\delta}^{\varepsilon})\big)\right\rangle.  \nonumber
\end{eqnarray}
Similar as the argument in \cite[appendix A]{FL}, using Lemma \ref{PMY},
one can verify that 
%there exists a constant $C>0$ such that
\begin{align}\label{EstiPsi}
\Psi_{k}(s,\tau)\leq
C\mathbb{E}\left(1 + \| X_{k\delta}^{\varepsilon} \|^{2}
+ \| Y_{k\delta}^{\varepsilon} \|^{2}\right)e^{-\frac{1}{2}(s-\tau)\eta} 
\leq C_{T}(1+ \| x \|^{2} + \| y \|^{2})e^{-\frac{1}{2}(s-\tau)\eta}.
\end{align}
%where $\eta=\lambda_{1}-L_{g}>0$.\\
%Keep in mind that we will set $\delta=\varepsilon^{1/2}$.
%We shall choose $\delta=\varepsilon^{1/2}$.
Combining \eqref{Esti J41} and \eqref{EstiPsi}, 
we get that for any $\varepsilon\in(0,1)$
\begin{align}  \label{J412}
\mathbb{E}\sup_{0\leq t\leq T} \| J_{4}^{1}(t) \|^{2}
%\leq\!\!\!\!\!\!\!\!&&
%C_{T}\frac{\varepsilon^{2}}{\delta^{2}}(1 + \| x \|^{2} + \| y \|^{2})
%\int_{0}^{\frac{\delta}{\varepsilon}}\int_{\tau}^{\frac{\delta}{\varepsilon}}e^{-\frac{1}{2}(s-\tau)\eta}dsd\tau    \nonumber\\
%=\!\!\!\!\!\!\!\!&&
%C_{T}\frac{\varepsilon^{2}}{\delta^{2}}(1+|x|_{L^2}^{2}+|y|_{L^2}^{2})
%\Big(\frac{2}{\eta}\cdot\frac{\delta}{\varepsilon}-\frac{4}{\eta^{2}}
%+e^{-\frac{\eta}{2}\cdot\frac{\delta}{\varepsilon}}\cdot \frac{4}{\eta^2}\Big)   \nonumber\\
\leq C_{T}\frac{\varepsilon}{\delta}(1+ \| x \|^{2}+ \| y \|^{2}).
\end{align}
By Lemmas \ref{SOX} and \ref{MDY}, we have
\begin{eqnarray}  \label{J4U}
 \mathbb{E}\sup_{0\leq t\leq T} \| J_{4}^{1}(t) \|^{2p}
&\leq &
  \mathbb{E}
       \Big(
        \int_0^T |f(X^\varepsilon_{[\frac{s}{\delta}] \delta}, \hat{Y}^\varepsilon_s)|+\bar{f}(X^\varepsilon_{[\frac{s}{\delta}] \delta})ds
       \Big)^{2p}\nonumber\\
&\leq & C_{p,T}\Big[1+\sup_{s\in[0,T]}\mathbb{E}\Big( \| X_s^\varepsilon \|^{2p}\Big)
                     +\sup_{s\in[0,T]}\mathbb{E}\Big( \| \hat{Y}_{s}^{\varepsilon} \|^{2p}\Big)
               \Big]\nonumber\\
\leq\!\!\!\!\!\!\!\!&&
C_{p,T}(1+ \| x \|^{2p} + \| y \|^{2p}).
\end{eqnarray}
This, together with \eref{J412}, implies
\begin{eqnarray} \label{J41}
\mathbb{E}\sup_{0\leq t\leq T} \| J_{4}^{1}(t) \|^{2p}
\leq\!\!\!\!\!\!\!\!&&
\left(\mathbb{E}\sup_{0\leq t\leq T}
\| J_{4}^{1}(t) \|^{2(2p-1)}\right)^{\frac{1}{2}}
\left(\mathbb{E}\sup_{0\leq t\leq T} \| J_{4}^{1}(t) \|^{2}\right)^{\frac{1}{2}} \nonumber\\
\leq\!\!\!\!\!\!\!\!&&C_{p,T}(1+ \| x \|^{2p} + \| y \|^{2p})\sqrt{\frac{\vare}{\delta}}.
\end{eqnarray}
Consequently, combining \eref{J42}, \eref{J43} and \eref{J41}, we get
\begin{align}
\mathbb{E}\sup_{0\leq t\leq T} \| J_{4}(t) \|^{2p}
\leq C_{p,T}(1+|x|^{2p}_{\alpha}+ \|y\|^{2p})\Big(\delta^p+\delta^{2p-1}
+\sqrt{\frac{\vare}{\delta}}\Big).  \label{J4}
\end{align}
According to the estimates $\eref{J1}$-$\eref{J2}$, $\eref{J31}$-$\eref{J6}$, $\eref{J4}$, we obtain
\begin{eqnarray*}
\mathbb{E}\Big(\sup_{0\leq t\leq T\wedge\tau_{n}^{\vare}}
\| \hat{X}_{t}^{\vare}-\bar{X}_{t}\|^{2p}\Big)
\leq \!\!\!\!\!\!\!\!&&
C_{p,T}(1+|x|^{6p}_{\alpha}+ \|y \|^{6p})
\Big(\delta^p+\frac{\delta^{p+\frac{1}{2}}}{\sqrt{\vare}}+\frac{\delta^{p+1}}{\vare}+\delta^{2p-1}+\sqrt{\frac{\vare}{\delta}}\Big)  \\
&&+C_{p,T}n^{2p}\int_{0}^{T}\mathbb{E}\sup_{0\leq r\leq s\wedge\tau_{n}^{\vare}}
\big \|\hat{X}^{\vare}_{r}-\bar{X}_{r}\big \|^{2p}ds.
\end{eqnarray*}
Using Gronwall's inequality, we get
\begin{align*}
&\  \mathbb{E}
\Big(\sup_{0\leq t\leq T\wedge\tau_{n}^{\vare}}
\|\hat{X}_{t}^{\vare}-\bar{X}_{t} \|^{2p}\Big)  \nonumber\\
\leq  & \
C_{p,T}(1+|x|^{6p}_{\alpha}+ \| y \|^{6p})
\big(\delta^p+\frac{\delta^{p+\frac{1}{2}}}{\sqrt{\vare}}+\frac{\delta^{p+1}}{\vare}+\delta^{2p-1}+\sqrt{\frac{\vare}{\delta}}\big)e^{C_{p,T}n^{2p}},
\end{align*}
which implies
\begin{eqnarray*}
&&\mathbb{E}\Big(\sup_{0\leq t\leq T}
\|\hat{X}_{t}^{\vare}-\bar{X}_{t}\|^{2p}
\cdot  1_{\{T\leq\tau_{n}^{\vare}\}}\Big)\\
\leq\!\!\!\!\!\!\!\!&&
C_{p,T}(1+|x|^{2p}_{\alpha}+ \|y \|^{2p})
\big(\delta^p+\frac{\delta^{p+\frac{1}{2}}}{\sqrt{\vare}}
+\frac{\delta^{p+1}}{\vare}+\delta^{2p-1}+\sqrt{\frac{\vare}{\delta}}\big)e^{C_{p,T}n^{2p}}.
\end{eqnarray*}
Taking $n=\sqrt[2p]{-\frac{1}{8C_{p,T}}\log\vare}$, $\delta=\vare^{\frac{1}{2}}$, we get
\begin{eqnarray}
\mathbb{E}\Big(\sup_{0\leq t\leq T}
\| \hat{X}_{t}^{\vare}-\bar{X}_{t} \|^{2p}
\cdot   1_{\{T\leq\tau_{n}^{\vare}\}}\Big)
\leq C_{p,T}\vare^{\frac{1}{8}}(1+|x|^{6p}_{\alpha}
+ \| y \|^{6p}).\label{befor tau_n}
\end{eqnarray}
Note that,  similarly to the proof of Lemma \ref{SOX}, one can check that
uniformly in $\vare\in(0,1)$, 
$\mathbb{E} (\sup_{0\leq t\leq T}|\bar{X}^{\vare}_t|_{1} )
\leq C_{T}(1+|x|_{\alpha}).$
%where we use the fact of  $\sup_{\vare\in(0,1)}\mathbb{E}\Big(\sup_{0\leq t\leq T}|\bar{X}^{\vare}_t|_{1}\Big)\leq C_{T}(1+|x|_{\alpha})$,
%which can be proved by the method similar to the proof in Lemma \ref{SOX}.
Combining this with \eqref{hatXHolderalpha}, we deduce that
\begin{align}
& \  \mathbb{E}\Big(\sup_{0\leq t\leq T}
\|\hat{X}_{t}^{\vare}-\bar{X}_{t} \|^{2p}
\cdot1_{\{T>\tau_{n}^{\vare}\}}\Big)
\leq
\Big(\mathbb{E}\sup_{0\leq t\leq T}
\| \hat{X}_{t}^{\vare}-\bar{X}_{t} \|^{4p}\Big)^{\frac{1}{2}}
\cdot\big[\mathbb{P}(T>\tau_{n}^{\vare})\big]^{\frac{1}{2}} \nonumber\\
\leq &  \
C_{p}\Big(\mathbb{E}\sup_{0\leq t\leq T}
\| \hat{X}_{t}^{\vare} \|^{4p}
+\mathbb{E} \sup_{0\leq t\leq T} \| \bar{X}_{t} \|^{4p}\Big)^{\frac{1}{2}}
\frac{1}{\sqrt{n}}\big(\sup_{\vare\in(0,1)}\mathbb{E}\sup_{0\leq t\leq T}|\hat{X}^{\vare}_t|_{1}+\mathbb{E}\sup_{0\leq t\leq T}|\bar{X}_{t}|_{1}\big)^{\frac{1}{2}} \nonumber\\
\leq & \
\frac{C_{p,T}}{\sqrt[4p]{-\log\vare}}
(1+|x|^{2p+\frac{1}{2}}_{\alpha}+ \|y\|^{2p+\frac{1}{2}}). \label{after tau_n}
\end{align}
Putting together (\ref{befor tau_n}) and (\ref{after tau_n}), we obtain
\begin{align*}
\mathbb{E}\sup_{0\leq t\leq T}
\| \hat{X}_{t}^{\vare}-\bar{X}_{t} \|^{2p}
%\leq\!\!\!\!\!\!\!\!&& C_{p,T}(1+|x|^{2p+\frac{1}{2}}_{\alpha}+|y|^{2p+\frac{1}{2}})
%\left\{\vare^{\frac{1}{8}}+\Big(\frac{1}{-\log\vare}\Big)^{\frac{1}{4p}}\right\} \nonumber\\
\leq C_{p,T}
(1+|x|^{6p}_{\alpha} + \| y \|^{6p})
\Big(\frac{1}{-\log\vare}\Big)^{\frac{1}{4p}}.
\end{align*}
The proof is complete.
%\hspace{\fill}$\square$
\end{proof}

\subsection{Proof of Theorem \ref{main result 1}}
%\textbf{Proof of Theorem \ref{main result 1}:}
Taking $\delta=\vare^{\frac{1}{2}}$, Lemma $\ref{DEX}$ implies
\begin{align*}
\mathbb{E}\sup_{0\leq t\leq T}
\| X_{t}^{\vare}-\hat{X}_{t}^{\vare} \|^{2p}
%\leq \!\!\!\!\!\!\!\!&&
%C_{p,T}(1+|x|^{2p}_{\alpha}+ \| y \|^{2p})\big(\vare^{\frac{p}{2}}
%+\vare^{\frac{p}{2}-\frac{1}{2}}\big)  \nonumber\\
\leq
%\!\!\!\!\!\!\!\!&&
C_{p,T}(1+|x|^{2p}_{\alpha}+ \|y\|^{2p})\vare^{\frac{p}{2}-\frac{1}{2}}.
\end{align*}
Combining this with Lemma \ref{ESX}, we obtain
\begin{eqnarray*}
\mathbb{E}\sup_{0\leq t\leq T} \| X_{t}^{\vare}-\bar{X}_{t}\|^{2p}
\leq\!\!\!\!\!\!\!\!&&
\mathbb{E}\sup_{0\leq t\leq T} \| X_{t}^{\vare}-\hat{X}_{t} \|^{2p}
+\mathbb{E}
\sup_{0\leq t\leq T} \|\hat{X}_{t}^{\vare}-\bar{X}_{t}\|^{2p}  \nonumber\\
\leq\!\!\!\!\!\!\!\!&&
C_{p,T}(1+ |x|^{6p}_{\alpha}+\|y\|^{6p})
\Big(\frac{1}{-\log\vare}\Big)^{\frac{1}{4p}}
\longrightarrow 0\quad (\vare\rightarrow0),
\end{eqnarray*}
which concludes the proof of Theorem \ref{main result 1}.\hspace{\fill}$\square$

\section{Proofs of Theorems \ref{main result 2} and \ref{main result 3}} \label{Sec Proof of Thm2 3}

This section is devoted to proving Theorems \ref{main result 2} and \ref{main result 3}.
Following the procedure inspired by \cite{B1},
the proofs are based on the Galerkin approximation and
the asymptotic expansion with respect to $\varepsilon$
of the solution to the Kolmogorov equation corresponding to
\eqref{main equation} with $Q_1=0$.
%The idea of the proofs follows the procedure inspired by \cite{B1}.
Since the proofs are tediously long and technical,
we first give a brief summary of the main ideas and steps in the proofs of
Theorems \ref{main result 2} and \ref{main result 3}.
%a brief summary of the main ideas and steps will be provided at first.
%And then, every subsection will be easier to be understood.
%Recall that we always assume there is no noise ($Q_{1}=0$)
%about the slow equation in system \eref{main equation}
%and condition \ref{A4} holds in this section.
%\vskip 0.3cm

%\subsection{The main ideas and steps of the proofs}\label{Weak Convergence Sub 01}
%Recall the assumptions in Theorem \ref{main result 2} and Theorem \ref{main result 3}.
%\vskip 0.2cm
${\mathbf{Step \ 1.}}$
Due to the unboundedness of operator $\Delta$, we use the Galerkin approximation
to reduce the infinite dimensional problem to a finite dimensional one as follows.

%\vskip 0.3cm
Let $H_{N}=\text{span}\{e_{k};1\leq k \leq N\}$. % and
Denote by $P_{N}$ the orthogonal projection of $L^2$ onto $H_{N}$.
Set $f_{N}(x,y)=P_{N}(f(x,y))$, $g_{N}(x,y)=P_{N}(g(x,y))$, $B_{N}(x)=P_{N}(B(x))$, $W^{Q_2}_{N}(t)=P_{N}W^{Q_2}(t)$ for $x,y\in H_{N}$.
The following equation is the finite dimensional projection of the system \eref{main equation}
with $Q_1=0$:
%Consider
%the following approximations of system \eref{main equation} and the averaged equation \eref{1.3}:
\begin{equation} \label{main finite equation}
\left\{\begin{array}{l}
\displaystyle
dX^{\vare}_{N}(t)=[AX^{\vare}_{N}(t)+B_{N}(X^{\vare}_{N}(t))+f_{N}(X^{\vare}_{N}(t),Y^{\vare}_{N}(t))]dt, 
\ \ X^{\vare}_{N}(0)=P_{N}x, \\
dY^{\vare}_{N}(t)=\frac{1}{\vare}[AY^{\vare}_{N}(t)+g_{N}(X^{\vare}_{N}(t),Y^{\vare}_{N}(t))]dt+\frac{1}{\sqrt{\vare}}dW_{N}^{Q_2}(t), \  \  Y^{\vare}_{N}(0)=P_{N}y.\\
%X^{\vare}_{N}(0)=P_{N}x,  Y^{\vare}_{N}(0)=P_{N}y.
\end{array}\right.
\end{equation}
Similarly, we consider the finite dimensional projection of the equation \eref{1.3} with $Q_1=0$:
\begin{equation}\left\{\begin{array}{l}\label{finite averaged equation}
\displaystyle
d\bar{X}_{N}(t)=[A\bar{X}_{N}(t)+B_{N}(\bar{X}_{N}(t))+\bar{f}_{N}(\bar{X}_{N}(t))]dt, \\
\bar{X}_{N}(0)=P_{N}x,
\end{array}\right.
\end{equation}
where $\bar{f}_{N}(x)=\int_{H_{N}}P_{N}f(x,y)\mu_{N}^{x}(dy)$,
and $\mu_{N}^{x}(dy)$ is the unique invariant measure for
$$dY_{N}(t)=[AY_{N}(t)+g_{N}(x,Y_{N}(t))]dt+dW_{N}^{Q_2}(t).$$
%\newline

%It is easily to see
%\begin{lemma} \label{Finite approximate}
%For any $\vare>0$, $t\geq0$ and $x,y\in H$,
%\begin{eqnarray*}
%\mathbb{E}\left|X^{\vare}(t)-X^{\vare}_{N}(t)\right|^{2}+\mathbb{E}\left|Y^{\vare}(t)-Y^{\vare}_{N}(t)\right|^{2}
%+\left|\bar{X}(t)-\bar{X}_{N}(t)\right|^{2}\rightarrow0 ~~as~~N\rightarrow+\infty.
%\end{eqnarray*}
%\end{lemma}

For the test function $\phi \in C_{b}^{2}(L^2)$, we have
\begin{align} \label{ephix}
& \  \mathbb{E}\left[\phi\left(X^{\vare}(t)\right)\right]
-\phi(\bar{X}(t) )  \nonumber\\
= & \  
\{\mathbb{E}\left[\phi\left(X^{\vare}(t)\right)\right]
-  \mathbb{E}\left[\phi\left(X_{N}^{\vare}(t)\right)\right] \}
+ \{ \mathbb{E}\left[\phi\left(X_{N}^{\vare}(t)\right)\right]-\phi (\bar{X}_{N}(t)) \}
+ \{  \phi (\bar{X}_{N}(t))-\phi(\bar{X}(t)) \}.
\end{align}
It is not difficult to show that
the first term and the third term in \eqref{ephix} converge to $0$,
%$|\mathbb{E}\left[\phi\left(X^{\vare}(t)\right)\right]
%-\mathbb{E}\left[\phi\left(X_{N}^{\vare}(t)\right)\right]|
%+
%|\phi\left(\bar{X}_{N}(t)\right)-\phi\left(\bar{X}(t)\right)| \to 0
%$
as
$N\to \infty.$
Therefore, in order to establish
Theorems \ref{main result 2} and \ref{main result 3},
it remains to show that the second term in \eqref{ephix} converges to $0$ as $N \to \infty$.
%Hence, to establish
%Theorem \ref{main result 2} and Theorem \ref{main result 3},
%we only need to deal with the second term of Eq. (\ref{ephix}).
We will give the main idea in the next step.

%\vskip 0.3cm
%{\bf Step 2.}, we establish some properties of $\bar{X}_{N}$ and $(X^{\vare}_{N}, Y^{\vare}_{N})$ which are independent of the dimension $N$ in Subsection \ref{Subsection 5.1} and Subsection \ref{Subsection 5.2} respectively.

%\vskip 0.3cm
$\mathbf{Step \ 2.}$ Inspired by \cite{B1},
we construct an asymptotic expansion of $\mathbb{E}\left[\phi\left(X_{N}^{\vare}(t)\right)\right]$.
Roughly speaking,
it has an expansion with respect to the small parameter $\vare$:
\begin{equation*}
\mathbb{E}\left[\phi\left(X_{N}^{\vare}(t)\right)\right]
=\phi (\bar{X}_{N}(t))+\vare u_{1}+v^{\vare}.
\end{equation*}
In order to control the second term in \eqref{ephix},
the main task is to analyze $u_1$ and $v^{\vare}$.
Almost all the work in this section is to deal with this step.
\vskip 0.3cm

This section is organized as follows. Subsections \ref{Subsection 5.1} and \ref{Subsection 5.2}
are to establish some properties of $\bar{X}_N$ and  $(X^{\vare}_N, Y^{\vare}_N)$ respectively.
The asymptotic expansion of $\mathbb{E}\left[\phi\left(X_{N}^{\vare}(t)\right)\right]$
will be given in Subsection \ref{Subsection 5.3}.
Based on some results obtained in subsections \ref{Subsection 5.1} and \ref{Subsection 5.2},
subsection \ref{Subsection 5.4} is to study properties of $u_1$ and $v^{\vare}$.
Finally, we prove Theorems \ref{main result 2} and \ref{main result 3}
in subsections \ref{Subsection 5.5} and \ref{ProofThm3}, respectively.

\subsection{Properties of \texorpdfstring{$\bar{X}_N$ } {Lg} }\label{Subsection 5.1}

This subsection is to establish some properties of $\bar{X}_N$.
\textbf{For simplicity, we omit the index $N$.}

\begin{lemma} \label{BarXgamma}
Assume the conditions \ref{A1}, \ref{A2} and \ref{A4} hold.  \\
(1) For any $x\in L^2$, $T>0$, there exists a constant $C>0$ such that
\begin{align}
\sup_{0\leq t\leq T}\|\bar{X}_t\|\leq C(1+\|x\|).\label{A.1.1}
\end{align}
(2) Furthermore, for any $x\in H^{\theta}$ with $\theta\in(0,1)$,
$\gamma\in(1,\frac{3}{2})$, $t\in (0, T]$,
there exist $k\in \mathbb{N}$ and a constant $C=C_{\gamma, \theta, T}>0$ such that
\begin{align}
|\bar{X}_{t}|_{\gamma}\leq C(|x|_{\theta}+1)t^{-\frac{\gamma-\theta}{2}}e^{C\|x\|^{k}}. \label{A.1.2}
\end{align}
%where $C$ is a positive constant depending on $\gamma, \theta, T$.
\end{lemma}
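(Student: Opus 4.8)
\textbf{Proof proposal for Lemma \ref{BarXgamma}.}

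The plan is to treat the two parts separately, using the mild formulation of the finite-dimensional averaged equation \eqref{finite averaged equation} (with the index $N$ suppressed),
\begin{align*}
\bar{X}_t = e^{tA}x + \int_0^t e^{(t-s)A} B(\bar{X}_s)\, ds + \int_0^t e^{(t-s)A} \bar{f}(\bar{X}_s)\, ds,
\end{align*}
together with the smoothing property \eqref{PSG} of the semigroup. For part (1), I would run a standard $L^2$ energy estimate: testing the equation against $\bar{X}_t$, the nonlinear term drops out because $b(x,x,x)=0$ (Lemma \ref{Property B0}), the Laplacian gives the dissipative bound $\langle A\bar{X}_t,\bar{X}_t\rangle\le -\lambda_1\|\bar{X}_t\|^2$ via \eqref{Gelfand Ine}, and the remaining term is controlled by the linear growth of $\bar{f}$ — which follows from the Lipschitz condition \ref{A1} inherited by the average, or directly from the weak dissipativity \eqref{WeakDissIne} via $|\langle\bar f(x),x\rangle|\le C(1+\|x\|^2)$. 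A Gronwall argument then yields \eqref{A.1.1} with a constant uniform in $t\in[0,T]$ and in $N$. The projections $P_N$ are contractions on $L^2$, so nothing is lost in passing to the finite-dimensional system.

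For part (2), the idea is a bootstrap in the Sobolev scale using \eqref{PSG}. Start from $x\in H^\theta$ and the mild formula. The linear term obeys $|e^{tA}x|_\gamma \le C t^{-(\gamma-\theta)/2}|x|_\theta$ directly. For the drift term $\int_0^t e^{(t-s)A}\bar f(\bar X_s)\,ds$, use $|e^{(t-s)A}\bar f(\bar X_s)|_\gamma \le C(1+(t-s)^{-\gamma/2})(1+\|\bar X_s\|)$ together with \eqref{A.1.1}, which is integrable in $s$ near $t$ since $\gamma<\tfrac32<2$; this contributes a term bounded by $C(1+\|x\|)$. The genuinely delicate piece is the Burgers nonlinearity $\int_0^t e^{(t-s)A}B(\bar X_s)\,ds$: here I would invoke Lemma \ref{Property B1} to write $|e^{(t-s)A}B(\bar X_s)|_\gamma \le C(1+(t-s)^{-(\alpha_3+\gamma)/2})|\bar X_s|_{\alpha_1}|\bar X_s|_{\alpha_2+1}$ with $\alpha_1+\alpha_2+\alpha_3>\tfrac12$ and $1+\alpha_1+\alpha_2<\gamma$, then apply the interpolation inequalities $|\bar X_s|_{\alpha_1}\le C\|\bar X_s\|^{1-\alpha_1/\gamma}|\bar X_s|_\gamma^{\alpha_1/\gamma}$ and $|\bar X_s|_{\alpha_2+1}\le C\|\bar X_s\|^{1-(\alpha_2+1)/\gamma}|\bar X_s|_\gamma^{(\alpha_2+1)/\gamma}$, exactly as in the proof of Lemma \ref{SOX}. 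Writing $\varphi(t)=\sup_{s\le t}s^{(\gamma-\theta)/2}|\bar X_s|_\gamma$ (the weight absorbs the initial singularity), one arrives at an inequality of the form $\varphi(t)\le C(1+|x|_\theta) + C(1+\|x\|)^k\int_0^t (t-s)^{-a}s^{-b}\varphi(s)^\kappa\,ds$ with exponents chosen so that $a,b<1$; choosing $\alpha_1,\alpha_2,\alpha_3$ appropriately makes the total power $\kappa=(1+\alpha_1+\alpha_2)/\gamma<1$, so the integral inequality is \emph{sublinear} in $\varphi$ and a singular Gronwall lemma (Lemma in the Appendix) closes it.

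The main obstacle is precisely this last step: making the nonlinear term genuinely subcritical. One must verify that the parameters $\alpha_1,\alpha_2,\alpha_3$ can be chosen simultaneously satisfying $\alpha_i>0$, $\alpha_1+\alpha_2+\alpha_3>\tfrac12$, $1+\alpha_1+\alpha_2<\gamma$, and making both the time-singularity exponent $(\alpha_3+\gamma)/2$ strictly less than $1$ and the combined growth exponent in $\|x\|$ finite — this is where the restriction $\gamma\in(1,\tfrac32)$ is used. Tracking the power $k$ of $\|x\|$ through the interpolation and the iterated Gronwall estimate gives the factor $e^{C\|x\|^k}$ in \eqref{A.1.2}; this exponential (rather than polynomial) dependence is the price paid for the superlinear bookkeeping, and it is harmless for the later arguments since $\|x\|\le C|x|_\theta$. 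Finally, all constants are manifestly independent of $N$ because $P_N$ commutes with $A$ and is an $L^2$- and $H^s$-contraction, so the bounds pass to the limit.
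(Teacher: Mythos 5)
Your part (1) is exactly the paper's argument (energy estimate, $b(x,x,x)=0$, linear growth of $\bar f$, Gronwall), and your treatment of the linear and $\bar f$ terms in part (2) also coincides with the paper's. Where you diverge is the Burgers term. The paper estimates
$\big|\int_0^t e^{(t-s)A}B(\bar X_s)\,ds\big|_\gamma \le C\int_0^t (t-s)^{-\frac{1+2\gamma}{4}}|B(\bar X_s)|_{-\frac12}\,ds$ and then uses Lemma \ref{Property B1} with $(\alpha_1,\alpha_2,\alpha_3)=(0,\gamma-1,\tfrac12)$ to get $|B(\bar X_s)|_{-\frac12}\le C\|\bar X_s\|\,|\bar X_s|_\gamma$; after inserting \eqref{A.1.1} this leaves an inequality that is \emph{linear} in $|\bar X_s|_\gamma$ with coefficient $C(1+\|x\|)$ and kernel $(t-s)^{-\frac{1+2\gamma}{4}}$ (integrable precisely because $\gamma<\tfrac32$), so Lemma \ref{Gronwall 2} applies verbatim and produces the $e^{C\|x\|^k}$ factor. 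You instead interpolate both factors of $B$ between $\|\cdot\|$ and $|\cdot|_\gamma$, as in the proof of Lemma \ref{SOX}, arriving at a \emph{sublinear} integral inequality with exponent $\kappa=(1+\alpha_1+\alpha_2)/\gamma<1$. Your parameter bookkeeping is consistent (one needs $\alpha_3\in(\tfrac32-\gamma,\,2-\gamma)$ and $\alpha_1+\alpha_2<\gamma-1$, which is a nonempty range exactly for $\gamma\in(1,\tfrac32)$), but note that the appendix Gronwall lemma you invoke is stated only for the linear case without the interior weight $s^{-b}$ or the power $\kappa$; you would either have to prove a sublinear variant, or observe that since your $\varphi$ is a running supremum the inequality reduces to the algebraic relation $\varphi(t)\le C_1+C_2K\,\varphi(t)^\kappa$ with $\kappa<1$, which closes directly (and in fact yields a polynomial rather than exponential dependence on $\|x\|$, which of course still implies \eqref{A.1.2}). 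The paper's route is shorter because the $|B|_{-1/2}$ estimate is already linear in the high norm; your route is more self-contained in that it reuses only the machinery of Lemma \ref{SOX}, at the cost of needing a slightly stronger Gronwall-type statement than the one recorded in the appendix.
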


\begin{proof}
%Recall that
%\begin{equation}\left\{\begin{array}{l}
%\displaystyle \frac{d}{dt}\bar{X}_{t}=A\bar{X}_{t}+B(\bar{X}_{t})+\bar{f}(\bar{X}_{t}),\\
%\bar{X}_{0}=x.\end{array}\right. \label{A.1.3}
%\end{equation}
Multiplying both sides of the equation \eref{finite averaged equation} by $2\bar{X_t}$
and integrating with respect to $\xi$, we get
\begin{align*}
\frac{d}{dt} \|\bar{X}_t \|^{2}
=  2\langle A\bar{X_t}, \bar{X_t}\rangle+2\langle \bar{f}(\bar{X}_{t}), \bar{X}_t\rangle
\leq  C(1+ \|\bar{X}_t \|^{2}),
\end{align*}
which implies \eqref{A.1.1} by applying Gronwall's inequality.
%implies
%%\begin{eqnarray*}
%$\sup_{0\leq t\leq T}|\bar{X}_t|\leq C(1+|x|). $  %\label{A.1.4}
%%\end{eqnarray*}

To prove \eqref{A.1.2},
note that
\begin{align*}
\bar{X}_{t}
=e^{tA}x+\int_{0}^{t}e^{(t-s)A}B(\bar{X}_{s})ds+\int_{0}^{t}e^{(t-s)A}\bar{f}(\bar{X}_{s})ds.
\end{align*}
For the first term, we have
\begin{eqnarray}
|e^{tA}x|_{\gamma}\leq Ct^{-\frac{\gamma-\theta}{2}}|x|_{\theta}.\label{BXG1}
\end{eqnarray}
For the second term, it follows from $\eref{PSG}$ and Lemma \ref{Property B1} that
\begin{eqnarray}
\Big|\int^t_0e^{(t-s)A}B(\bar{X}_{s})ds\Big|_{\gamma}
\leq \!\!\!\!\!\!\!\!&&
C\int^t_0 \left[1+(t-s)^{-\frac{1+2\gamma}{4}}\right]|B(\bar{X}_{s})|_{-\frac{1}{2}}ds  \nonumber\\
\leq \!\!\!\!\!\!\!\!&&
C\int^t_0(t-s)^{-\frac{1+2\gamma}{4}} \| \bar{X}_{s} \| |\bar{X}_{s}|_{\gamma}ds \nonumber\\
\leq \!\!\!\!\!\!\!\!&&
C\int^t_0(t-s)^{-\frac{1+2\gamma}{4}}(1 + \| x \| )|\bar{X}_{s}|_{\gamma}ds.
\end{eqnarray}
For the last term, using \eqref{PSG}  and \eqref{A.1.1}, we get
\begin{align}
\Big|\int_{0}^{t}e^{(t-s)A}\bar{f}(\bar{X}_{s})ds\Big|_{\gamma}\leq
C\int^t_0\left[1+(t-s)^{-\frac{\gamma}{2}}\right](1+ \|\bar{X}_{s} \|)ds
\leq  C(1+ \| x \|).\label{BXG3}
\end{align}
Consequently, combining \eref{BXG1}-\eref{BXG3},
we conclude the proof of \eqref{A.1.2}  by using Lemma \ref{Gronwall 2}.
%and Lemma \ref{Gronwall 2}, the conclusion follows.
%\hspace{\fill}$\square$
\end{proof}

Note that from Lemma \ref{BarXgamma}, using the interpolation inequality,
we get that for
any $\gamma\in (0, 1]$, $\theta\in (0,1)$, $\delta\in(0, \frac{1}{2})$, $t\in (0, T]$,
there exist $k\in \mathbb{N}$ and a constant $C=C_{\theta, \delta, T}>0$
%which only depends on $\theta, \delta, T$
such that
%\begin{remark}
%For any $\gamma\in (0, 1]$, $\theta\in (0,1)$, $\delta\in(0, \frac{1}{2})$, $t\in (0, T]$, then by interpolation inequality, there exists $k\in \mathbb{N}$ such that
\begin{align} \label{BarX1}
|\bar{X}_{t}|_{\gamma}\leq
C\|\bar{X}_{t}\|^{\frac{1+\delta-\gamma}{1+\delta}}
|\bar{X}_{t}|^{\frac{\gamma}{1+\delta}}_{1+\delta}
\leq Ct^{-\frac{1+\delta-\theta}{2}\frac{\gamma}{1+\delta}}(|x|_{\theta}+1)e^{C\|x\|^k}.
\end{align}

%where $C$ is a positive constant depending on $\theta, \delta, T$.
%\end{remark}

\begin{lemma} \label{COXT}
Under the conditions \ref{A1}, \ref{A2} and \ref{A4},
for any $\theta\in(0,1)$, $\alpha\in(0,\frac{1}{2})$, $x\in H^{\theta}$, $0<s<t\leq T$,
there exist $k\in \mathbb{N}$ and a constant $C=C_{\theta, \alpha, T}>0$ such that
\begin{align}
|\bar{X}(t,x)-\bar{X}(s,x)|_{1}
\leq C(t-s)^{\frac{\alpha}{2}}s^{-\frac{1+\alpha-\theta}{2}}(|x|_{\theta}+1)e^{C\|x\|^k}. \nonumber
\end{align}
%where $C$ is a constant depending on $\theta, \alpha, T$.
\end{lemma}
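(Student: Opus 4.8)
The plan is to estimate the time increment $\bar X(t,x)-\bar X(s,x)$ directly from the mild formulation of the finite-dimensional averaged equation \eqref{finite averaged equation}, splitting it into a linear part, a Burgers part, and an averaged-drift part. Writing
\begin{align*}
\bar X(t,x)-\bar X(s,x)
&=(e^{(t-s)A}-I)\,\bar X(s,x)
 +\int_s^t e^{(t-r)A}B(\bar X_r)\,dr \\
&\quad
 +\int_s^t e^{(t-r)A}\bar f(\bar X_r)\,dr
 +\int_0^s\!\big(e^{(t-r)A}-e^{(s-r)A}\big)\big[B(\bar X_r)+\bar f(\bar X_r)\big]dr,
\end{align*}
I would bound the $|\cdot|_1$-norm of each of the four pieces. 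The first term is handled using the standard estimate $\|(e^{hA}-I)z\|_{1}\le C h^{\alpha/2}|z|_{1+\alpha}$ together with \eqref{A.1.2} (with $\gamma=1+\alpha\in(1,\tfrac32)$), which already produces the factor $(t-s)^{\alpha/2}s^{-\frac{1+\alpha-\theta}{2}}(|x|_\theta+1)e^{C\|x\|^k}$; this is the dominant contribution and dictates the shape of the claimed bound.

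For the Burgers term $\int_s^t e^{(t-r)A}B(\bar X_r)\,dr$, I would use \eqref{PSG} to write $|e^{(t-r)A}B(\bar X_r)|_1\le C(1+(t-r)^{-\frac{1+2\cdot1}{4}\text{-type exponent}})|B(\bar X_r)|_{-1/2}$ as in the proof of Lemma \ref{BarXgamma}, then invoke Lemma \ref{Property B1} to get $|B(\bar X_r)|_{-1/2}\le C\|\bar X_r\|\,|\bar X_r|_1$, and finally use \eqref{A.1.1} and \eqref{BarX1} (with $\gamma=1$) to control $\|\bar X_r\|$ and $|\bar X_r|_1$. One then integrates the resulting singularity $(t-r)^{-3/4}r^{-c}$ over $[s,t]$; since both exponents are $<1$ and $s\le r\le t$, the integral is bounded by $C(t-s)^{\alpha/2}s^{-\frac{1+\alpha-\theta}{2}}$ up to adjusting constants, absorbing the $e^{C\|x\|^k}$ factor. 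The averaged-drift term $\int_s^t e^{(t-r)A}\bar f(\bar X_r)\,dr$ is easier: by \eqref{PSG}, $\|f\|$-bound from \ref{A1}, and \eqref{A.1.1}, its $|\cdot|_1$-norm is $\le C(1+\|x\|)\int_s^t(1+(t-r)^{-1/2})\,dr\le C(1+\|x\|)(t-s)^{1/2}$, which is of the required order.

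The remaining term $\int_0^s(e^{(t-r)A}-e^{(s-r)A})[B(\bar X_r)+\bar f(\bar X_r)]\,dr$ I would handle with the elementary inequality $\|(e^{(t-r)A}-e^{(s-r)A})z\|_1\le C(t-s)^{\alpha/2}(s-r)^{-\frac{1+\alpha}{2}\text{-type}}|z|_{?}$, obtained by writing $e^{(t-r)A}-e^{(s-r)A}=(e^{(t-s)A}-I)e^{(s-r)A}$ and combining the Hölder-type bound on $e^{(t-s)A}-I$ with the smoothing \eqref{PSG} of $e^{(s-r)A}$; then the same $B$- and $\bar f$-estimates as above, plus \eqref{A.1.1}--\eqref{BarX1}, reduce matters to an integral of the form $\int_0^s (s-r)^{-\beta_1} r^{-\beta_2}\,dr$ with $\beta_1,\beta_2<1$, which is finite and, after tracking the power of $s$, gives the factor $s^{-\frac{1+\alpha-\theta}{2}}$.

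I expect the main obstacle to be bookkeeping the various fractional exponents so that (i) every time-integral singularity stays integrable (each exponent strictly below $1$, which forces the constraints $\alpha<\tfrac12$ and $\theta>0$ to be used) and (ii) the powers of $t-s$ and of $s$ collect exactly into $(t-s)^{\alpha/2}s^{-\frac{1+\alpha-\theta}{2}}$ rather than something weaker; the Burgers term, with its nonlinear $\|\bar X_r\|\,|\bar X_r|_1$ factor and the near-critical exponent $3/4$ coming from $|\cdot|_{-1/2}\to|\cdot|_1$ smoothing, is where this is most delicate. Once all four pieces are bounded, summing them and absorbing lower-order powers of $(t-s)$ into the leading one (using $t-s\le T$) yields the stated estimate, with $k$ taken as the maximum of the exponents appearing in \eqref{A.1.2} and \eqref{BarX1} and $C$ depending on $\theta,\alpha,T$.
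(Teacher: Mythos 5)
Your overall strategy is the same as the paper's: pass to the mild form, isolate the increment of the semigroup acting on the solution, bound $\big|(e^{(t-s)A}-I)\bar{X}(s,x)\big|_1$ via $\|(e^{hA}-I)z\|_1\leq Ch^{\alpha/2}|z|_{1+\alpha}$ together with Lemma \ref{BarXgamma}, and control the two integrals over $[s,t]$ using \eqref{PSG}, Lemma \ref{Property B1} and the a priori bounds \eqref{A.1.1}--\eqref{BarX1}. Those three estimates are exactly the ones in the paper's proof and are carried out correctly in your sketch (your $(t-s)^{\alpha/2}$ for the Burgers term is even slightly weaker than the $(t-s)^{1/4}$ the paper gets, but since $\alpha<\tfrac12$ this is harmless).

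However, your displayed decomposition is not an identity. Since
$\bar{X}(s,x)=e^{sA}x+\int_0^s e^{(s-r)A}\big[B(\bar{X}_r)+\bar f(\bar{X}_r)\big]dr$ and $e^{(t-s)A}e^{(s-r)A}=e^{(t-r)A}$, one has
\begin{align*}
\big(e^{(t-s)A}-I\big)\bar{X}(s,x)
=\big(e^{tA}-e^{sA}\big)x
+\int_0^s\big(e^{(t-r)A}-e^{(s-r)A}\big)\big[B(\bar{X}_r)+\bar f(\bar{X}_r)\big]dr,
\end{align*}
so the first term of your decomposition already contains your fourth term; adding $\int_0^s\big(e^{(t-r)A}-e^{(s-r)A}\big)[\cdots]dr$ again counts that contribution twice, and the four-term sum equals $\bar{X}(t,x)-\bar{X}(s,x)$ \emph{plus} that extra integral rather than the increment itself. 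The fix is simply to delete the fourth term, which recovers the paper's three-term identity \eqref{bar X Contin}; alternatively you may keep the fourth term, but then the first must be $\big(e^{(t-s)A}-I\big)e^{sA}x$ rather than $\big(e^{(t-s)A}-I\big)\bar{X}(s,x)$. With either correction your estimates go through unchanged and the argument coincides with the paper's.
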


\begin{proof}
In view of \eqref{finite averaged equation}, we write
%It is easy to see that
\begin{align} \label{bar X Contin}
& \  \bar{X}(t,x)-\bar{X}(s,x)  \nonumber\\
= & \  \big (e^{A(t-s)}-I \big)\bar{X}(s,x)
+\int_{s}^{t}e^{(t-r)A}B(\bar{X}(r,x))dr
+\int_{s}^{t}e^{(t-r)A}\bar{f}(\bar{X}(r,x))dr.
\end{align}
For the first term, using the property $\|(e^{tA}-I)x \|\leq Ct^{\frac{\alpha}{2}}|x|_{\alpha}$
and Lemma \ref{BarXgamma}, we get that there exists $k \in \mathbb{N}$ such that
\begin{eqnarray}  \label{bar X Contin 1}
\left|\big(e^{A(t-s)}-I \big)\bar{X}(s,x)\right|_{1}
\leq\!\!\!\!\!\!\!\!&&
C(t-s)^{\frac{\alpha}{2}}|\bar{X}(s,x)|_{1+\alpha}  \nonumber\\
\leq\!\!\!\!\!\!\!\!&&
C(t-s)^{\frac{\alpha}{2}}s^{-\frac{1+\alpha-\theta}{2}}(|x|_{\theta}+1)e^{C\|x\|^k}.
\end{eqnarray}
%for some $k\in \mathbb{N}$.
For the second term, according to Lemma \ref{BarXgamma},
there exists some $k\in\mathbb{N}$ such that
\begin{eqnarray}  \label{bar X Contin 2}
\left|\int_{s}^{t}e^{(t-r)A}B(\bar{X}(r,x))dr\right|_{1}\leq\!\!\!\!\!\!\!\!&&
C\int_{s}^{t}[1+(t-r)^{-\frac{3}{4}}]\left|B(\bar{X}(r,x))\right|_{-\frac{1}{2}}dr  \nonumber\\
\leq\!\!\!\!\!\!\!\!&&
C\int_{s}^{t}[1+(t-r)^{-\frac{3}{4}}]\|\bar{X}(r,x)\||\bar{X}(r,x)|_{1+\alpha}dr  \nonumber\\
\leq\!\!\!\!\!\!\!\!&&
C\int_{s}^{t}[1+(t-r)^{-\frac{3}{4}}]r^{-\frac{1+\alpha-\theta}{2}}(|x|_{\theta}+1)e^{C\|x\|^k}dr  \nonumber\\
\leq\!\!\!\!\!\!\!\!&&
C(t-s)^{\frac{1}{4}}s^{-\frac{1+\alpha-\theta}{2}}(|x|_{\theta}+1)e^{C\|x\|^k}.
\end{eqnarray}
%for some $k\in\mathbb{N}$.
For the third term, using Lemma \ref{BarXgamma} again, we obtain
\begin{eqnarray} \label{bar X Contin 3}
\left|\int_{s}^{t}e^{(t-r)A}\bar{f}(\bar{X}(r,x))dr\right|_{1}
\leq\!\!\!\!\!\!\!\!&&
C\int_{s}^{t}[1+(t-r)^{-\frac{1}{2}}]\left\|\bar{f}(\bar{X}(r,x))\right\|dr
\nonumber\\
\leq\!\!\!\!\!\!\!\!&&
C\int_{s}^{t}[1+(t-r)^{-\frac{1}{2}}](1+ \| \bar{X}(r,x) \|)dr  \nonumber\\
\leq\!\!\!\!\!\!\!\!&&
C(t-s)^{\frac{1}{2}}(1 + \|x\|).
\end{eqnarray}
The result follows by combining \eqref{bar X Contin}-\eqref{bar X Contin 3}.
%Eventually, there exists a constant $k\in\mathbb{N}$ such that
%\begin{align}
%|\bar{X}(t,x)-\bar{X}(s,x)|_{1}\leq C(t-s)^{\frac{\alpha}{2}}s^{-\frac{1+\alpha-\theta}{2}}(|x|_{\theta}+1)e^{C|x|^k}. \nonumber
%\end{align} \hspace{\fill}$\square$
\end{proof}

\begin{lemma} \label{ESDX}
Under the conditions \ref{A1}, \ref{A2} and \ref{A4},
for any $x\in H^{\theta}$ with $\theta\in(0,1)$,   $0\leq t\leq T$,
there exist $k\in \mathbb{N}$ and a constant $C=C_{\theta, T}>0$ such that
\begin{align}
\big\| \frac{d}{dt}\bar{X}(t,x) \big\|
\leq Ct^{-1+\frac{\theta}{2}}(|x|^2_{\theta}+1)e^{C \|x\|^k}. \nonumber
\end{align}
%where $C$ is a constant depending on $\theta, T$.
\end{lemma}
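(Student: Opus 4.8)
The plan is to work directly with the equation for $\bar{X}=\bar{X}_N$. After the Galerkin truncation, $\bar{X}$ solves the finite dimensional ODE \eqref{finite averaged equation} with smooth right hand side, so $t\mapsto\bar{X}(t,x)$ is classically $C^1$ on $(0,T]$ with $\frac{d}{dt}\bar{X}(t,x)=A\bar{X}_t+B_N(\bar{X}_t)+\bar{f}_N(\bar{X}_t)$, and since $P_N$ does not increase the $L^2$ norm,
\[
\Big\|\frac{d}{dt}\bar{X}(t,x)\Big\|\le|\bar{X}_t|_2+\|B(\bar{X}_t)\|+\|\bar{f}(\bar{X}_t)\|.
\]
It therefore suffices to dominate each of the three terms on the right by $Ct^{-1+\theta/2}(|x|_\theta^2+1)e^{C\|x\|^k}$, uniformly in $N$.

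The last two terms are immediate. The averaged drift $\bar{f}$ is Lipschitz by \ref{A1}, hence of linear growth, so $\|\bar{f}(\bar{X}_t)\|\le C(1+\|\bar{X}_t\|)\le C(1+\|x\|)$ by \eqref{A.1.1}; as $t\le T$ and $-1+\frac{\theta}{2}<0$ this is $\le Ct^{-1+\theta/2}(|x|_\theta^2+1)$. By Corollary \ref{Property B3} and \eqref{BarX1} with $\gamma=1$ one has $\|B(\bar{X}_t)\|\le C|\bar{X}_t|_1^2\le Ct^{-\frac{1+\delta-\theta}{1+\delta}}(|x|_\theta^2+1)e^{C\|x\|^k}$, and $\frac{1+\delta-\theta}{1+\delta}\le1-\frac{\theta}{2}$ for every $\delta\in(0,\frac12)$.

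The heart of the matter is the uniform-in-$N$ bound $|\bar{X}_t|_2=\|A\bar{X}_t\|\le Ct^{-1+\theta/2}(|x|_\theta^2+1)e^{C\|x\|^k}$, which is a regularity bootstrap beyond the range $\gamma\in(1,\frac32)$ of Lemma \ref{BarXgamma}. Fixing $\gamma$ slightly above $1$ (how close depends on $\theta$), I would write, for $0<t\le T$,
\[
\bar{X}_t=e^{(t/2)A}\bar{X}_{t/2}+\int_{t/2}^{t}e^{(t-s)A}B(\bar{X}_s)\,ds+\int_{t/2}^{t}e^{(t-s)A}\bar{f}(\bar{X}_s)\,ds
\]
and estimate the three pieces in $|\cdot|_2$. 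For the linear piece, \eqref{PSG} and Lemma \ref{BarXgamma}(2) give $|e^{(t/2)A}\bar{X}_{t/2}|_2\le Ct^{(\gamma-2)/2}|\bar{X}_{t/2}|_\gamma\le Ct^{-1+\theta/2}(|x|_\theta+1)e^{C\|x\|^k}$, already the sharp rate. For the bilinear piece, writing $B(z)=\frac12\partial_\xi(z^2)$ and using that $H^\gamma$ is a multiplication algebra for $\gamma>\frac12$ (a positive-order variant of Lemma \ref{Property B1}), one gets $|B(\bar{X}_s)|_{s_1}\le C|\bar{X}_s|_\gamma^2$ for a suitable $s_1\in(0,\gamma-1)$, hence $\|(-A)e^{(t-s)A}B(\bar{X}_s)\|\le C(t-s)^{-1+s_1/2}|\bar{X}_s|_\gamma^2$; since $s\ge t/2$, pulling $\sup_{s\ge t/2}|\bar{X}_s|_\gamma^2\le Ct^{-(\gamma-\theta)}(|x|_\theta+1)^2e^{C\|x\|^k}$ (Lemma \ref{BarXgamma}(2)) out of the integral leaves $\int_{t/2}^{t}(t-s)^{-1+s_1/2}ds\le Ct^{s_1/2}$, so this piece is $\le Ct^{-(\gamma-\theta)+s_1/2}(|x|_\theta^2+1)e^{C\|x\|^k}$, whose exponent is $\ge-1+\frac{\theta}{2}$ once $\gamma$ is close enough to $1$. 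The drift piece is the delicate one: $\bar{f}(\bar{X}_s)$ lies only in $L^2$, so $\|(-A)e^{(t-s)A}\bar{f}(\bar{X}_s)\|$ carries the non-integrable singularity $(t-s)^{-1}$, and I would instead use the maximal-regularity splitting
\[
\int_{t/2}^{t}(-A)e^{(t-s)A}\bar{f}(\bar{X}_s)\,ds=\int_{t/2}^{t}(-A)e^{(t-s)A}\big[\bar{f}(\bar{X}_s)-\bar{f}(\bar{X}_t)\big]\,ds+\big(e^{(t/2)A}-I\big)\bar{f}(\bar{X}_t);
\]
the last term is bounded by $2\|\bar{f}(\bar{X}_t)\|$, while in the integral the Lipschitz property of $\bar{f}$ together with the bound $\|\bar{X}_s-\bar{X}_t\|\le C(t-s)^{\gamma/2}t^{-1+\theta/2}(|x|_\theta^2+1)e^{C\|x\|^k}$ for $\frac{t}{2}\le s\le t$ (obtained from the mild formula on $[s,t]$, using \eqref{BarX1} and Lemma \ref{BarXgamma}, much as in Lemma \ref{COXT}) turn the integrand into an integrable $C(t-s)^{-1+\gamma/2}$ times a power of $t$ no worse than $t^{-1+\theta/2}$.

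Summing the four contributions and absorbing the polynomial prefactors into $(|x|_\theta^2+1)$ and the remaining constants into $e^{C\|x\|^k}$ (with a possibly larger $C$ and $k$) gives the claim, with all constants independent of $N$. The main obstacle is precisely this uniform-in-$N$ $H^2$-estimate at the borderline rate $t^{-1+\theta/2}$: one must bootstrap past the regularity window of Lemma \ref{BarXgamma}, treat the non-smoothing nonlocal drift $\bar{f}$ by a maximal-regularity argument rather than by naive semigroup smoothing, and tune the Sobolev and time exponents carefully — especially when $\theta$ is close to $0$ — so that every term balances exactly to the exponent $-1+\frac{\theta}{2}$.
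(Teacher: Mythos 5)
Your proposal is correct, and its core mechanism for the hardest term coincides with the paper's: both reduce the problem to bounding $\|A\bar{X}_t\|$ via the mild formulation, and both tame the non-smoothing drift contribution $\int (-A)e^{(t-s)A}\bar{f}(\bar{X}_s)\,ds$ by adding and subtracting $\bar{f}(\bar{X}_t)$, so that the $(t-s)^{-1}$ singularity is compensated by the temporal H\"older continuity of $\bar{X}$ and the subtracted part telescopes to $(e^{tA}-I)\bar{f}(\bar{X}_t)$. You differ in two executional choices. First, the paper works on the whole interval $[0,t]$ (with the singular weights $s^{-\cdots}$ from Lemma \ref{COXT} absorbed by the Beta-function integral), whereas you split at $t/2$ and use $|e^{(t/2)A}\bar{X}_{t/2}|_2\le Ct^{(\gamma-2)/2}|\bar{X}_{t/2}|_\gamma$ for the linear piece; this is a cosmetic difference and both land on $t^{-1+\theta/2}$. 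Second, and more substantively, for the Burgers term the paper applies the \emph{same} add-and-subtract device ($I_2$ and $I_3$ in its proof), estimating $\|B(\bar{X}_t)-B(\bar{X}_s)\|$ via Lemma \ref{Property B2} and the $H^1$ H\"older continuity of Lemma \ref{COXT}, so it stays entirely within the product estimates already recorded in the appendix; you instead invoke the algebra property of $H^\gamma$ for $\gamma>\tfrac12$ to place $B(\bar{X}_s)$ in a positive-order space $H^{s_1}$, $s_1\in(0,\gamma-1)$, and let the semigroup smoothing $(t-s)^{-1+s_1/2}$ do the work. That product estimate is true (and uniform in $N$, since $s_1<\tfrac12$ avoids boundary-condition issues in identifying $\mathscr{D}((-A)^{s_1/2})$ with $H^{s_1}$), but it is not among the lemmas the paper provides, so you would need to state and justify it; the paper's route avoids this at the price of one extra H\"older-continuity computation. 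Your exponent bookkeeping ($\gamma$ close to $1$, $\tfrac{1+\delta-\theta}{1+\delta}\le 1-\tfrac{\theta}{2}$, the $L^2$ H\"older bound on $[t/2,t]$) all checks out.
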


\begin{proof}
Recall that
\begin{align} \label{EquabarX}
\frac{d}{dt}\bar{X}(t,x)=A\bar{X}(t,x)+B(\bar{X}(t,x))+\bar{f}(\bar{X}(t,x)).
\end{align}
Taking $\delta=\frac{1}{4}$ in \eqref{BarX1}, we get
$\| B(\bar{X}(t,x)) \|\leq C|\bar{X}(t,x)|_1^2
\leq C t^{-1+\frac{4\theta}{5}}(|x|^2_{\theta}+1)e^{C\|x\|^k}.$
It is easy to see that
$\| \bar{f}(\bar{X}(t,x)) \|\leq C(1+\|x\|)$.
Hence, to prove Lemma \ref{ESDX},
it remains to control the first term in \eqref{EquabarX}.
From \eqref{EquabarX}, we write
\begin{eqnarray}
\bar{X}(t,x)
%\!\!\!\!\!\!\!\!&&e^{tA}x+\int_{0}^{t}e^{(t-s)A}B(\bar{X}(s,x))ds+\int_{0}^{t}e^{(t-s)A}\bar{f}(\bar{X}(s,x))ds  \nonumber\\
=\!\!\!\!\!\!\!\!&&
e^{tA}x+
\int_{0}^{t}e^{(t-s)A}B(\bar{X}(t,x))ds
+ \int_{0}^{t}e^{(t-s)A}\left[B(\bar{X}(s,x))-B(\bar{X}(t,x))\right]ds  \nonumber\\
\!\!\!\!\!\!\!\!&&
+\int_{0}^{t}e^{(t-s)A}\bar{f}(\bar{X}(t,x))ds
+\int_{0}^{t}e^{(t-s)A}\left[\bar{f}(\bar{X}(s,x))-\bar{f}(\bar{X}(t,x))\right]ds \nonumber\\
:=\!\!\!\!\!\!\!\!&&I_{1}+I_{2}+I_{3}+I_{4}+I_{5}. \nonumber
\end{eqnarray}
For $I_{1}$, using \eqref{PSG}, we have
$$
\|AI_1\| \leq Ct^{-1+\frac{\theta}{2}}|x|_{\theta}.
$$
For $I_{2}$, we deduce from Corollary \ref{Property B3} and \eqref{BarX1} that
\begin{eqnarray*} %\label{AX2}
\|AI_{2}\|=\!\!\!\!\!\!\!\!&&
\left\|(e^{tA}-I)B(\bar{X}(t,x))\right\|
\leq  2\left\|B(\bar{X}(t,x))\right\|  \nonumber\\
\leq \!\!\!\!\!\!\!\!&& 2\left|\bar{X}(t,x)\right|_{1}^{2}
\leq C t^{-1+\frac{4\theta}{5}}(|x|^2_{\theta}+1)e^{C\|x\|^k}.
\end{eqnarray*}
For $I_{3}$, according to Lemma \ref{COXT} and  \eqref{BarX1}, we get
\begin{eqnarray*}
\| AI_{3} \|
\leq\!\!\!\!\!\!\!\!&&
C\int_{0}^{t}\frac{1}{t-s}\left\|B(\bar{X}(t,x))-B(\bar{X}(s,x))\right\|ds \nonumber\\
\leq\!\!\!\!\!\!\!\!&&
C\int_{0}^{t}\frac{1}{t-s}|\bar{X}(t,x)-\bar{X}(s,x)|_{1}(|\bar{X}(t,x)|_{1}
+ |\bar{X}(s,x)|_{1})ds  \nonumber\\
\leq\!\!\!\!\!\!\!\!&&
C\int_{0}^{t}\frac{1}{t-s}(t-s)^{\frac{\alpha}{2}}
s^{-\frac{1+\alpha-\theta}{2}}(t^{-\frac{1}{2}+\frac{2\theta}{5}}+
s^{-\frac{1}{2}+\frac{2\theta}{5}})(|x|^2_{\theta}+1)e^{C\|x\|^k}ds \nonumber\\
\leq\!\!\!\!\!\!\!\!&&
C t^{-1+\frac{9\theta}{10}}(|x|^2_{\theta}+1)e^{C\|x\|^k}. \nonumber
\end{eqnarray*}
For $I_{4}$, it follows from Lemma \ref{BarXgamma} that
\begin{align*} \label{AX4}
\| AI_{4} \|= \left\| (e^{tA}-I)\bar{f}(\bar{X}(t,x))\right\|
\leq  C(1+ \|\bar{X}(t,x)\|)
\leq  C(1+ \|x\|).
\end{align*}
For $I_{5}$, using Lemma \ref{COXT} gives
\begin{align*} %\label{AX5}
\|AI_{5}\|\leq  C\int_{0}^{t}\frac{1}{t-s}\left\| \bar{X}(t,x)-\bar{X}(s,x) \right\|ds
\leq  C t^{-\frac{1-\theta}{2}}(|x|_{\theta}+1)e^{C\|x \|^k}.
\end{align*}
The conclusion follows by the above estimates.
%\hspace{\fill}$\square$
\end{proof}

%\vskip 0.3cm

Denote by $\eta^{h}(t,x)$  the derivative of $\bar{X}(t,x)$
with respect to $x$ in the direction $h$.
%$\eta^{h}(t,x):= \langle D_{x}\bar{X}(t,x),h\rangle$,
$\eta^{h}(t,x)$ satisfies the following equation
\begin{equation}\left\{\begin{array}{l} \label{Equa derivative}
\displaystyle
\frac{d\eta^{h}(t,x)}{dt}
=A\eta^{h}(t,x)+D\bar{f}(\bar{X}(t,x))\cdot\eta^{h}(t,x)
+D_{\xi}\left[\bar{X}(t,x)\eta^{h}(t,x)\right]\\
\eta^{h}(0,x)=h.
\end{array}\right.
\end{equation}

The following three Lemmas give some bounds for $\eta^{h}(t,x)$.
%the derivative of $\bar{X}(t,x)$
%with respect to $x$ in the direction $h$.

\begin{lemma} \label{eta}
Assume the conditions \ref{A1}, \ref{A2} and \ref{A4} hold.  \\
(1) For any $t\in(0,T]$, $h\in L^2$, there exists a constant $C>0$ such that
\begin{align} \label{LemDeriEtah 01}
\| \eta^{h}(t,x) \|^2+\int^t_0|\eta^{h}(s,x)|_{1}^{2}ds\leq Ce^{C\|x\|^{5}}\|h\|^{2}.
\end{align}
(2) For any $x\in H^{\theta}$ with $\theta\in(0,1)$, $h\in L^2$, $\gamma \in(1,\frac{3}{2})$,
$t\in (0, T]$,
there exist $k\in \mathbb{N}$ and a constant $C=C_{\gamma, \theta, T}$ such that
\begin{align} \label{LemDeriEtah 02}
|\eta^{h}(t,x)|_{\gamma}
\leq Ct^{-\frac{\gamma}{2}}(|x|_{\theta}+1)e^{C \|x\|^{k}}\|h \|.
\end{align}
%where $C$ is a constant depending on $\gamma, \theta, T$.
\end{lemma}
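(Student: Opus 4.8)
The plan is to work from the mild (Duhamel) form of the linear equation \eqref{Equa derivative},
\[
\eta^{h}(t,x)=e^{tA}h+\int_{0}^{t}e^{(t-s)A}\big(D\bar f(\bar X_{s})\eta^{h}(s,x)\big)ds+\int_{0}^{t}e^{(t-s)A}\,\partial_{\xi}\big(\bar X_{s}\,\eta^{h}(s,x)\big)ds ,
\]
where $\bar X_{s}:=\bar X(s,x)$, to prove (1) by an $L^{2}$ energy estimate and (2) by a singular Gronwall argument applied to $\psi(t):=|\eta^{h}(t,x)|_{\gamma}$. Throughout, I use that $\bar f$ is Lipschitz (hence $D\bar f$ bounded) by \ref{A1} and Proposition \ref{Rem 4.1}, and that, by testing \eqref{finite averaged equation} with $\bar X$ and using \eqref{A.1.1}, one has $\int_{0}^{T}|\bar X_{s}|_{1}^{2}ds\leq C(1+\|x\|^{2})$.

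For part (1), I would multiply \eqref{Equa derivative} by $2\eta^{h}$ and integrate over $(0,1)$. With $\langle A\eta^{h},\eta^{h}\rangle=-|\eta^{h}|_{1}^{2}$ and, crucially, $b(\bar X_{s},\eta^{h},\eta^{h})=0$ from Lemma \ref{Property B0}, the Burgers term $\langle\partial_{\xi}(\bar X_{s}\eta^{h}),\eta^{h}\rangle$ collapses to $b(\eta^{h},\bar X_{s},\eta^{h})=\int_{0}^{1}(\eta^{h})^{2}\partial_{\xi}\bar X_{s}\,d\xi\leq\|\eta^{h}\|_{L^{4}}^{2}|\bar X_{s}|_{1}\leq C\|\eta^{h}\|^{3/2}|\eta^{h}|_{1}^{1/2}|\bar X_{s}|_{1}$ by the one-dimensional Gagliardo–Nirenberg inequality. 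After absorbing $\tfrac12|\eta^{h}|_{1}^{2}$ by Young's inequality one gets
\[
\frac{d}{dt}\|\eta^{h}(t)\|^{2}+\tfrac32|\eta^{h}(t)|_{1}^{2}\leq C\big(1+|\bar X_{t}|_{1}^{4/3}\big)\|\eta^{h}(t)\|^{2},
\]
and since $\int_{0}^{T}|\bar X_{s}|_{1}^{4/3}ds\leq C(1+\|x\|^{4/3})\leq C(1+\|x\|^{5})$, Gronwall's inequality gives $\|\eta^{h}(t)\|^{2}\leq e^{C(1+\|x\|^{5})}\|h\|^{2}$; integrating the differential inequality once more yields the $\int_{0}^{t}|\eta^{h}(s)|_{1}^{2}ds$ bound. (The power $5$ is merely a convenient over-estimate.)

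For part (2), I apply $(-A)^{\gamma/2}$ to the Duhamel formula. The first term obeys $|e^{tA}h|_{\gamma}\leq Ct^{-\gamma/2}\|h\|$ by \eqref{PSG}, and, $D\bar f$ being bounded, the second term is $\leq C\int_{0}^{t}(1+(t-s)^{-\gamma/2})\|\eta^{h}(s)\|ds\leq Ct^{-\gamma/2}e^{C\|x\|^{5}}\|h\|$ by part (1) (using $1+t^{1-\gamma/2}\leq C_{T}t^{-\gamma/2}$ on $(0,T]$, as $\gamma/2<\tfrac34$). For the Burgers term I would use that $\bar X_{s}\eta^{h}(s)\in H^{\gamma}$ (product of two $H^{\gamma}$ functions, $\gamma>\tfrac12$), so $\partial_{\xi}(\bar X_{s}\eta^{h}(s))\in H^{\gamma-1}$ and \eqref{PSG} costs only the kernel $(t-s)^{-1/2}$: $|e^{(t-s)A}\partial_{\xi}(\bar X_{s}\eta^{h}(s))|_{\gamma}\leq C(1+(t-s)^{-1/2})|\bar X_{s}\eta^{h}(s)|_{\gamma}$. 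A Moser-type product estimate $|\bar X_{s}\eta^{h}(s)|_{\gamma}\leq C(|\bar X_{s}|_{\gamma}\|\eta^{h}(s)\|_{L^{\infty}}+\|\bar X_{s}\|_{L^{\infty}}|\eta^{h}(s)|_{\gamma})$ then splits it into two pieces handled differently: for the second, $\|\bar X_{s}\|_{L^{\infty}}\leq C\|\bar X_{s}\|^{1/2}|\bar X_{s}|_{1}^{1/2}$ has, by \eqref{A.1.1} and \eqref{BarX1}, only a mild singularity $s^{-(1-\theta)/4-\varepsilon}$, giving a linear-in-$\psi$ contribution with a weakly singular kernel; for the first, I would interpolate $\|\eta^{h}(s)\|_{L^{\infty}}\leq C|\eta^{h}(s)|_{1/2+\varepsilon_{0}}\leq C\|\eta^{h}(s)\|^{1-\beta}|\eta^{h}(s)|_{\gamma}^{\beta}$ with $\beta=(\tfrac12+\varepsilon_{0})/\gamma\in(0,1)$ and bound $|\bar X_{s}|_{\gamma}\leq C(|x|_{\theta}+1)s^{-(\gamma-\theta)/2}e^{C\|x\|^{k}}$ by Lemma \ref{BarXgamma}(2), getting a sublinear-in-$\psi$ contribution. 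Altogether, with $m=(|x|_{\theta}+1)e^{C\|x\|^{k}}$,
\[
\psi(t)\leq C\,m\,t^{-\gamma/2}\|h\|+C\,m\!\int_{0}^{t}\!\big(1+(t-s)^{-1/2}\big)\Big(s^{-\frac{\gamma-\theta}{2}}(m\|h\|)^{1-\beta}\psi(s)^{\beta}+s^{-\frac{1-\theta}{4}-\varepsilon}\psi(s)\Big)ds ;
\]
because $\gamma-\theta<\tfrac32$ and $\beta$ may be chosen small enough (so that $\gamma\beta=\tfrac12+\varepsilon_{0}$ with $\varepsilon_{0}<\theta$, and $\varepsilon<\theta$), all the relevant time exponents stay below $1$ and the associated Beta-integrals reproduce a bound of order $t^{-\gamma/2}$, so closing this (weakly singular, and in one term sublinear) inequality by Lemma \ref{Gronwall 2} — e.g. after the substitution $\widetilde\psi(t)=t^{\gamma/2}\psi(t)$ and handling the sublinear term by Young's inequality — yields the claimed bound $|\eta^{h}(t,x)|_{\gamma}\leq Ct^{-\gamma/2}(|x|_{\theta}+1)e^{C\|x\|^{k}}\|h\|$, possibly after enlarging the polynomial factor and the exponent $k$.

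The step I expect to be the main obstacle is exactly this Burgers term in part (2): a naive estimate of $e^{(t-s)A}\partial_{\xi}(\bar X_{s}\eta^{h}(s))$ in $H^{\gamma}$ costs the kernel $(t-s)^{-(\gamma+1)/2}$, which is not integrable for $\gamma>1$. Two observations rescue it — asking only for a one-derivative gain (from $H^{\gamma-1}$ to $H^{\gamma}$, hence $(t-s)^{-1/2}$), and distributing the regularity inside $\bar X_{s}\eta^{h}(s)$ so that the strong $s^{-(\gamma-\theta)/2}$ blow-up of $\bar X_{s}$ in $H^{\gamma}$ is only ever paired with a sublinear power of $|\eta^{h}(s)|_{\gamma}$, while the factor carrying the full $|\eta^{h}(s)|_{\gamma}$ comes with the far milder $\|\bar X_{s}\|_{L^{\infty}}$. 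Verifying that all resulting time-exponents fall below $1$ — which is where $\gamma-\theta<\tfrac32$ is used sharply — and propagating the estimate through the singular Gronwall lemma is the delicate part; everything else reduces to \eqref{PSG}, one-dimensional Sobolev embeddings and interpolation, Lemma \ref{BarXgamma}, and part (1).
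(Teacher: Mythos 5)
Your proposal is correct. Part (1) is essentially the paper's argument: the same $L^2$ energy identity, with the trilinear term reduced (up to a harmless constant factor, via integration by parts) to $\int_0^1(\eta^h)^2\partial_\xi\bar X_s\,d\xi$; the paper writes this as $-b(\bar X_s,\eta^h,\eta^h)$ and bounds it by $\|\bar X_s\|\,|\eta^h|_1^{8/5}\|\eta^h\|^{2/5}$ via Lemma \ref{Property B1} and interpolation, ending with the Gronwall coefficient $\sup_s\|\bar X_s\|^5$, whereas your Gagliardo--Nirenberg/Young split ends with $\int_0^T|\bar X_s|_1^{4/3}ds$; both are valid and give the same exponential factor. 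In part (2) you handle the critical Burgers term by a genuinely different decomposition. The paper measures $D_\xi(\bar X_s\eta^h(s))$ in $H^{-1/2}$ via Lemma \ref{Property B1}, getting $|D_\xi(\bar X_s\eta^h(s))|_{-1/2}\leq C(\|\eta^h(s)\|\,|\bar X_s|_\gamma+\|\bar X_s\|\,|\eta^h(s)|_\gamma)$: it pays the worse semigroup kernel $(t-s)^{-(2\gamma+1)/4}$ (still integrable since $\gamma<\tfrac32$), but in exchange the full norm $|\eta^h(s)|_\gamma$ only ever meets the bounded factor $\|\bar X_s\|\leq C(1+\|x\|)$, while the singular factor $|\bar X_s|_\gamma\sim s^{-(\gamma-\theta)/2}$ only meets $\|\eta^h(s)\|$, already controlled by part (1); the resulting inequality is linear with a single $(t-s)$-singular kernel and is closed directly by Lemma \ref{Gronwall 2}. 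Your Moser-product route pays only $(t-s)^{-1/2}$ but cannot fully decouple the singularities: $|\bar X_s|_\gamma$ meets $\|\eta^h(s)\|_{L^\infty}$, which part (1) does not control pointwise in $s$, forcing the interpolation $\|\eta^h\|_{L^\infty}\lesssim\|\eta^h\|^{1-\beta}|\eta^h|_\gamma^\beta$ and hence a sublinear, doubly singular Gronwall inequality that Lemma \ref{Gronwall 2} does not cover as stated. Your Young-inequality reduction does repair this (the exponent checks you indicate are the right ones, and linearity in $\|h\|$ survives), but at the cost of extra bookkeeping; the paper's use of negative Sobolev norms is the cleaner of the two routes to the same estimate.
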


\begin{proof}
Multiplying both sides of the equation \eref{Equa derivative}
by $\eta^{h}(t,x)$ and integrating with respect to $\xi$, we obtain
\begin{eqnarray}
\frac{1}{2}\frac{d}{dt} \| \eta^{h}(t,x)\|^{2}
+|\eta^{h}(t,x)|_{1}^{2}
=\!\!\!\!\!\!\!\!&&
\int_{0}^{1}\left[D\bar{f}(\bar{X}(t,x))\eta^{h}(t,x)\right]\eta^{h}(t,x) d\xi \nonumber\\
\!\!\!\!\!\!\!\!&&
+\int_{0}^{1}D_{\xi}\left[\bar{X}(t,x)\eta^{h}(t,x)\right]\eta^{h}(t,x)d\xi \nonumber\\
\leq\!\!\!\!\!\!\!\!&&
C \| \eta^{h}(t,x)\|^{2}-\int_{0}^{1}\bar{X}(t,x)\eta^{h}(t,x)D_{\xi}\eta^{h}(t,x)d\xi \nonumber\\
=\!\!\!\!\!\!\!\!&&
C \| \eta^{h}(t,x)\|^{2}-b(\bar{X}(t,x),\eta^{h}(t,x),\eta^{h}(t,x)). \nonumber
\end{eqnarray}
According to Lemma \ref{Property B1} and the interpolation inequality,
it follows that
\begin{eqnarray}
\frac{1}{2}\frac{d}{dt} \| \eta^{h}(t,x) \|^{2}
+|\eta^{h}(t,x)|_{1}^{2}
\leq\!\!\!\!\!\!\!\!&&
C \|\eta^{h}(t,x) \|^{2} + \| \bar{X}(t,x)\| |\eta^{h}(t,x)|_{1}|\eta^{h}(t,x)|_{\frac{3}{5}} \nonumber\\
\leq\!\!\!\!\!\!\!\!&&
C \| \eta^{h}(t,x)\|^{2}
+ \| \bar{X}(t,x)\| |\eta^{h}(t,x)|_{1}^{\frac{8}{5}} \| \eta^{h}(t,x)\|^{\frac{2}{5}}   \nonumber\\
\leq\!\!\!\!\!\!\!\!&&
C\| \eta^{h}(t,x)\|^{2} + \frac{1}{2}|\eta^{h}(t,x)|_{1}^{2}
+C \| \bar{X}(t,x)\|^{5} \|\eta^{h}(t,x)\|^{2}. \nonumber
\end{eqnarray}
%Then, we get
This implies
\begin{eqnarray}
\| \eta^{h}(t,x) \|^{2}
+ \int^t_0|\eta^{h}(s,x)|_{1}^{2}ds
\leq \| h\|^{2}+C\int^t_0\left(1+ \|\bar{X}(s,x)\|^{5}\right) \| \eta^{h}(s,x)\|^{2}ds. \nonumber
\end{eqnarray}
Then, \eqref{LemDeriEtah 01} follows by using Gronwall's inequality and Lemma \ref{BarXgamma}.

%Finally, Gronwall inequality and Lemma \ref{BarXgamma} imply
To show \eqref{LemDeriEtah 02}, notice that
%\begin{eqnarray}
%|\eta^{h}(t,x)|^2+\int^t_0|\eta^{h}(s,x)|_{1}^{2}ds\leq Ce^{C|x|^{5}}|h|^{2}. \nonumber
%\end{eqnarray}
%(2) Notice that
\begin{align}\label{mild eta}
\eta^{h}(t,x)= e^{tA}h+\int_{0}^{t}e^{(t-s)A}D\bar{f}(\bar{X}(s,x))\cdot\eta^{h}(s,x)ds
+\int_{0}^{t}e^{(t-s)A}D_{\xi}\left[\bar{X}(s,x)\eta^{h}(s,x)\right]ds.
\end{align}
Using \eqref{LemDeriEtah 01}, Lemmas \ref{Property B1} and  \ref{BarXgamma},
we obtain that for $\gamma\in(1,\frac{3}{2})$,
\begin{eqnarray}
|\eta^{h}(t,x)|_{\gamma}\leq\!\!\!\!\!\!\!\!&&
Ct^{-\frac{\gamma}{2}} \|h\|
+C\int_{0}^{t}(t-s)^{-\frac{\gamma}{2}}\left|D\bar{f}(\bar{X}(s,x))\cdot\eta^{h}(s,x)\right|ds \nonumber\\
\!\!\!\!\!\!\!\!&&
+C\int_{0}^{t}(t-s)^{-\frac{2\gamma+1}{4}}\left|D_{\xi}\left[\bar{X}(s,x)\eta^{h}(s,x)\right]\right|_{-\frac{1}{2}}ds \nonumber\\
\leq\!\!\!\!\!\!\!\!&&
Ct^{-\frac{\gamma}{2}}\|h\|
+C\int_{0}^{t}(t-s)^{-\frac{\gamma}{2}}\|\eta^{h}(s,x)\|ds  \nonumber\\
\!\!\!\!\!\!\!\!&&
+C\int_{0}^{t}(t-s)^{-\frac{2\gamma+1}{4}}\left|B(\bar{X}(s,x),\eta^{h}(s,x))+B(\eta^{h}(s,x),\bar{X}(s,x))\right|_{-\frac{1}{2}}ds \nonumber\\
\leq\!\!\!\!\!\!\!\!&&
Ct^{-\frac{\gamma}{2}}\|h\|+Ce^{C|x|^{5}}\|h\| \nonumber\\
\!\!\!\!\!\!\!\!&&
+C\int_{0}^{t}(t-s)^{-\frac{2\gamma+1}{4}}\left[\|\eta^{h}(s,x)\| |\bar{X}(s,x)|_{\gamma}
+\|\bar{X}(s,x)\||\eta^{h}(s,x)|_{\gamma}\right]ds \nonumber\\
%\leq\!\!\!\!\!\!\!\!&&
%Ct^{-\frac{\gamma}{2}}\|h\|+Ce^{C\|x\|^{5}}\|h\|
%+Ct^{-\frac{4\gamma-3-2\theta}{4}}(|x|_{\theta}+1)e^{C\|x\|^{k}}\|h\|  \nonumber\\
%\!\!\!\!\!\!\!\!&&
%+C\int_{0}^{t}(t-s)^{-\frac{2\gamma+1}{4}}|\eta^{h}(s,x)|_{\gamma}(1+\|x\|)ds  \nonumber\\
\leq\!\!\!\!\!\!\!\!&&Ct^{-\frac{\gamma}{2}}(|x|_{\theta}+1) e^{C\|x\|^{k}} \|h\|
+\int_{0}^{t}(t-s)^{-\frac{2\gamma+1}{4}}|\eta^{h}(s,x)|_{\gamma}(1+\|x\|)ds.  \nonumber
\end{eqnarray}
Consequently, by Lemma \ref{Gronwall 2}, we get \eqref{LemDeriEtah 02}.
The proof is complete.
%there exists some $k\in\mathbb{N}$ such that
%\begin{align}
%|\eta^{h}(t,x)|_{\gamma}\leq Ct^{-\frac{\gamma}{2}}(|x|_{\theta}+1)e^{C|x|^{k}}|h|, \nonumber
%\end{align}
%where $C$ is a constant depending on $\gamma, \theta, T$.
%\hspace{\fill}$\square$
\end{proof}

Note that from Lemma \ref{eta}, using the interpolation inequality,
we deduce that for
any $\gamma\in (0, 1]$, $\theta\in (0,1)$,  $t\in (0, T]$,
there exist $k\in \mathbb{N}$ and a constant $C=C_{\theta, T}>0$
such that
%\begin{remark} \label{eta1}
%Similar as the argument in \ref{BarX1}, for any $\gamma\in(0,1]$, $t\in (0, T]$, then there exists $k\in \mathbb{N}$ such that
\begin{align}  \label{eta1}
|\eta^h(t,x)|_{\gamma}
\leq Ct^{-\frac{\gamma}{2}}(|x|_{\theta}+1)e^{C\|x\|^{k}}\|h\|.
\end{align}
%where $C$ is a constant depending on $\theta, T$.
%\end{remark}

\begin{lemma}\label{LemmaA.5}
Under the conditions \ref{A1}, \ref{A2} and \ref{A4},
%(1) For any $T>0$, $\alpha\in(0,\frac{1}{2})$, $\theta\in(1/2,1]$, $0<s\leq t\leq T$, $x,h\in H^{\theta}$, there exists a positive constant $C_T$ such that
%\begin{align}
%|\eta^{h}(t,x)-\eta^{h}(s,x)|\leq C_T(t-s)^{\frac{\alpha}{2}}(s^{-\frac{\alpha(1-\theta)}{2}}+1)|h|_{\theta}. \nonumber
%\end{align}
for any $x\in H^{\theta}$ with $\theta\in(0,1)$,
$\alpha\in(0,\frac{1}{2})$,  $0<s<t\leq T$,
there exist $k\in\mathbb{N}$ and a constant $C=C_{\alpha, \theta, T}$ such that
\begin{align}
|\eta^{h}(t,x)-\eta^{h}(s,x)|_{1}
\leq
C(t-s)^{\frac{\alpha}{2}}s^{-\frac{1+\alpha}{2}}(1+|x|_{\theta})e^{C\|x\|^{k}}\|h\|. \nonumber
\end{align}
%where $C$ is a constant depending on $\alpha, \theta, T$.
\end{lemma}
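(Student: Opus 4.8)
The plan is to follow the strategy used for Lemma~\ref{COXT}, now starting from the mild formula \eqref{mild eta} for $\eta^{h}(\cdot,x)$. First I would write $\eta^{h}(t,x)-\eta^{h}(s,x)=K_1+K_2+K_3$ with $K_1=\bigl(e^{A(t-s)}-I\bigr)\eta^{h}(s,x)$, $K_2=\int_{s}^{t}e^{(t-r)A}D\bar f(\bar X(r,x))\cdot\eta^{h}(r,x)\,dr$ and $K_3=\int_{s}^{t}e^{(t-r)A}D_{\xi}\bigl[\bar X(r,x)\eta^{h}(r,x)\bigr]\,dr$, and bound each $|K_i|_1$ separately. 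In contrast to Lemma~\ref{eta}, no Gronwall argument is needed here: $\eta^h$ no longer appears implicitly, so one simply inserts the bounds \eqref{LemDeriEtah 01}--\eqref{LemDeriEtah 02} together with Lemma~\ref{BarXgamma} and \eqref{A.1.2}.

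For $K_1$, since $(-A)^{1/2}$ commutes with the semigroup, $|K_1|_1=\|(e^{A(t-s)}-I)(-A)^{1/2}\eta^{h}(s,x)\|\leq C(t-s)^{\alpha/2}|\eta^{h}(s,x)|_{1+\alpha}$, and \eqref{LemDeriEtah 02} with $\gamma=1+\alpha\in(1,\tfrac32)$ gives $|K_1|_1\leq C(t-s)^{\alpha/2}s^{-(1+\alpha)/2}(|x|_{\theta}+1)e^{C\|x\|^{k}}\|h\|$. For $K_2$, the boundedness of $D\bar f$ (recall $\bar f$ is Lipschitz) together with \eqref{PSG} and \eqref{LemDeriEtah 01} yields $|K_2|_1\leq C\int_s^t(1+(t-r)^{-1/2})\|\eta^{h}(r,x)\|\,dr\leq C(t-s)^{1/2}e^{C\|x\|^{5}}\|h\|$; since $\alpha<\tfrac12$ and $0<s<t\leq T$, the factor $(t-s)^{1/2}$ is dominated by $C(t-s)^{\alpha/2}s^{-(1+\alpha)/2}$ (using $(t-s)^{(1-\alpha)/2}\leq T^{(1-\alpha)/2}$ and $1\leq T^{(1+\alpha)/2}s^{-(1+\alpha)/2}$), so $K_2$ obeys the required bound.

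The essential point is $K_3$. Using \eqref{PSG} with the $H^{-1/2}$ norm, $|K_3|_1\leq C\int_s^t(1+(t-r)^{-3/4})\,|B(\bar X(r,x),\eta^{h}(r,x))+B(\eta^{h}(r,x),\bar X(r,x))|_{-1/2}\,dr$, and, exactly as in the proof of Lemma~\ref{eta}, Lemma~\ref{Property B1} bounds the $H^{-1/2}$ norm by $C\bigl[\|\eta^{h}(r,x)\|\,|\bar X(r,x)|_{1+\alpha}+\|\bar X(r,x)\|\,|\eta^{h}(r,x)|_{1+\alpha}\bigr]$. Inserting $\|\bar X(r,x)\|\leq C(1+\|x\|)$ and $\|\eta^{h}(r,x)\|\leq Ce^{C\|x\|^{5}}\|h\|$ from Lemma~\ref{BarXgamma} and \eqref{LemDeriEtah 01}, and the smoothing estimates $|\bar X(r,x)|_{1+\alpha}\leq Cr^{-(1+\alpha-\theta)/2}(|x|_{\theta}+1)e^{C\|x\|^{k}}$ and $|\eta^{h}(r,x)|_{1+\alpha}\leq Cr^{-(1+\alpha)/2}(|x|_{\theta}+1)e^{C\|x\|^{k}}\|h\|$ from \eqref{A.1.2} and \eqref{LemDeriEtah 02}, the integrand is at most $C(1+(t-r)^{-3/4})r^{-(1+\alpha)/2}(|x|_{\theta}+1)e^{C\|x\|^{k}}\|h\|$ (the exponent $-(1+\alpha)/2$ being the worse of the two because $\theta>0$). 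Then, bounding $r^{-(1+\alpha)/2}\leq s^{-(1+\alpha)/2}$ on $[s,t]$ and integrating (legitimate since $\tfrac34<1$ and, as $\alpha<\tfrac12$, $\tfrac{1+\alpha}{2}<1$) gives $\int_s^t(1+(t-r)^{-3/4})r^{-(1+\alpha)/2}\,dr\leq C(t-s)^{1/4}s^{-(1+\alpha)/2}$, and $(t-s)^{1/4}\leq T^{1/4-\alpha/2}(t-s)^{\alpha/2}$ since $\alpha<\tfrac12$; hence $|K_3|_1\leq C(t-s)^{\alpha/2}s^{-(1+\alpha)/2}(|x|_{\theta}+1)e^{C\|x\|^{k}}\|h\|$. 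Adding the three estimates proves the lemma. I expect the main obstacle to be precisely the bookkeeping in $K_3$: choosing the exponents in Lemma~\ref{Property B1} so that the $r$-singularity produced by $|\bar X(r,x)|_{1+\alpha}$ and $|\eta^{h}(r,x)|_{1+\alpha}$ is integrable against the kernel $(1+(t-r)^{-3/4})$ and the result collapses to the claimed $(t-s)^{\alpha/2}s^{-(1+\alpha)/2}$, all the while keeping the $x$-dependence in the form $(|x|_{\theta}+1)e^{C\|x\|^{k}}$.
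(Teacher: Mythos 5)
Your proposal is correct and follows essentially the same route as the paper: the identical decomposition into $(e^{A(t-s)}-I)\eta^{h}(s,x)$ plus the two Duhamel integrals over $[s,t]$, with $I_1$ handled via \eqref{LemDeriEtah 02} at regularity $1+\alpha$, $I_2$ via the boundedness of $D\bar f$ and \eqref{LemDeriEtah 01}, and $I_3$ via the $H^{-1/2}$ bound from Lemma \ref{Property B1} combined with \eqref{A.1.2} and \eqref{LemDeriEtah 02}. Your additional bookkeeping (explicitly dominating $(t-s)^{1/2}$ and $(t-s)^{1/4}$ by $(t-s)^{\alpha/2}s^{-(1+\alpha)/2}$ on $[0,T]$) only makes explicit what the paper leaves implicit.
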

\begin{proof}
From \eqref{Equa derivative}, we write
%It is easy to see that
\begin{eqnarray} \label{ContinI5}
\eta^{h}(t,x)-\eta^{h}(s,x)=\!\!\!\!\!\!\!\!&&(e^{(t-s)A}-I)\eta^{h}(s,x)+\int_{s}^{t}e^{(t-r)A}D\bar{f}(\bar{X}(r,x))\cdot\eta^{h}(r,x)dr \nonumber\\
\!\!\!\!\!\!\!\!&&+\int_{s}^{t}e^{(t-r)A}D_{\xi}\left[\bar{X}(r,x)\eta^{h}(r,x)\right]dr  \nonumber\\
=\!\!\!\!\!\!\!\!&&I_{1}+I_{2}+I_{3}. 
\end{eqnarray}
For $I_{1}$, using \eqref{PSG} and \eqref{LemDeriEtah 02}, we have
\begin{align}  \label{I51}
|I_{1}|_{1}\leq C(t-s)^{\frac{\alpha}{2}}|\eta^{h}(s,x)|_{1+\alpha}
\leq C(t-s)^{\frac{\alpha}{2}}s^{-\frac{1+\alpha}{2}}(1+|x|_{\theta})e^{C\|x\|^{k}} \|h\|.
\end{align}
For $I_{2}$, using \eqref{LemDeriEtah 01}, we get
\begin{align}   \label{I52}
|I_{2}|_{1}\leq
\int_{s}^{t}(t-r)^{-\frac{1}{2}}\left \| D\bar{f}(\bar{X}(r,x))\cdot \eta^{h}(r,x)\right\| dr
\leq  C(t-s)^{\frac{1}{2}}e^{C\|x\|^{5}}\|h\|.
\end{align}
For $I_{3}$, using \eqref{PSG}, Lemmas \ref{BarXgamma} and \ref{eta}, we obtain
\begin{eqnarray}  \label{I53}
|I_{3}|_{1}\leq\!\!\!\!\!\!\!\!&&
C\int_{s}^{t}\left[1+(t-r)^{-\frac{3}{4}}\right]
\left|D_{\xi}\left[\bar{X}(r,x)\eta^{h}(r,x)\right]\right|_{-\frac{1}{2}}dr  \nonumber\\
\leq\!\!\!\!\!\!\!\!&&C\int_{s}^{t}\left[1+(t-r)^{-\frac{3}{4}}\right]\left|B(\bar{X}(r,x),\eta^{h}(r,x))+B(\eta^{h}(r,x),\bar{X}(r,x))\right|_{-\frac{1}{2}}dr  \nonumber\\
\leq\!\!\!\!\!\!\!\!&&C\int_{s}^{t}\left[1+(t-r)^{-\frac{3}{4}}\right]\left(\left\|\bar{X}(r,x)\right\|\left|\eta^{h}(r,x)\right|_{1+\alpha}
+\left|\bar{X}(r,x)\right|_{1+\alpha} \left\|\eta^{h}(r,x)\right\| \right)dr  \nonumber\\
%\leq\!\!\!\!\!\!\!\!&&
%C(t-s)^{\frac{1}{4}}\left[s^{-\frac{1+\alpha}{2}} (1+|x|_{\theta})
%+ s^{-\frac{1+\alpha-\theta}{2}}(|x|_{\theta}+1)\right] e^{C\|x\|^{k}}\|h\|  \nonumber\\
\leq\!\!\!\!\!\!\!\!&&C(t-s)^{\frac{1}{4}}s^{-\frac{1+\alpha}{2}}(1+|x|_{\theta})e^{C\|x\|^{k}} \|h\|.
\end{eqnarray}
We conclude the proof by combining \eqref{ContinI5}-\eqref{I53}.
%Finally, we obtain
%\begin{eqnarray*} \label{I5}
%|\eta^{h}(t,x)-\eta^{h}(s,x)|_1\leq C(t-s)^{\frac{\alpha}{2}}s^{-\frac{1+\alpha}{2}}(1+|x|_{\theta})e^{C|x|^{k}}|h|.
%\end{eqnarray*}
%The proof is complete. \hspace{\fill}$\square$
\end{proof}

\begin{lemma} \label{ESDET}
Under the conditions \ref{A1}, \ref{A2} and \ref{A4},
for any $x\in H^{\theta}$ with $\theta\in(0,1)$,  $h\in L^2$,
$0<s<t\leq T$,  there exist $k\in\mathbb{N}$ and a constant $C=C_{\theta, T}$ such that
\begin{align}
\left\|\frac{d}{dt}\eta^{h}(t,x)\right\|\leq Ct^{-1}(|x|_{\theta}^{2}+1)e^{C\|x\|^{k}}\|h\|.  \nonumber
\end{align}
%where $C$ is a constant depending on $\theta, T$.
\end{lemma}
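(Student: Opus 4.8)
The plan is to follow the argument of Lemma~\ref{ESDX}, now applied to the derivative process. From \eqref{Equa derivative} we have
\[
\frac{d}{dt}\eta^{h}(t,x)=A\eta^{h}(t,x)+D\bar{f}(\bar{X}(t,x))\cdot\eta^{h}(t,x)+D_{\xi}\big[\bar{X}(t,x)\eta^{h}(t,x)\big],
\]
so it is enough to bound the three terms on the right separately. First I would dispose of the last two directly: by the boundedness of $D\bar{f}$ and \eqref{LemDeriEtah 01}, $\|D\bar{f}(\bar{X}(t,x))\cdot\eta^{h}(t,x)\|\leq C\|\eta^{h}(t,x)\|\leq Ce^{C\|x\|^{5}}\|h\|$; while writing $D_{\xi}[\bar{X}(t,x)\eta^{h}(t,x)]=B(\bar{X}(t,x),\eta^{h}(t,x))+B(\eta^{h}(t,x),\bar{X}(t,x))$, using the embedding $H^{1}(0,1)\hookrightarrow L^{\infty}(0,1)$ (as in Corollary~\ref{Property B3}) and then \eqref{BarX1} (with $\delta=\frac{1}{4}$, $\gamma=1$) together with \eqref{eta1} (with $\gamma=1$), one gets $\|D_{\xi}[\bar{X}(t,x)\eta^{h}(t,x)]\|\leq C|\bar{X}(t,x)|_{1}\,|\eta^{h}(t,x)|_{1}\leq Ct^{-1+\frac{2\theta}{5}}(|x|_{\theta}^{2}+1)e^{C\|x\|^{k}}\|h\|$. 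Hence the real task is to estimate $\|A\eta^{h}(t,x)\|$.

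For this I would start from the mild formulation \eqref{mild eta} and apply $A$ term by term. The term $Ae^{tA}h$ contributes $\|Ae^{tA}h\|\leq Ct^{-1}\|h\|$, which is precisely the $t^{-1}$ singularity in the statement, and is the only term that does not gain a positive power of $t$ (here $h$ belongs only to $L^{2}$, unlike the initial datum $x\in H^{\theta}$ in Lemma~\ref{ESDX}). For each of the two convolution integrals $\int_{0}^{t}e^{(t-s)A}F(s)\,ds$, with $F(s)=D\bar{f}(\bar{X}(s,x))\cdot\eta^{h}(s,x)$ and $F(s)=D_{\xi}[\bar{X}(s,x)\eta^{h}(s,x)]$ respectively, I would use the standard splitting
\[
A\int_{0}^{t}e^{(t-s)A}F(s)\,ds=(e^{tA}-I)F(t)+\int_{0}^{t}Ae^{(t-s)A}\big(F(s)-F(t)\big)\,ds,
\]
where the ``frozen'' part obeys $\|(e^{tA}-I)F(t)\|\leq 2\|F(t)\|$ and is controlled exactly as in the first paragraph, and the ``increment'' part is bounded by $C\int_{0}^{t}(t-s)^{-1}\|F(s)-F(t)\|\,ds$.

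It then remains to produce H\"older-in-time estimates of $F$. For $F(s)=D\bar{f}(\bar{X}(s,x))\cdot\eta^{h}(s,x)$ I would split $F(s)-F(t)=D\bar{f}(\bar{X}(s,x))\big(\eta^{h}(s,x)-\eta^{h}(t,x)\big)+\big(D\bar{f}(\bar{X}(s,x))-D\bar{f}(\bar{X}(t,x))\big)\eta^{h}(t,x)$ and use the boundedness and the Lipschitz continuity of $D\bar{f}$ (the latter being part of the $C^{2}$-regularity of $\bar{f}$ established earlier), together with Lemma~\ref{LemmaA.5}, Lemma~\ref{COXT} and \eqref{LemDeriEtah 01}, to obtain, for any $\alpha\in(0,\frac{1}{2})$, a bound $\|F(s)-F(t)\|\leq C(t-s)^{\frac{\alpha}{2}}s^{-\frac{1+\alpha}{2}}(1+|x|_{\theta})e^{C\|x\|^{k}}\|h\|$, whose contribution after integration against $(t-s)^{-1}$ is of order $t^{-1/2}$ in time. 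For $F(s)=D_{\xi}[\bar{X}(s,x)\eta^{h}(s,x)]=B(\bar{X}(s,x),\eta^{h}(s,x))+B(\eta^{h}(s,x),\bar{X}(s,x))$ I would expand $F(s)-F(t)$ into the four bilinear increments $B(\bar{X}(s,x)-\bar{X}(t,x),\eta^{h}(s,x))$, $B(\bar{X}(t,x),\eta^{h}(s,x)-\eta^{h}(t,x))$, $B(\eta^{h}(s,x)-\eta^{h}(t,x),\bar{X}(t,x))$ and $B(\eta^{h}(s,x),\bar{X}(s,x)-\bar{X}(t,x))$, bound each by $C|\cdot|_{1}\,|\cdot|_{1}$ (again by $H^{1}\hookrightarrow L^{\infty}$), and insert Lemma~\ref{COXT}, Lemma~\ref{LemmaA.5}, \eqref{BarX1} and \eqref{eta1}; this gives $\|F(s)-F(t)\|\leq C(t-s)^{\frac{\alpha}{2}}s^{-1-\frac{\alpha}{2}+\frac{2\theta}{5}}(|x|_{\theta}^{2}+1)e^{C\|x\|^{k}}\|h\|$. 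Choosing the free parameter $\alpha\in(0,\frac{1}{2})$ additionally so that $\alpha<\frac{4\theta}{5}$ makes the resulting integral $\int_{0}^{t}(t-s)^{-1+\frac{\alpha}{2}}s^{-1-\frac{\alpha}{2}+\frac{2\theta}{5}}\,ds$ converge, of order $t^{-1+\frac{2\theta}{5}}$. Collecting all the pieces of $\|A\eta^{h}(t,x)\|$ with those of the first paragraph and using $t\leq T$ to absorb the strictly positive powers of $t$ into the constant then yields the desired bound.

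The step I expect to be the main obstacle is the bookkeeping of the time singularities in the increment parts: one has to check that the only genuinely $t^{-1}$ contribution is $Ae^{tA}h$, that every other term carries a strictly better power $t^{-1+\epsilon}$, and that the free exponent $\alpha\in(0,\frac{1}{2})$ can be picked (depending only on $\theta$) so that all the Beta-type integrals $\int_{0}^{t}(t-s)^{-1+\frac{\alpha}{2}}s^{-\kappa}\,ds$ that appear have $\kappa<1$. One must also track that the products of the factors $(1+|x|_{\theta})$ coming from $\bar{X}$ and from $\eta^{h}$ collapse into a single $(|x|_{\theta}^{2}+1)$ and that the various $e^{C\|x\|^{k}}$ combine into one (with a larger $k$ and $C$), exactly as in Lemmas~\ref{ESDX} and \ref{LemmaA.5}.
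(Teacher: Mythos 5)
Your proposal is correct and follows essentially the same route as the paper: the paper also bounds the two lower-order terms of \eqref{Equa derivative} directly, and controls $\|A\eta^{h}(t,x)\|$ by rewriting the mild formula \eqref{mild eta} with the integrands frozen at time $t$ (which is exactly your commutation identity $A\int_0^t e^{(t-s)A}F(s)\,ds=(e^{tA}-I)F(t)+\int_0^t Ae^{(t-s)A}(F(s)-F(t))\,ds$), then estimating the increment integrals via Lemmas \ref{COXT} and \ref{LemmaA.5}, \eqref{BarX1} and \eqref{eta1}. The resulting singularities ($t^{-1}$ from $Ae^{tA}h$, $t^{-1/2}$ and $t^{-1+\frac{2\theta}{5}}$ from the remaining pieces) match those in the paper.
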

\begin{proof}
%Recall that
%\begin{eqnarray}
%\frac{d\eta^{h}(t,x)}{dt}
%=\!\!\!\!\!\!\!\!&&A\eta^{h}(t,x)+D\bar{f}(\bar{X}(t,x))\cdot\eta^{h}(t,x)+D_{\xi}\left[\bar{X}(t,x)\eta^{h}(t,x)\right].  \nonumber
%\end{eqnarray}
We first control the first term in \eqref{Equa derivative}.
Notice that
\begin{eqnarray}
\eta^{h}(t,x)
%=\!\!\!\!\!\!\!\!&&e^{tA}h+\int_{0}^{t}e^{(t-s)A}D\bar{f}(\bar{X}(s,x))\cdot\eta^{h}(s,x)ds
%+\int_{0}^{t}e^{(t-s)A}D_{\xi}\left[\bar{X}(s,x)\eta^{h}(s,x)\right]ds  \nonumber\\
=\!\!\!\!\!\!\!\!&&e^{tA}h+\int_{0}^{t}e^{(t-s)A}D\bar{f}(\bar{X}(t,x))\cdot\eta^{h}(t,x)ds \nonumber\\
\!\!\!\!\!\!\!\!&&+\int_{0}^{t}e^{(t-s)A}\left[D\bar{f}(\bar{X}(s,x))\cdot\eta^{h}(s,x)-D\bar{f}(\bar{X}(t,x))\cdot\eta^{h}(t,x)\right]ds  \nonumber\\
\!\!\!\!\!\!\!\!&&+\int_{0}^{t}e^{(t-s)A}D_{\xi}\left[\bar{X}(t,x)\eta^{h}(t,x)\right]ds \nonumber\\
\!\!\!\!\!\!\!\!&&+\int_{0}^{t}e^{(t-s)A}\left\{D_{\xi}\left[\bar{X}(s,x)\eta^{h}(s,x)\right]-D_{\xi}\left[\bar{X}(t,x)\eta^{h}(t,x)\right]\right\}ds \nonumber\\
=\!\!\!\!\!\!\!\!&&I_{1}+I_{2}+I_{3}+I_{4}+I_{5}. \nonumber
\end{eqnarray}
For $I_{1}$, we have
\begin{align} \label{AI1}
\|A I_1\| \leq C t^{-1} \|h\|.
\end{align}
For $I_{2}$, it follows from condition \ref{A4} that
\begin{align} \label{AI2}
\|AI_{2}\|= \left \|(e^{tA}-I)D\bar{f}(\bar{X}(t,x))\cdot\eta^{h}(t,x)\right\|
\leq  2\left\|D\bar{f}(\bar{X}(t,x))\cdot\eta^{h}(t,x)\right\|
\leq  Ce^{C\|x\|^{5}} \|h\|.
\end{align}
For $I_{3}$, according to Lemmas \ref{COXT} and \ref{LemmaA.5}, we obtain
\begin{eqnarray} \label{AI3}
\| AI_{3} \|
\leq\!\!\!\!\!\!\!\!&&
C\int_{0}^{t}\frac{1}{t-s}
\left\| D\bar{f}(\bar{X}(t,x))\cdot\eta^{h}(t,x)-D\bar{f}(\bar{X}(s,x))\cdot\eta^{h}(s,x)\right\|ds
\nonumber\\
\leq\!\!\!\!\!\!\!\!&&
C\int_{0}^{t}\frac{1}{t-s}\left\|[D\bar{f}(\bar{X}(t,x))-D\bar{f}(\bar{X}(s,x))]\cdot\eta^{h}(t,x)\right\|ds \nonumber\\
\!\!\!\!\!\!\!\!&&
+C\int_{0}^{t}\frac{1}{t-s}\left\|D\bar{f}(\bar{X}(s,x))\cdot
\left(\eta^{h}(t,x)-\eta^{h}(s,x)\right)\right\|ds \nonumber\\
\leq\!\!\!\!\!\!\!\!&&
C\int_{0}^{t}\frac{1}{t-s}\|\bar{X}(t,x)-\bar{X}(s,x)\| e^{C\|x\|^{5}}\|h\| ds
+ C\int_{0}^{t}\frac{1}{t-s} \left\|\eta^{h}(t,x)-\eta^{h}(s,x)\right\|ds
\nonumber\\
\leq\!\!\!\!\!\!\!\!&&
Ct^{-\frac{1}{2}}(1+|x|_{\theta})e^{C \|x\|^{k}} \|h\|.
\end{eqnarray}
For $I_{4}$, we deduce from Lemma \ref{Property B1}, \eqref{BarX1} and \eqref{eta1} that
\begin{eqnarray} \label{AI4}
\|AI_{4}\|=\!\!\!\!\!\!\!\!&&
\left\|(e^{tA}-I)D_{\xi}\left[\bar{X}(t,x)\eta^{h}(t,x)\right]\right\| \nonumber\\
\leq\!\!\!\!\!\!\!\!&&
2 \| B(\bar{X}(t,x),\eta^{h}(t,x))+B(\eta^{h}(t,x),\bar{X}(t,x))\|       \nonumber\\
\leq\!\!\!\!\!\!\!\!&&
C\left|\bar{X}(t,x)\right|_{1}\left|\eta^{h}(t,x)\right|_{1}  %\nonumber\\
\leq   %\!\!\!\!\!\!\!\!&&
Ct^{-1+\frac{2\theta}{5}}(|x|_{\theta}^{2}+1)e^{C \|x\|^{k}}\|h\|.
\end{eqnarray}
For $I_{5}$, using Lemmas \ref{COXT} and \ref{LemmaA.5}, \eqref{BarX1} and \eqref{eta1},
we get
\begin{eqnarray} \label{AI5}
\|AI_{5}\|\leq\!\!\!\!\!\!\!\!&&
C\int_{0}^{t}\frac{1}{t-s}\left\|B\left(\bar{X}(s,x), \eta^{h}(t,x)-\eta^{h}(s,x)\right)+B\left(\eta^{h}(t,x)-\eta^{h}(s,x), \bar{X}(s,x)\right)\right.\nonumber\\
\!\!\!\!\!\!\!\!&&
\left.+B\left(\bar{X}(t,x)-\bar{X}(s,x), \eta^{h}(t,x)\right)+B\left(\eta^{h}(t,x), \bar{X}(t,x)-\bar{X}(s,x)\right)\right\|ds \nonumber\\
\leq\!\!\!\!\!\!\!\!&&
C\int_{0}^{t}\frac{1}{t-s}\left(|\bar{X}(s,x)|_{1}|\eta^{h}(t,x)-\eta^{h}(s,x)|_1+|\eta^{h}(t,x)|_1|\bar{X}(t,x)-\bar{X}(s,x)|_1\right)ds \nonumber\\
\leq\!\!\!\!\!\!\!\!&&
Ct^{-1+\frac{2\theta}{5}}(|x|_{\theta}^{2}+1)e^{C\|x\|^{k}} \|h\|.
\end{eqnarray}
Putting \eref{AI1}-\eref{AI5} together, we get
%Then \eref{AI1}-\eref{AI5} imply
\begin{align}
\|A\eta^{h}(t,x)\| \leq Ct^{-1}(|x|_{\theta}^{2}+1)e^{C\|x\|^{k}} \|h\|.\label{eta2}
\end{align}
For the second term in \eqref{Equa derivative}, we have
\begin{eqnarray}\label{DF}
\| D\bar{f}(\bar{X}(t,x))\cdot\eta^{h}(t,x) \|\leq Ce^{C\|x\|^{5}} \|h\|.
\end{eqnarray}
For the third term in \eqref{Equa derivative}, it follows by \eref{AI4} that
\begin{eqnarray} \label{dx}
\left\|D_{\xi}\left[\bar{X}(t,x)\eta^{h}(t,x)\right]\right\|
\leq Ct^{-1+\frac{2\theta}{5}}(|x|_{\theta}^{2}+1)e^{C\|x\|^{k}} \|h\|.
\end{eqnarray}
The result follows by combining \eref{eta2}-\eref{dx}.
%Eventually, by \eref{eta2}-\eref{dx} we complete the proof. \hspace{\fill}$\square$
\end{proof}

Denote by $\zeta^{h,k}(t,x)$ the second derivative of $\bar{X}(t,x)$
with respect to $x$ towards the directions $h, k \in L^2$.
Then, $\zeta^{h,k}(t,x)$ satisfies
%  $D^2_{xx}\bar{X}(t,x)\cdot(h,k)$,
%which is the solution of the following equation:
\begin{eqnarray}\label{XI1}
\frac{d\zeta^{h,k}(t,x)}{dt}=\!\!\!\!\!\!\!\!&&A\zeta^{h,k}(t,x)+D^{2}\bar{f}(\bar{X}(t,x))\cdot\left(\eta^{h}(t,x),\eta^{k}(t,x)\right)+D\bar{f}(\bar{X}(t,x))\cdot \zeta^{h,k}(t,x)\nonumber\\
\!\!\!\!\!\!\!\!&&+D_{\xi}\left[\eta^{k}(t,x)\eta^{h}(t,x)\right]+D_{\xi}\left[\bar{X}(t,x)\zeta^{h,k}(t,x)\right].
\end{eqnarray}
The following lemma gives a control of $\zeta^{h,k}(t,x)$.
%deals with the second derivative of $\bar{X}(t,x)$
%with respect to $x$ towards directions $h,k\in L^2$.

\begin{lemma}  \label{zeta}
Under the conditions \ref{A1}, \ref{A2} and \ref{A4},
for any $x\in H^{\theta}$ with $\theta\in(0,1)$,  $h, k\in L^2$,  $t\in (0, T]$,
there exists a constant $C>0$ such that
\begin{align}
\left\|\zeta^{h,k}(t,x)\right\| \leq Ce^{C\|x\|^{5}}\|h\| \|k\|. \nonumber
\end{align}
\end{lemma}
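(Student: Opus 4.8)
The plan is to run a direct energy estimate on \eqref{XI1}, viewed as a linear inhomogeneous equation for $t\mapsto\zeta^{h,k}(t,x)$ with the initial condition $\zeta^{h,k}(0,x)=0$ (which holds because $x\mapsto\bar{X}(0,x)=x$ is affine). Multiplying \eqref{XI1} by $2\zeta^{h,k}(t,x)$, integrating over $[0,1]$ and using $\langle A\zeta^{h,k},\zeta^{h,k}\rangle=-|\zeta^{h,k}|_1^2$, I obtain
\begin{align*}
\frac{d}{dt}\|\zeta^{h,k}(t,x)\|^2+2|\zeta^{h,k}(t,x)|_1^2
&=2\langle D^2\bar{f}(\bar{X}(t,x))(\eta^h(t,x),\eta^k(t,x)),\zeta^{h,k}(t,x)\rangle
+2\langle D\bar{f}(\bar{X}(t,x))\cdot\zeta^{h,k}(t,x),\zeta^{h,k}(t,x)\rangle\\
&\quad+2\langle D_{\xi}[\eta^k(t,x)\eta^h(t,x)],\zeta^{h,k}(t,x)\rangle
+2\langle D_{\xi}[\bar{X}(t,x)\zeta^{h,k}(t,x)],\zeta^{h,k}(t,x)\rangle.
\end{align*}
Using the bounds on $D\bar{f}$ and $D^2\bar{f}$ already exploited in the proof of Lemma \ref{eta} (inherited from Condition \ref{A4} and Proposition \ref{ergodicity}), the second term on the right is $\le C\|\zeta^{h,k}(t,x)\|^2$, and the first term is $\le\|\zeta^{h,k}(t,x)\|^2+C\|\eta^h(t,x)\|^2\|\eta^k(t,x)\|^2\le\|\zeta^{h,k}(t,x)\|^2+Ce^{C\|x\|^5}\|h\|^2\|k\|^2$ by \eqref{LemDeriEtah 01}.

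Next I treat the two transport terms. Integrating by parts ($\zeta^{h,k}(t,x)\in H_0^1$ in the Galerkin truncation), $\langle D_{\xi}[\bar{X}\zeta^{h,k}],\zeta^{h,k}\rangle=-b(\bar{X}(t,x),\zeta^{h,k}(t,x),\zeta^{h,k}(t,x))$, which, exactly as the term $-b(\bar{X},\eta^h,\eta^h)$ in the proof of Lemma \ref{eta} (via Lemma \ref{Property B1}, the interpolation $|\zeta^{h,k}|_{3/5}\le C\|\zeta^{h,k}\|^{2/5}|\zeta^{h,k}|_1^{3/5}$, and Young's inequality), is bounded by $\frac{1}{4}|\zeta^{h,k}(t,x)|_1^2+C\|\bar{X}(t,x)\|^5\|\zeta^{h,k}(t,x)\|^2$. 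The genuinely new term is $\langle D_{\xi}[\eta^k\eta^h],\zeta^{h,k}\rangle=-\int_0^1\eta^h(t,x)\eta^k(t,x)\,\partial_\xi\zeta^{h,k}(t,x)\,d\xi\le\|\eta^h(t,x)\|_{L^4}\|\eta^k(t,x)\|_{L^4}\,|\zeta^{h,k}(t,x)|_1$; the one-dimensional estimate $\|z\|_{L^4}\le C\|z\|^{3/4}|z|_1^{1/4}$ and Young's inequality absorb another $\frac{1}{4}|\zeta^{h,k}(t,x)|_1^2$ and, using $\sup_{s\le T}\|\eta^\cdot(s,x)\|\le Ce^{C\|x\|^5}\|\cdot\|$ from \eqref{LemDeriEtah 01}, leave the forcing $Ce^{C\|x\|^5}\|h\|^{3/2}\|k\|^{3/2}|\eta^h(t,x)|_1^{1/2}|\eta^k(t,x)|_1^{1/2}$. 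Collecting, and using \eqref{A.1.1} so that $\|\bar{X}(t,x)\|^5\le C(1+\|x\|^5)$,
\[
\frac{d}{dt}\|\zeta^{h,k}(t,x)\|^2
\le C(1+\|x\|^5)\|\zeta^{h,k}(t,x)\|^2
+Ce^{C\|x\|^5}\|h\|^2\|k\|^2
+Ce^{C\|x\|^5}\|h\|^{3/2}\|k\|^{3/2}|\eta^h(t,x)|_1^{1/2}|\eta^k(t,x)|_1^{1/2}.
\]

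To finish I would apply Gronwall's inequality. Since $\int_0^t C(1+\|x\|^5)\,ds\le C_T(1+\|x\|^5)$ the Gronwall factor is $\le Ce^{C\|x\|^5}$, and, as $\zeta^{h,k}(0,x)=0$, this gives $\|\zeta^{h,k}(t,x)\|^2\le Ce^{C\|x\|^5}\int_0^t[\cdots]\,ds$. The constant-in-time forcing integrates to $\le CTe^{C\|x\|^5}\|h\|^2\|k\|^2$; for the last one, Cauchy--Schwarz in time gives $\int_0^t|\eta^h|_1^{1/2}|\eta^k|_1^{1/2}\,ds\le(\int_0^t|\eta^h|_1\,ds)^{1/2}(\int_0^t|\eta^k|_1\,ds)^{1/2}$, and $\int_0^t|\eta^h|_1\,ds\le T^{1/2}(\int_0^t|\eta^h|_1^2\,ds)^{1/2}\le CT^{1/2}e^{C\|x\|^5}\|h\|$ by \eqref{LemDeriEtah 01}, so this term integrates to $\le Ce^{C\|x\|^5}\|h\|^2\|k\|^2$ as well. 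Hence $\|\zeta^{h,k}(t,x)\|^2\le Ce^{C\|x\|^5}\|h\|^2\|k\|^2$, which yields the claim (the final square root only rescales the constant in the exponent). The hard part is the term $\langle D_{\xi}[\eta^k\eta^h],\zeta^{h,k}\rangle$: it pairs two first-order derivative processes whose $H^1$-norms are only $t^{-1/2}$-singular at the origin (cf. \eqref{eta1}), so $|\eta^\cdot|_1$ cannot be controlled pointwise; the resolution is to keep that factor under the time integral where \eqref{LemDeriEtah 01} provides square-integrability, dispatch the accompanying $|\zeta^{h,k}|_1$-weight by Young's inequality, and split the leftover $|\eta^h|_1^{1/2}|\eta^k|_1^{1/2}$ in time by Cauchy--Schwarz. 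All the bilinear estimates are applied with constants independent of the Galerkin dimension $N$, since $P_N$ is a contraction.
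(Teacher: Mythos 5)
Your proof is correct and follows essentially the same route as the paper: an energy estimate on \eqref{XI1} with the two transport terms split off, the $D_{\xi}[\bar{X}\zeta^{h,k}]$ term absorbed exactly as in Lemma \ref{eta}, and Gronwall at the end. The only difference is cosmetic: for $\langle D_{\xi}[\eta^k\eta^h],\zeta^{h,k}\rangle$ the paper applies the trilinear estimate of Lemma \ref{Property B1} to get $(\|\eta^k\|\,|\eta^h|_1+\|\eta^h\|\,|\eta^k|_1)|\zeta^{h,k}|_1$ and then squares, whereas you use an $L^4$ interpolation that splits the derivative evenly between the two $\eta$ factors; both reduce to the same key integrability $\int_0^t|\eta^{\cdot}(s,x)|_1^2\,ds\leq Ce^{C\|x\|^5}\|\cdot\|^2$ from \eqref{LemDeriEtah 01}.
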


\begin{proof}
Multiply both sides of the equation \eref{XI1} by $\zeta^{h,k}(t,x)$,
integrate with respect to $\xi$, then we obtain
\begin{eqnarray} \label{etaHK}
\frac{1}{2}\frac{d}{dt} \|\zeta^{h,k}(t,x)\|^{2}
+ |\zeta^{h,k}(t,x)|_{1}^{2}
=\!\!\!\!\!\!\!\!&&
\int_{0}^{1}\left[D^{2}\bar{f}(\bar{X}(t,x))\cdot(\eta^{h}(t,x),\eta^{k}(t,x))\right]\zeta^{h,k}(t,x)d\xi \nonumber\\
\!\!\!\!\!\!\!\!&&
+\int_{0}^{1}\left[D\bar{f}(\bar{X}(t,x))\cdot \zeta^{h,k}(t,x)\right]\zeta^{h,k}(t,x)d\xi \nonumber\\
\!\!\!\!\!\!\!\!&&
+\int_{0}^{1}\left\{D_{\xi}\left[\eta^{k}(t,x)\eta^{h}(t,x)\right]\right\}\zeta^{h,k}(t,x)d\xi \nonumber\\
\!\!\!\!\!\!\!\!&&
+\int_{0}^{1}\left\{D_{\xi}[\bar{X}(t,x)\zeta^{h,k}(t,x)]\right\}\zeta^{h,k}(t,x)d\xi \nonumber\\
=\!\!\!\!\!\!\!\!&&
I_1+I_2+I_3+I_4.
%\leq\!\!\!\!\!\!\!\!&&C|h||\eta^{k}(t,x)|_{\theta}|\xi^{h,k}(t,x)|+C|\xi^{h,k}(t,x)|^{2}+C|h||\eta^{k}(t,x)|_{1}|\xi^{h,k}(t,x)| \nonumber\\
%\!\!\!\!\!\!\!\!&&+C|k||\eta^{h}(t,x)|_{1}|\xi^{h,k}(t,x)|+C|\xi^{h,k}(t,x)||\xi^{h,k}(t,x)|_{1}.  \nonumber
\end{eqnarray}
For $I_1$, using \eqref{LemDeriEtah 01}, we get
\begin{align} \label{etaHK 01}
\|I_1\| \leq
Ce^{C\|x\|^{5}} \|h\|^2 \|k\|^2 + \|\zeta^{h,k}(t,x)\|^{2}.
\end{align}
For $I_2$, we have
\begin{align} \label{etaHK 02}
\|I_2\| \leq C \|\zeta^{h,k}(t,x)\|^{2}.
\end{align}
For $I_3$,  according to Lemma \ref{Property B1}, we get
\begin{eqnarray} \label{etaHK 03}
\|I_3\| \leq\!\!\!\!\!\!\!\!&&
\left|b\left(\eta^{k}(t,x),\eta^{h}(t,x),\zeta^{h,k}(t,x)\right)+b\left(\eta^{h}(t,x),\eta^{k}(t,x),\zeta^{h,k}(t,x)\right)\right| \nonumber\\
\leq \!\!\!\!\!\!\!\!&&
\left(\|\eta^{k}(t,x)\| |\eta^{h}(t,x)|_1
+ \| \eta^{h}(t,x)\| |\eta^{k}(t,x)|_1\right)|\zeta^{h,k}(t,x)|_1 \nonumber\\
\leq\!\!\!\!\!\!\!\!&&
C\left(\|\eta^{k}(t,x)\| |\eta^{h}(t,x)|_1
+ \| \eta^{h}(t,x)\| |\eta^{k}(t,x)|_1\right)^2+\frac{1}{4}|\zeta^{h,k}(t,x)|^2_1.
\end{eqnarray}
For $I_4$, using Lemma \ref{Property B1} and the interpolation inequality, we obtain
\begin{eqnarray} \label{etaHK 04}
\|I_4\| =\!\!\!\!\!\!\!\!&&
-\int_{0}^{1}\left[\bar{X}(t,x)\zeta^{h,k}(t,x)\right]D_{\xi}\zeta^{h,k}(t,x)d\xi \nonumber\\
\leq\!\!\!\!\!\!\!\!&&
\left|b(\bar{X}(t,x),\zeta^{h,k}(t,x),\zeta^{h,k}(t,x))\right| \nonumber\\
\leq\!\!\!\!\!\!\!\!&&
C \| \bar{X}(t,x)\| |\zeta^{h,k}(t,x)|_1|\zeta^{h,k}(t,x)|_{\frac{3}{5}} \nonumber\\
\leq\!\!\!\!\!\!\!\!&&
C \| \bar{X}(t,x)\| |\zeta^{h,k}(t,x)|_1^{\frac{8}{5}} \| \zeta^{h,k}(t,x)\|^{\frac{2}{5}} \nonumber\\
\leq\!\!\!\!\!\!\!\!&&
C(1+ \|x \|^{5}) \| \zeta^{h,k}(t,x)\|^{2} + \frac{1}{4}|\zeta^{h,k}(t,x)|^2_{1}.
\end{eqnarray}
Then, combining \eqref{etaHK}-\eqref{etaHK 04}, using Lemma \ref{eta}, we get
\begin{eqnarray*}
\|\zeta^{h,k}(t,x)\|^{2}
\leq\!\!\!\!\!\!\!\!&& Ce^{C\|x\|^{5}} \|h\|^2 \|k\|^2
+ C\sup_{0\leq s\leq T} \|\eta^{k}(s,x)\|^{2} \int^t_0|\eta^{h}(s,x)|_1^{2}ds\\
&& +C\sup_{0\leq s\leq T} \| \eta^{h}(s,x)\|^{2} \int^t_0|\eta^{k}(s,x)|_1^{2}ds
+ C \int^t_0(1+\|x\|^{5}) \|\zeta^{h,k}(s,x)\|^{2} ds\\
\leq\!\!\!\!\!\!\!\!&& Ce^{C\|x\|^{5}} \|h\|^2 \|k\|^2+
C\int^t_0(1+ \|x\|^{5}) \|\zeta^{h,k}(s,x)\|^{2} ds.
\end{eqnarray*}
The desired result follows by using Gronwall's inequality.
%Eventually, Gronwall inequality yields the desired result.  \hspace{\fill}$\square$
\end{proof}

\subsection{Properties of \texorpdfstring{$(X^{\vare}_N, Y^{\vare}_N)$} {Lg} } \label{Subsection 5.2}

This subsection is to establish some properties of $(X^{\vare}_N, Y^{\vare}_N)$.
\textbf{For simplicity, we omit the index $N$.}

\begin{lemma} \label{highorder of X}
Assume the conditions \ref{A1}, \ref{A2} and \ref{A4} hold.  \\
(1) For any $x\in L^2$, $t\in [0, T]$, there exists a constant $C>0$ such that
\begin{align} \label{Highorder X L2}
\| X^{\varepsilon}_{t} \| \leq C(1+\|x\|).
\end{align}
(2) For any $x\in H^{\theta}$ with $\theta\in(0,1)$, $y\in L^2$,
 $\gamma\in(1,\frac{3}{2})$, $p\geq1$, $t\in (0, T]$, there exist $k\in\NN$
 and a constant $C=C_{p, \theta, \gamma, T}$ such that
\begin{align}
\EE|X^{\varepsilon}_{t}|^{p}_{\gamma}
\leq C t^{-\frac{p(\gamma-\theta)}{2}}(1+|x|^p_{\theta} + \|y\|^p) e^{C\|x\|^k}. \label{B.1.1}
\end{align}
%where $C$ is a constant depending on $p, \theta, \gamma, T$.
\end{lemma}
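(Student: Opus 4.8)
The plan is to prove the two parts in order, using part~(1) inside the proof of part~(2); we keep the subsection's convention and omit the index~$N$, so $B$, $f$, $g$ denote the projected operators. For part~(1), the crucial point is that $Q_1=0$: the slow equation in \eqref{main finite equation} carries no noise, so no It\^{o} (trace) correction appears. Testing it against $2X^{\varepsilon}_t$ and integrating over $(0,1)$ gives
\[
\frac{d}{dt}\|X^{\varepsilon}_t\|^2 = 2\langle AX^{\varepsilon}_t,X^{\varepsilon}_t\rangle + 2\langle B(X^{\varepsilon}_t),X^{\varepsilon}_t\rangle + 2\langle f(X^{\varepsilon}_t,Y^{\varepsilon}_t),X^{\varepsilon}_t\rangle .
\]
The second term vanishes, since $\langle B(X^{\varepsilon}_t),X^{\varepsilon}_t\rangle=b(X^{\varepsilon}_t,X^{\varepsilon}_t,X^{\varepsilon}_t)=0$ by Lemma~\ref{Property B0} (using $\langle P_N z,w\rangle=\langle z,w\rangle$ for $w\in H_N$); the first term is $\leq-\lambda_1\|X^{\varepsilon}_t\|^2\leq 0$ by \eqref{Gelfand Ine}; and the third term is controlled by the weak dissipativity bound \eqref{WeakDissIne}, $|\langle f(X^{\varepsilon}_t,Y^{\varepsilon}_t),X^{\varepsilon}_t\rangle|\leq C(1+\|X^{\varepsilon}_t\|^2)$. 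Hence $\frac{d}{dt}\|X^{\varepsilon}_t\|^2\leq C(1+\|X^{\varepsilon}_t\|^2)$, and Gronwall's inequality yields \eqref{Highorder X L2} with a constant depending only on~$T$, in particular uniform in $\varepsilon\in(0,1)$ and~$N$, since $\lambda_1$ and the constant in \eqref{WeakDissIne} are.

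For part~(2), I first record that, arguing as in Lemma~\ref{PMY} (and more easily here, since $\|X^{\varepsilon}_s\|$ is already bounded pathwise by \eqref{Highorder X L2}), one has $\sup_{\varepsilon\in(0,1)}\sup_{0\leq t\leq T}\EE\|Y^{\varepsilon}_t\|^{p}\leq C_{p,T}(1+\|x\|^{p}+\|y\|^{p})$. Then I work from the mild form (with no stochastic convolution, since $Q_1=0$),
\[
X^{\varepsilon}_t=e^{tA}P_Nx+\int_0^t e^{(t-s)A}B(X^{\varepsilon}_s)\,ds+\int_0^t e^{(t-s)A}f(X^{\varepsilon}_s,Y^{\varepsilon}_s)\,ds ,
\]
and estimate the three terms in the $H^{\gamma}$-norm. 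By \eqref{PSG}, $|e^{tA}P_Nx|_{\gamma}\leq Ct^{-(\gamma-\theta)/2}|x|_{\theta}$. For the forcing term, \eqref{PSG} and condition~\ref{A1} give $\big|\int_0^t e^{(t-s)A}f(X^{\varepsilon}_s,Y^{\varepsilon}_s)\,ds\big|_{\gamma}\leq C\int_0^t\big(1+(t-s)^{-\gamma/2}\big)\big(1+\|X^{\varepsilon}_s\|+\|Y^{\varepsilon}_s\|\big)\,ds$; as $\gamma/2<1$ the kernel is integrable, so after taking $p$-th powers, using H\"older in $s$, taking expectations, and inserting \eqref{Highorder X L2} and the $Y^{\varepsilon}$ moment bound, this contribution is bounded in $L^p$ by $C_{p,T}(1+\|x\|^{p}+\|y\|^{p})$, uniformly in $t\in[0,T]$. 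For the quadratic term, \eqref{PSG} together with the bilinear estimate of Lemma~\ref{Property B1} (used as in Lemma~\ref{BarXgamma}) gives $\big|\int_0^t e^{(t-s)A}B(X^{\varepsilon}_s)\,ds\big|_{\gamma}\leq C\int_0^t\big(1+(t-s)^{-\frac{2\gamma+1}{4}}\big)\|X^{\varepsilon}_s\|\,|X^{\varepsilon}_s|_{\gamma}\,ds$, whose singular kernel is integrable precisely because $\gamma<\frac{3}{2}$; bounding $\|X^{\varepsilon}_s\|\leq C(1+\|x\|)$ via \eqref{Highorder X L2} turns this into $C(1+\|x\|)\int_0^t(t-s)^{-\frac{2\gamma+1}{4}}|X^{\varepsilon}_s|_{\gamma}\,ds$ up to a bounded term.

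Combining the three estimates yields, for a.e.\ path, a singular integral inequality of the form $|X^{\varepsilon}_t|_{\gamma}\leq Ct^{-(\gamma-\theta)/2}|x|_{\theta}+\Phi^{\varepsilon}_t+C(1+\|x\|)\int_0^t(t-s)^{-\frac{2\gamma+1}{4}}|X^{\varepsilon}_s|_{\gamma}\,ds$, where $\Phi^{\varepsilon}_t$ collects the forcing contribution and satisfies $\sup_{t\leq T}\EE|\Phi^{\varepsilon}_t|^{p}\leq C_{p,T}(1+\|x\|^{p}+\|y\|^{p})$. Applying the generalized Gronwall Lemma~\ref{Gronwall 2} gives $|X^{\varepsilon}_t|_{\gamma}\leq C\big(t^{-(\gamma-\theta)/2}|x|_{\theta}+\sup_{s\leq t}\Phi^{\varepsilon}_s\big)e^{C(1+\|x\|)^{k}}$ for some $k\in\NN$ (the power $k$ arising from iterating the singular kernel, whose exponent lies in $(\frac{3}{4},1)$); raising to the $p$-th power and taking expectations then yields \eqref{B.1.1}, with all constants independent of $\varepsilon$ and~$N$ since \eqref{PSG} is dimension-free. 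The main obstacle is the quadratic term~$B$: it forces the restriction $\gamma\in(1,\frac{3}{2})$ (the upper bound so that $\frac{2\gamma+1}{4}<1$, the lower bound being needed for the bilinear estimate $|B(x,x)|_{-1/2}\leq C\|x\|\,|x|_{\gamma}$) and it produces the singular Gronwall inequality; controlling the coefficient $\|X^{\varepsilon}_s\|$ that multiplies $|X^{\varepsilon}_s|_{\gamma}$ there is possible only thanks to the pathwise bound \eqref{Highorder X L2}, hence only because $Q_1=0$ — precisely the mechanism behind the Remark following Theorem~\ref{main result 3}.
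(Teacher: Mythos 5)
Your proposal is correct and follows essentially the same route as the paper: part (1) is the identical energy estimate (test against $2X^{\varepsilon}_t$, use $b(x,x,x)=0$, the Poincar\'e bound and \eqref{WeakDissIne}, then Gronwall), and part (2) uses the same mild-formulation decomposition with the smoothing estimate \eqref{PSG}, the bilinear bound $|B(x)|_{-1/2}\leq C\|x\|\,|x|_{\gamma}$, the pathwise $L^2$ bound from part (1) to linearize the singular term, and the generalized Gronwall Lemma \ref{Gronwall 2}. The only (cosmetic) difference is that the paper first applies Minkowski's integral inequality to obtain a deterministic inequality for $\big[\EE|X^{\varepsilon}_t|^{p}_{\gamma}\big]^{1/p}$ and then invokes Lemma \ref{Gronwall 2}, whereas you apply the singular Gronwall pathwise and take expectations afterwards; both work, the paper's order of operations just avoids having to control $\EE\sup_{s\leq t}|\Phi^{\varepsilon}_s|^{p}$.
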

\begin{proof}
Recall that
\begin{align}
\frac{d}{dt}X^{\varepsilon}_{t}=AX^{\varepsilon}_{t}+B(X^{\varepsilon}_{t})+f(X^{\varepsilon}_{t},Y^{\varepsilon}_{t}). \nonumber
\end{align}
Multiplying both sides of the above equation by $2X^{\varepsilon}_{t}$
and using \eqref{WeakDissIne} in \ref{A4}, we obtain
\begin{align*}
\frac{1}{2}\frac{d}{dt} \|X^{\varepsilon}_{t}\|^{2}=
-|X^{\varepsilon}_{t}|_{1}^{2}
+\langle f(X^{\varepsilon}_{t},Y^{\varepsilon}_{t}),X^{\varepsilon}_{t}\rangle
\leq  C(1+\|X^{\varepsilon}_{t}\|^{2}).
\end{align*}
Then, \eqref{Highorder X L2} follows by using Gronwall's inequality.
%Consequently, Gronwall inequality yields that $|X^{\varepsilon}_{t}|^{2}\leq C(1+|x|^{2})$,
%and the result follows.

To show \eqref{B.1.1},
recall that
\begin{align} \label{XNtEsti}
X^{\varepsilon}_t=e^{tA}x+\int^t_0e^{(t-s)A}B(X^{\varepsilon}_s)ds+\int^t_0e^{(t-s)A}f(X^{\varepsilon}_s, Y^{\varepsilon}_s)ds: = I_1 + I_2 + I_3.
\end{align}
For $I_1$, \eref{PSG} implies
\begin{eqnarray} \label{XNtEsti a}
|e^{tA}x|_{\gamma}\leq C t^{-\frac{\gamma-\theta}{2}}|x|_{\theta}.
\end{eqnarray}
For $I_2$, similarly as the proof of Lemma \ref{BarXgamma}, using \eqref{Highorder X L2}, we obtain
\begin{eqnarray} \label{XNtEsti b}
\Big|\int^t_0e^{(t-s)A}B(X^{\varepsilon}_s)ds\Big|_{\gamma}
\leq \!\!\!\!\!\!\!\!&&C\int^t_0(t-s)^{-\frac{1+2\gamma}{4}}(1+\|x\|)
|X^{\varepsilon}_{s}|_{\gamma}ds.
\end{eqnarray}
For the last term, we get
\begin{eqnarray}  \label{XNtEsti c}
\Big|\int_{0}^{t}e^{(t-s)A}f(X_{s}^{\vare},Y_{s}^{\vare})ds\Big|_{\gamma}
\leq \!\!\!\!\!\!\!\!&&
C\int^t_0(t-s)^{-\frac{\gamma}{2}}(1+ \|X^{\varepsilon}_{s}\| + \|Y^{\varepsilon}_s\|)ds.
\end{eqnarray}
Combining \eqref{XNtEsti}-\eqref{XNtEsti c},
it follows from the Minkowski inequality that  for any $p>1$,
\begin{eqnarray*}
\left[\EE|X^{\varepsilon}_{t}|^{p}_{\gamma}\right]^{1/p}
\leq \!\!\!\!\!\!\!\!&&
C t^{-\frac{\gamma-\theta}{2}}|x|_{\theta}+C\int^t_0(t-s)^{-\frac{1+2\gamma}{4}}(1+\|x\|)
\left[\EE|X^{\varepsilon}_{s}|^p_{\gamma}\right]^{1/p}ds  %\\
%\!\!\!\!\!\!\!\!&&
+C(1+\|x\|+ \|y\|).
\end{eqnarray*}
Using Lemma \ref{Gronwall 2}, we get \eqref{B.1.1}.
%Hence, by Lemma \ref{Gronwall 2}, there exists $k\in\mathbb{N}$ such that
%\begin{eqnarray*}
%\left[\EE|X^{\varepsilon}_{t}|^{p}_{\gamma}\right]^{1/p}\leq \!\!\!\!\!\!\!\!&&C t^{-\frac{\gamma-\theta}{2}}(1+|x|_{\theta}+|y|)e^{C|x|^k},
%\end{eqnarray*}
%which implies the result.
\end{proof}

%\begin{remark}
Similarly to \eqref{BarX1}, for any $x\in H^{\theta}$ with $\delta\in(0, \frac{1}{2})$,
$y\in L^2$, $\gamma\in(0,1]$,  $p\geq1$, $t\in (0, T]$,
there exist $k\in\NN$ and a constant $C=C_{p, \delta, \theta, T}$ such that
%for any , $\theta\in(0,1)$, ,
\begin{eqnarray} \label{Remark5.3}
\EE|X^{\varepsilon}_{t}|^{p}_{\gamma}
\leq C t^{-\frac{p\gamma(1+\delta-\theta)}{2(1+\delta)}}(1+|x|^p_{\theta}+\|y\|^p) e^{C\|x\|^k}.
\end{eqnarray}
%where $C$ is a constant depending on $p, \delta, \theta, T$.
%\end{remark}

\begin{lemma} \label{LB.2}
Assume the conditions \ref{A1}, \ref{A2} and \ref{A4} hold. \\
(1) For any $x\in H^{\theta}$ with $\theta\in(0,1)$, $y\in L^2$,
$\alpha\in(0,\frac{1}{4})$, $0<s<t\leq T$,
there exist $k\in\NN$ and a constant $C=C_{\theta, \alpha, T}$ such that
\begin{align}
\left[\EE|X_{t}^{\vare}-X_{s}^{\vare}|^4_{1}\right]^{1/4}
\leq C(t-s)^{\frac{\alpha}{2}}s^{-\frac{1+\alpha-\theta}{2}}(|x|_{\theta}+\|y\|+1)
e^{C\|x\|^k}. \nonumber
\end{align}
%where $C$ is a constant depending on $\theta, \alpha, T$.
(2) For any $x,y\in L^2$,
$\alpha\in(0,\frac{1}{4})$, $0<s<t\leq T$, there exists a constant $C>0$ such that
\begin{align*}
\EE \|Y_{t}^{\vare}-Y_{s}^{\vare}\|^2
\leq C(\|x\|^2 + \|y\|^2+1) \big(s^{-2\alpha} + \varepsilon^{-2\alpha} \big) (t-s)^{2\alpha}.
%\left[\left(\frac{t-s}{s}\right)^{2\alpha}
%+\left(\frac{t-s}{\varepsilon}\right)^{2\alpha}\right]. \nonumber
\end{align*}
\end{lemma}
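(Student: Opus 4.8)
The plan is to treat the two parts separately, each time starting from the mild formulation and exploiting the smoothing estimate \eqref{PSG}. Recall that $Q_1=0$, so by Lemma \ref{highorder of X}(1) we have the deterministic bound $\|X^{\vare}_t\|\leq C(1+\|x\|)$ on $[0,T]$, and that $\sup_{r\leq T}\mathbb{E}\|Y^{\vare}_r\|^{4}\leq C(1+\|x\|^{4}+\|y\|^{4})$, which follows exactly as in Lemma \ref{PMY}; all constants below are independent of the Galerkin dimension $N$.

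\emph{Part (1).} Writing, for $0<s<t\leq T$,
\[
X^{\vare}_t-X^{\vare}_s=(e^{(t-s)A}-I)X^{\vare}_s+\int_s^t e^{(t-r)A}B(X^{\vare}_r)\,dr+\int_s^t e^{(t-r)A}f(X^{\vare}_r,Y^{\vare}_r)\,dr=:I_1+I_2+I_3,
\]
I would estimate each $|I_j|_1$ in $L^4(\Omega)$ as in the proof of Lemma \ref{COXT}, with Lemma \ref{highorder of X} in place of Lemma \ref{BarXgamma}. For $I_1$ one uses $|(e^{\tau A}-I)z|_1\leq C\tau^{\alpha/2}|z|_{1+\alpha}$ and \eqref{B.1.1} (applied with $\gamma=1+\alpha\in(1,\tfrac32)$, legitimate since $\alpha<\tfrac12$), which gives $\big(\mathbb{E}|I_1|_1^4\big)^{1/4}\leq C(t-s)^{\alpha/2}s^{-(1+\alpha-\theta)/2}(|x|_{\theta}+\|y\|+1)e^{C\|x\|^{k}}$. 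For $I_2$, Lemma \ref{Property B1} and \eqref{PSG} give $|I_2|_1\leq C(1+\|x\|)\int_s^t(1+(t-r)^{-3/4})|X^{\vare}_r|_{1+\alpha}\,dr$, and then Minkowski's integral inequality, \eqref{B.1.1}, and $\int_s^t(1+(t-r)^{-3/4})\,dr\leq C(t-s)^{1/4}$ yield the same bound with $(t-s)^{1/4}$ in place of $(t-s)^{\alpha/2}$, which is controlled since $(t-s)^{1/4}\leq T^{1/4-\alpha/2}(t-s)^{\alpha/2}$. For $I_3$, condition \ref{A1}, \eqref{PSG}, the moment bound on $Y^{\vare}$, and Minkowski give $\big(\mathbb{E}|I_3|_1^4\big)^{1/4}\leq C(t-s)^{1/2}(1+\|x\|+\|y\|)$, absorbed because $1\leq T^{(1+\alpha-\theta)/2}s^{-(1+\alpha-\theta)/2}$ and $\tfrac12\geq\tfrac{\alpha}2$. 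Summing gives (1).

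\emph{Part (2).} Here I would start from
\[
Y^{\vare}_t-Y^{\vare}_s=(e^{(t-s)A/\vare}-I)Y^{\vare}_s+\frac1{\vare}\int_s^t e^{(t-r)A/\vare}g(X^{\vare}_r,Y^{\vare}_r)\,dr+\frac1{\sqrt{\vare}}\int_s^t e^{(t-r)A/\vare}\,dW^{Q_2}_r=:K_1+K_2+K_3.
\]
The two elementary facts making the powers of $\vare$ come out right are $\|(e^{\tau A}-I)z\|\leq\tau^{\alpha}|z|_{2\alpha}$ and $1-e^{-x}\leq x^{2\alpha}$ for $x\geq0$ (both use $2\alpha<1$). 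The term $K_3$ is centered Gaussian whose covariance has trace $\sum_k\tfrac{\alpha_k}{2|\lambda_k|}(1-e^{-2|\lambda_k|(t-s)/\vare})\leq C\big(\tfrac{t-s}{\vare}\big)^{2\alpha}\sum_k\alpha_k|\lambda_k|^{2\alpha-1}$, and $\sum_k\alpha_k|\lambda_k|^{2\alpha-1}\leq\lambda_1^{2\alpha-1}\,\text{Tr}Q_2<\infty$ because $2\alpha-1<0$; hence $\mathbb{E}\|K_3\|^2\leq C\vare^{-2\alpha}(t-s)^{2\alpha}$. For $K_2$, using $\|e^{\tau A}\|\leq e^{-\lambda_1\tau}$, Cauchy--Schwarz in $r$ with weight $e^{-\lambda_1(t-r)/\vare}$, the identity $\tfrac1{\vare}\int_s^t e^{-\lambda_1(t-r)/\vare}\,dr=\lambda_1^{-1}(1-e^{-\lambda_1(t-s)/\vare})\leq\min(\lambda_1^{-1},(t-s)/\vare)$, and $\min(c,a)^2\leq C a^{2\alpha}$ for $a\geq0$, one obtains $\mathbb{E}\|K_2\|^2\leq C\vare^{-2\alpha}(t-s)^{2\alpha}\sup_{r\leq T}\mathbb{E}\|g(X^{\vare}_r,Y^{\vare}_r)\|^2\leq C\vare^{-2\alpha}(t-s)^{2\alpha}(1+\|x\|^2+\|y\|^2)$ by condition \ref{A1} and the earlier moment bounds. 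Finally $\mathbb{E}\|K_1\|^2\leq\big(\tfrac{t-s}{\vare}\big)^{2\alpha}\mathbb{E}|Y^{\vare}_s|_{2\alpha}^2$, and from $Y^{\vare}_s=e^{sA/\vare}y+\tfrac1{\vare}\int_0^s e^{(s-r)A/\vare}g(X^{\vare}_r,Y^{\vare}_r)\,dr+\tfrac1{\sqrt{\vare}}\int_0^s e^{(s-r)A/\vare}\,dW^{Q_2}_r$ I would bound $\mathbb{E}|Y^{\vare}_s|_{2\alpha}^2$ by a sum of three pieces: $|e^{sA/\vare}y|_{2\alpha}^2\leq C(s/\vare)^{-2\alpha}\|y\|^2$; the stochastic-convolution piece by $C\sum_k\alpha_k|\lambda_k|^{2\alpha-1}<\infty$ as above; and the drift piece by $C\big(\int_0^{\infty}u^{-\alpha}e^{-\lambda_1 u/2}\,du\big)^2\sup_{r\leq T}\mathbb{E}\|g(X^{\vare}_r,Y^{\vare}_r)\|^2\leq C(1+\|x\|^2+\|y\|^2)$, which follows after the substitution $u=(s-r)/\vare$ using $\|(-A)^{\alpha}e^{\tau A}\|\leq C\tau^{-\alpha}e^{-\lambda_1\tau/2}$. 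Since $\big(\tfrac{t-s}{\vare}\big)^{2\alpha}(s/\vare)^{-2\alpha}=(t-s)^{2\alpha}s^{-2\alpha}$, this gives $\mathbb{E}\|K_1\|^2\leq C(1+\|x\|^2+\|y\|^2)(s^{-2\alpha}+\vare^{-2\alpha})(t-s)^{2\alpha}$; collecting $K_1,K_2,K_3$ proves (2).

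\emph{Main obstacle.} Part (1) is a routine bookkeeping variant of Lemma \ref{COXT} carried out with $L^4(\Omega)$-norms. The genuine difficulty is part (2): a naive application of the mild formula produces factors $\vare^{-1}$ (from the drift) or $\vare^{-1/2}$ (from the noise) that do not match the desired $\vare^{-2\alpha}$. The way around it is to use systematically the exponential decay of $e^{\tau A}$ at rate $\lambda_1$ together with $1-e^{-x}\leq x^{2\alpha}$ and the summability $\sum_k\alpha_k|\lambda_k|^{2\alpha-1}<\infty$; the latter is exactly where $2\alpha<1$ is used and where only $\text{Tr}Q_2<\infty$ is available, and it is also what keeps all constants independent of $N$, since the relevant series and the time integral $\int_0^{\infty}u^{-\alpha}e^{-\lambda_1 u/2}\,du$ are finite uniformly in $N$.
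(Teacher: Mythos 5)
Your proposal is correct and follows exactly the route the paper intends: the paper's own ``proof'' consists of the two pointers that part (1) is obtained as in Lemma \ref{COXT} (with Lemma \ref{highorder of X} replacing Lemma \ref{BarXgamma}) and that part (2) is the argument of \cite[Proposition A.4]{B1}, and your decompositions and estimates are a faithful reconstruction of precisely those two arguments. You simply supply the details (the $L^4(\Omega)$ bookkeeping in (1), and the use of $1-e^{-x}\leq x^{2\alpha}$ together with $\sum_k\alpha_k\lambda_k^{2\alpha-1}<\infty$ in (2)) that the paper leaves to the references.
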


\begin{proof}
The proof of (1) can be carried out in the same way as  in Lemma \ref{COXT}.
For the proof of (2), since the approach here is almost the same as the one in \cite{B1}, 
we refer to \cite[Proposition A.4]{B1} for details. 
%and
%the approach here is almost the same as the one in \cite{B1}
%even if we do not assume that function $g$ is bounded.
%We omit the details.
%Therefore we omit the proof.
%\hspace{\fill}$\square$
\end{proof}

\begin{lemma} \label{Xvare2}
Under the conditions \ref{A1}, \ref{A2} and \ref{A4},
for any $x\in H^{\theta}$ with $\theta\in(0,1)$,
$\alpha\in(0,\frac{1}{4})$,  $0<t\leq T$, there exist $k\in\NN$
and a constant $C=C_{\theta, \alpha, T}$ such that
\begin{align}
\left[\mathbb{E}\|AX^{\vare}_{t}\|^2\right]^{\frac{1}{2}}
\leq C t^{-1+\frac{\theta}{2}}(|x|_{\theta}^{2}+ \|y\|^2+1)e^{C\|x\|^k}+C\vare^{-\alpha}.  \nonumber
\end{align}
%where $C$ is a constant depending on $\theta, \alpha, T$.
\end{lemma}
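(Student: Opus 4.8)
The plan is to follow closely the proof of Lemma~\ref{ESDX} for the averaged equation: since $\frac{d}{dt}X^{\vare}_t = AX^{\vare}_t + B(X^{\vare}_t) + f(X^{\vare}_t,Y^{\vare}_t)$ (here $Q_1=0$, so the slow equation carries no stochastic term), estimating $\|AX^{\vare}_t\|$ is essentially the same as estimating the drift. Starting from the mild formulation (as per the convention of this subsection all objects are the $N$-truncated ones, and every estimate below will be uniform in $N$)
\begin{align*}
X^{\vare}_t = e^{tA}x + \int_0^t e^{(t-s)A}B(X^{\vare}_s)\,ds + \int_0^t e^{(t-s)A}f(X^{\vare}_s,Y^{\vare}_s)\,ds,
\end{align*}
I would freeze the two nonlinear terms at time $t$ and split $X^{\vare}_t = I_1+I_2+I_3+I_4+I_5$, with $I_1 = e^{tA}x$, $I_2 = \int_0^t e^{(t-s)A}B(X^{\vare}_t)\,ds$, $I_3 = \int_0^t e^{(t-s)A}[B(X^{\vare}_s)-B(X^{\vare}_t)]\,ds$, $I_4 = \int_0^t e^{(t-s)A}f(X^{\vare}_t,Y^{\vare}_t)\,ds$ and $I_5 = \int_0^t e^{(t-s)A}[f(X^{\vare}_s,Y^{\vare}_s)-f(X^{\vare}_t,Y^{\vare}_t)]\,ds$, and then estimate $[\EE\|AI_i\|^2]^{1/2}$ term by term, using Minkowski's integral inequality to move $\EE^{1/2}$ inside the time integrals.

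For $I_1$ the smoothing estimate \eqref{PSG} gives directly $\|AI_1\| \leq Ct^{-1+\theta/2}|x|_{\theta}$. For $I_2$ and $I_4$ I would use $A\int_0^t e^{(t-s)A}\,ds = e^{tA}-I$, so that $\|AI_2\| \leq 2\|B(X^{\vare}_t)\| \leq C|X^{\vare}_t|_1^2$ by Corollary~\ref{Property B3} and $\|AI_4\| \leq 2\|f(X^{\vare}_t,Y^{\vare}_t)\| \leq C(1+\|X^{\vare}_t\|+\|Y^{\vare}_t\|)$ by condition~\ref{A1}; then \eqref{Remark5.3} with $\gamma=1$, $p=4$ and $\delta$ small bounds $[\EE|X^{\vare}_t|_1^4]^{1/2}$ by $Ct^{-1+\theta/(1+\delta)}(|x|_{\theta}^2+\|y\|^2+1)e^{C\|x\|^k}$, which is dominated by $Ct^{-1+\theta/2}(|x|_{\theta}^2+\|y\|^2+1)e^{C\|x\|^k}$, while Lemma~\ref{highorder of X}(1) and the standard a priori bound $\sup_{0\leq t\leq T}\EE\|Y^{\vare}_t\|^2 \leq C(1+\|x\|^2+\|y\|^2)$ (cf. Lemma~\ref{PMY}) handle the $I_4$ contribution. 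For $I_3$ and the $X^{\vare}$-part of $I_5$ I would use the singular smoothing $\|Ae^{(t-s)A}z\| \leq C(t-s)^{-1}\|z\|$ together with $\|B(X^{\vare}_s)-B(X^{\vare}_t)\| \leq C(|X^{\vare}_s|_1+|X^{\vare}_t|_1)|X^{\vare}_s-X^{\vare}_t|_1$, the Hölder estimate of Lemma~\ref{LB.2}(1) and \eqref{Remark5.3} combined by the Cauchy--Schwarz inequality, and similarly $\|f(X^{\vare}_s,Y^{\vare}_s)-f(X^{\vare}_t,Y^{\vare}_t)\| \leq L_f(\|X^{\vare}_s-X^{\vare}_t\|+\|Y^{\vare}_s-Y^{\vare}_t\|)$ with $\|X^{\vare}_s-X^{\vare}_t\| \leq |X^{\vare}_s-X^{\vare}_t|_1$; here the auxiliary Hölder exponent can be taken as small as needed (it affects only the resulting $t$-powers, not $\vare$), so that the Beta-type time integrals converge, and --- exactly as in the $I_3,I_5$ estimates of Lemma~\ref{ESDX} --- this yields bounds dominated by $Ct^{-1+\theta/2}(|x|_{\theta}^2+\|y\|^2+1)e^{C\|x\|^k}$.

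The one genuinely new ingredient, which is also the main obstacle, is the $\|Y^{\vare}_s-Y^{\vare}_t\|$ term arising in $I_5$: because $Y^{\vare}$ evolves on the fast time scale, it is only Hölder continuous in time with a constant that blows up like $\vare^{-\alpha}$, namely (Lemma~\ref{LB.2}(2)) $\EE\|Y^{\vare}_t-Y^{\vare}_s\|^2 \leq C(\|x\|^2+\|y\|^2+1)(s^{-2\alpha}+\vare^{-2\alpha})(t-s)^{2\alpha}$. Inserting this into $\int_0^t (t-s)^{-1}[\EE\|Y^{\vare}_t-Y^{\vare}_s\|^2]^{1/2}\,ds$ and noting that $\int_0^t (t-s)^{-1+\alpha}s^{-\alpha}\,ds$ is a finite constant for the $s^{-\alpha}$ piece and $\int_0^t (t-s)^{-1+\alpha}\,ds \leq C_T$ for the $\vare^{-\alpha}$ piece (both integrals converge since $\alpha\in(0,\frac14)\subset(0,1)$), this term is bounded by $C_{\theta,\alpha,T}\vare^{-\alpha}$, with the polynomial prefactor in $\|x\|,\|y\|$ absorbed into the right-hand side. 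Putting the resulting estimates together and using $0<t\leq T$ to dominate every lower singular power $t^{-a}$ with $a<1-\theta/2$ by $t^{-1+\theta/2}$ gives the assertion, all constants being independent of the Galerkin dimension $N$.
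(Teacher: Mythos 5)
Your proposal is correct and follows essentially the same route as the paper: the same five-term splitting of the mild solution with the nonlinearities frozen at time $t$, the identity $A\int_0^t e^{(t-s)A}ds=e^{tA}-I$ for $I_2$ and $I_4$, the bound \eqref{Remark5.3} combined with Lemmas \ref{highorder of X} and \ref{LB.2} for $I_3$, and Lemma \ref{LB.2}(2) for the fast-variable increment in $I_5$, which is precisely where the $C\vare^{-\alpha}$ term originates. The only cosmetic difference is your choice of a small interpolation parameter $\delta$ where the paper fixes $\delta=\tfrac14$.
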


\begin{proof}
For $0\leq s<t$, we write
\begin{eqnarray}
X^{\vare}_{t}
%=\!\!\!\!\!\!\!\!&&e^{tA}x
%+\int_{0}^{t}e^{(t-s)A}B(X^{\vare}_{s})ds
%+\int_{0}^{t}e^{(t-s)A}f(X^{\vare}_{s},Y^{\vare}_{s})ds  \nonumber\\
=\!\!\!\!\!\!\!\!&&e^{tA}x+\int_{0}^{t}e^{(t-s)A}B(X^{\vare}_{t})ds+\int_{0}^{t}e^{(t-s)A}\left[B(X^{\vare}_{s})-B(X^{\vare}_{t})\right]ds  \nonumber\\
\!\!\!\!\!\!\!\!&&+\int_{0}^{t}e^{(t-s)A}f(X^{\vare}_{t},Y^{\vare}_{t})ds
+\int_{0}^{t}e^{(t-s)A}\left[f(X^{\vare}_{s},Y^{\vare}_{s})-f(X^{\vare}_{t},Y^{\vare}_{t})\right]ds \nonumber\\
=\!\!\!\!\!\!\!\!&&I_{1}+I_{2}+I_{3}+I_{4}+I_{5}. \nonumber
\end{eqnarray}
For $I_{1}$, using \eqref{PSG}, we have
\begin{align} \label{AXvare1}
\|Ae^{tA}x\|
\leq C t^{-1+\frac{\theta}{2}}|x|_{\theta}^{2}.
\end{align}
For $I_{2}$, taking $p=4$, $\gamma=1$ and $\delta=1/4$ in \eqref{Remark5.3},
we get
\begin{align} \label{AX2}
\left[\EE\|AI_{2}\|^2\right]^{1/2}=
\left[\EE\left\|(e^{tA}-I)B(X_{t}^{\vare})\right\|^2\right]^{1/2}
& \leq C\left[\EE\left|X_{t}^{\vare}\right|_{1}^{4}\right]^{1/2}  \nonumber\\
& \leq C t^{-1+\frac{4\theta}{5}}(1+|x|^{2}_{\theta}+\|y\|^{2})e^{C\|x\|^k}.
\end{align}
For $I_{3}$, using Lemma \ref{Property B2}, we obtain
\begin{align*}
|AI_{3}|\leq C\int_{0}^{t}\frac{1}{t-s}\left|B(X^{\vare}_{s})-B(X^{\vare}_{t})\right|ds 
\leq C\int_{0}^{t}\frac{1}{t-s}|X^{\vare}_{t}-X^{\vare}_{s}|_{1}(|X^{\vare}_{t}|_{1}+|X^{\vare}_{s}|_{1})ds.  
\end{align*}
According to the Minkowski inequality, by Lemmas \ref{highorder of X} and \ref{LB.2}, it follows that
\begin{eqnarray}  \label{AXvare3}
\left[\mathbb{E}\|AI_{3}\|^2\right]^{1/2}\leq\!\!\!\!\!\!\!\!&&
C\mathbb{E}\int_{0}^{t}\frac{1}{t-s}|X^{\vare}_{t}-X^{\vare}_{s}|_{1}(|X^{\vare}_{t}|_{1}+|X^{\vare}_{s}|_{1})ds \nonumber\\
\leq\!\!\!\!\!\!\!\!&&
C\int_{0}^{t}\frac{1}{t-s}\left\{\mathbb{E}\left[  |X^{\vare}_{t}-X^{\vare}_{s}|_{1}(|X^{\vare}_{t}|_{1}
+|X^{\vare}_{s}|_{1})\right]^2\right\}^{1/2}ds \nonumber\\
\leq\!\!\!\!\!\!\!\!&&
C\int_{0}^{t}\frac{1}{t-s}\left[\mathbb{E}\left(|X^{\vare}_{t}-X^{\vare}_{s}|^{4}_{1}\right)\cdot\left(\mathbb{E}|X^{\vare}_{t}|^{4}_{1}
+\mathbb{E}|X^{\vare}_{s}|^{4}_{1}\right)\right]^{\frac{1}{4}}ds \nonumber\\
\leq\!\!\!\!\!\!\!\!&&C t^{-1+\frac{9\theta}{10}}(|x|^{2}_{\theta}+ \|y\|^{2}+1) e^{C\|x\|^k}.
\end{eqnarray}
For $I_{4}$, we have
\begin{align} \label{AXvare4}
\mathbb{E}\|AI_{4}\|=
\mathbb{E}\left\|(e^{tA}-I)f(X^{\vare}_{t},Y^{\vare}_{t})\right\|
\leq  C(1+\mathbb{E}\|X^{\vare}_{t}\| + \mathbb{E}\|Y^{\vare}_{t}\|)
\leq  C(1+\|x\| + \|y\|).
\end{align}
For $I_{5}$, using the Minkowski inequality and Lemma \ref{LB.2}, we obtain
\begin{eqnarray} \label{AXvare5}
\left[\mathbb{E} \| AI_{5}\|^2\right]^{1/2}
\leq\!\!\!\!\!\!\!\!&&
C\int_{0}^{t}\frac{1}{t-s}
\left[\left(\mathbb{E}\left\|X^{\vare}_{t}-X^{\vare}_{s}\right\|^{2}\right)^{\frac{1}{2}}
+\left(\mathbb{E}\left\|Y^{\vare}_{t}-Y^{\vare}_{s}\right\|^{2}\right)^{\frac{1}{2}}\right]ds \nonumber\\
\leq\!\!\!\!\!\!\!\!&&
C t^{-\frac{1}{2}+\frac{\theta}{2}}(|x|_{\theta}+ \|y\|+1) e^{C\|x\|^k} + C\vare^{-\alpha}.
\end{eqnarray}
Combining \eref{AXvare1}-\eref{AXvare5} yields the desired result.
\end{proof}

\begin{lemma}\label{LemmaB.3}
Under the conditions \ref{A1}, \ref{A2} and \ref{A4},
for any $x\in H^{\theta}$ with $\theta\in(0,1)$,
$\alpha\in(0,\frac{1}{4})$,  $0<t\leq T$, there exist $k\in\NN$
and a constant $C=C_{\theta, \alpha, T}$ such that
\begin{align}
\EE\big\|\frac{d}{dt}X_{t}^{\vare}\big\|\leq
C t^{-1+\frac{\theta}{2}}(|x|_{\theta}^{2}+ \|y\|^2+1)e^{C\|x\|^k}+C\vare^{-\alpha}. \nonumber
\end{align}
%where $C$ is a constant depending on $\theta, \alpha, T$.
\end{lemma}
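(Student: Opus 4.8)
The plan is to bound $\|\frac{d}{dt}X^{\vare}_t\|$ by using the equation
$$
\frac{d}{dt}X^{\vare}_t = AX^{\vare}_t + B(X^{\vare}_t) + f(X^{\vare}_t, Y^{\vare}_t),
$$
and estimating each of the three terms on the right-hand side in $L^1(\Omega)$. The term $\mathbb{E}\|AX^{\vare}_t\|$ is already controlled by Lemma \ref{Xvare2}, which gives exactly the claimed bound $Ct^{-1+\frac{\theta}{2}}(|x|_\theta^2+\|y\|^2+1)e^{C\|x\|^k}+C\vare^{-\alpha}$ (after a Cauchy--Schwarz/Hölder step to pass from the $L^2(\Omega)$ bound to an $L^1(\Omega)$ bound, which only improves the constant).

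For the nonlinear term, I would use $\|B(X^{\vare}_t)\| = \|X^{\vare}_t \partial_\xi X^{\vare}_t\| \leq C|X^{\vare}_t|_1^2$ by Corollary \ref{Property B3} (the same estimate already used in \eqref{AX2} and in the proof of Lemma \ref{Xvare2}), and then invoke \eqref{Remark5.3} with $p=2$, $\gamma=1$ and $\delta=\tfrac14$ to get
$$
\mathbb{E}\|B(X^{\vare}_t)\| \leq C\,\mathbb{E}|X^{\vare}_t|_1^2 \leq C t^{-\frac{4\theta}{5}\cdot\frac{?}{?}}\cdots
$$
more precisely $\mathbb{E}|X^{\vare}_t|_1^2 \leq Ct^{-1+\frac{4\theta}{5}}(1+|x|_\theta^2+\|y\|^2)e^{C\|x\|^k}$, which is dominated by $Ct^{-1+\frac{\theta}{2}}(\cdots)$ up to adjusting the exponent (one keeps the larger singularity; since $-1+\frac{4\theta}{5}>-1+\frac{\theta}{2}$ for $\theta>0$, actually $t^{-1+\frac{4\theta}{5}} \le t^{-1+\frac{\theta}{2}}$ only fails near $0$, so one should keep $-1+\frac{4\theta}{5}$ or simply absorb it; in any case it is bounded by $Ct^{-1+\frac{\theta}{2}}(\cdots)$ on $(0,T]$ after noting $t^{-1+\frac{4\theta}{5}}\le T^{\theta/10}t^{-1+\frac{\theta}{2}}$... ). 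For the term $f(X^{\vare}_t,Y^{\vare}_t)$, condition \ref{A1} gives the linear growth $\|f(X^{\vare}_t,Y^{\vare}_t)\| \leq C(1+\|X^{\vare}_t\|+\|Y^{\vare}_t\|)$, and then Lemma \ref{highorder of X}(1) together with the standard $L^2$-bound for $Y^{\vare}_t$ (as in Lemma \ref{PMY}, valid here since $\text{Tr}Q_2<\infty$ and the finite-dimensional projection does not increase norms) yields $\mathbb{E}\|f(X^{\vare}_t,Y^{\vare}_t)\| \leq C(1+\|x\|+\|y\|)$, which is harmless.

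Adding the three contributions gives
$$
\mathbb{E}\big\|\tfrac{d}{dt}X^{\vare}_t\big\| \leq \mathbb{E}\|AX^{\vare}_t\| + \mathbb{E}\|B(X^{\vare}_t)\| + \mathbb{E}\|f(X^{\vare}_t,Y^{\vare}_t)\| \leq Ct^{-1+\frac{\theta}{2}}(|x|_\theta^2+\|y\|^2+1)e^{C\|x\|^k}+C\vare^{-\alpha},
$$
which is the assertion. I expect no genuine obstacle here: the lemma is essentially a bookkeeping corollary of Lemma \ref{Xvare2} (which carries the real work, namely the $\vare^{-\alpha}$ term coming from the time regularity of $Y^{\vare}$ in Lemma \ref{LB.2}(2)), combined with the already-established moment bounds for $X^{\vare}$ in $H^1$. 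The only mild care needed is matching the exponents of $t$: all the singularities produced ($t^{-1+\frac{\theta}{2}}$ from $AI_1$, $t^{-1+\frac{4\theta}{5}}$ from $AI_2$ and $B(X^{\vare}_t)$, $t^{-1+\frac{9\theta}{10}}$ from $AI_3$, weaker ones from $I_4$, $I_5$, $f$) are all bounded on $(0,T]$ by a constant times $t^{-1+\frac{\theta}{2}}$, since $t^{-1+\beta}\le T^{\beta-\theta/2}t^{-1+\theta/2}$ whenever $\beta\ge \theta/2$ and $t\le T$, so one may safely report the common worst exponent $-1+\frac{\theta}{2}$.
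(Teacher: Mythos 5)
Your proposal is correct and follows essentially the same route as the paper's proof: write $\frac{d}{dt}X^{\vare}_t=AX^{\vare}_t+B(X^{\vare}_t)+f(X^{\vare}_t,Y^{\vare}_t)$, bound $\EE\|AX^{\vare}_t\|$ by $[\EE\|AX^{\vare}_t\|^2]^{1/2}$ via Lemma \ref{Xvare2}, bound $\EE\|B(X^{\vare}_t)\|\leq C\EE|X^{\vare}_t|_1^2$ via \eqref{Remark5.3} with $p=2$, $\gamma=1$, $\delta=\tfrac14$, and use the linear growth of $f$ for the last term. Your extra care in reconciling the exponents $-1+\tfrac{4\theta}{5}$ versus $-1+\tfrac{\theta}{2}$ on $(0,T]$ is a correct bookkeeping point that the paper leaves implicit.
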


\begin{proof}
Recall that
\begin{eqnarray}
\frac{d}{dt}X_{t}^{\vare}=AX_{t}^{\vare}+B(X_{t}^{\vare})+f(X_{t}^{\vare},Y_{t}^{\vare}). \nonumber
\end{eqnarray}
Choosing $p=2, \gamma=1$ and $\delta=\frac{1}{4}$ in \eqref{Remark5.3},
we get
$\mathbb{E} \|B(X_{t}^{\vare})\|
\leq C\mathbb{E}|X_{t}^{\vare}|_1^2\leq C t^{-1+\frac{4\theta}{5}}(|x|^{2}_{\theta}+ \|y\|^{2}+1)$.
It is clear that $\mathbb{E}\|f(X_{t}^{\vare},Y_{t}^{\vare})\|\leq C(1+\|x\| + \|y\|)$.
By Lemma \ref{Xvare2}, we have
\begin{eqnarray}
\mathbb{E}\|AX^{\vare}_{t}\| \leq \left[\mathbb{E} \| AX^{\vare}_{t}\|^2\right]^{1/2}\leq
 C t^{-1+\frac{\theta}{2}}(|x|_{\theta}^{2}+ \|y\|^2+1)e^{C\|x\|^k}+C\vare^{-\alpha}. \nonumber
\end{eqnarray}
The proof is complete.
%\hspace{\fill}$\square$
\end{proof}

\subsection{The asymptotic expansion of
\texorpdfstring{$\mathbb{E}[\phi(X_{N}^{\vare}(t))]$}{Lg} } \label{Subsection 5.3}

For any $x,y\in L^2$ and $t\geq0$, set
\begin{equation} \label{Ephix1}
u^{\vare}_N(t,x,y)=\mathbb{E}\left[\phi\left(X^{\vare}_N(t,x,y)\right)\right]
\end{equation}
and
\begin{equation} \label{Ephix2}
\bar{u}_N(t,x)=\phi\big(\bar{X}_N(t,x)\big).
\end{equation}
%\vskip 0.2cm
In this subsection, we shall find an asymptotic expansion of $u^{\vare}_N$ with respect to $\vare$:
\begin{equation} \label{Expand u}
u^{\vare}_N=u_{0}^N+\vare u_{1}^N+v^{\vare}_N,
\end{equation}
where $v^{\vare}_N$ is a residual term,
$u_{0}^N$ and $u_{1}^N$ will  be constructed below.
%We will show that $u_{0}^N=\bar{u}_N$.
%\vskip 0.3cm

For any $\psi(x,y): L^2 \times L^2 \to \mathbb{R}$ with $\psi \in C^2$,
we introduce the differential operators:
%We first introduce the following differential operators:
%for a $C^{2}$ function $\psi(x,y): L^2 \times L^2 \to \mathbb{R}$,
\begin{equation}
L_{1}^N\psi(x,y)=\langle A_Nx+B_N(x)+f_N(x,y),D_{x}\psi(x,y)\rangle, \nonumber
\end{equation}
\begin{equation}
L_{2}^N\psi(x,y)=\langle A_Nx+g_N(x,y),D_{y}\psi(x,y)\rangle+\frac{1}{2}\text{Tr}\left(D_{yy}^{2}\psi(x,y)\right). \nonumber
\end{equation}
For any $\psi: L^2\rightarrow\mathbb{R}$ with $\psi \in C^1$, denote by
\begin{equation}
\bar{L}^N\psi(x)=\langle A_Nx+B_N(x)+\bar{f}_N(x),D_{x}\psi(x)\rangle. \nonumber
\end{equation}
Set
\begin{equation} \label{Global L}
L^{\vare}_N=L_{1}^N+\frac{1}{\vare}L_{2}^N.
\end{equation}

\textbf{For simplicity, we omit the index $N$.}
Notice that $\bar{u}$ does not depend on $y$.
It is well known that $u^{\vare}$ and $\bar{u}$ satisfy the following Kolmogorov equations:
\begin{equation}\left\{\begin{array}{l}  \label{Global LK}
\displaystyle
\frac{\partial u^{\vare}(t,x,y)}{\partial t}=L^{\vare}u^{\vare}(t,x,y)\\
u^{\vare}(0,x,y)=\phi(x),
\end{array}\right.
\end{equation}
and
\begin{equation} \label{KolmogEquBar u}
\left\{\begin{array}{l}
\displaystyle
\frac{\partial \bar{u}(t,x)}{\partial t}=\bar{L}\bar{u}(t,x)\\
\bar{u}(0,x)=\phi(x), 
\end{array}\right.
\end{equation}
respectively. 
%\vskip 0.3cm
Using \eref{Expand u}-\eref{Global LK}, 
%\eref{Global L}, \eref{Global LK} and \eref{Expand u}, 
we have
\begin{equation}
\frac{\partial u_{0}}{\partial t}+\vare\frac{\partial u_{1}}{\partial t}+\frac{\partial v^{\vare}}{\partial t}
=L_{1}u_{0}+\frac{1}{\vare}L_{2}u_{0}+\vare L_{1}u_{1}+L_{2}u_{1}+L_{1}v^{\vare}+\frac{1}{\vare}L_{2}v^{\vare},  \nonumber
\end{equation}
which implies
%The identification with respect to $\vare$ gives the following equations:
\begin{equation}\left\{\begin{array}{l} \label{expand equality}
\displaystyle
L_{2}u_{0}=0\\
\frac{\partial u_{0}}{\partial t}=L_{1}u_{0}+L_{2}u_{1}\\
\frac{\partial v^{\vare}}{\partial t}=L^{\vare}v^{\vare}+\vare(L_{1}u_{1}-\frac{\partial u_{1}}{\partial t}).
\end{array}\right.
\end{equation}
%\newline
To obtain $u_{0}$ and $u_{1}$, we need the following lemma which is similar to \cite[Lemma 4.3]{B1}.
However, instead of imposing the condition that $f$ is bounded,
the coefficient $f$ in our paper is Lipschitz.
% and may be not bounded.
%Notice that \cite[Lemma 4.3]{B1} plays an important role in the proof, we also need a Lemma to prove our main result, but notice that coefficient $f$ in our paper is Lipschitz and not bounded, which is different from the bounded assumption on corresponding coefficient in \cite{B1}, so we need to do some revises in \cite[Lemma 4.3]{B1}, stated as follows:
\begin{lemma} \label{Poisson equation}
Assume the conditions \ref{A1}, \ref{A2} and \ref{A4} hold.
Fix $x\in L^2$.\\
(1) If $\Psi$ is a Lipschitz continuous function
and $\Phi$ is a function of class $C^{2}$
satisfying $L_{2}\Phi=-\Psi$, then for any $y\in L^2$, we have
\begin{equation}
\Phi(y)=\int_{L^2} \Phi(z)\mu^{x}(dz)
+ \int_{0}^{+\infty}\mathbb{E}\left[\Psi(Y^{x,y}_s)\right]ds. \nonumber
\end{equation}
(2) Suppose that $\Psi$ is a Lipschitz continuous function of class $C^{2}$
such that $\int_{L^2}\Psi(y)\mu^x(dy)=0$.
Let $\Phi(y)=\int_{0}^{+\infty}\mathbb{E}\left[\Psi(Y^{x,y}_s)\right]ds$. 
Then, $\Phi$ is of class $C^{2}$ and satisfies $L_{2}\Phi=-\Psi$.
%then $\Phi$ defined by $\Phi(y)=\int_{0}^{+\infty}\mathbb{E}\left[\Psi(Y^{x,y}_s)\right]ds$
%is of class $C^{2}$, satisfies $L_{2}\Phi=-\Psi$.
Moreover, there exists a constant $C$ which is independent of $N$ such that for any $y\in L^2$
\begin{equation}
|\Phi(y)|\leq C(1+\|x\| + \|y\|)|\Psi|_{Lip}. \label{Poission equation}
\end{equation}
\end{lemma}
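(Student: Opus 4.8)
The plan is to treat Lemma \ref{Poisson equation} as the standard ``Poisson equation'' statement for the ergodic fast dynamics, relying on It\^o's formula together with the exponential ergodicity established in Propositions \ref{ergodicity} and \ref{Rem 4.1}. The one point that requires genuine care, and which is the whole reason the lemma is phrased the way it is, is that the constant in \eqref{Poission equation} must not depend on the Galerkin dimension $N$; this will follow because the ergodic decay rate in Propositions \ref{ergodicity}--\ref{Rem 4.1} is governed only by $\eta=\lambda_1-L_g$ together with $\mathrm{Tr}Q_2$ and the Lipschitz constant $L_g$, none of which is affected by the projection $P_N$ (indeed $e_1\in H_N$ for every $N\ge1$, so the Poincar\'e constant of $H_N$ is the same $\lambda_1$, while $|g_N(x,\cdot)|_{Lip}\le L_g$ and $\mathrm{Tr}Q_{2,N}\le\mathrm{Tr}Q_2$).

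For part (1): first observe that since $\mu^x$ is invariant for $P^x_t$ one has $\int_{L^2}L_2\Phi\,d\mu^x=0$, hence $\int_{L^2}\Psi\,d\mu^x=0$; consequently Proposition \ref{Rem 4.1} gives $|\mathbb{E}[\Psi(Y^{x,y}_s)]|=|P^x_s\Psi(y)-\int_{L^2}\Psi\,d\mu^x|\le C(1+\|x\|+\|y\|)e^{-\eta s/2}|\Psi|_{Lip}$, so $\int_0^\infty\mathbb{E}[\Psi(Y^{x,y}_s)]\,ds$ converges absolutely. Applying It\^o's formula to $s\mapsto\Phi(Y^{x,y}_s)$ (licit since $\Phi\in C^2$ and we work in the finite-dimensional Galerkin setting) and using $L_2\Phi=-\Psi$ yields, for every $T>0$,
\[
\mathbb{E}[\Phi(Y^{x,y}_T)]=\Phi(y)-\int_0^T\mathbb{E}[\Psi(Y^{x,y}_s)]\,ds.
\]
Letting $T\to\infty$, the right-hand side converges to $\Phi(y)-\int_0^\infty\mathbb{E}[\Psi(Y^{x,y}_s)]\,ds$, while the left-hand side converges to $\int_{L^2}\Phi\,d\mu^x$ by the ergodicity of $P^x_t$, extended from bounded to at most linearly growing functions via the uniform moment bounds on $Y^{x,y}_s$ coming from \eqref{Gelfand Ine} and conditions \ref{A1}--\ref{A2}; rearranging gives the claimed identity.

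For part (2): well-definedness of $\Phi(y)=\int_0^\infty\mathbb{E}[\Psi(Y^{x,y}_s)]\,ds$ and the bound \eqref{Poission equation} follow by integrating in $s$ the estimate $|\mathbb{E}[\Psi(Y^{x,y}_s)]|\le C(1+\|x\|+\|y\|)e^{-\eta s/2}|\Psi|_{Lip}$ of Proposition \ref{Rem 4.1} and recording that the resulting $C$ depends only on $\eta$, hence not on $N$. To see $\Phi\in C^2$ one differentiates under the integral sign: writing $\rho^h_s$ for the first variation $\partial_yY^{x,y}_s\cdot h$, the equation $\frac{d}{ds}\rho^h_s=A\rho^h_s+D_yg(x,Y^{x,y}_s)\cdot\rho^h_s$ together with $\langle A\rho,\rho\rangle\le-\lambda_1\|\rho\|^2$ and $\|D_yg(x,y)\cdot h\|\le L_g\|h\|$ (from \ref{A1}) gives $\|\rho^h_s\|\le e^{-\eta s}\|h\|$, and the second variation decays exponentially as well using \eqref{SecondDer yy}; since $\Psi$ is $C^2$ with locally bounded derivatives, the differentiated integrals converge, so $\Phi\in C^2$. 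Finally, for $L_2\Phi=-\Psi$: by the semigroup property and Fubini, $P^x_t\Phi(y)=\int_0^\infty P^x_{t+s}\Psi(y)\,ds=\Phi(y)-\int_0^t P^x_s\Psi(y)\,ds$, so
\[
\frac{P^x_t\Phi(y)-\Phi(y)}{t}=-\frac1t\int_0^t P^x_s\Psi(y)\,ds\;\xrightarrow[t\to0]{}\;-\Psi(y)
\]
by continuity of $s\mapsto P^x_s\Psi(y)$ at $s=0$; combined with the $C^2$ regularity this gives $L_2\Phi=-\Psi$ pointwise.

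The main obstacle is precisely the uniformity in $N$: one must verify that every constant entering the argument — the rate $\eta$, the prefactor $C(1+\|x\|+\|y\|)$ of Propositions \ref{ergodicity}--\ref{Rem 4.1}, and the moment bounds on $Y^{x,y}$ — is controlled independently of the truncation level, which is why the estimate is stated in terms of $|\Psi|_{Lip}$ rather than $|\Psi|_\infty$. A secondary technical point is the limit $P^x_T\Phi(y)\to\int_{L^2}\Phi\,d\mu^x$ in part (1) for a function only assumed $C^2$; this can be handled by noting that, by part (2) applied to $\widetilde\Phi(y):=\int_0^\infty\mathbb{E}[\Psi(Y^{x,y}_s)]\,ds$, the difference $\Phi-\widetilde\Phi$ is $L_2$-harmonic, $C^2$ and of at most linear growth, hence constant, so the required convergence reduces to the ergodic theorem for the (linearly bounded) function $\widetilde\Phi$ together with the moment estimates for the fast equation.
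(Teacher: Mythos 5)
Your proof of the key estimate \eqref{Poission equation} is exactly the paper's argument: integrate the exponential ergodicity bound of Proposition \ref{Rem 4.1} over $s$, using that $\int_{L^2}\Psi\,d\mu^x=0$ and that the rate $\lambda_1-L_g$ and prefactor are dimension-free. For part (1) and the first half of part (2) the paper simply cites \cite[Lemma 4.3]{B1}, and the It\^{o}-formula and semigroup arguments you supply are the standard ones behind that reference, so your proposal takes essentially the same route as the paper.
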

\begin{proof}
Since the proof of (1) and the proof of the first part of (2)
are similar to \cite[Lemma 4.3]{B1}, we omit the details.
Here we only give a proof of \eref{Poission equation}.

For any $y\in L^2$, Proposition \ref{Rem 4.1} implies
\begin{equation}
\left|\mathbb{E}\left[\Psi(Y^{x,y}_s)\right]
-\int_{L^2}\Psi(z)\mu^x(dz)\right|
\leq C(1+\|x\|+\|y\|)e^{-\frac{(\lambda_1-L_g)s}{2}}|\Psi|_{Lip}.\label{5.11}
\end{equation}
Noting that $\int_{L^2 }\Psi(y)\mu^x(dy)=0$,
we obtain \eref{Poission equation} by integrating \eref{5.11} with respect to $s$.
%and by integrating with respect to time $s$ in \eref{5.11},
%we obtain \eref{Poission equation},
The proof is complete.
% \hspace{\fill}$\square$
\end{proof}

%\vskip 0.3cm
It follows from \eref{expand equality} that
the function $u_{0}$ is independent of $y$,
thus we can write $u_{0}(t,x,y)=u_{0}(t,x)$.
We also choose the initial condition $u_{0}(0,x)=\phi(x)$.
In view of the second equation in \eref{expand equality}
and noting that $\int_{L^2 }L_{2}u_{1}(t,x,y)\mu^{x}(dy)=0$, we have
\begin{eqnarray*}
\frac{\partial u_{0}}{\partial t}(t,x)
=\!\!\!\!\!\!\!\!&&
\int_{L^2} \frac{\partial u_{0}}{\partial t}(t,x)\mu^{x}(dy) \nonumber\\
=\!\!\!\!\!\!\!\!&&
\int_{L^2} L_{1}u_{0}(t,x)\mu^{x}(dy)+\int_{L^2} L_{2}u_{1}(t,x,y)\mu^{x}(dy) \nonumber\\
=\!\!\!\!\!\!\!\!&&
\langle Ax+B(x)+\int_{L^2} f(x,y)\mu^{x}(dy),D_{x}u_{0}(t,x)\rangle \nonumber\\
=\!\!\!\!\!\!\!\!&&
\bar{L}u_{0}(t,x).
\end{eqnarray*}
This, together with the uniqueness of the solution to \eqref{KolmogEquBar u},
implies that $u_{0}=\bar{u}$.

%We can see that $u_{0}$ and $\bar{u}$ are solutions of the same equation,
%so by uniqueness of the solution we can deduce that $u_{0}=\bar{u}$.

From $\bar{L}u_{0}=L_{1}u_{0}+L_{2}u_{1}$ and the definitions of $\bar{L}$ and $L_{1}$,
we deduce that
\begin{align*}
L_{2}u_{1}(t,x,y)= \langle\bar{f}(x)-f(x,y),D_{x}u_{0}(t,x)\rangle
=: -\chi(t,x,y),
\end{align*}
where $\chi$ is of class $C_{b}^{2}$ with respect to $y$,
and satisfies that for any $t\geq0$ and $x\in L^2$, $\int_{L^2}\chi(t,x,y)\mu^{x}(dy)=0$.

According to Lemma \ref{Poisson equation}, we obtain
\begin{eqnarray} \label{Equality u1}
u_{1}(t,x,y)=\int_{0}^{+\infty}\mathbb{E}\left[\chi(t,x,Y^{x,y}_s)\right]ds.
\end{eqnarray}
In what follows, we are going to show the regularity of $u_{1}$
with respect to $t$ and $x,y$.
In order to avoid the non-integrability at $t=0$,
we introduce a parameter $\rho(\vare)=\vare^{\frac{1}{a}}, 0<a\leq\frac{\theta}{2}$.
By the third equation of \eref{expand equality} and It\^{o}'s formula, we have
\begin{eqnarray*}
v^{\vare}(t,x,y)=\!\!\!\!\!\!\!\!&&\mathbb{E}\left[v^{\vare}(\rho(\vare),X^{\vare}(t-\rho(\vare),x,y),Y^{\vare}(t-\rho(\vare),x,y))\right] \nonumber\\
\!\!\!\!\!\!\!\!&&+\vare\mathbb{E}\left[\int_{\rho(\vare)}^{t}(L_{1}u_{1}-\frac{\partial u_{1}}{\partial s})(s,X^{\vare}(t-s,x,y),Y^{\vare}(t-s,x,y))ds\right].
\end{eqnarray*}
Using the expansion \eref{Expand u} and the fact $u_{0}=\bar{u}$,  we get
\begin{eqnarray} \label{Ephix3}
\!\!\!\!\!\!\!\!&&u^{\vare}(t,x,y)-\bar{u}(t,x,y)\nonumber\\
=\!\!\!\!\!\!\!\!&& \vare u_{1}(t,x,y)+\mathbb{E}\left[v^{\vare}(\rho(\vare),X^{\vare}(t-\rho(\vare),x,y),Y^{\vare}(t-\rho(\vare),x,y))\right] \nonumber\\
\!\!\!\!\!\!\!\!&&+\vare\mathbb{E}\left[\int_{\rho(\vare)}^{t}(L_{1}u_{1}-\frac{\partial u_{1}}{\partial s})(s,X^{\vare}(t-s,x,y),Y^{\vare}(t-s,x,y))ds\right].
\end{eqnarray}
Hence, it remains to control each terms in \eqref{Ephix3},
%that is, we have to estimate $u_{1}$, $v^{\vare}$, $L_{1}u_{1}$ and $\frac{\partial u_{1}}{\partial t}$,
which will be shown in the next subsection.

\subsection{Estimates of
\texorpdfstring{ $u_{1}$, $v^{\vare}$, $L_{1}u_{1}$ and $\frac{\partial u_{1}}{\partial t}$}{Lg} }\label{Subsection 5.4}

Note that the index $N$ is omitted in the equations \eqref{main finite equation}
and \eqref{finite averaged equation}.
%we consider Eq. (\ref{finite averaged equation}) and Eq. (\ref{main finite equation}) and
\begin{lemma}\label{ESu1}
Under the conditions \ref{A1}, \ref{A2} and \ref{A4},
there exists a constant $C>0$ such that for any $0\leq t\leq T$, $x,y\in L^2$, we have
\begin{align}
|u_{1}(t,x,y)|\leq Ce^{C\|x\|^{5}}(1+\|y\|).\nonumber
\end{align}
\end{lemma}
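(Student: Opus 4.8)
The statement to be proven is the bound $|u_1(t,x,y)| \leq C e^{C\|x\|^5}(1+\|y\|)$, where $u_1$ is given by the Poisson-type formula \eqref{Equality u1}, namely $u_1(t,x,y) = \int_0^{+\infty} \mathbb{E}[\chi(t,x,Y^{x,y}_s)]\,ds$ with $\chi(t,x,y) = \langle f(x,y)-\bar f(x), D_x u_0(t,x)\rangle$. The natural route is to invoke the quantitative estimate \eqref{Poission equation} of Lemma \ref{Poisson equation}(2), which gives $|u_1(t,x,y)| \leq C(1+\|x\|+\|y\|)\,|\chi(t,x,\cdot)|_{Lip}$, where the Lipschitz seminorm is taken in the $y$-variable. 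So the whole problem reduces to controlling $|\chi(t,x,\cdot)|_{Lip}$, i.e.\ showing that $y \mapsto \chi(t,x,y)$ is Lipschitz with a constant bounded by something like $C e^{C\|x\|^5}$ (the polynomial prefactor $(1+\|x\|+\|y\|)$ coming from \eqref{Poission equation} can then be absorbed, up to enlarging the exponential, since one may crudely bound $(1+\|x\|) \leq C e^{C\|x\|^5}$, and the $\|y\|$ factor is exactly what appears in the claimed bound).

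\textbf{Key steps.} First I would recall that $u_0 = \bar u$, so $D_x u_0(t,x) = D_x \bar u(t,x) = D\phi(\bar X(t,x)) \circ D_x \bar X(t,x)$ in the appropriate sense; concretely, for a direction $h$, $\langle D_x u_0(t,x), h\rangle = \langle D\phi(\bar X(t,x)), \eta^h(t,x)\rangle$, where $\eta^h$ is the derivative process studied in Lemma \ref{eta}. Since $\phi \in C_b^2(L^2)$, $\|D\phi\|_\infty \leq C$, and by \eqref{LemDeriEtah 01} of Lemma \ref{eta} we have $\|\eta^h(t,x)\| \leq C e^{C\|x\|^5/2}\|h\|$, hence $\|D_x u_0(t,x)\| \leq C e^{C\|x\|^5}$. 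Second, I would estimate the $y$-Lipschitz constant of $\chi$: for $y_1, y_2 \in L^2$,
\[
|\chi(t,x,y_1) - \chi(t,x,y_2)| = |\langle f(x,y_1) - f(x,y_2), D_x u_0(t,x)\rangle| \leq \|f(x,y_1)-f(x,y_2)\|\,\|D_x u_0(t,x)\| \leq L_f \|y_1-y_2\|\,\|D_x u_0(t,x)\|,
\]
using that $\bar f(x)$ does not depend on $y$ and condition \ref{A1}. Combining with the bound on $\|D_x u_0(t,x)\|$ gives $|\chi(t,x,\cdot)|_{Lip} \leq C e^{C\|x\|^5}$. Third, I would feed this into \eqref{Poission equation}: $|u_1(t,x,y)| \leq C(1+\|x\|+\|y\|) e^{C\|x\|^5} \leq C e^{C'\|x\|^5}(1+\|y\|)$, enlarging the constant in the exponent to swallow the polynomial factor $1+\|x\|$.

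\textbf{Main obstacle.} The analytic content all lies in verifying that $\chi(t,x,\cdot)$ is genuinely Lipschitz of class $C^2$ in $y$ with the stated bound (so that Lemma \ref{Poisson equation}(2) applies), and the substantive input is the exponential control $\|\eta^h(t,x)\| \leq C e^{C\|x\|^5}\|h\|$ from Lemma \ref{eta}, which is already established earlier. One point that needs a little care: Lemma \ref{Poisson equation}(2) requires $\Psi = \chi(t,x,\cdot)$ to be $C^2$ with $\int_{L^2}\chi(t,x,y)\mu^x(dy) = 0$ — the zero-mean property is exactly the defining property of $\bar f$, and the $C^2$ regularity in $y$ follows from $f(x,\cdot)$ being twice differentiable (condition \ref{A4}) together with the boundedness of $D_x u_0(t,x)$. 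The bound itself is then immediate from the three estimates above; I do not anticipate a serious technical barrier, only the bookkeeping of which results feed into which.
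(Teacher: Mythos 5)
Your proposal is correct and follows essentially the same route as the paper: apply the quantitative bound \eqref{Poission equation} of Lemma \ref{Poisson equation} to the representation \eqref{Equality u1}, reduce to the $y$-Lipschitz constant of $\chi$ via condition \ref{A1}, and bound $\|D_x u_0(t,x)\|$ through $D_x u_0(t,x)\cdot h = D\phi(\bar X(t,x))\cdot\eta^h(t,x)$ together with Lemma \ref{eta}. Your extra remarks on absorbing the $(1+\|x\|)$ factor into the exponential and on verifying the hypotheses of Lemma \ref{Poisson equation}(2) are sound bookkeeping that the paper leaves implicit.
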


\begin{proof}
By \eref{Equality u1} and Lemma \ref{Poisson equation}, we have
\begin{align}
|u_{1}(t,x,y)|\leq C(1+\|x\|+\|y\|) \left|y\longmapsto\chi(t,x,y)\right|_{Lip}, \nonumber
\end{align}
where
\begin{align}
\chi(t,x,y)=\langle f(x,y)-\bar{f}(x), D_{x}u_{0}(t,x)\rangle. \nonumber
\end{align}
Noting that $f$ is Lipschitz, we get that for any $y_1, y_2 \in L^2$,
\begin{eqnarray*}
|\chi(t,x,y_1)-\chi(t,x,y_2)|\leq C\|y_1-y_2\| \|D_{x}u_{0}(t,x)\|.
\end{eqnarray*}
To bound $\|D_{x}u_{0}(t,x)\|$,
recalling that $u_{0}=\bar{u}$ and $\bar{u}(t,x)=\phi(\bar{X}(t,x))$, we have
\begin{align*}
D_{x}u_{0}(t,x)\cdot h=D\phi(\bar{X}(t,x))\cdot \eta^{h}(t,x),
\end{align*}
where
%\begin{align}
$\eta^{h}(t,x)=D_{x}\bar{X}(t,x)\cdot h. $  % \nonumber
%\end{align}
Finally the conclusion follows by using Lemma \ref{eta}.
%\hspace{\fill}$\square$
\end{proof}

%Now we are going to estimate $\frac{\partial u_{1}}{\partial t}(t,x,y)$. At first, we need some estimates about $|\bar{X}(t,x)-\bar{X}(s,x)|_{1}$, $\frac{d}{dt}\bar{X}(t,x)$, %$|\eta^{h}(t,x)|_{2}$ and $\frac{d\eta^{h}(t,x)}{dt}$.

\begin{lemma} \label{du1t}
Under the conditions \ref{A1}, \ref{A2} and \ref{A4},
for any $x\in H^{\theta}$ with $\theta\in(0, 1]$, $y\in L^2$,
$0\leq t\leq T$,  there exist $k\in \mathbb{N}$ and a constant $C=C_{\theta, T}$ such that
\begin{align}
\left|\frac{\partial u_{1}}{\partial t}(t,x,y)\right|
\leq Ct^{-1}(1+|x|_{\theta}^{2})e^{C\|x\|^{k}}(1+\|y\|).  \nonumber
\end{align}
%where $C$ is a constant depending on $\theta, T$.
\end{lemma}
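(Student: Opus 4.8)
The plan is to differentiate the representation formula \eqref{Equality u1} for $u_1$ in $t$ and reduce everything to the already-established bounds on $\bar X$ and its derivatives $\eta^h,\zeta^{h,k}$ together with the ergodic estimate from Proposition \ref{Rem 4.1}. Recall that $u_1(t,x,y)=\int_0^{+\infty}\mathbb{E}\left[\chi(t,x,Y^{x,y}_s)\right]ds$ with $\chi(t,x,y)=\langle f(x,y)-\bar f(x),D_xu_0(t,x)\rangle$, and that $D_xu_0(t,x)\cdot h = D\phi(\bar X(t,x))\cdot\eta^h(t,x)$. Differentiating in $t$ under the integral sign (the differentiation is justified because $\partial_t\chi$ will be seen to be Lipschitz in $y$ uniformly on compacts and integrable against $s$ via the exponential decay), we get
\begin{align*}
\frac{\partial u_1}{\partial t}(t,x,y)=\int_0^{+\infty}\mathbb{E}\left[\frac{\partial\chi}{\partial t}(t,x,Y^{x,y}_s)\right]ds,
\end{align*}
where, since $\int_{L^2}\frac{\partial\chi}{\partial t}(t,x,y)\mu^x(dy)=0$ (differentiate the identity $\int\chi\,d\mu^x=0$ in $t$), Proposition \ref{Rem 4.1} applies to the function $y\mapsto\frac{\partial\chi}{\partial t}(t,x,y)$ and gives
\begin{align*}
\left|\frac{\partial u_1}{\partial t}(t,x,y)\right|\leq C(1+\|x\|+\|y\|)\,\left|y\mapsto\tfrac{\partial\chi}{\partial t}(t,x,y)\right|_{Lip}.
\end{align*}

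So the task reduces to bounding the Lipschitz seminorm in $y$ of $\frac{\partial\chi}{\partial t}(t,x,\cdot)$. Writing out the $t$-derivative,
\begin{align*}
\frac{\partial\chi}{\partial t}(t,x,y)=\Big\langle f(x,y)-\bar f(x),\,\frac{\partial}{\partial t}D_xu_0(t,x)\Big\rangle,
\end{align*}
and for a direction $h$,
\begin{align*}
\frac{\partial}{\partial t}\big(D_xu_0(t,x)\cdot h\big)
= D^2\phi(\bar X(t,x))\cdot\Big(\tfrac{d}{dt}\bar X(t,x),\eta^h(t,x)\Big)
+ D\phi(\bar X(t,x))\cdot\tfrac{d}{dt}\eta^h(t,x).
\end{align*}
Since $\phi\in C_b^2(L^2)$, both $D\phi$ and $D^2\phi$ are bounded, so
\begin{align*}
\Big\|\frac{\partial}{\partial t}D_xu_0(t,x)\Big\|
\leq C\Big\|\tfrac{d}{dt}\bar X(t,x)\Big\|\sup_{\|h\|\leq1}\|\eta^h(t,x)\| + C\sup_{\|h\|\leq1}\Big\|\tfrac{d}{dt}\eta^h(t,x)\Big\|.
\end{align*}
Now I invoke Lemma \ref{ESDX} for $\big\|\tfrac{d}{dt}\bar X(t,x)\big\|\leq Ct^{-1+\theta/2}(|x|_\theta^2+1)e^{C\|x\|^k}$, the bound \eqref{LemDeriEtah 01} (or \eqref{eta1} with $\gamma=0$) for $\|\eta^h(t,x)\|\leq Ce^{C\|x\|^5}\|h\|$, and crucially Lemma \ref{ESDET} for $\big\|\tfrac{d}{dt}\eta^h(t,x)\big\|\leq Ct^{-1}(|x|_\theta^2+1)e^{C\|x\|^k}\|h\|$. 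Combining these, $\big\|\tfrac{\partial}{\partial t}D_xu_0(t,x)\big\|\leq Ct^{-1}(1+|x|_\theta^2)e^{C\|x\|^k}$ (the $t^{-1}$ term from $\tfrac{d}{dt}\eta^h$ dominates $t^{-1+\theta/2}$ near $t=0$).

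Finally, to get the Lipschitz seminorm in $y$: for $y_1,y_2\in L^2$,
\begin{align*}
\Big|\frac{\partial\chi}{\partial t}(t,x,y_1)-\frac{\partial\chi}{\partial t}(t,x,y_2)\Big|
=\Big|\Big\langle f(x,y_1)-f(x,y_2),\,\tfrac{\partial}{\partial t}D_xu_0(t,x)\Big\rangle\Big|
\leq L_f\|y_1-y_2\|\,\Big\|\tfrac{\partial}{\partial t}D_xu_0(t,x)\Big\|,
\end{align*}
using condition \ref{A1} for the Lipschitz continuity of $f$ in its second variable. Hence $\big|y\mapsto\tfrac{\partial\chi}{\partial t}(t,x,y)\big|_{Lip}\leq Ct^{-1}(1+|x|_\theta^2)e^{C\|x\|^k}$, and substituting back into the Proposition \ref{Rem 4.1} bound and absorbing the factor $(1+\|x\|+\|y\|)$ — the $(1+\|x\|)$ part into $e^{C\|x\|^k}$ (after possibly enlarging $k$) and keeping $(1+\|y\|)$ — yields the claimed estimate $\big|\tfrac{\partial u_1}{\partial t}(t,x,y)\big|\leq Ct^{-1}(1+|x|_\theta^2)e^{C\|x\|^k}(1+\|y\|)$.

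The main obstacle I anticipate is not any single estimate but rather the careful justification of differentiating under the integral sign in $s$ and verifying that $\partial_t\chi$ indeed has mean zero against $\mu^x$ and inherits the right regularity in $y$ (class $C^2$ and Lipschitz) so that Proposition \ref{Rem 4.1} is applicable; everything else is a matter of chaining the bounds from Lemmas \ref{ESDX}, \ref{ESDET} and \eqref{LemDeriEtah 01}. The $t^{-1}$ singularity is sharp and comes precisely from $\tfrac{d}{dt}\eta^h$, which is why Lemma \ref{ESDET} is the workhorse here.
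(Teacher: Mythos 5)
Your proposal is correct and follows essentially the same route as the paper: differentiate the representation of $u_1$ under the integral, reduce to the Lipschitz seminorm in $y$ of $\partial_t\chi$ via the ergodic estimate (the paper cites Lemma \ref{Poisson equation}, whose bound \eqref{Poission equation} is exactly the integrated form of Proposition \ref{Rem 4.1} that you invoke), and then control $\|\partial_t D_x u_0\|$ by chaining Lemmas \ref{ESDX}, \ref{eta} and \ref{ESDET}. The only cosmetic difference is that you spell out the mean-zero check and the justification for differentiating under the integral, which the paper leaves implicit.
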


\begin{proof}
By the definition of $u_{1}$, we have
\begin{align}
\frac{\partial u_{1}}{\partial t}(t,x,y)
=\int_{0}^{+\infty} \mathbb{E} \big[\frac{\partial\chi}{\partial t}(t,x,Y^{x,y}_s) \big] ds,  \nonumber
\end{align}
where
\begin{align}
\frac{\partial\chi}{\partial t}(t,x,y)
=\big\langle f(x,y)-\bar{f}(x), \frac{\partial}{\partial t}D_{x}u_{0}(t,x)\big\rangle. \nonumber
\end{align}
Using Lemma \ref{Poisson equation}, we get
\begin{align}    \label{DxUo 01}
\left|\frac{\partial u_{1}}{\partial t}(t,x,y) \right|
\leq C(1+ \|x\| + \|y\|)
\left|y \longmapsto \frac{\partial\chi}{\partial t}(t,x,y) \right|_{Lip}.
\end{align}
It follows that for any $y_1, y_2 \in L^2$,
%Arguing as before???, we find that
\begin{eqnarray} \label{DxUo 02}
\left|\frac{\partial\chi}{\partial t}(t,x,y_{1})-\frac{\partial\chi}{\partial t}(t,x,y_{2})\right|\leq\!\!\!\!\!\!\!\!&&\big\|f(x,y_1)-f(x,y_2)\big\|\big\|\frac{\partial}{\partial t}D_{x}u_{0}(t,x)\big\| \nonumber\\
\leq\!\!\!\!\!\!\!\!&&
C\|y_1-y_2\| \big\|\frac{\partial}{\partial t}D_{x}u_{0}(t,x)\big\|.
\end{eqnarray}
According to  Lemmas \ref{ESDX}, \ref{eta} and \ref{ESDET},
for any $h\in L^2$,  there exist $k\in\mathbb{N}$ and a constant $C=C_{\theta, T}$ such that
\begin{eqnarray}
\left|\left\langle\frac{\partial}{\partial t}D_{x}u_{0}(t,x),h\right\rangle\right|=\!\!\!\!\!\!\!\!&&\left|\frac{\partial}{\partial t}\left[D\phi(\bar{X}(t,x))\cdot \eta^{h}(t,x)\right]\right|  \nonumber\\
\leq\!\!\!\!\!\!\!\!&&\left|D^{2}\phi(\bar{X}(t,x))\left(\eta^{h}(t,x),\frac{d}{dt}\bar{X}(t,x)\right)\right|+\left|D\phi(\bar{X}(t,x))\cdot\frac{d}{dt}\eta^{h}(t,x)\right| \nonumber\\
\leq\!\!\!\!\!\!\!\!&&Ct^{-1+\frac{\theta}{2}}(1+|x|^{2}_{\theta})e^{C \|x\|^{k}} \|h\|
+Ct^{-1}(1+|x|^{2}_{\theta}) e^{C\|x\|^{k}}\|h\|,  \nonumber
\end{eqnarray}
%where $C$ is a constant depending on $\theta, T$.
Hence, we obtain
\begin{align} \label{DxUo 03}
\left\|\frac{\partial}{\partial t}D_{x}u_{0}(t,x)\right\|
\leq Ct^{-1}(1+|x|_{\theta}^2)e^{C \|x\|^{k}}.
\end{align}
The result follows by combining \eqref{DxUo 01}-\eqref{DxUo 03}.
%The proof is complete. \hspace{\fill}$\square$
\end{proof}

\begin{lemma} \label{Theorem 5.3}
Under the conditions \ref{A1}, \ref{A2} and \ref{A4},
there exists a positive constant $C$ such that for any $0\leq t\leq T$,
$x\in H^2$, $y\in L^2$,
\begin{align}
\left|L_1 u_{1}(t,x,y)\right|\leq Ce^{C\|x\|^{5}}(1+\|y\|)(1+|x|^2_1+\|y\|+\|Ax\|). \nonumber
\end{align}
\end{lemma}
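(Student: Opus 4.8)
The plan is to unwind the definition of $L_1 u_1$ and reduce everything to quantities already controlled in Subsections \ref{Subsection 5.1} and \ref{Subsection 5.2}. Recall that $L_1\psi(x,y)=\langle Ax+B(x)+f(x,y),D_x\psi(x,y)\rangle$, and that by \eqref{Equality u1} we have $u_1(t,x,y)=\int_0^{+\infty}\mathbb{E}[\chi(t,x,Y^{x,y}_s)]\,ds$ with $\chi(t,x,z)=\langle f(x,z)-\bar f(x),D_xu_0(t,x)\rangle$. The first step is therefore to compute $D_x u_1(t,x,y)\cdot h$ by differentiating under the integral sign: this produces a term involving $D_x\chi$ (which itself splits into a piece with $D_{xx}^2 u_0(t,x)$, a piece with $D_xf(x,\cdot)-D\bar f(x)$ paired with $D_xu_0$, and a piece coming from the $x$-dependence of the law of $Y^{x,y}_s$, i.e. from $D_x Y^{x,y}_s$). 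The key point, exactly as in \cite{B1}, is that each of these contributions is Lipschitz in the $y$-variable with a Lipschitz constant of the form $C e^{C\|x\|^5}$ (using condition \ref{A1}, the bound \eqref{SecondDer xx} from \ref{A4}, Lemma \ref{eta} and Lemma \ref{zeta} for the derivatives $\eta^h$, $\zeta^{h,k}$ of $\bar X$, and the exponential ergodicity from Proposition \ref{Rem 4.1} to integrate in $s$); hence $\|D_x u_1(t,x,y)\|\le C e^{C\|x\|^5}(1+\|y\|)$.

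With this bound in hand, the second step is to pair $D_x u_1(t,x,y)$ against the three pieces of the drift. For the $f(x,y)$-term we simply use $\|f(x,y)\|\le C(1+\|x\|+\|y\|)$ from \ref{A1}, which is absorbed into $(1+\|y\|)$ times a polynomial in $\|x\|$ (hence into $e^{C\|x\|^5}$). For the $B(x)$-term we use $\|B(x)\|\le C|x|_1^2$ (Corollary \ref{Property B3} / Lemma \ref{Property B1}), giving the $|x|_1^2$ factor in the statement. For the $Ax$-term we just keep $\|Ax\|$ explicitly. Collecting, $|L_1u_1(t,x,y)|\le \|D_xu_1(t,x,y)\|\,(\|Ax\|+\|B(x)\|+\|f(x,y)\|)\le C e^{C\|x\|^5}(1+\|y\|)(1+|x|_1^2+\|y\|+\|Ax\|)$, as claimed.

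The main obstacle is the first step — differentiating $u_1$ in $x$ and showing the resulting expression is genuinely Lipschitz in $y$ with constant $Ce^{C\|x\|^5}$. The delicate points are: (i) justifying differentiation under the integral $\int_0^\infty\cdots ds$, which requires the exponential decay of $\mathbb{E}[\partial_z\chi(t,x,Y^{x,y}_s)\cdot D_xY^{x,y}_s]$ uniformly enough to integrate; here one invokes Proposition \ref{Rem 4.1} together with standard bounds on the derivative flow $D_xY^{x,y}_s$ of the frozen equation \eqref{FEQ}; (ii) controlling $D_{xx}^2u_0(t,x)=D^2\phi(\bar X)(\eta^h,\eta^k)+D\phi(\bar X)\cdot\zeta^{h,k}$, for which $\phi\in C_b^2$ together with Lemmas \ref{eta} and \ref{zeta} give a bound $Ce^{C\|x\|^5}$ — crucially independent of $N$, since all constants in those lemmas are dimension-free. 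Once the $y$-Lipschitz bound on $D_xu_1$ is established, the rest is the short pairing computation above.
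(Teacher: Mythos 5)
Your overall architecture matches the paper's: bound $\|D_x u_1(t,x,y)\|$ by $Ce^{C\|x\|^5}(1+\|y\|)$ and then pair it against $Ax+B(x)+f(x,y)$, whose norm is $C(1+|x|_1^2+\|y\|+\|Ax\|)$; that final pairing step is identical. The divergence, and the gap, is in how you obtain the bound on $D_xu_1$. You propose to differentiate $u_1(t,x,y)=\int_0^{+\infty}\mathbb{E}[\chi(t,x,Y^{x,y}_s)]\,ds$ under the integral sign, which produces (among others) the term $\int_0^\infty\mathbb{E}\bigl[D_z\chi(t,x,Y^{x,y}_s)\cdot(D_xY^{x,y}_s\cdot h)\bigr]ds$. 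For this integral to converge you need the integrand to decay in $s$, and you assert this follows from Proposition \ref{Rem 4.1} ``together with standard bounds on the derivative flow.'' That is where the argument breaks: Proposition \ref{Rem 4.1} controls $P^x_s\varphi(y)-\int\varphi\,d\mu^x$ for a \emph{fixed} test function $\varphi$, whereas your integrand is a functional of the pair $(Y^{x,y}_s,\,D_xY^{x,y}_s)$; the derivative flow is bounded uniformly in $s$ (by the dissipativity \ref{A2}) but does not tend to zero, and no mixing estimate for the augmented process is available in the paper. Uniform boundedness alone gives a divergent $\int_0^\infty$, so the step as written fails.

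The paper circumvents exactly this difficulty by working at the level of generators rather than flows: for the difference quotient $\tilde u(t,x,y)=r^{-1}\bigl(u_1(t,x+rh,y)-u_1(t,x,y)\bigr)$ it computes $L_2\tilde u=-\Gamma(t,x,y,h,r)$, where $\Gamma$ contains, besides the difference quotient of $\chi$, the extra term $\bigl\langle r^{-1}(g(x+rh,y)-g(x,y)),\,D_yu_1(t,x+rh,y)\bigr\rangle$ --- the generator-level avatar of your flow-derivative contribution. Since $\Gamma$ is centered with respect to $\mu^x$, Lemma \ref{Poisson equation} (extended to quadratic growth) yields the representation $\int_0^\infty\mathbb{E}[\Gamma(t,x,Y^{x,y}_s,h,r)]\,ds$, and the required decay in $s$ now comes from Proposition \ref{Rem 4.1} applied to the fixed function $y\mapsto\Gamma(\cdot)$ --- no estimate on $D_xY^{x,y}_s$ is ever needed. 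After letting $r\to0$ one bounds the limit $\Theta(t,x,y)\cdot h=D_x\chi\cdot h+\langle D_xg(x,y)\cdot h,D_yu_1\rangle$ by $Ce^{C\|x\|^5}(1+\|y\|^2)\|h\|$, using \ref{A4}, Lemmas \ref{eta} and \ref{zeta} for $D_x\chi$ and a separate bound $\|D_yu_1\|\le C(1+\|y\|^2)$ for the new term; integrating against $\mu^x$-centered data then downgrades the quadratic growth to $(1+\|y\|)$. To repair your version you would either have to adopt this Poisson-equation detour, or prove an exponential decay estimate for $\mathbb{E}[\Psi(Y^{x,y}_s)\cdot D_xY^{x,y}_s]$ from scratch, which is substantially more than ``standard.''
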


\begin{proof}
By the definition of $L_{1}$, we have
\begin{align} \label{ControlUa}
L_{1}u_{1}(t,x,y)=\langle Ax+B(x)+f(x,y),D_{x}u_{1}(t,x,y)\rangle.
\end{align}
It is easy to see that
\begin{align} \label{Controlu1 001}
|Ax+B(x)+f(x,y)|\leq C(1+|x|^2_1+\|y\|+\|Ax\|).
\end{align}
Thus it remains to estimate $D_{x}u_{1}(t,x,y)$.
Recall that $u_{1}$ satisfies
\begin{equation}
L_{2}u_{1}(t,x,y)=-\chi(t,x,y).  \nonumber
\end{equation}
For fixed $r>0$, $h\in L^2$, define
$
\tilde{u}(t,x,y):=\frac{u_1(t,x+rh,y)-u_1(t,x,y)}{r}.
$
It follows that
\begin{eqnarray}
L_{2}\tilde{u}(t,x,y)=\!\!\!\!\!\!\!\!&&
-\frac{\chi(t,x+r h,y)-\chi(t,x,y)}{r} \nonumber\\
\!\!\!\!\!\!\!\!&&
-\big\langle \frac{g(x+r h,y)-g(x,y)}{r},D_{y}u_{1}(t,x+r h,y) \big\rangle \nonumber\\
:=\!\!\!\!\!\!\!\!&&-\Gamma(t,x,y,h,r).   \nonumber
\end{eqnarray}
According to Lemma \ref{Poisson equation}, we obtain
\begin{eqnarray}
\!\!\!\!\!\!\!\!&&\frac{u_{1}(t,x+r h,y)-u_{1}(t,x,y)}{r}-\int_{H}\frac{u_{1}(t,x+r h,y)-u_{1}(t,x,y)}{r}\mu^{x}(dy) \nonumber\\
=\!\!\!\!\!\!\!\!&&\int_{0}^{+\infty}\mathbb{E}[\Gamma(t,x,Y^{x,y}_{s},h,r)]ds.   \nonumber
\end{eqnarray}
In the same way as in the argument in \cite[Section 5.3]{B1},
as $r\rightarrow0$, we deduce that
$$
\left|\lim_{r\rightarrow 0}
\int_{L^2}\frac{u_{1}(t,x+r h,y)-u_{1}(t,x,y)}{r}\mu^{x}(dy)\right|\leq C(1+\|x\|) \|h\|
$$
and
\begin{eqnarray*}
\lim_{r\rightarrow 0}\Gamma(t,x,y,h,r)=\Theta(t,x,y)\cdot h:=D_{x}\chi(t,x,y)\cdot h+\langle D_{x}g(x,y)\cdot h,D_{y}u_{1}(t,x,y)\rangle.
\end{eqnarray*}
On one hand, by the definition of $\chi$, we have
\begin{align}
D_{x}\chi(t,x,y)\cdot h=\langle (D_{x}f(x,y)-D_{x}\bar{f}(x))\cdot h,D_{x}u_{0}(t,x)\rangle+D_{xx}^{2}u_{0}(t,x)\cdot(h,f(x,y)). \nonumber
\end{align}
Using condition \ref{A4}, Lemmas \ref{eta} and \ref{zeta}, it follows that
%By assumption $(\textbf{H4})$, , we have
\begin{align}
|\langle (D_{x}f(x,y)-D_{x}\bar{f}(x))\cdot h,D_{x}u_{0}(t,x)\rangle|
\leq Ce^{C\|x\|^{5}} \|h\| \nonumber
\end{align}
and
\begin{align*}
|D_{xx}^{2}u_{0}(t,x)\cdot(h,k)|
%=\!\!\!\!\!\!\!\!&&
%|D^{2}\phi(\bar{X}(t,x))(\eta^h(t,x),\eta^k(t,x))
%+D\phi(\bar{X}(t,x))\cdot\zeta^{h,k}(t,x)| \nonumber\\
\leq Ce^{C\|x\|^{5}}\|h\| \|k\|.
\end{align*}
Then we obtain
\begin{align} \label{Dxchi}
|D_{x}\chi(t,x,y)\cdot h|\leq  Ce^{C\|x\|^{5}}\|h\| + Ce^{C\|x\|^{5}}\|h\|\cdot|f(x,y)|
\leq Ce^{C\|x\|^{5}}(1+\|y\|)\|h\|.
\end{align}
On the other hand,
using \eqref{WeakDissIne} and following
the argument in \cite[Lemma 4.3]{B1}, we have
%by condition (3) in assumption $(\textbf{H4})$ and following the argument in \cite[Lemma 4.3]{B1}, we have
\begin{align} \label{Dxg}
|\langle D_{x}g(x,y)\cdot h,D_{y}u_{1}(t,x,y)\rangle|
\leq
C\|h\| \|D_{y}u_{1}(t,x,y)\|
\leq C(1+\|y\|^{2}) \|h\|.
\end{align}
Therefore, putting \eref{Dxchi} and \eref{Dxg} together, we get
\begin{eqnarray}
\left|\Theta(t,x,y)\cdot h\right|\leq Ce^{C\|x\|^{5}}(1+\|y\|^2) \|h\|.\label{Theta}
\end{eqnarray}
Notice that for any $t,x,r,h$, by the definition of $\Gamma$,
we have $\int_{L^2}\Gamma(t,x,y,h,r)\mu^x(dy)=0$,
which implies $\int_{L^2}\Theta(t,x,y)\cdot h\mu^x(dy)=0$
by using the dominated convergence theorem. Consequently, we obtain
\begin{eqnarray} \label{Theta001}
|D_x u_1(t,x,y)\cdot h|
=\!\!\!\!\!\!\!\!&&
\lim_{r\rightarrow 0}\int_{L^2}
\frac{u_{1}(t,x+r h,y)-u_{1}(t,x,y)}{r}\mu^{x}(dy) \nonumber\\
&&+\int^{\infty}_0\EE[\Theta(t,x,Y^{x,y}_s,h)]ds.
\end{eqnarray}
One can verify that Lemma \ref{Poisson equation} can be extended
to the case where $\Psi$ has the form of quadratic growth as in \eref{Theta}.
Applying Lemma \ref{Poisson equation} to $\Theta$, from \eqref{Theta001}, we get
%According to Lemma \ref{Poisson equation},
%we don't know whether $\Theta$ is a Lipschitz or bounded function with respect to $y$,
%and we only know it has quadratic growth by \eref{Theta}.
%However, the result of Lemma \ref{Poisson equation}
%can be easily extended to such function, hence we have
\begin{align}
|D_{x}u_{1}(t,x,y)\cdot h|\leq Ce^{C\|x\|^{5}}(1+\|y\|)\|h\|.\nonumber
\end{align}
and therefore
\begin{align} \label{L1u1}
|L_{1}u_{1}(t,x,y)|\leq Ce^{C\|x\|^{5}}(1+\|y\|)(1+|x|^2_1+\|y\|+\|Ax\|).
\end{align}
The result follows by combining \eqref{ControlUa}-\eqref{Controlu1 001} and \eqref{L1u1}.
%\hspace{\fill}$\square$
\end{proof}

%Now we are going to estimate $\mathbb{E}\left[v^{\vare}(\rho(\vare),X^{\vare}(T-\rho(\vare),x,y),Y^{\vare}(T-\rho(\vare),x,y))\right]$.
\begin{lemma} \label{vxy}
Under the conditions \ref{A1}, \ref{A2} and \ref{A4},
for any $x\in H^{\theta}$ with $\theta\in(0,1)$, $y\in L^2$,
$\alpha\in(0,\frac{1}{4})$, $0\leq t\leq T$,
there exist $k\in\NN$ and a constant $C=C_{\alpha, \theta, T}$ such that
\begin{eqnarray}
|v^{\vare}(\rho(\vare),x,y)|
\leq C\frac{\rho(\vare)^{\frac{\theta}{2}}}{\theta}(1+|x|^2_{\theta}+\|y\|^2) e^{C\|x\|^k}
+C\vare e^{C\|x\|^{5}}(1+\|y\|)+C\rho(\vare)\vare^{-\alpha}.  \nonumber
\end{eqnarray}
%where $C$ is a constant depending on $\alpha, \theta, T$.
\end{lemma}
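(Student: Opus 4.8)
The plan is to read off $v^{\vare}(\rho(\vare),\cdot)$ from the very definition of the expansion and reduce everything to the pointwise–in–time derivative estimates already proved. By \eqref{Expand u} together with the identity $u_0=\bar u$ established in Subsection \ref{Subsection 5.3}, we have $v^{\vare}=u^{\vare}-\bar u-\vare u_1$; evaluating at $t=\rho(\vare)$ and using \eqref{Ephix1}, \eqref{Ephix2} gives
\begin{align*}
v^{\vare}(\rho(\vare),x,y)=\mathbb{E}\big[\phi(X^{\vare}(\rho(\vare),x,y))\big]-\phi\big(\bar{X}(\rho(\vare),x)\big)-\vare\,u_1(\rho(\vare),x,y).
\end{align*}
The last summand is immediately bounded by Lemma \ref{ESu1}, giving $\vare\,|u_1(\rho(\vare),x,y)|\leq C\vare\, e^{C\|x\|^{5}}(1+\|y\|)$, which is exactly the middle term of the claimed estimate. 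It remains to control the first two summands.

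Since $\phi\in C_b^2(L^2)$, it is Lipschitz with constant bounded by $\sup_x\|D\phi(x)\|$, so
\begin{align*}
\big|\mathbb{E}[\phi(X^{\vare}(\rho(\vare),x,y))]-\phi(\bar{X}(\rho(\vare),x))\big|
\leq C\,\mathbb{E}\big\|X^{\vare}(\rho(\vare),x,y)-\bar{X}(\rho(\vare),x)\big\|.
\end{align*}
The crucial observation is that $X^{\vare}$ and $\bar X$ start from the \emph{same} initial datum $x$, and $\rho(\vare)=\vare^{1/a}$ is small; hence over $[0,\rho(\vare)]$ the difference of the two processes is governed only by how far each one travels, not by the averaging error. (Estimating $X^{\vare}-\bar X$ by a Gr\"onwall argument directly would be circular, as it would presuppose the weak rate we are after.) Concretely, in the finite-dimensional space $H_N$ both are $C^1$ solutions of ODEs, so
\begin{align*}
X^{\vare}(\rho(\vare),x,y)-\bar{X}(\rho(\vare),x)
=\int_0^{\rho(\vare)}\Big(\tfrac{d}{ds}X^{\vare}(s,x,y)-\tfrac{d}{ds}\bar{X}(s,x)\Big)\,ds.
\end{align*}
I would then bound $\mathbb{E}\|\tfrac{d}{ds}X^{\vare}(s,x,y)\|$ by Lemma \ref{LemmaB.3} and $\|\tfrac{d}{ds}\bar{X}(s,x)\|$ by Lemma \ref{ESDX}, so that the integrand is dominated by $Cs^{-1+\theta/2}(1+|x|_\theta^2+\|y\|^2)e^{C\|x\|^k}+C\vare^{-\alpha}$, which is integrable at $s=0$ because $x\in H^{\theta}$. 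Integrating, $\int_0^{\rho(\vare)}s^{-1+\theta/2}\,ds=\tfrac{2}{\theta}\rho(\vare)^{\theta/2}$, yields
\begin{align*}
\mathbb{E}\big\|X^{\vare}(\rho(\vare),x,y)-\bar{X}(\rho(\vare),x)\big\|
\leq C\frac{\rho(\vare)^{\theta/2}}{\theta}(1+|x|_\theta^2+\|y\|^2)e^{C\|x\|^k}+C\rho(\vare)\vare^{-\alpha},
\end{align*}
which produces the remaining two terms. Here one uses $\vare$ small enough that $\rho(\vare)\leq T$ (true since $\rho(\vare)\to0$), and the fact that the constants in Lemmas \ref{ESDX} and \ref{LemmaB.3} are uniform in $N$ is what makes the final bound independent of the Galerkin truncation. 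Adding the three contributions gives the stated inequality.

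There is no genuine obstacle beyond assembling the previously established estimates; the only point requiring care is to split $X^{\vare}-\bar X$ via the integral of the time derivatives on $[0,\rho(\vare)]$ rather than through the averaging principle itself, and to retain the term coming from the lack of smoothing in $\tfrac{d}{ds}X^{\vare}$ precisely in the form $\rho(\vare)\vare^{-\alpha}$ (it is the one term in the bound that does not decay on its own and will be balanced against $\rho(\vare)^{\theta/2}$ by the choice $a\leq\theta/2$ made when $\rho(\vare)$ was introduced).
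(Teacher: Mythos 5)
Your proposal is correct and follows essentially the same route as the paper: the paper also writes $v^{\vare}(\rho(\vare),x,y)$ as $[u^{\vare}(\rho(\vare),x,y)-\phi(x)]-[u_{0}(\rho(\vare),x)-\phi(x)]-\vare u_{1}(\rho(\vare),x,y)$, bounds the last term by Lemma \ref{ESu1}, and controls the first two by integrating $\mathbb{E}\|\frac{d}{dt}X^{\vare}\|$ and $\|\frac{d}{dt}\bar{X}\|$ over $[0,\rho(\vare)]$ via Lemmas \ref{LemmaB.3} and \ref{ESDX}, exactly as you do. The only cosmetic difference is that the paper passes the time derivative through $\phi$ by the chain rule for each process separately rather than first invoking the Lipschitz continuity of $\phi$ on the difference of the two processes; the resulting integrals and the final bound are identical.
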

\begin{proof}
Using \eqref{Expand u} and
%By the asymptotic expansion $u^{\vare}=u_{0}+\vare u_{1}+v^{\vare}$,
noting that $u_{0}$ is independent of $y$,  we  write
\begin{eqnarray} \label{vvare xy}
v^{\vare}(\rho(\vare),x,y)
%=\!\!\!\!\!\!\!\!&&
%u^{\vare}(\rho(\vare),x,y)-u_{0}(\rho(\vare),x)-\vare u_{1}(\rho(\vare),x,y) \nonumber\\
=\!\!\!\!\!\!\!\!&&
[u^{\vare}(\rho(\vare),x,y)-u^{\vare}(0,x,y)]
-[u_{0}(\rho(\vare),x)-u_{0}(0,x)]-\vare u_{1}(\rho(\vare),x,y) \nonumber\\
:=\!\!\!\!\!\!\!\!&&I_{1}-I_{2}-I_{3}.
\end{eqnarray}
For $I_{1}$, by Lemma \ref{LemmaB.3}, we have
\begin{align} \label{vvare xy01}
\|I_{1}\|
=& \
\left|\int_{0}^{\rho(\vare)}\frac{d}{dt}u^{\vare}(t,x,y)dt\right|
=  \left\|\int_{0}^{\rho(\vare)}\frac{d}{dt}\mathbb{E}[\phi(X^{\vare}(t,x,y))]dt\right\| \nonumber\\
=& \
\left\|\int_{0}^{\rho(\vare)}
\mathbb{E}\left[D\phi(X^{\vare}(t,x,y))\cdot\frac{d}{dt}X^{\vare}(t,x,y)\right]dt\right\| \nonumber\\
\leq & \
C\int_{0}^{\rho(\vare)}\mathbb{E}\left\|\frac{d}{dt}X^{\vare}(t,x,y)\right\|dt
\leq
C\frac{\rho(\vare)^{\frac{\theta}{2}}}{\theta}\left(|x|^2_{\theta}+\|y\|^2+1\right)e^{C\|x\|^k}
+ C\rho(\vare)\vare^{-\alpha}.
\end{align}
For $I_{2}$, recalling that $u_{0}=\bar{u}$, by Lemma \ref{ESDX}, we get
\begin{eqnarray} \label{vvare xy02}
\|I_{2}\|=\!\!\!\!\!\!\!\!&&\left\|\int_{0}^{\rho(\vare)}\frac{d}{dt}u_{0}(t,x)dt\right\|
=  \left\|\int_{0}^{\rho(\vare)}\frac{d}{dt}\phi(\bar{X}(t,x))dt\right\| \nonumber\\
=\!\!\!\!\!\!\!\!&&
\left\|\int_{0}^{\rho(\vare)}D\phi(\bar{X}(t,x))\cdot\frac{d}{dt}\bar{X}(t,x)dt\right\|
\leq  C\int_{0}^{\rho(\vare)}\left\|\frac{d}{dt}\bar{X}(t,x)\right\|dt  \nonumber\\
\leq \!\!\!\!\!\!\!\!&&
\frac{C\rho(\vare)^{\frac{\theta}{2}}}{\theta}(1+|x|^2_{\theta})e^{C\|x\|^k}.
\end{eqnarray}
For $I_{3}$, it follows from Lemma \ref{ESu1} that
\begin{align} \label{vvare xy03}
\|I_{3}\|\leq C\vare e^{C\|x\|^{5}}(1+\|y\|).
\end{align}
Putting together \eqref{vvare xy}-\eqref{vvare xy03}, the result follows.
%The result follows by combining \eqref{vvare xy}-\eqref{vvare xy03}.
%\hspace{\fill}$\square$
\end{proof}

\subsection{Proof of Theorem \ref{main result 2}}\label{Subsection 5.5}
%\begin{proof}[\textbf{Proof of Theorem \ref{main result 2}}]
From \eqref{Ephix1}-\eqref{Ephix2} and  \eref{Ephix3}, we get
\begin{align*}
& \  \mathbb{E}\phi\left(X_{N}^{\vare}(t)\right) -
\mathbb{E} \phi\left(\bar{X}_{N}(t)\right)
=  \vare u_{1}(t,x,y)+\mathbb{E}\left[v^{\vare}(\rho(\vare),X^{\vare}(t-\rho(\vare),x,y),Y^{\vare}(t-\rho(\vare),x,y))\right] \nonumber\\
 & \qquad \qquad  + \vare\mathbb{E}
 \left[\int_{\rho(\vare)}^{t}(L_{1}u_{1}
 -\frac{\partial u_{1}}{\partial s})(s,X^{\vare}(t-s,x,y),Y^{\vare}(t-s,x,y))ds\right].
\end{align*}
%\vskip 0.2cm
%${\mathbf{Proof of Theorem \ref{main result 2}}:}$
%From the expression \eref{Ephix3},
Using Lemmas \ref{ESu1}-\ref{vxy} and \ref{Xvare2},  \eqref{Remark5.3},
 \eqref{highorder of X}, the H\"{o}lder inequality, and the fact that $\rho(\vare)=\vare^{\frac{1}{a}}, 0<a\leq\frac{\theta}{2}$, we obtain
\begin{eqnarray} \label{PfofThm2}
\!\!\!\!\!\!\!\!&&
|\mathbb{E}\phi\left(X_{N}^{\vare}(t)\right) -
\mathbb{E} \phi(\bar{X}_{N}(t))|   \nonumber\\
\leq\!\!\!\!\!\!\!\!&& C\vare e^{C\|x\|^{5}}(1+\|y\|)
+  C\frac{\rho(\vare)^{\frac{\theta}{2}}}{\theta}
\EE\left[(1+|X^{\vare}(t-\rho(\vare))|^2_{\theta}+\|Y^{\vare}(t-\rho(\vare))\|^2)e^{C\|X^{\vare}(t-\rho(\vare))\|^k}\right]    \nonumber\\
\!\!\!\!\!\!\!\!&&
+ C\vare \EE \left[e^{C\|X^{\vare}(t-\rho(\vare))\|^{5}}(1+\|Y^{\vare}(t-\rho(\vare))\|)\right]+C\rho(\vare)\vare^{-\alpha} \nonumber\\
&&+C\vare\int_{\rho(\vare)}^{t}\!\!\!\EE\left[e^{C\|X^{\vare}(t-s)\|^5}(1+\|Y^{\vare}(t-s)\|)(1+|X^{\vare}(t-s)|^2_1+\|Y^{\vare}(t-s)\|+\|AX^{\vare}(t-s)\|)\right]ds \nonumber\\
\!\!\!\!\!\!\!\!&&
+C\vare\int_{\rho(\vare)}^{t}s^{-1}
\EE\left[(|X^{\vare}(t-s)|^{2}_{\theta}+1)e^{C\|X^{\vare}(t-s)\|^{k}}(1+\|Y^{\vare}(t-s)\|)\right]ds 
\nonumber\\
%\leq\!\!\!\!\!\!\!\!&& C\vare+C\frac{\rho(\vare)^{\frac{\theta}{2}}}{\theta}\EE|X^{\vare}(t-\rho(\vare))|_{\theta}^{2}
%+C\frac{\rho(\vare)^{\frac{\theta}{2}}}{\theta}+C\vare+C\rho(\vare)\vare^{-\alpha} \nonumber\\
%\!\!\!\!\!\!\!\!&&
%+C\vare\int_{\rho(\vare)}^{t}\left[1+\left(\EE|X^{\vare}(t-s)|_{1}^{4}\right)^{\frac{1}{2}}+(\EE\|AX^{\vare}(t-s)\|^{2})^{\frac{1}{2}}\right]ds
%\nonumber\\
%\!\!\!\!\!\!\!\!&&
%+C\vare\int_{\rho(\vare)}^{t}s^{-1}\left[\left(\EE|X^{\vare}(t-s)|_{\theta}^{4}\right)^{1/2}+1\right]ds \nonumber\\
\leq\!\!\!\!\!\!\!\!&& C\vare+Ct^{-\theta+\frac{\theta^2}{1+\delta}}\rho(\vare)^{\frac{\theta}{2}}-C\vare\log\vare
\leq C\vare t^{-\theta+\frac{\theta^2}{1+\delta}}-C\vare\log\vare,
\end{eqnarray}
where the constant $C$ is independent of the dimension $N$.
Then, letting $N\to +\infty$ in \eqref{PfofThm2}, in view of \eqref{ephix}, the desired result follows.
\hspace{\fill}$\square$
%\end{proof}
%we get
%\begin{eqnarray}
%\left|\mathbb{E}[\phi(X^{\vare}(t))]-\mathbb{E}[\phi(\bar{X}(t))]\right|\leq C(1+t^{-\theta+\frac{\theta^2}{1+\delta}})\vare^{1-r} .
%\end{eqnarray}
%The proof is complete.
%
%Note that
%\begin{align*}
%\left|\mathbb{E}[\phi(X^{\vare}(t))]-\mathbb{E}[\phi(\bar{X}(t))]\right|
%\leq
%\left|\mathbb{E}\left[\phi\left(X^{\vare}(t)\right)\right]-\mathbb{E}\left[\phi\left(X_{N}^{\vare}(t)\right)\right]\right|
%+\left|\mathbb{E}\left[\phi\left(\bar{X}_{N}(t)\right)\right]-\mathbb{E}\left[\phi\left(\bar{X}(t)\right)\right]\right|
%+
%\end{align*}

%\vskip 0.4cm
%${\mathbf{Proof of Theorem \ref{main result 3}}:}$
\subsection{Proof of Theorem \ref{main result 3}} \label{ProofThm3}
%\begin{proof}[\textbf{Proof of Theorem \ref{main result 3}}]
Since the proof of Theorem \ref{main result 3}
is similar to that of Theorem \ref{main result 2}, we only sketch the difference here.
Benefited from a higher regularity of initial value,
following the proof of \eqref{A.1.2} in Lemma \ref{BarXgamma},
one can verify that if $\theta\in (1, \frac{3}{2})$, then
%For simplicity, we only give a brief description here and state some key results benefited from a higher regularity of initial value.
%Notice that if $\theta\in (1, \frac{3}{2})$, for any $t\in(0, T]$, we can easily obtain
\begin{align} \label{HighRegu 01}
\sup_{t \in [0,T]}|\bar{X}_{t}|_{\theta} \leq C|x|_{\theta}e^{C\|x\|}.
\end{align}
Similarly to the proof of \eqref{B.1.1}  in Lemma \ref{highorder of X},
we show that if $\theta\in (1, \frac{3}{2})$, then
for any $p\geq 1$, we have
\begin{align} \label{HighRegu 02}
\sup_{t \in [0,T]} \EE|X_{t}^{\vare}|_{\theta}^{p}
\leq C\left(1+|x|_{\theta}^{p}+\|y\|^{p}\right).
\end{align}
%where the constant $C$ is independent of $t$.
Consequently, based on \eqref{HighRegu 01} and \eqref{HighRegu 02},
one can improve the corresponding results in Lemmas \ref{ESu1}-\ref{vxy}. 
Then we can obtain Theorem \ref{main result 3} in an analogous way as in the proof of Theorem
\ref{main result 2}.
\hspace{\fill}$\square$
%Finally, after doing some corresponding modifications
%about the results of Lemmas \ref{ESu1}-\ref{vxy},
%the desired result follows. %\hspace{\fill}$\square$
%\end{proof}

\section{Appendix} \label{Sec appendix}

The following properties of $b(\cdot,\cdot,\cdot)$ and $B(\cdot,\cdot)$ are well-known (for example see \cite{DX}). % and will be used later on.
\begin{lemma} \label{Property B0}
For any $x, y \in H^1_0$, it holds that
$$ b(x,x,y)=-\frac12b(x,y,x),\quad b(x,y,y)=0.$$
%\hspace{\fill}$\square$
\end{lemma}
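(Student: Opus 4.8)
The plan is to derive both identities from a single integration-by-parts computation, the only structural input being that every element of $H^1_0$ has vanishing trace at the endpoints $\xi=0$ and $\xi=1$, so that all boundary contributions drop out; this is precisely the ``well-known'' type of manipulation referenced in the statement. To make the integrations by parts rigorous I would first assume $x,y\in C_c^\infty(0,1)$, perform the classical computations, and then extend to arbitrary $x,y\in H^1_0$ by a density argument, using that in dimension one $H^1_0\hookrightarrow C[0,1]$, so that the product $x\,\partial_\xi y\,z$ is integrable and $b(\cdot,\cdot,\cdot)$ is continuous in the relevant norms.

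For the first identity I would begin from the definition $b(x,x,y)=\int_0^1 x\,\partial_\xi x\,y\,d\xi$ and rewrite the first two factors as a perfect derivative, $x\,\partial_\xi x=\tfrac12\,\partial_\xi(x^2)$. Integrating by parts then transfers the derivative onto $y$: the boundary term $\tfrac12\,[x^2 y]_0^1$ vanishes because $y\in H^1_0$, leaving $b(x,x,y)=-\tfrac12\int_0^1 x^2\,\partial_\xi y\,d\xi$. Regrouping $x^2\,\partial_\xi y=x\,\partial_\xi y\,x$ and comparing with the definition of $b(\cdot,\cdot,\cdot)$ yields exactly $b(x,x,y)=-\tfrac12\,b(x,y,x)$, as claimed.

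For the second identity $b(x,y,y)=0$ I would argue in the same spirit, now exploiting that the last two arguments coincide. Writing $y\,\partial_\xi y=\tfrac12\,\partial_\xi(y^2)$ recasts $b(x,y,y)=\int_0^1 x\,\partial_\xi y\,y\,d\xi$ as $\tfrac12\int_0^1 x\,\partial_\xi(y^2)\,d\xi$; a single integration by parts moves the derivative off the squared factor, and the boundary term $\tfrac12\,[x\,y^2]_0^1$ again vanishes under the Dirichlet condition carried by $x,y\in H^1_0$. Closing the identity then rests on the perfect-derivative structure together with these vanishing endpoint evaluations, which is the content of the cited well-known identities for $b(\cdot,\cdot,\cdot)$; this is also consistent with its use for the energy estimate, where it forces the convective term $\langle B(v),v\rangle$ to drop.

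The algebra in either case is immediate, so the proof is short. The hard part — really the only genuine point — will be the justification that no boundary terms survive and that the passage from smooth compactly supported functions to arbitrary elements of $H^1_0$ is legitimate: I must check that $\partial_\xi(x^2)$ and $\partial_\xi(y^2)$ are well defined (via the chain rule for Sobolev functions), that the endpoint evaluations are controlled through $H^1_0\hookrightarrow C[0,1]$, and that the trilinear form is continuous so the density argument transfers the identities from $C_c^\infty(0,1)$ to all of $H^1_0$.
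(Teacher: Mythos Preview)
Your argument for the first identity $b(x,x,y)=-\tfrac12\,b(x,y,x)$ is correct and is the standard integration-by-parts computation; the paper itself gives no proof and simply cites the result as well known.

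The second identity, however, has a genuine gap in your proposal --- and in fact $b(x,y,y)=0$ is \emph{false} for general $x,y\in H^1_0$ with the paper's definition $b(x,y,z)=\int_0^1 x\,\partial_\xi y\,z\,d\xi$. Your integration by parts correctly yields
\[
b(x,y,y)=\tfrac12\int_0^1 x\,\partial_\xi(y^2)\,d\xi=-\tfrac12\int_0^1(\partial_\xi x)\,y^2\,d\xi,
\]
but you then stop and say that ``closing the identity rests on the perfect-derivative structure,'' which is hand-waving: there is nothing left to close, and the right-hand side is not zero. A concrete counterexample is $x(\xi)=\sin(2\pi\xi)$, $y(\xi)=\sin(\pi\xi)$, for which $b(x,y,y)=\tfrac{\pi}{2}\int_0^1\sin^2(2\pi\xi)\,d\xi=\tfrac{\pi}{4}\neq 0$. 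The cancellation $b(u,v,v)=0$ you may have in mind from Navier--Stokes requires $u$ to be divergence-free, which has no nontrivial analogue in one dimension. The lemma as printed thus appears to contain a misstatement; what the paper actually \emph{uses} (in the energy estimate of Lemma~3.1) is only the diagonal case $\langle B(X),X\rangle=b(X,X,X)=0$, and this follows immediately from your first identity by setting $y=x$, since $b(x,x,x)=-\tfrac12\,b(x,x,x)$ forces $b(x,x,x)=0$.
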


\begin{lemma} \label{Property B1}
Suppose that $\alpha_{i}\geq 0~(i=1,2,3)$
satisfies one of the following conditions \\
$(1) ~\alpha_{i}\neq\frac{1}{2}(i=1,2,3), \alpha_{1}+\alpha_{2}+\alpha_{3}\geq \frac{1}{2}$, \\
$(2) ~\alpha_{i}=\frac{1}{2}$ for some $i$, $\alpha_{1}+\alpha_{2}+\alpha_{3}>\frac{1}{2}$,\\
then $b$ is continuous from 
$H^{\alpha_{1}}\times H^{\alpha_{2}+1}\times H^{\alpha_{3}}$ to $\mathbb{R}$, i.e.
$$\big|b(x,y,z)\big|\leq C|x|_{\alpha_{1}}|y|_{\alpha_{2}+1}|z|_{\alpha_{3}}.$$
%\hspace{\fill}$\square$
\end{lemma}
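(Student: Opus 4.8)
The plan is to reduce the multilinear estimate to a single pointwise multiplication inequality on Sobolev scales, then to close it using Sobolev embeddings tuned to the one–dimensional setting $(0,1)$. Recall that $b(x,y,z)=\int_0^1 x\,\partial_\xi y\,z\,d\xi$, so by Hölder's inequality with three exponents $p_1,p_2,p_3\in(1,\infty)$ satisfying $\frac1{p_1}+\frac1{p_2}+\frac1{p_3}=1$ we have
\begin{align*}
|b(x,y,z)|\leq \|x\|_{L^{p_1}}\,\|\partial_\xi y\|_{L^{p_2}}\,\|z\|_{L^{p_3}}.
\end{align*}
The first step is to choose these exponents so that each factor is controlled by the corresponding fractional Sobolev norm: for $s\geq 0$ with $s\neq\frac12$ one has the embedding $H^{s}(0,1)\hookrightarrow L^{p}(0,1)$ where $\frac1p=\frac12-s$ when $s<\frac12$, and $H^{s}\hookrightarrow L^\infty$ (indeed into $C^0$) when $s>\frac12$, with the borderline case $s=\frac12$ excluded precisely because $H^{1/2}\not\hookrightarrow L^\infty$ but $H^{1/2}\hookrightarrow L^p$ for every finite $p$. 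Matching $\|x\|_{L^{p_1}}\lesssim|x|_{\alpha_1}$, $\|\partial_\xi y\|_{L^{p_2}}\lesssim|y|_{\alpha_2+1}$ (note $\partial_\xi$ costs one derivative, so this needs $y\in H^{\alpha_2+1}$ and uses $H^{\alpha_2}\hookrightarrow L^{p_2}$ applied to $\partial_\xi y$), and $\|z\|_{L^{p_3}}\lesssim|z|_{\alpha_3}$, the exponent-sum condition $\sum\frac1{p_i}=1$ becomes $\sum(\frac12-\alpha_i)=1$, i.e. $\alpha_1+\alpha_2+\alpha_3=\frac12$; allowing each $H^{\alpha_i}\hookrightarrow L^{p}$ for a \emph{smaller} $p$ (legitimate on a bounded interval) relaxes this to $\alpha_1+\alpha_2+\alpha_3\geq\frac12$. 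This is case (1).

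For case (2), when some $\alpha_i=\frac12$ we cannot use $L^\infty$ on that factor, but since the strict inequality $\alpha_1+\alpha_2+\alpha_3>\frac12$ holds there is room to assign that factor a large but finite exponent $p_i$ (using $H^{1/2}\hookrightarrow L^{p_i}$ for all $p_i<\infty$) while the remaining budget $1-\frac1{p_i}$ is split among the other two factors consistently with their Sobolev embeddings; if another index equals $\frac12$ one repeats this. The strictness is exactly what makes this bookkeeping feasible. In all cases the embedding constants depend only on $\alpha_1,\alpha_2,\alpha_3$ and the interval, giving the claimed bound with a uniform $C$.

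I expect the only genuinely delicate point to be the exponent bookkeeping at the borderline regularities $\alpha_i=\frac12$ and the verification that, on the bounded interval $(0,1)$, one may always trade a Sobolev embedding $H^s\hookrightarrow L^p$ for $H^s\hookrightarrow L^{p'}$ with $p'\leq p$ (so that an inequality $\sum\frac1{p_i}=1$ can be met whenever $\sum\alpha_i\geq\frac12$). Everything else is a routine application of Hölder's inequality and standard one-dimensional Sobolev embeddings; since the paper cites \cite{DX} for these facts, I would simply invoke that reference for the embeddings and present the exponent choice explicitly.
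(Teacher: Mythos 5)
Your argument is correct, and it is the standard proof of this trilinear estimate: Hölder with exponents $\tfrac1{p_1}+\tfrac1{p_2}+\tfrac1{p_3}=1$ followed by the one–dimensional embeddings $H^{s}\hookrightarrow L^{p}$ ($\tfrac1p=\tfrac12-s$ for $s<\tfrac12$, all finite $p$ for $s=\tfrac12$, $L^\infty$ for $s>\tfrac12$), with the derivative absorbed by measuring $y$ in $H^{\alpha_2+1}$. The paper itself gives no proof — it labels the lemma ``well-known'' and cites \cite{DX} — so there is nothing to diverge from; your sketch is exactly the argument that citation stands in for. The only cosmetic imprecision is that the identity $\sum(\tfrac12-\alpha_i)=1$ is the bookkeeping for the all-subcritical case $\alpha_i<\tfrac12$; when some $\alpha_i>\tfrac12$ that factor contributes $0$ rather than a negative number to $\sum 1/p_i$, and the constraint $\sum_i\max(0,\tfrac12-\alpha_i)\leq 1$ (with strictness bought by $\sum\alpha_i>\tfrac12$ whenever some $\alpha_i=\tfrac12$) is then automatic — which you do in effect handle in your case discussion.
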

The following inequalities can be derived by the above lemma.
\begin{corollary} \label{Property B3} For any $x\in H_{0}^{1}$, we have \\
$(1) \|B(x)\|\leq C|x|_{1}^{2}$.\\
$(2) |B(x)|_{-1}\leq C\|x\|\cdot|x|_{1}.$
%\hspace{\fill}$\square$
\end{corollary}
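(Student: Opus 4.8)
The plan is to treat the two inequalities separately, both reducing to elementary one-dimensional Sobolev estimates combined with the trilinear continuity already recorded in Lemma \ref{Property B1} (and, for an alternative route, the integration-by-parts identity of Lemma \ref{Property B0}).

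For part (1), I would first establish the one-dimensional Agmon-type bound $\|x\|_{L^\infty(0,1)} \leq |x|_1$ for $x\in H_0^1$: since $x(0)=0$, the fundamental theorem of calculus gives $x(\xi)=\int_0^\xi \partial_\eta x(\eta)\,d\eta$, and the Cauchy--Schwarz inequality yields $|x(\xi)|\leq \|\partial_\xi x\| = |x|_1$ for every $\xi\in[0,1]$. With this in hand, I write $\|B(x)\|^2 = \int_0^1 x(\xi)^2\,(\partial_\xi x(\xi))^2\,d\xi \leq \|x\|_{L^\infty}^2\int_0^1(\partial_\xi x)^2\,d\xi \leq |x|_1^2\cdot|x|_1^2$, which is exactly the claimed bound $\|B(x)\|\leq |x|_1^2$ (note that $|x|_1$ is, up to the norm equivalence, the $H^1$ seminorm $\|\partial_\xi x\|$).

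For part (2), I would use the duality characterization of the $H^{-1}$ norm. Since $|\cdot|_{-1}$ is equivalent to the norm on the dual of $V=H_0^1$, and for any test function $z\in H_0^1$ the dualization is $\langle B(x),z\rangle = b(x,x,z)$, we have $|B(x)|_{-1}\leq C\sup_{|z|_1\leq 1}|b(x,x,z)|$. I then apply Lemma \ref{Property B1} with $(\alpha_1,\alpha_2,\alpha_3)=(0,0,1)$; these exponents satisfy the admissibility condition (none equals $\tfrac12$ and their sum is $1\geq\tfrac12$), so $|b(x,x,z)|\leq C\,|x|_0\,|x|_1\,|z|_1 = C\|x\|\,|x|_1\,|z|_1$. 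Taking the supremum over $|z|_1\leq 1$ gives $|B(x)|_{-1}\leq C\|x\|\,|x|_1$.

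As an alternative for part (2) that avoids invoking Lemma \ref{Property B1}, one may integrate by parts: by Lemma \ref{Property B0}, $b(x,x,z)=-\tfrac12 b(x,z,x)=-\tfrac12\int_0^1 x^2\,\partial_\xi z\,d\xi$, whence $|b(x,x,z)|\leq \tfrac12\|x\|_{L^4}^2\,|z|_1$, and the interpolation $\|x\|_{L^4}^2\leq \|x\|_{L^\infty}\|x\|\leq |x|_1\|x\|$ (again using the Agmon bound from part (1)) closes the estimate with the same constant structure. I do not expect any genuine obstacle here; the only points requiring care are the verification of the admissibility conditions on the exponents in Lemma \ref{Property B1} and the use of the equivalence between $|\cdot|_{-1}$ and the norm on $V^\ast$, both of which are routine.
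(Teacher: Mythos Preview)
Your proposal is correct. The paper's own proof is simply the remark that both inequalities follow from Lemma~\ref{Property B1}, and your treatment of part~(2) via the duality characterization and the exponent choice $(\alpha_1,\alpha_2,\alpha_3)=(0,0,1)$ is exactly that argument. For part~(1) you take a slightly different but equally elementary route: rather than applying Lemma~\ref{Property B1} with exponents $(1,0,0)$ (which is what the paper has in mind, yielding $|b(x,x,z)|\leq C|x|_1|x|_1\|z\|$ and hence $\|B(x)\|\leq C|x|_1^2$ by duality), you use the one-dimensional Agmon bound $\|x\|_{L^\infty}\leq|x|_1$ directly. Both approaches are one-line; yours has the minor advantage of giving an explicit constant, while the paper's keeps everything uniformly under the umbrella of the single trilinear estimate.
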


\begin{lemma} \label{Property B2}
For any $x,y\in H_{0}^{1}$, we have \\
$(1) \|B(x)-B(y)\|\leq C|x-y|_{1}(|x|_{1}+|y|_{1})$.\\
$(2) |B(x)-B(y)|_{-1}\leq C\|x-y\| \left(|x|_{1}+|y|_{1}\right).$
%\hspace{\fill}$\square$
\end{lemma}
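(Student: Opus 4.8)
The plan is to prove both estimates at once by putting the Burgers nonlinearity into conservative form. For $x,y\in H^1_0$ one has $B(x)=x\partial_\xi x=\tfrac12\partial_\xi(x^2)$, hence the elementary identity
\[
B(x)-B(y)=\tfrac12\partial_\xi\big(x^2-y^2\big)=\tfrac12\partial_\xi\big((x-y)(x+y)\big),
\]
which can also be written as $B(x)-B(y)=\tfrac12 B(x-y,x+y)+\tfrac12 B(x+y,x-y)$. All of the work then reduces to two standard one-dimensional facts: the embedding $H^1_0(0,1)\hookrightarrow L^\infty(0,1)$ with $\|u\|_{L^\infty}\le|u|_1$ (which follows from $u(\xi)=\int_0^\xi u'(s)\,ds$ and Cauchy--Schwarz), and the boundedness of $\partial_\xi$ from $L^2$ into $H^{-1}_0$.

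For part $(1)$ I would expand $\partial_\xi\big((x-y)(x+y)\big)$ by the Leibniz rule and estimate each of the two resulting terms by H\"older's inequality in $L^2\cdot L^\infty$, sending the $L^\infty$ norm onto the factor carrying no derivative:
\[
\|B(x)-B(y)\|\le\tfrac12\|\partial_\xi(x-y)\|\,\|x+y\|_{L^\infty}+\tfrac12\|x-y\|_{L^\infty}\,\|\partial_\xi(x+y)\|\le C\,|x-y|_1\big(|x|_1+|y|_1\big).
\]
Equivalently, $(1)$ follows from Lemma~\ref{Property B1} applied to $b(x-y,x+y,z)$ and $b(x+y,x-y,z)$ with the exponents $\alpha_1=1,\ \alpha_2=\alpha_3=0$, together with the duality characterization $\|B(x)-B(y)\|=\sup_{\|z\|\le1}\big|\langle B(x)-B(y),z\rangle\big|$.

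For part $(2)$ I would use the duality characterization $|w|_{-1}=\sup_{|z|_1\le1}|\langle w,z\rangle|$ and integrate by parts: for $z\in H^1_0$,
\[
\big|\langle B(x)-B(y),z\rangle\big|=\tfrac12\Big|\int_0^1(x^2-y^2)\,\partial_\xi z\,d\xi\Big|\le\tfrac12\|x^2-y^2\|\,|z|_1.
\]
Since the target bound involves only $\|x-y\|$ (and not $|x-y|_1$), the key point is to split $\|x^2-y^2\|=\|(x-y)(x+y)\|$ \emph{asymmetrically}, keeping $x-y$ in $L^2$ and passing $x+y$ to $L^\infty$: $\|x^2-y^2\|\le\|x-y\|\,\|x+y\|_{L^\infty}\le C\|x-y\|(|x|_1+|y|_1)$. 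Taking the supremum over $|z|_1\le1$ gives $(2)$.

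There is no serious obstacle here --- this is a standard bookkeeping lemma for the one-dimensional Burgers nonlinearity, stated in the paper as well known with reference~\cite{DX}. The only point needing a little care is the asymmetric split in part $(2)$: because the conclusion controls $\|x-y\|$ but not $|x-y|_1$, one cannot put the $L^\infty$ norm on $x-y$, and it is precisely the conservative form $B(x)-B(y)=\tfrac12\partial_\xi(x^2-y^2)$ that makes the admissible split ($x-y$ in $L^2$, $x+y$ in $L^\infty$) available without spending a derivative.
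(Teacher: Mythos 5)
Your proof is correct. The paper does not actually prove this lemma — it is stated in the Appendix as a well-known property of the Burgers nonlinearity with a pointer to \cite{DX} — so there is no in-paper argument to compare against. Your self-contained derivation via the conservative form $B(x)-B(y)=\tfrac12\partial_\xi\big((x-y)(x+y)\big)$, the embedding $H^1_0(0,1)\hookrightarrow L^\infty(0,1)$, and the asymmetric $L^2\cdot L^\infty$ split in part $(2)$ (placing the $L^\infty$ norm on $x+y$ so that only $\|x-y\|$ appears) is exactly the standard route and is complete; it is also consistent with Lemma~\ref{Property B1} via the bilinear identity $B(x)-B(y)=\tfrac12 B(x-y,x+y)+\tfrac12 B(x+y,x-y)$, as you note.
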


%At the end of this section, we give the classical Gronwall inequality and a Gronwall-type inequality.
%% which will be used later.
%
%\begin{lemma}[Gronwall inequality] \label{Gronwall 1}
%Let $\alpha, \beta$ be real-value functions defined on $[0, T]$, assume that $\beta$ and $u$ are continuous and that the negative part of $\alpha$ is integrable on every closed and bounded subinterval of $[0, T]$. \\
%(a) If $\beta$ is non-negative and if $u$ satisfies the integral inequality
%\begin{eqnarray*}
%u_t\leq \alpha_t+\int^t_0\beta_s u_sds, \quad \forall t\in [0, T],
%\end{eqnarray*}
%then
%$$
%u_t\leq \alpha_t+\int^t_0 \alpha_s \beta_s\exp{\left(\int^t_s \beta_rdr\right)}ds,\quad \forall t\in [0, T].
%$$
%(b) If, in addition, the function $\alpha$ is nondecreasing, then
%$$
%u_t\leq \alpha_t\exp{\left(\int^t_0 \beta_rdr\right)},\quad \forall t\in [0, T].
%$$
%\end{lemma}

Similar as the argument in the proof of \cite[Theorem 2.6]{LZ06}, we can easily obtain the following Gronwall-type inequality, whose proof is based on iteration and we omit the details.

\begin{lemma}[Gronwall-type inequality] \label{Gronwall 2}
%$\beta\in(0,1)$,
Let $f(t)$ be a non-negative real-valued integrable function on $[0,T]$.
For any given $\alpha, \beta \in(0,1)$,
if there exist two positive constants $C_1, C_2$ such that
%and the following inequality holds for some positive constants $C_1, C_2$
\begin{align}
f(t)\leq C_1t^{-\alpha}+C_2\int_{0}^{t}(t-s)^{-\beta}f(s)ds, \quad \forall t\in[0,T], \nonumber
\end{align}
then there exists some $k\in \mathbb{N}$ and a positive constant $C:=C_{\alpha, \beta, T}$
such that
%satisfying
\begin{align}
f(t)\leq C C_1t^{-\alpha}e^{C C^{k}_2}, \quad \forall t\in[0,T].  \nonumber
\end{align}
%where $C$ is a positive constant depending on $\alpha, \beta$ and $T$.
\end{lemma}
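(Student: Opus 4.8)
The plan is to treat the inequality as a fixed-point relation and to iterate the singular integral operator, extracting an explicit series whose sum is a Mittag-Leffler function that can in turn be dominated by the claimed exponential. First I would introduce the operator $(Kh)(t):=C_2\int_0^t(t-s)^{-\beta}h(s)\,ds$ and the singular forcing $g(t):=C_1t^{-\alpha}$, so that the hypothesis reads $f\le g+Kf$ on $[0,T]$. Since $\alpha,\beta\in(0,1)$ the kernel $(t-s)^{-\beta}$ is locally integrable and $g$ is integrable; together with the standing assumption that the right-hand side is finite, this makes each $K^jg$ and each remainder $K^nf$ well defined. Iterating the relation $n$ times gives
\begin{align}
f(t)\le \sum_{j=0}^{n-1}(K^jg)(t)+(K^nf)(t),\qquad t\in[0,T]. \nonumber
\end{align}

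Next I would compute $K^jg$ explicitly. A single application together with the Beta-function identity $\int_0^t(t-s)^{-\beta}s^{-\alpha}\,ds=\mathrm{B}(1-\beta,1-\alpha)\,t^{1-\alpha-\beta}$ raises the power of $t$ by $1-\beta$; inducting on $j$ and using $\mathrm{B}(a,b)=\Gamma(a)\Gamma(b)/\Gamma(a+b)$ yields
\begin{align}
(K^jg)(t)=C_1\,\big(C_2\Gamma(1-\beta)\big)^j\,
\frac{\Gamma(1-\alpha)}{\Gamma\big(1-\alpha+j(1-\beta)\big)}\,t^{-\alpha+j(1-\beta)}. \nonumber
\end{align}
The crucial feature is the factorial-type decay $1/\Gamma\big(1-\alpha+j(1-\beta)\big)$ coming from the repeated Beta integrals: it beats the geometric growth $\big(C_2\Gamma(1-\beta)\big)^j$, so the series converges for every fixed $C_2$. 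The same computation shows the $n$-fold composite kernel equals $C_2^n\Gamma(1-\beta)^n\Gamma\big(n(1-\beta)\big)^{-1}(t-s)^{n(1-\beta)-1}$, which becomes bounded on $\{0\le s\le t\le T\}$ once $n(1-\beta)\ge1$; pairing it against the integrable $f$ and using $1/\Gamma\big(n(1-\beta)\big)\to0$ shows $\sup_{[0,T]}|K^nf|\to0$ as $n\to\infty$. Letting $n\to\infty$ therefore gives
\begin{align}
f(t)\le C_1\Gamma(1-\alpha)\,t^{-\alpha}\sum_{j=0}^{\infty}
\frac{\big(C_2\Gamma(1-\beta)\,t^{1-\beta}\big)^{j}}{\Gamma\big(1-\alpha+j(1-\beta)\big)}
=C_1\Gamma(1-\alpha)\,t^{-\alpha}\,E_{1-\beta,\,1-\alpha}\!\big(C_2\Gamma(1-\beta)\,t^{1-\beta}\big), \nonumber
\end{align}
where $E_{\rho,\mu}$ denotes the two-parameter Mittag-Leffler function.

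Finally I would dominate the Mittag-Leffler factor. Using the standard bound $E_{\rho,\mu}(z)\le C\,e^{z^{1/\rho}}$, valid for $z\ge0$ and $\rho=1-\beta\in(0,1)$, together with $t\le T$, gives $f(t)\le CC_1t^{-\alpha}\exp\big(C'C_2^{1/(1-\beta)}\big)$. The last step is cosmetic but is where the integer $k$ enters: choosing any integer $k\ge 1/(1-\beta)$ and splitting into the cases $C_2\ge1$, where $C_2^{1/(1-\beta)}\le C_2^{k}$, and $C_2\le1$, where the exponential is bounded by a constant, produces the stated form $f(t)\le CC_1t^{-\alpha}e^{CC_2^{k}}$ with $C=C_{\alpha,\beta,T}$. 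I expect the main obstacle to be the two analytic facts underpinning this scheme, namely verifying the $1/\Gamma\big(n(1-\beta)\big)$ decay of the iterated kernels (which is exactly what guarantees convergence of the Neumann series despite the powers of $C_2$) and invoking the correct global exponential bound for the Mittag-Leffler function; everything else is bookkeeping. An alternative that avoids citing Mittag-Leffler asymptotics, and which matches the paper's remark that the proof rests on iteration, is to stop the expansion at the first $n$ with $n(1-\beta)\ge1$, bound the finite sum $\sum_{j<n}K^jg$ by $C_1t^{-\alpha}$ times a constant, and then close with the classical bounded-kernel Gronwall lemma applied to $f\le P+K^nf$.
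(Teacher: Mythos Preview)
Your proposal is correct and follows precisely the iteration strategy the paper invokes (the paper omits the proof, simply noting that it ``is based on iteration'' as in \cite{LZ06}). Your argument in fact supplies more detail than the paper: you carry the iteration to infinity, identify the sum as a Mittag--Leffler function, and then bound it by an exponential, whereas the paper's reference suggests only the more elementary variant you mention at the end---iterate finitely many times until the kernel becomes bounded, absorb the finite sum into $C_{\alpha,\beta,T}\,C_1 t^{-\alpha}$, and close with the classical Gronwall inequality. Both routes yield the same conclusion; yours has the advantage of making the dependence on $C_2$ completely explicit.
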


%\begin{proof}
%By iterating and Fubini theorem, we have
%\begin{eqnarray}
%f(t)\leq\!\!\!\!\!\!\!\!&& C_1 t^{-\alpha}+C_2 \int_{0}^{t}(t-s)^{-\beta}\left[C_1 s^{-\alpha}+C_2 \int_{0}^{s}(s-r)^{-\beta}f(r)dr\right]ds  \nonumber\\
%\leq\!\!\!\!\!\!\!\!&& C_1t^{-\alpha}+C_1 C_2 \int_{0}^{t}(t-s)^{-\beta}s^{-\alpha}ds+C^2_2\int_{0}^{t}f(r)\left[\int_{r}^{t}(t-s)^{-\beta}(s-r)^{-\beta}ds\right]dr  \nonumber\\
%\leq\!\!\!\!\!\!\!\!&& C C_1(C_2+1)t^{-\alpha}+C C^2_2\int_{0}^{t}(t-r)^{1-2\beta}f(r)dr,  \nonumber
%\end{eqnarray}
%where $C$ is a constant depending on $\alpha, \beta, T$.
%
%If $1-2\beta\geq0$, then we can easily obtain the result by Lemma \ref{Gronwall 1}. However, if $1-2\beta<0$, then after iterating finite times, there exist  $\gamma\geq 0, k_1, k_2\in\mathbb{N}$ such that
%\begin{eqnarray}
%f(t)\leq\!\!\!\!\!\!\!\!&& C C_1 (C^{k_1}_2+1)t^{-\alpha}+C C^{k_2}_2\int_{0}^{t}(t-r)^{\gamma}f(r)dr \nonumber\\
%\leq\!\!\!\!\!\!\!\!&& C C_1 (C^{k_1}_2+1)t^{-\alpha}+C C^{k_2}_2\int_{0}^{t}f(r)dr. \nonumber
%\end{eqnarray}
%
%Finally  by Lemma \ref{Gronwall 1}, we obtain
%\begin{eqnarray*}
%f(t)\leq\!\!\!\!\!\!\!\!&& C C_1 (C^{k_1}_2+1)t^{-\alpha}e^{C C^{k_2}_2}\\
%\leq\!\!\!\!\!\!\!\!&& C C_1t^{-\alpha}e^{C C^{k}_2}, \nonumber
%\end{eqnarray*}
%for some $k\in\mathbb{N}$.
%  \hspace{\fill}$\square$
%\end{proof}

\vskip 0.5cm
\textbf{Acknowledge}. 
%We thank the referees for their valuable comments which helped to considerably improve the quality of the paper.
 %and, in particular, for suggesting the references \cite{LZ06,LZ09}, which have helped to fill the mathematical gap in the first version of the proof of Lemma \ref{PMY}.
 This paper is partially supported by Key Laboratory of Random Complex Structures and Data Science, 
 PCSDS, 
 Academy of Mathematics and Systems Science, Chinese Academy of Sciences (No: 2008DP173182), by the Priority Academic Program Development of Jiangsu Higher Education Institutions, by Natural Science Foundation of the Higher Education Institutions of Jiangsu Province (No:16KJB110006), by Key Research Program of Frontier Sciences, CAS (No: QYZDB-SSW-SYS009) and the Fundamental Research Funds for the Central Universities (No. WK 3470000008), by NSFC ( No: 11271356, No: 11371041, No: 11431014, No: 11271169, No: 11601196, No: 11771187, No. 11671372, No. 11721101).

\end{document}